\def\Ma{{\mathcal M}}
\def\Pa{{\mathcal P}}
\numberwithin{equation}{section}
\numberwithin{figure}{section}
\theoremstyle{plain}
\newtheorem{thm}{Theorem}[section]
  \theoremstyle{definition}
  \theoremstyle{plain}
  \newtheorem{prop}[thm]{Proposition}
  \theoremstyle{plain}
  \newtheorem{cor}[thm]{Corollary}
  \theoremstyle{plain}
  \newtheorem{lem}[thm]{Lemma}
   \theoremstyle{plain}
  \newtheorem{hypo}[thm]{Hypothesis}
     \theoremstyle{plain}
  \newtheorem{rem}[thm]{Remark}
  \def\deg{{{\rm deg}}}
\def\ra{{\rightarrow}}
\newcommand{\R}{\mathbb{R}}
\def\Tr{{\rm Tr}}
\newcommand{\XXi}{{\mbox{\boldmath$\Xi$}}}
\newcommand{\zzeta}{{\mbox{\boldmath$\zeta$}}}
\newcommand{\YY}{{\mbox{{\bf Y}}}}
\newcommand{\yy}{{\mbox{{\bf y}}}}
\newcommand{\zz}{{\mbox{{\bf z}}}}
\newcommand{\operator}[1]{\mathsf{#1}}
\def\D{{\mathcal D}}
\def\W{{\mathcal W}}
\def\Tr{{\rm Tr}}
\def\tr{{\rm Tr}}
\def\ot{{\otimes}}
\def\ra{{\rightarrow}}
\def\La{{\mathcal L}}
\def\Ta{{\mathcal T}}
\def\LL{\mathscr{L}}
\def\AA{\mathscr{A}}
\def\BB{\mathscr{B}}
\def\a{\alpha}
\def\one{{\bf 1}}
\def\RR{\mathcal{R}}
 \newcommand{\negint}{{\int\negthickspace\negthickspace\negthickspace\negthickspace-}}
 \newcommand{\nnegint}{{\int\negthickspace\negthickspace\negthickspace-}}
\begin{document}

\title[Universality  in several-matrix models via approximate transport maps]{Universality in several-matrix models\\ via approximate transport maps}

\author{A. Figalli$^\dagger$, A. Guionnet$^{\ddagger}$ }

\thanks{
$^\dagger$ The University of Texas at Austin,
Mathemtics Dept. RLM 8.100,
2515 Speedway Stop C1200,
Austin, Texas 78712-1202, USA. email: \texttt{figalli@math.utexas.edu}.\\
$^{\ddagger}$ Department of Mathematics, Massachusetts Institute of Technology, 77 Massachusetts Ave, Cambridge, MA 02139-4307 USA. email:
\texttt{guionnet@math.mit.edu}.}

\begin{abstract}
We construct approximate transport maps  for perturbative  several-matrix models.
As a consequence, we deduce that local statistics have the same asymptotic 
 as in the case of independent GUE or GOE matrices (i.e., they are given by the sine-kernel in the bulk and the Tracy-Widom distribution at the edge),
 and we show averaged energy universality (i.e., universality for averages of $m$-points correlation functions around some energy level $E$ in the bulk).
As a corollary, these results yield universality for self-adjoint polynomials in several independent GUE or GOE matrices which are close to the identity.
\end{abstract}
\maketitle

\tableofcontents

\section{Introduction.}
Large random matrices appear  in many different fields, including quantum mechanics,  quantum chaos, telecommunications, finance, and   statistics. 
As such, understanding how the asymptotic properties of the spectrum depend on the fine details of the model, in particular on the  distribution of the entries,
  soon appeared as a central question. 

An important model is the one of Wigner matrices, that is Hermitian matrices with independent and identically distributed real or complex entries. We will denote by $N$ the dimension of
the matrix, and assume that the entries are renormalized to have covariance $N^{-1}$.  It was
shown by Wigner 
\cite{Wig62}  that the macroscopic distribution of the spectrum converges, under very mild assumptions, to
the so-called semi-circle law. However, because the spectrum is a complicated function of the entries,
its local
properties took much longer to be revealed.
The first approach to the study of local fluctuations of the spectrum was 
based on exact models, namely the Gaussian models,   where the joint law of the eigenvalues has a simple description as a Coulomb Gas law \cite{ME,TW,TW2,Forr,Deiftbook}.
There, it was shown that the largest eigenvalue fluctuates around the boundary of the support of the semi-circle law
in the scale $N^{-2/3}$, and that the limit distribution of these fluctuations were given by the so-called Tracy-Widom law \cite{TW,TW2}. 
On the other hand, inside the bulk the distance between two consecutive eigenvalues is of order $N^{-1}$ and the fluctuations at this scale can be described by the
sine-Kernel distribution. 
Although this precise description was first obtained only for the Gaussian models, it
was already envisioned by Wigner that these fluctuations should be universal, i.e., 
independent of the precise distribution of the entries.

Recently, a series of remarkable breakthroughs 
\cite{Las,EP, ESYY,EYY,LY,TV1,TV2,TV3,T1}  proved that, under rather general assumptions, the local statistics of a Wigner matrix
 are  independent of the precise distribution of the entries, provided they have enough finite moments, are centered and with the same variance. These results were  extended
 to  the case where distribution of the entries depend
 on the indices, still assuming that their variance is uniformly bounded below \cite{erdy}. The study of band-matrices is still a challenge when the width of the band approaches the critical order of $\sqrt{N}$, see related works \cite{ShT, erdy2}.
Such universality results
were also extended to non-normal square matrices
 with independent entries \cite{taocirc3}.

 A related question is to study universality for local fluctuations for the so-called $\beta$-models, that are laws of particles in interaction according to a Coulomb-gas potential to the power $\beta$ and submitted to a potential $V$. When $\beta=1,2,4$
 and $V$ is quadratic, these laws correspond to the joint law of the eigenvalues of Gaussian matrices with real, complex, or symplectic entries. Universality was proven for very general potentials  in the case $\beta=2$ \cite{LL,Lub}. In the case $\beta=1,4$, universality was proved in  \cite{DeGibulk} in the bulk, and \cite{DeGiedge} at the edge, for monomials $V$ (see \cite{DeGi2} for a review).    For general one-cut potentials, the first proof of universality was given in \cite{Shch} in the case $\beta=1$, whereas \cite{KrSh} treated the case $\beta=4$.
The local fluctuations of more general $\beta$-ensembles were only derived recently \cite{VV,RRV} in the Gaussian case. 
Universality in the $\beta$-ensembles was first addressed in \cite{BEY1} (in the bulk, $\beta> 0$, $V\in C^4$), then in  \cite{BEY2} (at the edge, $\beta\ge 1$, $V\in C^4$), 
\cite{KRV} (at the edge, $\beta>0$, $V$ convex polynomial), and  finally in  \cite{Shch} (in the bulk, $\beta>0$, $V$ analytic, multi-cut case included) and in \cite{BFG}  (in the bulk and the edge, $V$ smooth enough). The universality at the edge in the several-cut case is treated in \cite{Bek}. The case where the interaction is more general than a Coulomb gas, but given by a mean-field interaction $\prod_{i< j} \varphi(x_i-x_j)$ where $\varphi(t)$ behaves as $|t|^\beta$ in a neighborhood of the origin
and $\log |x|^{-\beta}\varphi(x)$ is real analytic as well as the potential, was considered in \cite{GoVe} ($\beta=2$, universality in the bulk), \cite{Ven} ($\beta>0$, universality in the bulk), and \cite{KrVe} ($\beta=2$, universality at the edge).\\

Despite all these new developments, up to now nothing was known about the universality of the fluctuations of the eigenvalues in several-matrix models, except in very particular situations. The aim of this paper is to provide new universality  results for  general
perturbative several matrix models, giving a firm mathematical ground  to the widely spread belief coming from physics that universality of local fluctuations should hold, at least until some phase transition occurs.

An important application of our results  is given by polynomials in Gaussian Wigner matrices and deterministic matrices. More precisely, let $X^{N}_1,\ldots,X^{N}_d$ be $N\times N$ independent GUE matrices,
that is $N\times N$ Hermitian matrices with independent complex Gaussian entries with covariance $1/N$, and let   $B^N_1,\ldots, B^N_m$ 
be  $N\times N$ Hermitian deterministic matrices.  Assume that
for any choices of $i_1,\ldots,i_k\in\{1,\ldots,m\}$ and $k\in\mathbb N$,
\begin{equation}\label{toto1}\frac{1}{N}\tr(B^N_{i_1}\cdots B^N_{i_k})\end{equation}
converges to some limit $\tau(b_{i_1}\cdots b_{i_k})$, where $\tau$ is a linear form on the set of polynomials in the variables $\{b_\ell\}_{1\le \ell\le m}$
that inherits properties of the trace (such as positivity, mass
 one, and traciality, see \eqref{trace}),
and it is called a ``tracial state'' or a ``non-commutative distribution'' in free probability.

A key result due to Voiculescu \cite{Voi91} shows the existence of a non-commutative distribution  $\sigma$
 such
that for any polynomial $p$ 
 in $d+m$ self-adjoint non-commutative variables
$$\lim_{N\ra\infty}\frac{1}{N}\tr\bigl(p(X^{N}_1,\ldots,X^{N}_d, B^N_1,\ldots, B^N_m)\bigr)=\sigma\bigl(p(S_1,\ldots,S_d, b_1,\ldots, b_m)\bigr)\qquad \mbox{a.s.}$$
where, under $\sigma$, $S_1,\ldots,S_d$ are $d$ free semicircular variables, free from $b_1,\ldots,b_m$ with law $\tau$.  
More recently, Haagerup and Thorbj{\o}rnsen \cite{HT05}  (when the matrices $\{B^N_i\}_{1\le i\le m}$ vanish) and then Male \cite{male} (when the spectral radius of
polynomials  $p(B^N_1,\ldots, B^N_m)$ in $\{B^N_i\}_{1\le i\le m}$ converge to the norm of their limit $p(b_1,\ldots,b_m)$) showed that this convergence is also true for the operator norms, namely the following convergence holds almost surely:
$$\lim_{N\ra\infty}\|p(X^{N}_1,\ldots,X^{N}_d, B^N_1,\ldots,B^N_m)\|_\infty=\| p(S_1,\ldots,S_d,b_1,\ldots,b_m)\|_\infty\,,$$
where
$$ \| p(S_1,\ldots,S_d,b_1,\ldots,b_m)\|_\infty=\lim_{r\ra\infty} \sigma\Bigl( \bigl(p(S_1,\ldots,S_d,b_1,\ldots, b_m)p(S_1,\ldots,S_d,b_1,\ldots,b_m)^*\bigr)^{r}\Bigr)^{\frac{1}{2r}}
\,.$$
However, it was not known in general how the eigenvalues of such a polynomial  fluctuate locally.

In this paper we show that if
$p$ is a perturbation of $x_1$ then, under some weak additional assumptions on the deterministic matrices $B^N_1,\ldots,B^N_m$, the eigenvalues of $p(X^{N}_1,\ldots,X^{N}_d,B^N_1,\ldots,B^N_m)$ fluctuate as the eigenvalues of $X_1^N$. 
In particular, if
 $p(X_1,\ldots,X_d)=X_1+\epsilon\, Q(X_1,\ldots,X_d)$ with $\epsilon$ small enough and $Q$ self-adjoint, then we can show that,
once properly renormalized,  the  fluctuations of the eigenvalues of $p(X^{N}_1,\ldots,X^{N}_d)$ 
 follow the sine-kernel inside the
bulk and the Tracy-Widom law at the edges. In addition, this universality result holds also for (averages with respect to $E$ of) $m$-points correlation functions around some energy level $E$ in the bulk.
Furthermore, all these results extend to the case of GOE matrices.
\smallskip

Although we shall not investigate this here,  our results 
should extend to non-Gaussian entries at least when the entries have the same first four moments 
as the Gaussian. 
This would however  be a non-trivial generalization, as it would involve fine analysis such as the local law and rigidity.

To our knowledge this type of result is completely new except in the case of the very specific polynomial $p(S,b)=b+S$, which was recently treated 
in non-perturbative situations  \cite{CP,LSSY} or $p$ is a product of  non-normal random matrices \cite{LW,AS}.
Notice that although our results hold only in a perturbative setting, it is clear that some assumptions
on $p$ are needed and universality cannot hold for any polynomial.
Indeed, even if one considers
only one matrix, if $p$ is not strictly increasing then the largest eigenvalue of $p(X_1^N)$ could be the image by $p$ of an eigenvalue of $X^N_1$  inside the bulk, hence it would follow the sine-kernel law instead of the Tracy-Widom
law.\\

Our approach to universality for polynomials in several matrices goes through the universality for
unitarily invariant matrices interacting via a potential. Indeed, as shown in Section \ref{lawofpol}, the law of the eigenvalues
of such polynomials is a special case of the latter models, that we describe now.

Let $V$ be a polynomial in non-commutative variables, $W_1,\ldots,W_d:\R \to \R$ be  smooth functions, and
 consider the following probability measure on the space of $d$-uple of $N\times N$ Hermitian or symmetric  matrices (see also Section \ref{sect:statement} for more details):
$$d{\mathbb P}^{N,V}_\beta(dX_1,\ldots,dX_d) =\frac{1}{Z^{N,V}_\beta} e^{ N\,\Tr V(X_1,\ldots,X_d,B_1,\ldots,B_m)} e^{-N\sum_{k=1}^d \Tr W_k(X_k)}\prod_{i=1}^d \one_{\|X_i\|_\infty \le M} \,dX,$$
where $dX=dX_1\ldots dX_d$ is the Lebesgue measure on the set of  $d$-uple of $N\times N$ Hermitian or symmetric matrices (from now on, to simplify the notation, we remove the superscript $N$ on $X_i$ and $B_i$).  $M>0$ is a cut-off which ensures that 
$$
Z^{N,V}_\beta:=\int e^{ N\,\Tr V(X_1,\ldots,X_d,B_1,\ldots,B_m)} e^{-N\sum_k \Tr W_k(X_k)}\prod_{i=1}^d \one_{\|X_i\|_\infty \le M}\, dX
$$
is finite despite the fact that $V$ is a polynomial which could go to infinity
faster than the $W_k$'s.  We assume that $V$ is self-adjoint in the sense that $V(X_1,\ldots,X_d,B_1,\ldots,B_m)$ is Hermitian (resp. symmetric)  for any $N\times N$ Hermitian (resp. symmetric)
 matrices $X_1,\ldots,X_d,B_1,\ldots,B_m$. As a consequence,
$\mathbb P^{N,V}_\beta$ has a real non-negative density. Since we shall later need to assume $V$ small, we shall not try to get the best assumptions on the $W_k$'s,
and we shall assume that they are uniformly convex. As discussed in Remark \ref{rmk:Wk} below, this could be relaxed.

Such multi-matrix models appear in physics, in connection with the  enumeration of colored maps \cite{BIPZ, MehIs,kos,eybo},
and in  planar algebras and the Potts model on random graphs \cite{GJS,GJSZ}. However, 
despite the introduction of biorthogonal 
polynomials \cite{Ber} to compute precisely observables in these models, the local properties of the spectrum in these models could not be studied so far, except in very specific 
situations \cite{ABK}.   Our proof  shows that the limiting spectral measure of the matrix models has a connected support and behaves as a square root at the boundary when $a$ is small enough and the $W_k$  are uniformly convex, see Lemma \ref{lem:support}. This in particular shows that in great generality  the $n$-th moments for the related models, which can be identified with  generating functions for planar maps,  grow like $C^n n^{-3/2}$, as for the semi-circle law and rooted trees. 
More interesting  exponents could be found at criticality, a case that we can hardly study in this article  since we need $a$ to be small.  The transport maps between the limiting measures could themselves provide valuable combinatorial information, as a way to analyze the limiting spectral measures,  but they would also need to be extended to criticality too. Yet, the extension of our techniques to the non-commutative setting yields interesting isomorphisms of  related algebras \cite{GS, Nel}. \smallskip

In 
\cite{GMS1,GMS2} it was shown that there exists $M_0<\infty$ such that the following holds: for $M>M_0$
there exists $a_0>0$ so that,
for $a\in [-a_0,a_0]$, there is a non-commutative distribution $\tau^{aV}$ satisfying
$$
\lim_{N\ra\infty}\mathbb P^{N,aV}_\beta\left(\frac{1}{N}\Tr \bigl(p( X_1,\ldots,X_d)\bigr)\right)
=\tau^{aV}(p)
$$
for any polynomials $p$ in $d$ non-commutative letters.
In particular, if $(\lambda_i^k)_{1\le i\le N}$ denote the eigenvalues of $X_k$,
 the spectral measure $L^{N}_k:=\frac{1}{N}\sum_i \delta_{\lambda_i^k}$ converges weakly and in moments towards the probability measure $\mu_k^{aV}$ defined by
\begin{equation}\label{limit}\mu_k^{aV}(x^\ell):=\tau^{aV}\left((X_k)^\ell\right)\qquad \forall\, \ell\in\mathbb N.\,.\end{equation}
Moreover, one can bound these moments  to see that $\mu_k^{aV}$ is compactly supported and hence defined by the family of its moments. In addition, it can be proved that $\mu_k^{aV}$ does not depend on the cutoff $M$.
Furthermore, a central limit theorem for this problem was studied in \cite{GMS2} where it was proved that, for any polynomial~$p$,
$$\Tr\bigl(p(X_1,\ldots,X_d)\bigr)-N\,\tau^{aV}(p)$$
converges in law towards a Gaussian variable.  Higher order expansion (the ``topological expansion'') were derived in \cite{edouard}.
\smallskip

In this article we show that, if $a$ is small enough,
 the local  fluctuations of the eigenvalues of each matrix under $\mathbb P^{N,aV}_\beta$ are the
same as when $a=0$ and  the $W_k$ are just quadratic; in other words, up to rescaling,  they follow the sine-kernel  distribution inside the bulk and the Tracy-Widom law  at the edges of the corresponding ensemble
(see Corollaries \ref{thm:univ} and \ref{thm:univ2}).
In addition, averaged energy universality of the correlation functions  holds in our multi-matrix setting (see Corollary  \ref{thm:energy}).

The idea to prove these results consists in finding a map from the law of the eigenvalues of independent GUE or GOE matrices 
to a probability measure that approximates our  matrix models 
(see Theorem \ref{thm:transport} and Corollary \ref{thm:univ2}).
This approach is inspired by the method introduced in \cite{BFG} to study one-matrix models.
However, not only the arguments here are much more involved,
but we also improve the results in \cite{BFG}. Indeed,
the estimates on the approximate transport map obtained in \cite{BFG} 
 allowed one to obtain universality results only with bounded test functions, and could not be used to show averaged energy universality even in the single-matrix setting.
Here, we are able to show stronger estimates that allow us to deal also with functions 
that grow polynomially in $N$ (see Equation \eqref{aptr}),
and we exploit this to prove averaged energy universality in multi-matrix models (see Corollary \ref{thm:energy}).


A second key (and highly nontrivial) step in our proof consists in showing a large $N$-expansion for integrals over the unitary and orthogonal group (see
Section \ref{matrixintegrals}).  Such integrals arise when one seeks for the joint law of the eigenvalues by simply performing a change of variables and integrating over the eigenvectors. The  expansion of such integrals  was only know up to  the first order \cite{CGM} in the orthogonal case,  and was derived for linear statistics in the case $\beta=2$ in \cite{GN}.  However, to be able to study the law of the eigenvalues of polynomials in several matrices we need to treat quadratic statistics. Moreover, we need to prove that the expansions are smooth functions of the empirical measures of the matrices.  Indeed, such an expansion allows us to express the joint law of the eigenvalues of our matrix models
as the distribution of  mean field interaction models (more precisely, as the distribution of $d$ $\beta$-ensembles interacting via a mean field smooth interaction), and from this representation 
we are able to apply to this setting the approximate transport argument mentioned above, 
and prove our universality results.
\smallskip

In the next section we describe in detail our results.

\section{Statement of the results}
\label{sect:statement}

We are interested in the joint law of the eigenvalues under $\mathbb P^{N,V}_\beta$.
 We shall in fact consider a slightly more general model, where the interaction potential may
 not be linear in the trace, but rather some tensor power of the trace. This is necessary to deal with
 the law of a polynomial in several matrices.
Hence, we consider  the probability measure  {
 $$d {\mathbb P}_\beta^{N,V}(X_1,\ldots,X_d):=\frac{1}{Z_\beta^{N,V}} e^{ N^{2-r} \tr^{\otimes r}  V(X_1,\ldots,X_d, B_1,\ldots, B_m)}\prod_{k=1}^d dR_{\beta,M }^{N,W_k}
 (X_k)$$
 with
 $$dR_{\beta,M}^{N,W}(X):= \frac{1}{Z^{N,W}_{\beta,M}} e^{-N \,\tr(W(X))} \one_{\|X\|_\infty\le M} dX\,,$$
 where $\one_E$ denotes the indicator function of a set $E$, 
 and $Z_\beta^{N,V},Z^{N,W}_{\beta,M}$ are normalizing constants. 
Here:
\begin{itemize}
\item[-] $\beta=2$ (resp. $\beta=1$) corresponds to integration over the Hermitian (resp. symmetric) set $\mathcal H^N_\beta$ of $N\times N$ matrices
with complex (resp. real) entries. In particular $dX=\prod_{1\le j \leq \ell\le N}dX_{\ell j}$ if $\beta=1$, whereas $dX=\prod_{1\le j \leq \ell\le N}d\Re(X_{\ell j})\prod_{1\le j<\ell\le N} d\Im (X_{\ell j})$ if $\beta=2$.
\item[-] $\Tr$ denotes the trace over $N\times N$ matrices, that is, $\Tr A=\sum_jA_{jj}$.
\item[-] $W_k:\mathbb R\to \mathbb R$ are uniformly convex functions, that is
$$
W''_k(x)\ge c_0>0\qquad\forall \,x \in\mathbb R,
$$
and given a function $W:\R \to \R$ and a $N\times N$ Hermitian matrix $X$, we define $W(X)$ as
$$
W(X):=U W(D) U^*,
$$
where $U$ is a unitary matrix which diagonalize $X$ as $X=UDU^*$, and 
$W(D)$ is the diagonal matrix  with entries $\bigl(W(D_{11}),\ldots, W(D_{NN})\bigr)$.
\item[-]
$B_1,\ldots, B_m$ are Hermitian (resp. symmetric) matrices if $\beta =2$ (resp. $\beta=1$).
\item[-] $\mathbb C\langle x_1,\ldots,x_d, b_1,\ldots, b_m\rangle^{\otimes r}$ denote the space of $r$-th tensor product of polynomials in $d$ non-commutative
variables 
with complex (resp. real) coefficients when $\beta=2$ (resp. $\beta=1$). 
For $p\in \mathbb C\langle x_1,\ldots,x_d, b_1,\cdots, b_m\rangle^{\otimes r}$ we denote by $$p=\sum \langle p,q_1\otimes q_2 \cdots\otimes q_r \rangle \,q_1\otimes q_2 \cdots\otimes q_r$$ its decomposition on the monomial basis, and
 let $p^*$ denote its adjoint given by
$$p^*:=\sum \overline{\langle p,q_1\otimes q_2 \cdots\otimes q_r \rangle}\, q_1^*\otimes q_2^* \cdots\otimes q_r^*,$$
where $*$ denotes the involution given by
$$(Y_{i_1}\cdots Y_{i_\ell})^*=Y_{i_\ell}\cdots Y_{i_1}\qquad \forall\, i_1,\ldots,i_\ell\in\{1,\ldots,d+m\},$$
where $\{Y_i=X_i\}_{1\le i\le d}$ and $\{Y_{j+d}=B_j\}_{1\le j\le m}$.
We take $V$ to belong to the closure  of  $\mathbb C\langle x_1,\ldots,x_d, b_1,\ldots, b_m\rangle^{\otimes r}$ for the norm given, for $\xi>1$ and $\zeta\ge 1$,
by
\begin{equation}\label{tyu}
\|p\|_{\xi,\zeta}:=\sum |\langle p,q_1\otimes q_2 \cdots\otimes q_r \rangle| \xi^{\sum_{i=1}^r \deg_X(q_i)}\zeta^{\sum_{i=1}^r\deg_{B}(q_i)}\end{equation}
where $\deg_X(q)$ (resp. $\deg_{B}(q)$) denotes the number of letters $\{X_i\}_{1\le i\le d}$ (resp. $\{B_i\}_{1\le i\le m}$)  contained in $q$. If $p$ only depends on the $X_i$ (resp. the $B_i$), its norm does not depend on $\zeta$ (resp. $\xi$)
and we simply denote it $\|p\|_\xi$ (resp. $\|p\|_\zeta$). 
We also assume that $V$ is self-adjoint, that is $V(X_1,\ldots,X_d, B_1,\ldots, B_m)^*=V(X_1,\ldots,X_d, B_1,\ldots, B_m)$.
\item[-] We use $\|\cdot\|_\infty $ to denote the spectral radius norm. 
\end{itemize}
Performing the change of variables $X_k\mapsto U_k D(\lambda^k) U_k^*$,  with $U_k$ unitary and $D(\lambda^k)$ the diagonal matrix with entries $\lambda^k:=(\lambda_1^k,\ldots\lambda_N^k)$, we find that the joint law of the eigenvalues is given by
\begin{equation}\label{poo}
dP^{N,V}_\beta(\lambda^1,\ldots,\lambda^d)=\frac{1}{\tilde Z^{N,V}_{\beta}} I^{N,V}_\beta(\lambda^1,\ldots, \lambda^d)  \prod_{k=1}^ddR^{N,W_k}_{\beta,M}(\lambda^k)
\end{equation}
where $$I^{N,V}_\beta(\lambda^1,\ldots, \lambda^d) :=\int e^{N^{2-r} \tr^{\otimes r} V\left(U_1D(\lambda^1)U_1^*,\ldots, U_d D(\lambda^d) U_d^*,B_1,\ldots, B_m\right)} dU_1\ldots dU_d,$$
$dU$ being the Haar measure on the unitary group when $\beta=2$ (resp. the orthogonal group when $\beta=1$), $\tilde Z^{N,V}_\beta>0$ is a normalization constant, and $R^{N,W}_{\beta,M}$ is the probability measure on $\mathbb R^N$ given by 
\begin{equation}
\label{eq:RNW}
dR^{N,W}_{\beta,M}(\lambda):=\frac{1}{Z^{N,W}_{\beta,M}}\prod_{i<j}|\lambda_i-\lambda_j|^\beta e^{-N\sum_{i=1}^N W(\lambda_i)}\prod_{i=1}^N 1_{|\lambda_i|\le M} d\lambda_i\,,\qquad \lambda=(\lambda_1,\ldots,\lambda_N).
\end{equation}
As we shall prove in Section \ref{eqmeas}, if $W_k$ are uniformly convex and $V$ is sufficiently small, 
for all $k\in \{1,\ldots,d\}$ the empirical measure $L_k^N$ of the eigenvalues of $X_k$
converges to a compactly supported probability measure $\mu_k^V$. In particular, if the cut-off $M$ is chosen sufficiently large
so that $[-M,M]\supset \supset {\rm supp}(\mu_k^0)$, for $V$ sufficiently small 
$[-M,M]\supset \supset {\rm supp}(\mu_k^V)$ and the limiting measures $\mu_k^V$ will be independent of $M$.
Hence, we shall assume that $M$ is a universally large constant (i.e., the largeness depends only on the potentials $W_k$).
More precisely, throughout the whole paper we will suppose that the following holds: 
\begin{hypo}\label{hypo}Assume that:
\begin{itemize}
\item $W_k:\mathbb R\ra\mathbb R$ is uniformly convex for any $k\in\{1,\ldots, d\}$, that is,
$W_k''(x)\ge c_0>0$ for all $x\in\mathbb R$. Moreover, $W_k\in C^{\sigma}(\mathbb R)$
for some $\sigma \geq 36.$
\item $M> 1$ is a large universal constant.
\item $V$ is self-adjoint and  $\|V\|_{M\xi,\zeta}<\infty$ for some $\xi$ large enough
(the largeness being universal, see Lemma \ref{lem:smooth}) and $\zeta\ge 1$.
\item The spectral radius of the Hermitian matrices $B_1,\ldots,B_m$ is bounded by $1$.
\end{itemize}
\end{hypo}
\begin{rem}\label{rmk:Wk}
{\rm 
The convexity assumption on the potentials $W_k$ could be relaxed. Indeed, the main reasons for this assumption are:\\
- To ensure that the equilibrium measures, obtained as limits of the empirical measure of the eigenvalues,  enjoy the properties described in Section \ref{eqmeas}.\\
- To guarantee that the operator $\XXi_t$ appearing in Proposition \ref{prop:Wk} is invertible.\\
- To prove the concentration inequalities in Section \ref{sect:rest}.\\
- To have rigidity estimates on the eigenvalues, needed in the universality proofs in Section \ref{sect:extensions}.\\
As shown in the papers \cite{BEYbulk,BGK,BFG}, the properties above hold under weaker assumptions on the $W_k$'s.
However, because the proofs of our results are already very delicate, we decided to introduce this convexity assumptions in order to avoid additional technicality that 
would obscure the main ideas in the paper.
}
\end{rem}

In order to be able to apply the approximate transport strategy introduced in \cite{BFG},
a key result we will prove is the following large dimensional expansion of $I^{N,V}_\beta$. 
\begin{thm}\label{mainexp} Under Hypothesis \ref{hypo},  there exists $a_0>0$
so that for  $a\in [-a_0,a_0]$ 
\begin{equation}\label{po}I^{N,aV}_{\beta}(\lambda^1,\ldots, \lambda^k) =\biggl(1+O\Bigl(\frac{1}{N}\Bigr)\biggr)e^{ \sum_{l=0}^2 N^{2-l}  F_l^{a}(L^N_1,\cdots, L^{N}_d,\tau^N_B)  },\end{equation}
where  $L^{N}_k$ are the spectral measures
$$L^{N}_k:=\frac{1}{N}\sum_{i=1}^N\delta_{\lambda^k_i}\,,$$
$O(\frac{1}{N})$ depends only on $M$, $\tau^N_B$ denotes the non-commutative distribution
of the $B_i$'s given  by the collection of complex numbers 
\begin{equation}
\label{eq:tauBN}
\tau_B^N(p):=\frac{1}{N}\tr\bigl(p(B_1,\ldots, B_m)\bigr),\qquad p\in\mathbb C\langle b_1,\ldots, b_m\rangle\,,
\end{equation}
and $\{F_l^{a}(\mu_1,\ldots,\mu_d,\tau)\}_{0\le l\le 2}$ are smooth functions of $(\mu_1,\ldots,\mu_d,\tau)$  for the weak topology generated  on 
the space of probability measures $\mathcal P([-M,+M])$ by 
$\|\mu\|_{\zeta M}:=\max_{k\ge 1}(M\zeta)^{-k}|\mu(x^k)|$
and the norm $\sup_{\|p \|_{\zeta}\leq 1}|\tau(p)|$  on linear forms $\tau$ on $\mathbb C\langle b_1,\ldots,b_m\rangle$.\end{thm}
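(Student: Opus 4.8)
The plan is to establish the asymptotic expansion of the spherical/Itzykson--Zuber-type integral $I^{N,aV}_\beta$ by a perturbative fixed-point argument around the Gaussian ($a=0$) case, treating the exponent $N^{2-r}\tr^{\otimes r}V$ as a small analytic perturbation. First I would set up the notation: write $T_V(U_1,\dots,U_d):=\tr^{\otimes r}V(U_1D(\lambda^1)U_1^*,\dots,U_dD(\lambda^d)U_d^*,B_1,\dots,B_m)$, so that $I^{N,aV}_\beta=\int e^{aN^{2-r}T_V}\,dU_1\cdots dU_d$, and expand the exponential in powers of $a$. The coefficient of $a^n$ is $\frac{N^{n(2-r)}}{n!}\int T_V^n\,\prod dU_k$, which is a polynomial in entries of the $U_k$ integrated against Haar measure; such moments are governed by Weingarten calculus. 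The key point, carried out in detail in Section \ref{matrixintegrals}, is that each such Haar moment admits a $1/N$-expansion whose coefficients are polynomials in the $\frac1N\tr(D(\lambda^k)^j)=L^N_k(x^j)$ and in $\tau^N_B(p)$; moreover, the leading contribution of order $N^{2-r}\cdot N^{\#\text{cycles}}$ organizes itself, after summing the series in $a$, into the exponential of a sum $\sum_{l=0}^2 N^{2-l}F_l^a$. Thus $\log I^{N,aV}_\beta$ is, to leading orders, a ``free energy'' of order $N^2$ plus corrections, and the combinatorics of connected diagrams (a cluster/cumulant expansion) is what turns the $\log$ of a sum into a sum over connected contributions.

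Concretely, I would proceed in the following steps. (i) \emph{Concentration/a priori bounds:} using uniform convexity of the $W_k$ and the cut-off $M$, show that under the tilted measure the empirical measures $L^N_k$ are exponentially concentrated near their limits $\mu_k^{aV}$ (this is essentially available from \cite{GMS1,GMS2} and Section \ref{eqmeas}), which localizes all estimates to a neighborhood where the series converge. (ii) \emph{Analyticity in $a$:} for $\|V\|_{M\xi,\zeta}<\infty$ with $\xi$ large, the bound $|T_V|\le C N\,\|V\|_{M\xi,\zeta}$ (each $\|U_kD(\lambda^k)U_k^*\|_\infty\le M$, each $\|B_i\|_\infty\le1$, and the $\xi,\zeta$ weights dominate the combinatorial growth) shows $a\mapsto I^{N,aV}_\beta$ is real-analytic near $0$ with radius $\ge a_0$ uniform in $N$, after rescaling by the expected $e^{N^2 F_0}$ factor. (iii) \emph{Schwinger--Dyson / master loop equations:} differentiate $I^{N,aV}_\beta$ in $a$ and integrate by parts on the unitary (orthogonal) group to obtain a closed hierarchy for the moments $\frac1N\mathbb E[\tr(U_kD(\lambda^k)U_k^*\cdots)]$; the uniform convexity guarantees invertibility of the relevant linearized operator (the operator $\XXi_t$ in Proposition \ref{prop:Wk}), so the hierarchy can be solved order by order in $1/N$, yielding that the solution is a smooth function of $(L^N_1,\dots,L^N_d,\tau^N_B)$ for the stated norms. (iv) \emph{Assembling the expansion:} integrate the resulting ODE in $a$ from $0$ to $aV$; the $a=0$ value is an explicit Gaussian (GUE/GOE) normalization whose log has a known $N^2F_0^0+F_2^0+o(1)$ form (here $F_1^0=0$ for $\beta=2$ and is the genuine order-$N$ term for $\beta=1$), and the integral in $a$ of a smooth functional of the concentrated empirical measures contributes the $a$-dependence of $F_0^a,F_1^a,F_2^a$, with error $O(1/N)$ controlled by the concentration estimate from step (i).

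The main obstacle will be step (iii) together with the smoothness claim: one must show not only that each coefficient in the $1/N$-expansion of $\log I^{N,aV}_\beta$ exists, but that $F_0^a,F_1^a,F_2^a$ depend \emph{smoothly} (not merely continuously) on the empirical measures and on $\tau^N_B$ in the weak norms $\|\mu\|_{\zeta M}$ and $\sup_{\|p\|_\zeta\le1}|\tau(p)|$ --- this requires tracking, uniformly in $N$, the Fr\'echet derivatives of the solution of the loop equations with respect to the input data, and showing the implicit-function/fixed-point argument is differentiable in parameters. The orthogonal case $\beta=1$ is harder than $\beta=2$ because of the genuine order-$N$ term $F_1^a$ and the more involved Weingarten calculus; previously only the first-order term was known \cite{CGM}, and extending to the full three-term expansion for \emph{quadratic} statistics (needed for polynomials in several matrices) is the technical heart, handled in Section \ref{matrixintegrals}. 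The remaining pieces --- analyticity, concentration, invertibility of $\XXi_t$ --- are comparatively routine given Hypothesis \ref{hypo} and the cited results.
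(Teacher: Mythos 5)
Your overall skeleton---Schwinger--Dyson equations plus a priori concentration, then integration in $a$ to recover the free energy---does match the paper's architecture in Section \ref{matrixintegrals}, but several of the specific mechanisms you invoke are the wrong ones, and these are not cosmetic slips: they reflect a misreading of what the statement is about.

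First, Theorem \ref{mainexp} is a \emph{deterministic} statement about the integral $I^{N,aV}_\beta(\lambda^1,\dots,\lambda^d)$ over $U(N)^d$ (or $O(N)^d$) with \emph{fixed} eigenvalue configurations $\lambda^k$. The potentials $W_k$ and the cutoff do not appear anywhere in $I^{N,aV}_\beta$, so uniform convexity of $W_k$ plays no role here, and there is no tilted measure under which the $L^N_k$ concentrate---the $L^N_k$ are simply the empirical measures of the fixed $\lambda^k_i$'s and are arguments, not random objects. The concentration estimates the paper actually uses (Lemma \ref{conc}) are for the correlators $\W_{kN}^V$ under the Haar-type measure $\mathbb Q^{N,V}_\beta$ on the unitary/orthogonal group, and they rest on Gromov's Ricci lower bound and Bakry--Emery, not on convexity of the $W_k$.

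Second, the operator whose invertibility you cite is the wrong one. The operator $\XXi_t$ of Proposition \ref{prop:Wk} is built from the effective potentials $W_{k,t}^{\rm eff}$ and is used in Section \ref{transport} to construct the transport map; it has nothing to do with the loop equations for the matrix integral. In Section \ref{matrixintegrals} the relevant object is $\Psi^{V_\beta}_\tau=\operatorname{Id}+\overline{\operator{T}}_{\tau}+\overline{\operator{P}}^{V_\beta}_{\tau}+\overline{\operator{Q}}^{V_\beta}_{\tau}$, and its invertibility is obtained from the smallness condition $\delta_{\xi,\zeta}(V)<1/(1+\max\{2,r\})$ (Lemma \ref{conv}), i.e.\ from $|a|$ small and $\xi$ large---not from any convexity hypothesis.

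Third, your step (iv) contains an outright error: you say the $a=0$ value of $\log I^{N,aV}_\beta$ is ``an explicit Gaussian (GUE/GOE) normalization whose log has a known $N^2F_0^0+F_2^0+o(1)$ form.'' In fact $I^{N,0}_\beta=\int\prod_k dU_k=1$ because the Haar measure is normalized, so the free energy at $a=0$ is identically zero. The Gaussian normalization you have in mind belongs to the full law $\mathbb P^{N,V}_\beta$, not to the group integral. The correct mechanism (Section \ref{convfreeE}) is $\log I^{N,aV}_\beta=N^2\int_0^a\big[\frac1{N^r}\Tr^{\otimes r}V\big]\,d\mathbb Q^{N,uV}_\beta\,du$, expanded via the correlator asymptotics.

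Finally, you do not address a genuinely nontrivial step of the paper's proof: showing that the coefficients $G_l$ of the expansion, which a priori depend on the full joint non-commutative distribution $\tau^N_{AB}$ of $(A_1,\dots,A_d,B_1,\dots,B_m)$, in fact depend only on the individual spectral measures $L^N_k$ and on $\tau^N_B$. This requires verifying that the relevant operators ($\overline\Delta$, $\tilde\Delta$, $\overline{\operator{P}}^V$, $\overline{\operator{S}}^V$) preserve, up to cyclic symmetry, the subalgebra $\mathcal P$ of polynomials in $u_ia_iu_i^{-1}$ and $b_j$; without this reduction the statement $F_l^a=F_l^a(L^N_1,\dots,L^N_d,\tau^N_B)$ does not follow. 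Your appeal to Weingarten calculus and a cumulant/cluster expansion is a plausible alternative route in principle, but you do not indicate how it would deliver this factorization property or the Fr\'echet differentiability in the $\|\cdot\|_{\zeta M}$ norms; the paper does not take that route precisely because the loop-equation argument makes both points tractable.
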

This result is proved in Section \ref{matrixintegrals}.  We notice that it was already 
partially proved in \cite{GN} in the unitary case. However, only the case where $r=1$ was considered there, and the expansion was shown to
hold only in terms of the joint non-commutative distribution of the diagonal matrices $\{D(\lambda^k)\}_{1\le k\le d}$ rather than the spectral measure of each of them.

From the latter expansion of the density of $P^{N,aV}_\beta$ we
can deduce the convergence of
the spectral measures by standard large deviation techniques.
\begin{cor} \label{cor:muk}
Assume that, for any polynomial $p\in\mathbb C\langle b_1,\ldots, b_m\rangle$,
\begin{equation}
\label{eq:convtauBN}
\lim_{N\ra\infty}\tau^N_B(p)=\tau_B(p).
\end{equation}
Then, under Hypothesis \ref{hypo}, there exists $a_0>0$
such that, for  $a\in [-a_0,a_0]$, the empirical measures $\{L^{N}_k\}_{1\le k\le d}$ converge almost surely under $ P^{N,aV}_\beta$
towards  probability measures $\{\mu^{aV}_{k}\}_{1\le k\le d}$ on the real line.
\end{cor}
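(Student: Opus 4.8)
The plan is to combine the expansion from Theorem \ref{mainexp} with a classical large deviation principle for the reference measures $R^{N,W_k}_{\beta,M}$. First I would note that, by \eqref{poo} and \eqref{po}, the density of $P^{N,aV}_\beta$ with respect to the product measure $\prod_k dR^{N,W_k}_{\beta,M}(\lambda^k)$ equals
$$
\frac{1}{\tilde Z^{N,aV}_\beta}\biggl(1+O\Bigl(\tfrac1N\Bigr)\biggr)\exp\Bigl(\sum_{l=0}^{2}N^{2-l}F_l^{a}(L^N_1,\ldots,L^N_d,\tau^N_B)\Bigr).
$$
Because $F^a_1,F^a_0$ carry only $N^{1}$ and $N^{0}$ prefactors, they contribute nothing to the leading $N^2$ rate, so the effective tilt at the level of large deviations is the bounded continuous functional $\mu\mapsto F^a_2(\mu_1,\ldots,\mu_d,\tau_B)$ (continuity in the stated weak topologies is exactly the smoothness asserted in Theorem \ref{mainexp}, together with the convergence \eqref{eq:convtauBN} of $\tau^N_B$ to $\tau_B$, which lets us freeze the last argument). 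The error term $1+O(1/N)$ is harmless on the exponential scale.

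Second I would invoke the standard large deviation principle: under $R^{N,W_k}_{\beta,M}$, the empirical measure $L^N_k$ satisfies an LDP at speed $N^2$ with good rate function the usual weighted logarithmic-energy functional $\mathcal E_{W_k}$ (restricted to measures supported in $[-M,M]$), whose unique minimizer is the equilibrium measure $\mu_k^0$; this is classical (e.g.\ Ben Arous--Guionnet, and as recalled in Section \ref{eqmeas}). Since the $d$ coordinates are independent under the product reference measure, the vector $(L^N_1,\ldots,L^N_d)$ satisfies an LDP with rate $\sum_k \mathcal E_{W_k}(\mu_k)$. Applying Varadhan's lemma with the bounded continuous perturbation $F^a_2$, the tilted measure $P^{N,aV}_\beta$ satisfies an LDP with good rate function
$$
\mathcal I^a(\mu_1,\ldots,\mu_d):=\sum_{k=1}^d \mathcal E_{W_k}(\mu_k)-F^a_2(\mu_1,\ldots,\mu_d,\tau_B)-\inf\Bigl(\sum_k\mathcal E_{W_k}-F^a_2(\cdot,\tau_B)\Bigr).
$$
The normalization constant $\tilde Z^{N,aV}_\beta$ is precisely what makes this a probability, so no separate estimate on it is needed beyond Varadhan's lemma. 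Uniqueness of the minimizer then gives the claimed almost sure convergence; and here is where the smallness of $a$ enters: for $a=0$ the minimizer is the product $(\mu_1^0,\ldots,\mu_d^0)$ and the rate function is strictly convex near it (the logarithmic energy is strictly convex, $\mathcal E_{W_k}$ uniformly so thanks to convexity of $W_k$), so a perturbative argument—$F^a_2$ and its first/second variations being $O(a)$ by the norm bound $\|V\|_{M\xi,\zeta}<\infty$—shows that for $|a|\le a_0$ the functional $\sum_k\mathcal E_{W_k}-F^a_2(\cdot,\tau_B)$ still has a unique minimizer, which we name $(\mu_1^{aV},\ldots,\mu_d^{aV})$. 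Almost sure (rather than in-probability) convergence follows from the $N^2$ speed via Borel--Cantelli.

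The main obstacle I anticipate is not any single estimate but the bookkeeping needed to make Varadhan's lemma rigorous despite the $O(1/N)$ multiplicative error and the subleading terms $N F^a_1+F^a_0$: one must check these are uniformly controlled (using that $F^a_l$ are bounded on $\mathcal P([-M,M])^d$, which follows from their continuity and the compactness of that space in the weak topology, again from Theorem \ref{mainexp}), so that they do not spoil the exponential tightness or the upper/lower bounds. A secondary technical point is verifying the exponential tightness of $(L^N_1,\ldots,L^N_d)$ under the tilted measure—but since everything is supported in the fixed compact set of measures on $[-M,M]$, exponential tightness is automatic. Thus the proof reduces to: (i) quote the LDP for $R^{N,W_k}_{\beta,M}$; (ii) apply Varadhan with $F^a_2$; (iii) prove uniqueness of the minimizer for small $a$ by perturbing the $a=0$ case; (iv) upgrade to a.s.\ convergence by Borel--Cantelli.
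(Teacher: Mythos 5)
Your overall strategy matches the paper's: Lemma \ref{ldplem} establishes a large deviation principle at speed $N^2$ for $(L^N_1,\ldots,L^N_d)$ under $P^{N,aV}_\beta$ (classical LDP for the product reference measure plus Varadhan's lemma applied to the continuous tilt at the $N^2$ scale), and Lemma \ref{lem:support} supplies uniqueness of the minimizer for small $a$, yielding the almost sure convergence. Boundedness of the tilt from continuity on the compact $\mathcal P([-M,M])^d$, automatic exponential tightness, and the Borel--Cantelli upgrade are all correct.

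However, you have misread the indexing in Theorem \ref{mainexp}. The exponent there is $\sum_{l=0}^{2} N^{2-l} F_l^a$, so $F_l^a$ carries the prefactor $N^{2-l}$: it is $F_0^a$ that comes with $N^2$ and survives at the large-deviation scale, while $F_2^a$ has prefactor $N^0$ and is subleading. Your proposal has this reversed, declaring $F_0^a$ and $F_1^a$ subleading and installing $F_2^a$ as the effective tilt, hence writing the rate function as $\sum_k \mathcal E_{W_k}(\mu_k)-F_2^a(\mu_1,\ldots,\mu_d,\tau_B)$ (up to normalization). The correct rate function, as in Lemma \ref{ldplem}, is built from
\begin{equation*}
J^a(\mu_1,\ldots,\mu_d)=\frac{1}{2}\sum_{k=1}^d \iint\bigl[W_k(x)+W_k(y)-\beta\log|x-y|\bigr]\,d\mu_k(x)\,d\mu_k(y)-F_0^a(\mu_1,\ldots,\mu_d,\tau_B).
\end{equation*}
Substituting $F_2^a$ for $F_0^a$ would produce the wrong equilibrium measures, so as stated the argument identifies an incorrect limit. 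With this single swap corrected (use $F_0^a$, not $F_2^a$, as the tilt, and note only that $NF_1^a+F_2^a$ are subleading so Varadhan is unaffected), your proof becomes the paper's.
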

In the case $r=1$ this result is already  a  consequence 
of \cite{GMS1} and \cite{CGM}. The existence and study of the equilibrium measures is performed in Section
\ref{eqmeas}.

Starting from the representation of the density given in Theorem \ref{mainexp} (see Section \ref{transport}), we are able to prove the following existence results on approximate transport maps:
\begin{thm}
\label{thm:transport}
Under Hypothesis \ref{hypo} with $\zeta>1$, 
suppose additionally that 
\begin{equation}\label{expB}
\tau_B^N(p)=\tau_B^0(p)+\frac{1}{N}\tau_B^1(p)+\frac{1}{N^2}\tau_B^2(p)+O \biggl(\frac{1}{N^3}\biggr)\end{equation}
where the error is uniform on balls for $\|\cdot\|_{\zeta}$.
Then there exists a constant $\alpha>0$ such that, provided $|a|\leq \alpha$,
we can construct a map
$$T^N=\left( (T^{N})_1^1,\ldots,(T^{N})_N^1,\ldots,(T^{N})_1^d,\ldots, (T^{N})_N^d\right):\mathbb R^{dN}\ra\mathbb R^{dN}$$
satisfying the following property:
Let  $\chi:\mathbb R^{dN}\to \mathbb R^+$ be a nonnegative measurable function
such that $\|\chi\|_\infty \leq N^k$ for some $k \geq 0$. Then, for any $\eta>0$, we have
\begin{equation}\label{aptr}\left|\log\biggr(1+\int \chi\circ T^N\, d P^{N,0}_\beta\biggr)-
\log\biggl(1+\int \chi\, d P^{N,aV}_\beta\biggr)\right|\le C_{k,\eta}\,N^{\eta-1}
\end{equation}
for some constant $C_{\eta,k}$ independent of $N$.
Moreover $T^N$ 
 has the form
$$(T^{N})_i^k(\hat\lambda)= T_0^k(\lambda_i^k) +\frac{1}{N} (T_1^{N})_i^k(\hat\lambda)\qquad \forall\,i=1,\ldots,N,\,k=1,\ldots,d,\qquad \hat\lambda:=(\lambda_1^1,\ldots,\lambda_N^d),$$
where $T_0^k:\R\to \R$ and $T_1^{N}:\R^{dN}\to \R^{dN}$ are of class $C^{\sigma-3}$ and satisfy uniform (in $N$) regularity estimates. More precisely,
we have the decomposition  $T^{N}_1=X^{N}_{1,1}+
\frac{1}{N} X_{2,1}^N$ where
\begin{equation}\label{boi1int}
\max_{1\leq k\leq d,\,1\leq i\leq N}\|(X_{1,1}^{N})_i^k\|_{L^4( P_\beta^{N,0})} \leq C\,\log N,\qquad
\max_{1\leq k\leq d,\,1\leq i\leq N} \|(X_{2,1}^{N})_i^k\|_{L^2(P_\beta^{N,0})} \leq C\,(\log N)^2,
\end{equation}for some constant $C>0$ independent of $N$.
In addition, with $ P^{N,0}_\beta$-probability greater than $1- e^{-c(\log N)^2}$,
$$
\max_{i,k}\bigl|(X_{1,1}^N)_i^k\bigr|\leq C\,\log N \,N^{1/(\sigma-14)},
\qquad 
\max_{i,k}\bigl|(X_{2,1}^N)_i^k\bigr|\leq C\,(\log N)^2 \,N^{2/(\sigma-15)},
$$
$$
\max_{1\leq i,i'\leq N}\left|(X_{1,1}^{N})_i^k(\hat \lambda)-(X_{1,1}^{N})_{i'}^k(\hat\lambda)\right|\le C\,\log N\,N^{1/(\sigma-15)} \,|\lambda_i^k-\lambda_{i'}^k|\qquad \forall\,k=1\ldots,d,
$$
$$
\max_{i,i'}\bigl|(X_{2,1}^N)_i^k(\hat\lambda)-(X_{2,1}^N)_{i'}^k(\hat \lambda)\bigr|\le C\,(\log N)^2\,N^{2/(\sigma-17)}|\lambda_i^k-\lambda_{i'}^{k}|\qquad \forall\,k=1,\dots,d,
$$
$$
\max_{1\le i,j\le N}
\left|\partial_{\lambda_j^\ell}(X_{1,1}^N)_i^k\right|(\hat\lambda)\le C\,\log N\,N^{1/(\sigma-15)} \qquad \forall\,k,\ell=1,\dots,d.
$$
\end{thm}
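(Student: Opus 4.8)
The plan is to adapt the interpolation-and-flow strategy of \cite{BFG} to the representation of the density of $P^{N,aV}_\beta$ coming from Theorem \ref{mainexp}. First I would connect $P^{N,0}_\beta$ to $P^{N,aV}_\beta$ by a one-parameter family $(P_t)_{t\in[0,1]}$ obtained by turning on the interaction gradually: using \eqref{po} and \eqref{expB}, one writes, up to normalization,
$$\frac{dP_t}{dP^{N,0}_\beta}(\hat\lambda)\ \propto\ \exp\Big(t\sum_{l=0}^2 N^{2-l}\,F_l^{a}(L^N_1,\ldots,L^N_d,\tau^N_B)+t\,r_N(\hat\lambda)\Big),$$
where $r_N=O(1)$ collects the $\log(1+O(1/N))$ correction together with the $O(1/N^3)$ error in \eqref{expB} (both harmless). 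If $T_t$ denotes the flow of a time-dependent vector field $Y_t=((Y_t)_i^k)$ on $\mathbb R^{dN}$, then $(T_t)_\#P^{N,0}_\beta=P_t$ is equivalent to the continuity equation $\partial_t\log(dP_t)+\nabla\!\cdot\!Y_t+Y_t\!\cdot\!\nabla\log(dP_t)=\text{const}$; rather than solving it exactly, I would solve it up to a small remainder, which will turn out to be enough to deduce \eqref{aptr}.

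The core step is to make the ansatz $(Y_t)_i^k(\hat\lambda)=y^{0,k}_t(\lambda_i^k)+\frac1N y^{1,k}_{t}(\hat\lambda)$ and expand the continuity equation in powers of $N$, replacing at each order the empirical measures by the equilibrium measures of Section \ref{eqmeas} up to controlled fluctuations. The leading ($N^2$ and $N$) orders produce, for each $k$ separately — the interaction decouples to leading order since $F_l^a$ depends only on the empirical measures, which concentrate — a scalar ``master equation'' for $y^{0,k}_t$ of the type encountered in the single-matrix setting; its solvability amounts to inverting the operator $\XXi_t$ of Proposition \ref{prop:Wk}, which is where the uniform convexity of the $W_k$ enters, and it identifies $T^k_0$ as (the time-one flow giving) the monotone transport map from $\mu^0_k$ to $\mu^{aV}_k$ on the real line. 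The next ($N^0$) order yields a coupled linear system for $(y^{1,k}_t)_{1\le k\le d}$, again solved by inverting $\XXi_t$ componentwise and treating the small coupling perturbatively (here $a$ small is used), relying on the smoothness of the $F_l^a$ asserted in Theorem \ref{mainexp}. What remains after these two steps is an explicit defect $\mathcal R_t(\hat\lambda)$, and the whole point is to control it in a strong norm.

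The bulk of the work is then quantitative. Using the square-root behavior of the equilibrium densities at the edge (Lemma \ref{lem:support}), rigidity of the eigenvalues under the convex $\beta$-ensemble $P^{N,0}_\beta$, and the concentration inequalities of Section \ref{sect:rest}, I would propagate regularity through the construction — further splitting $y^1$ as in $T^N_1=X^{N}_{1,1}+\frac1N X^N_{2,1}$ — to obtain the $L^4$/$L^2$ estimates \eqref{boi1int} together with the high-probability pointwise, Lipschitz and derivative bounds, the losses $N^{1/(\sigma-\cdot)}$ reflecting how many times $W_k$, hence the transport coefficients, can be differentiated; in parallel one proves $\|\mathcal R_t\|\le C_\eta N^{\eta-1}$ both in $L^2(P_t)$ and off an event of probability $\le e^{-c(\log N)^2}$. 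Finally \eqref{aptr} follows by integrating the flow: setting $g_t=\log(1+\int\chi\circ T_t\,dP^{N,0}_\beta)$ and differentiating in $t$, the defect in the continuity equation contributes terms of the form $\int(\chi\circ T_t)\,\mathcal R_t\,dP^{N,0}_\beta$, and a Gronwall argument driven by $\|\mathcal R_t\|$ closes the estimate; the $\log(1+\cdot)$ normalization together with the strength of the error bound are exactly what allow the argument to absorb test functions as large as $\|\chi\|_\infty\le N^k$, improving on \cite{BFG}. I expect the main obstacles to be (i) inverting $\XXi_t$ with estimates uniform in $t\in[0,1]$ and stable under the multi-matrix coupling, and (ii) pushing the remainder bound to hold with the correct powers of $\log N$ and with exponentially small exceptional probability, since the weaker $L^1$-type control of \cite{BFG} does not suffice for polynomially growing $\chi$.
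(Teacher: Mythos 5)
Your high-level strategy --- interpolating between $P^{N,0}_\beta$ and $P^{N,aV}_\beta$ using the representation from Theorem \ref{mainexp}, constructing an approximate transport as the time-one flow of a vector field chosen so that the defect in the continuity equation is small, inverting the master operator $\XXi_t$ of Proposition \ref{prop:Wk}, and closing via a Gronwall estimate on a $\log(1+\cdot)$-transformed integral --- is indeed the one the paper follows in Section \ref{transport}. However, as written your sketch has two genuine gaps.

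First, your ansatz $(\YY_t)_i^k=y^{0,k}_t(\lambda_i^k)+\frac1N y^{1,k}_t(\hat\lambda)$ leaves the $1/N$-correction a generic function of all $dN$ coordinates, and you then propose to solve the $N^0$-order balance ``by inverting $\XXi_t$ componentwise.'' But $\XXi_t$ acts on $d$-tuples of functions of one real variable, not on functions on $\R^{dN}$, so for a generic $y^1(\hat\lambda)$ this step is not even well posed. The ingredient you are missing is the explicit mean-field structure of the paper's ansatz \eqref{eq:psi}: the subleading term is $\frac1N\yy^1_{k,t}(\lambda_i^k)+\frac1N\sum_\ell\zzeta_{k\ell,t}(\lambda_i^k,M^N_\ell)$, i.e.\ a single-site piece plus a piece \emph{linear in the fluctuation measures} $M^N_\ell$. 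That linear-in-$M^N$ term is precisely what is needed to cancel the quadratic-in-$M^N$ contribution $\sum_{k\ell}\iint f_{k\ell,0}\,dM^N_k\,dM^N_\ell$ arising from Taylor-expanding $N^2F_0^a(L^N_\bullet)$ around the equilibrium measures; once it is there, the $N^0$-order balance reduces to the family of genuinely scalar $\XXi_t$-inversion problems \eqref{eq:main Xi} (one for $\yy^0$, a $y$-parametrized one for $\zz_{k\ell,t}(\cdot,y)$, one for $\yy^1$), each of which Proposition \ref{prop:Wk} can handle. Without identifying this structure, the defect $\mathcal R_t$ you are left with is not $O((\log N)^3/N)$ and the rest of the argument does not go through.

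Second, the Gronwall quantity $g_t=\log\bigl(1+\int\chi\circ T_t\,dP^{N,0}_\beta\bigr)$ is not the right one: its endpoint at $t=0$ is $\log\bigl(1+\int\chi\,dP^{N,0}_\beta\bigr)$, which is \emph{not} one of the two quantities compared in \eqref{aptr}, and its time derivative is $\int\nabla\chi(T_t)\cdot\YY_t(T_t)\,dP^{N,0}_\beta$, which neither isolates the defect $\mathcal R_t$ nor makes sense for a merely measurable $\chi$. The paper's Lemma \ref{lem:key} works instead with $Z(t)=\int\chi_t\,\rho_t\,d\mathcal L$, where $\rho_t$ is the density of $Q^{N,aV}_t$ and $\chi_t:=\chi\circ X^N_1\circ(X^N_t)^{-1}$ solves the (backward) transport equation $\partial_t\chi_t+\YY^N_t\cdot\nabla\chi_t=0$ with terminal condition $\chi_1=\chi$. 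Then $Z(1)=\int\chi\,dQ^{N,aV}_1$ and $Z(0)=\int\chi\circ T^N\,dQ^{N,aV}_0$ are exactly the two sides of \eqref{aptr}, the derivative collapses to $\dot Z=\int\chi_t\,\mathcal R_t\,\rho_t\,d\mathcal L$, and a H\"older estimate with exponents tuned to $k$ and $\eta$ combined with $\|\mathcal R_t\|_{L^q(Q^{N,aV}_t)}\le C_q(\log N)^3/N$ yields $|\dot Z|\le C\,N^{\eta-1}(1+Z)$. It is precisely this arrangement --- not the one in your sketch --- that lets the logarithm absorb test functions with $\|\chi\|_\infty\le N^k$ and thereby improve on the $L^1$-type estimate of \cite{BFG}.
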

As explained in Section \ref{sect:extensions},
the existence of an approximate transport map satisfying regularity properties
as above allows us to show universality properties for the local fluctuations of the spectrum.
For instance, we can prove the following result:
\begin{cor}
\label{thm:univ}
Under the hypotheses of Theorem \ref{thm:transport} the following holds:
Let $T_0^k$ be as in Theorem \ref{thm:transport} and denote by 
 $\tilde P^{N,aV}_{\beta}$  the distribution of
the increasingly ordered  eigenvalues $(\{\lambda_i^k\}_{1\leq i \leq N}, 1\le k\le d)$ under  the law $P^{N,aV}_\beta$.
Also, let $\mu_k^0,\mu_k^{aV}$ be as in Corollary \ref{cor:muk}, and $\alpha$ as in Theorem \ref{thm:transport}.
Then, for any $\theta\in (0,1/6)$ there exists a constant $\hat C>0$, independent of $N$, such that the following two facts hold true provided $|a|\leq \alpha$:
\begin{itemize}
\item[(1)] Let $\{i_k\}_{1\leq k \leq d} \subset [\varepsilon N,(1-\varepsilon)N]$ for some $\varepsilon >0$. Then, choosing $\gamma_{i_k/N}^k \in \R$ such that
$\mu^0_{k}((-\infty,\gamma_{i_k/N}^k))=i_k/N$,
if $m \leq N^{2/3-\theta}$ then,
 for any bounded Lipschitz function $f:\mathbb R^{dm}\to\mathbb R$,
 \begin{multline*}
\bigg|\int f\Bigl(\bigl(N(\lambda_{i_{k}+1}^k-\lambda_{i_{k}}^k),\ldots,N(\lambda_{i_k+m}^k-\lambda_{i_k}^k)\bigr)_{1\le k\le d}\Bigr)\, d\tilde P^{N,aV}_{\beta}\\
\qquad -\int f\Bigl(\bigl((T_{0}^k)'(\gamma_{i_k/N}^k)\,N(\lambda_{i_k+1}^k-\lambda_{i_k}^k),\ldots,(T_{0}^k)'(\gamma_{i_k/N}^k)\,N(\lambda_{i_k+m}^k-\lambda_{i_k}^k)\bigr)_{1\le k\le d})\Bigr) \,d\tilde P^{N,0}_{\beta}\bigg|\\
 \le \hat C \,N^{\theta-1} \,
\|f\|_\infty + \hat C\,m^{3/2}\,N^{\theta-1}\,
\|\nabla f\|_{\infty}.
\end{multline*}
\item[(2)]
Let $a_k^0$ (resp. $a_{k}^{aV}$) denote the smallest point in the
support of $\mu_{k}^{0}$ (resp.  $\mu_{k}^{aV}$), so that ${\rm supp}(\mu_{k}^{0})\subset [a_{k}^{0},\infty)$
(resp. ${\rm supp}(\mu_{k}^{aV})\subset [a_{k}^{aV},\infty)$).
If $m \leq N^{4/7}$ then, for any bounded Lipschitz function $f:\mathbb R^{dm}\to\mathbb R$,
\begin{multline*}
\bigg|
\int f\Bigl(\bigl(N^{2/3}(\lambda_1^k-a_{k}^{aV}), \ldots,N^{2/3}(\lambda_m^k-a_{k}^{aV})\bigr)_{1\le k\le d}\Bigr) \,d\tilde P^{N,aV}_{\beta}\\
-\int f\Bigl(\bigl((T_{0}^k)'(a_{k}^{0})\,N^{2/3}(\lambda_1^k-a_{k}^{0}), \ldots,(T_{0}^k)'(a_{k}^{0})\,N^{2/3}(\lambda_m^k-a_{k}^{0})\bigr)_{1\le k\le d}\Bigr)\, d\tilde P^{N,0}_{\beta}\bigg|\\
\le \hat C \,N^{\theta-1}\,
\|f\|_\infty+\hat C\,\bigl(m^{1/2}\,N^{\theta-1/3}+ m^{7/6}\,N^{-2/3}\bigr)\,
\|\nabla f\|_{\infty} .
\end{multline*}
The same bound holds around the largest point in the support of $\mu_{k}^{aV}$.
\end{itemize}
\end{cor}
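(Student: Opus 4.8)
The plan is to transport the whole problem to the unperturbed ensemble by means of the map $T^N$ of Theorem~\ref{thm:transport}, and then to compare, on the $P^{N,0}_\beta$-side, the pushed‑forward local statistic with the explicitly rescaled one appearing in the statement; this is the content of Section~\ref{sect:extensions}. First I would fix a bounded Lipschitz $f$ and, for part~(1), introduce the permutation‑symmetric observable $\Phi_f(\hat\lambda):=f\bigl((N(\lambda^k_{i_k+1}-\lambda^k_{i_k}),\ldots,N(\lambda^k_{i_k+m}-\lambda^k_{i_k}))_{1\le k\le d}\bigr)$, understood to be evaluated on the increasingly reordered eigenvalues of each block, so that $\int\Phi_f\,dP^{N,V}_\beta=\int\Phi_f\,d\tilde P^{N,V}_\beta$ for every $V$, the right-hand side being the quantity of interest. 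Applying \eqref{aptr} to $\chi:=\|f\|_\infty+\Phi_f\ge 0$ (bounded, hence $\le N^k$) and using that both integrals of $\chi$ lie in a fixed interval — so the $\log(1+\cdot)$'s are comparable to the integrals up to a factor $O(\|f\|_\infty)$ — together with the fact that $T^N$ may be taken permutation‑equivariant within each block (so $\Phi_f\circ T^N$ is again symmetric and $\int\chi\circ T^N\,dP^{N,0}_\beta=\int\chi\circ T^N\,d\tilde P^{N,0}_\beta$), I would obtain
$$\Bigl|\int \Phi_f\circ T^N\,d\tilde P^{N,0}_\beta-\int \Phi_f\,d\tilde P^{N,aV}_\beta\Bigr|\le C_\theta\,\|f\|_\infty\,N^{\theta-1},$$
which already accounts for the first error term; it then remains to compare $\int\Phi_f\circ T^N\,d\tilde P^{N,0}_\beta$ with the rescaled statistic.

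Next I would restore the ordering. On the event $\Omega$ of $P^{N,0}_\beta$-probability $\ge 1-e^{-c(\log N)^2}$ carrying the a.s.\ estimates of Theorem~\ref{thm:transport}, for sorted $\hat\lambda$ one has
$$(T^N)^k_{i+1}(\hat\lambda)-(T^N)^k_i(\hat\lambda)=T_0^k(\lambda^k_{i+1})-T_0^k(\lambda^k_i)+\frac1N\bigl[(T_1^N)^k_{i+1}-(T_1^N)^k_i\bigr]\ \ge\ \Bigl(c_0-\frac{C\log N\,N^{1/(\sigma-14)}}{N}\Bigr)(\lambda^k_{i+1}-\lambda^k_i)\ \ge\ 0$$
for $N$ large, since $(T_0^k)'$ is bounded below by a positive constant (equilibrium-measure analysis of Section~\ref{eqmeas}, together with Lemma~\ref{lem:support} near the edges, where the square-root vanishing of the densities makes $T_0^k$ essentially affine) and by the a.s.\ Lipschitz bound on $T_1^N$. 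Thus $T^N$ is order-preserving block by block on $\Omega$, so there $\Phi_f\circ T^N(\hat\lambda)=f\bigl((N[(T^N\hat\lambda)^k_{i_k+j}-(T^N\hat\lambda)^k_{i_k}])_{j,k}\bigr)$ on sorted configurations; discarding $\Omega^c$ at the cost of $\|f\|_\infty e^{-c(\log N)^2}$, I would be reduced to bounding, $\tilde P^{N,0}_\beta$-a.s.\ on $\Omega$, the difference between $f$ at the pushed-forward gaps and $f$ at the gaps rescaled by $(T_0^k)'(\gamma^k_{i_k/N})$.

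Finally, the comparison. Writing $(T^N)^k_i(\hat\lambda)=T_0^k(\lambda^k_i)+\tfrac1N(T_1^N)^k_i(\hat\lambda)$,
$$N\bigl[(T^N\hat\lambda)^k_{i_k+j}-(T^N\hat\lambda)^k_{i_k}\bigr]=N\!\int_{\lambda^k_{i_k}}^{\lambda^k_{i_k+j}}(T_0^k)'(t)\,dt+\bigl[(T_1^N)^k_{i_k+j}(\hat\lambda)-(T_1^N)^k_{i_k}(\hat\lambda)\bigr].$$
On a further high-probability event I would invoke rigidity for the $\beta$-ensemble $\tilde P^{N,0}_\beta$ — available because each $W_k$ is uniformly convex — to confine $\lambda^k_l$ to within $N^{\theta-1}$ of its classical location $\gamma^k_{l/N}$ for $l\in[i_k,i_k+m]$; since $T_0^k\in C^{\sigma-3}$ with $(T_0^k)'$ bounded and Lipschitz, Taylor-expanding $(T_0^k)'$ about $\gamma^k_{i_k/N}$ replaces the first integral by $(T_0^k)'(\gamma^k_{i_k/N})\,N(\lambda^k_{i_k+j}-\lambda^k_{i_k})$ up to a per-coordinate error controlled by the oscillation of $(T_0^k)'$ over this window times $N|\lambda^k_{i_k+j}-\lambda^k_{i_k}|$, while the remainder is controlled by the a.s.\ bound $|(T_1^N)^k_{i_k+j}-(T_1^N)^k_{i_k}|\le C\log N\,N^{1/(\sigma-14)}|\lambda^k_{i_k+j}-\lambda^k_{i_k}|$. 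Using $|\lambda^k_{i_k+j}-\lambda^k_{i_k}|\lesssim(j+N^\theta)/N$ from rigidity, summing the squares of these $dm$ per-coordinate errors, and using that $f$ is Lipschitz for the Euclidean norm, would yield the second error term, hence part~(1). Part~(2) is analogous, with the $N^{2/3}$-scaling at the edge: edge rigidity $|\lambda^k_l-\gamma^k_{l/N}|\lesssim N^{\theta-2/3}l^{-1/3}$ and the square-root behavior of $\mu^0_k,\mu^{aV}_k$ at their edges (Lemma~\ref{lem:support}), which forces $T_0^k(a^0_k)=a^{aV}_k$ and controls $T_0^k$ near $a^0_k$, replace the bulk estimates and lead to the exponents $m^{1/2}N^{\theta-1/3}+m^{7/6}N^{-2/3}$.

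The delicate point — and the expected main obstacle — is this last step: one must interlock three scales, the $\sim 1/N$ typical spacing, the $\sim N^{\theta-1}$ rigidity fluctuation, and the $\sim N^{1/(\sigma-14)}$ loss in the regularity of $T^N$, and accumulate the $m$ per-coordinate errors without overshooting $m^{3/2}N^{\theta-1}$ over the whole range $m\le N^{2/3-\theta}$. What makes this feasible is precisely the structure of $T^N$ from Theorem~\ref{thm:transport}: the leading part is the \emph{deterministic univariate} map $T_0^k$, so the dangerous curvature and rigidity contributions enter only through the small differences $\lambda^k_{i_k+j}-\lambda^k_{i_k}$; and the genuinely multivariate remainder $\tfrac1N T_1^N$, though only $O(\log N/N)$ per entry, contributes to the gap statistic through differences that are Lipschitz in $|\lambda^k_{i_k+j}-\lambda^k_{i_k}|$ with a near-bounded constant. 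It is the simultaneous use of all of these estimates — the transport inequality, the regularity of $T^N$, and rigidity — rather than any single one, that carries the argument.
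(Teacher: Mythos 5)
Your proposal follows the same three-step architecture as the paper: push the statistic through $T^N$ using \eqref{aptr}, restore the ordering (you check order-preservation entrywise; the paper gets it from a Gronwall estimate on the flow, cf.~\eqref{eq:TN}, but the conclusion \eqref{eq:TNR} is the same), and then Taylor-expand the leading order map $T_0^k$ on a rigidity event while controlling the correction $\frac1N T_1^N$. You have also correctly identified the decisive issue in the last step. The gap is that you resolve it the wrong way.

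You control the $\frac1N T_1^N$ contribution to the gaps via the \emph{unconditional} almost-sure Lipschitz bound of Theorem~\ref{thm:transport},
$$\bigl|(X_{1,1}^N)_{i_k+j}^k-(X_{1,1}^N)_{i_k}^k\bigr|\le C\log N\,N^{1/(\sigma-15)}\,|\lambda_{i_k+j}^k-\lambda_{i_k}^k|.$$
Multiplying by $N$ (for the rescaled gap), using $|\lambda_{i_k+j}^k-\lambda_{i_k}^k|\lesssim (m+N^{\theta})/N$ on the rigidity event, and summing over the $dm$ coordinates in Euclidean norm, this term alone contributes $\gtrsim \log N\,N^{1/(\sigma-15)}\,m^{3/2}/N$ to the error. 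To match the asserted $\hat C\,m^{3/2}N^{\theta-1}$ you would need $\log N\, N^{1/(\sigma-15)}\lesssim N^{\theta}$, i.e.\ $\theta>1/(\sigma-15)$. With $\sigma=36$ as required by Hypothesis~\ref{hypo} this excludes all $\theta\le 1/21$, whereas the corollary is claimed for every $\theta\in(0,1/6)$. Thus, with your input estimates, the argument does not close.

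The missing idea in the paper is that on the rigidity set $G_\vartheta$ one can \emph{re-derive} sharper bounds on $X_{1,1}^N$ than the ones obtained in Theorem~\ref{thm:transport}. On $G_\vartheta$ the eigenvalues are within $N^{\vartheta-2/3}\min\{i,N+1-i\}^{1/3}$ of the classical locations of $\mu_k^0$, so for any $1$-Lipschitz $\psi$ one has $|\int\psi\,dM^N_\ell|\le C\,N^{\vartheta}$. Feeding this back into the ODE system of Proposition~\ref{theflow} (in place of Lemma~\ref{lem:ub4}, which was what produced the $N^{1/(\sigma-15)}$ loss) gives \eqref{eq:X1theta}:
$$\max_{i,k}\bigl|(X_{1,1})_i^k(\hat\lambda)\bigr|\le C\,N^{\vartheta},\qquad \bigl|(X_{1,1})_i^k(\hat\lambda)-(X_{1,1})_{i'}^k(\hat\lambda)\bigr|\le C\,N^{\vartheta}\,|\lambda_i^k-\lambda_{i'}^k|\quad\text{on }G_\vartheta.$$
With $\vartheta\le\theta/2$ (bulk) or $\vartheta\le\theta$ (edge) this replaces $\log N\,N^{1/(\sigma-15)}$ by a factor that can be taken arbitrarily small relative to $N^{\theta}$, and the per-coordinate error accumulation then gives exactly the stated rates. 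So while your outline is correct and you correctly flagged the scale-interlocking as the dangerous point, the specific remedy you propose—using the global a.s.\ estimates—does not reach the stated conclusion, and the needed extra step (a rigidity-conditional improvement of the $T_1^N$ estimates) is absent from your argument.
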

Similar results could be derived with functions of both statistics in the bulk and at the edge.
Let us remark that for $a=0$ the eigenvalues of the different matrices are uncorrelated and
$P^{N,0}_\beta$ 
becomes a product: $dP^{N,0}_\beta=
\prod_{k=1}^ddR^{N,W_k}_{\beta,M}$. Universality under the latter $\beta$-models was already proved in \cite{BEY1,BEY2,Shch,BFG}.
Moreover, by the results in \cite{BFG} we can find approximate transport maps $S_k^N:\R^N\to \R^N$ from
the law $P^N_{{\rm GVE},\beta}$ (this is the law of GUE matrices 
when $\beta=2$ and GOE matrices when $\beta=1$) to
$R^{N,W_k}_{\beta,M}$ for any $k=1,\ldots,d$.
Hence $(S^N_1,\ldots,S^N_d):\R^{dN}\to \R^{dN}$ is an approximate transport from $(P^N_{{\rm GVE},\beta})^{\ot d}$
(i.e., the law of $d$ independent GUE matrices 
when $\beta=2$ and GOE matrices when $\beta=1$) to
$P^{N,0}_\beta$, and this allows us to deduce that the local statistics are in the same universality class as GUE  (resp. GOE) matrices. 

More precisely, as already observed in \cite{BFG}, the leading orders in the transport can be restated in terms of the equilibrium densities:
denoting by
\begin{equation}
\label{eq:semicircle}
\rho_{ {\rm sc}}(x):=\frac1{2\pi} \sqrt{(4-x^2)_+}
\end{equation}
the
density of the semicircle 
distribution and by $\rho^{0}_{k}$ the density of $\mu_k^0$, then the leading order term of $S^N_k$ is given by $(S_0^k)^{\otimes N}$, where $S_0^k:\R \to \R$ 
is the monotone transport from $\rho_{ {\rm sc}}\,dx$ to $\rho^{0}_{k}\,dx$ that can be found solving the ODE
\begin{equation}
\label{eq:Sk}
(S_0^k)'(x)=\frac{\rho_{{\rm sc}}}{\rho^{0}_{k}(S_0^k)}(x),\qquad
S_0^k(-2)=a_k^{0}.
\end{equation}
Also, the transport $T_0^k:\R \to \R$ appearing in  Corollary \ref{thm:univ} solves
\begin{equation}
\label{eq:Tk0}
(T_0^k)'(x)=\frac{\rho^{0}_{k}}{\rho^{aV}_{k}(T_0^k)}(x),\qquad
T_0^{k}(a_k^0)=a_k^{aV}.
\end{equation}
Set 
\begin{equation}
\label{eq:ck}
c_{k}^{aV}:=\lim_{x\to -2^+}\frac{\rho_{\rm sc}}{\rho_{k}^{aV}(T_0^k\circ S_0^k)}(x).
\end{equation}
Thanks to these observations, we can easily prove the following result:
\begin{cor}
\label{thm:univ2}
Let $m \in \mathbb N$.
Under the hypotheses of Theorem \ref{thm:transport} the following holds:
 Denote by
 $\tilde P^{N,aV}_{\beta}$ (resp. $(\tilde P_{{\rm GVE},\beta}^N)^{\otimes d}$) the distribution of
the increasingly ordered  eigenvalues $(\{\lambda_i^k\}_{1\leq i \leq N}, 1\le k\le d)$ under  the law $P^{N,aV}_\beta$ (resp. $(P_{{\rm GVE},\beta}^N)^{\otimes d}$). 
Also, let $\alpha$ be as in Theorem \ref{thm:transport}.
Then, for any $\theta\in (0,1/6)$ and $C_0>0$
there exists a constant $\hat C>0$, independent of $N$, such that the following two facts hold true provided $|a|\leq \alpha$:
\begin{itemize}
\item[(1)] Given $\{\sigma_k\}_{1\leq k \leq d} \subset (0,1)$, let $\gamma_{\sigma_k} \in \R$ be such that $\mu_{\rm sc}((-\infty,\gamma_{\sigma_k}))=\sigma_k$,
and $\gamma_{\sigma_k,k}$ such that $\mu_{k}^{aV}((-\infty,\gamma_{\sigma_k,k}))=\sigma_k$.
Then, if $|i_k/N - \sigma_k| \leq C_0/N$ and $m \leq N^{2/3-\theta}$,
 for any bounded Lipschitz function $f:\mathbb R^{dm}\to \mathbb R$ we have
\begin{multline*}
\bigg|\int f\Bigl(\bigl(N(\lambda_{i_k+1}^k-\lambda_{i_k}^k),\ldots,N(\lambda_{i_k+m}^k-\lambda_{i_k}^k)\bigr)_{1\le k\le d} \Bigr)\, d\tilde P^{N,aV}_{\beta}\\
\qquad -\int f\biggl (\Big(\frac{\rho_{\rm sc}(\gamma_{\sigma_k})}{\rho_{k}^{aV}(\gamma_{\sigma_k,k})} N(\lambda_{i_k+1}^k-\lambda_{i_k}^k),\ldots,\frac{\rho_{\rm sc}(\gamma_{\sigma_k})}{\rho_{k}^{aV}(\gamma_{\sigma_k,k})} N(\lambda_{i_k+m}^k-\lambda_{i_k}^k)\Big)_{1\le k\le d}\biggr) \,d (\tilde P_{{\rm GVE},\beta}^N)^{\otimes d}\bigg|\\
 \le  \hat C
  \,N^{\theta-1} \,
\|f\|_\infty + \hat C\,
\,m^{3/2}\,N^{\theta-1}\,
\|\nabla f\|_{\infty}.
 \end{multline*}
\item [(2)]
Let $c_{k}^{aV}$ be as in \eqref{eq:ck}.
If $m \leq N^{4/7}$ then,
for any bounded Lipschitz function $f:\mathbb R^m\to \mathbb R$,
we have
\begin{multline*}
\bigg|
\int f\Bigl(\bigl(N^{2/3}(\lambda_1^k-a_{k}^{aV}), \ldots,N^{2/3}(\lambda_m^k-a_{k}^{aV})\bigr)_{1\le k\le d}\Bigr) \,d\tilde P^{N,aV}_{\beta}\\
-\int f\Bigl(c_{k}^{aV} \,N^{2/3}\bigl(\lambda_1^k+2\bigr), \ldots,c_{k}^{aV}\,N^{2/3}\bigl(\lambda^k_m+2\bigr)\bigr)_{1\le k\le d}\Bigr)\, d (\tilde P_{{\rm GVE},\beta}^N)^{\otimes d}
\bigg|\\
\le   \hat C \,N^{\theta-1}
\|f\|_\infty+\hat C\,\bigl(m^{1/2}\,N^{\theta-1/3}+ m^{7/6}\,N^{-2/3}\bigr)\,
\|\nabla f\|_{\infty} .\end{multline*}
The same bound holds around the largest point in the support of $\mu_{k}^{aV}$.
\end{itemize}
\end{cor}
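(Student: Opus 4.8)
The plan is to chain the approximate transport $T^N$ of Theorem \ref{thm:transport}, which compares $P^{N,aV}_\beta$ with $P^{N,0}_\beta$, with the one-matrix approximate transports $S^N_k$ of \cite{BFG} recalled above: since $P^{N,0}_\beta=\prod_{k=1}^d R^{N,W_k}_{\beta,M}$ decouples when $a=0$, the $S^N_k$ compare $P^{N,0}_\beta$ with $(P^N_{{\rm GVE},\beta})^{\otimes d}$, and one then reads off the rescaling constants from the ODEs \eqref{eq:Sk}--\eqref{eq:Tk0} and from \eqref{eq:ck}. Concretely, the results of \cite{BFG} give, for each $k$, a one-matrix analogue of Corollary \ref{thm:univ} comparing the local statistics of $R^{N,W_k}_{\beta,M}$ with those of $P^N_{{\rm GVE},\beta}$, the relevant rescaling being $(S_0^k)'$ (with $S_0^k$ the monotone transport from $\mu_{\rm sc}$, of density $\rho_{\rm sc}$ in \eqref{eq:semicircle}, onto $\mu^0_k$, solving \eqref{eq:Sk}) evaluated at the appropriate semicircle quantile. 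It therefore suffices to compose this comparison, block by block over $k=1,\dots,d$, with the one between $P^{N,aV}_\beta$ and $P^{N,0}_\beta$ furnished by Corollary \ref{thm:univ}.

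For part (1), I would first invoke Corollary \ref{thm:univ}(1) to replace, up to an error $\hat C N^{\theta-1}\|f\|_\infty+\hat C m^{3/2}N^{\theta-1}\|\nabla f\|_\infty$, the integral of $f$ against $\tilde P^{N,aV}_\beta$ by the integral against $\tilde P^{N,0}_\beta$ of $f$ precomposed with the block-wise linear rescaling by $(T_0^k)'(\gamma^k_{i_k/N})$, where $\gamma^k_{i_k/N}$ is the $i_k/N$-quantile of $\mu^0_k$; note that this rescaled function is again bounded Lipschitz, with the same sup-norm and with Lipschitz constant controlled by $\bigl(\max_k\|(T_0^k)'\|_\infty\bigr)\|\nabla f\|_\infty$. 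Applying next, in each block, the one-matrix version of Corollary \ref{thm:univ}(1) from \cite{BFG} replaces $\tilde P^{N,0}_\beta$ by $(\tilde P^N_{{\rm GVE},\beta})^{\otimes d}$ with an additional rescaling by $(S_0^k)'(\gamma^{\rm sc}_{i_k/N})$, $\gamma^{\rm sc}_{i_k/N}$ the $i_k/N$-quantile of $\mu_{\rm sc}$, at the cost of another error of the same form summed over the $d$ blocks. Since a monotone transport sends the $t$-quantile of its source to the $t$-quantile of its target — so $S_0^k(\gamma^{\rm sc}_{i_k/N})=\gamma^k_{i_k/N}$ and $T_0^k(\gamma^k_{i_k/N})$ is the $i_k/N$-quantile of $\mu^{aV}_k$ — the chain rule together with \eqref{eq:Sk}--\eqref{eq:Tk0} gives
\[
(T_0^k)'(\gamma^k_{i_k/N})\,(S_0^k)'(\gamma^{\rm sc}_{i_k/N})
=\frac{\rho^0_k(\gamma^k_{i_k/N})}{\rho^{aV}_k(T_0^k(\gamma^k_{i_k/N}))}\cdot\frac{\rho_{\rm sc}(\gamma^{\rm sc}_{i_k/N})}{\rho^0_k(S_0^k(\gamma^{\rm sc}_{i_k/N}))}
=\frac{\rho_{\rm sc}(\gamma^{\rm sc}_{i_k/N})}{\rho^{aV}_k(\gamma^{aV,k}_{i_k/N})},
\]
with $\gamma^{aV,k}_{i_k/N}$ the $i_k/N$-quantile of $\mu^{aV}_k$. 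Finally, using $|i_k/N-\sigma_k|\le C_0/N$ together with the $C^1$-regularity and lower bound of the densities in the bulk, one replaces the $i_k/N$-quantiles by the $\sigma_k$-quantiles $\gamma_{\sigma_k}$, $\gamma_{\sigma_k,k}$, exactly as in the proof of Corollary \ref{thm:univ}, and one checks that this is absorbed into the stated error; after absorbing $d$ and $C_0$ into $\hat C$ this is part (1).

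Part (2) is the same argument performed at the lower edge. By \eqref{eq:Sk} we have $S_0^k(-2)=a_k^0$, and by \eqref{eq:Tk0} $T_0^k(a_k^0)=a_k^{aV}$, so $T_0^k\circ S_0^k$ sends $-2$ to $a_k^{aV}$; moreover the chain rule together with \eqref{eq:Sk}--\eqref{eq:Tk0} gives $(T_0^k\circ S_0^k)'(x)=\rho_{\rm sc}(x)/\rho^{aV}_k(T_0^k\circ S_0^k(x))$, whose limit as $x\to-2^+$ is precisely $c_k^{aV}$ by \eqref{eq:ck}. Hence, invoking Corollary \ref{thm:univ}(2) to pass from $\tilde P^{N,aV}_\beta$ (rescaled by $N^{2/3}$ about $a_k^{aV}$) to $\tilde P^{N,0}_\beta$ (rescaled by $(T_0^k)'(a_k^0)N^{2/3}$ about $a_k^0$), and then the one-matrix edge result of \cite{BFG} to pass to $(\tilde P^N_{{\rm GVE},\beta})^{\otimes d}$ (rescaled by $(S_0^k)'(-2)(T_0^k)'(a_k^0)N^{2/3}$ about $-2$), the composite constant $(S_0^k)'(-2)(T_0^k)'(a_k^0)=(T_0^k\circ S_0^k)'(-2)=c_k^{aV}$ reproduces the statement; adding the two edge errors, each of the form $\hat C N^{\theta-1}\|f\|_\infty+\hat C(m^{1/2}N^{\theta-1/3}+m^{7/6}N^{-2/3})\|\nabla f\|_\infty$, gives the claimed bound, and the bound about the upper edge follows by symmetry.

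The one genuinely delicate point — the reason Lemma \ref{lem:support} is needed — is the edge behavior: $\rho_{\rm sc}$, $\rho^0_k$ and $\rho^{aV}_k$ all vanish like a square root at their endpoints, so the one-sided derivatives $(S_0^k)'(-2)$ and $(T_0^k)'(a_k^0)$ are $0/0$-type limits, and one must use this square-root behavior of the equilibrium densities to see that the limits are finite and nonzero and that $T_0^k\circ S_0^k$ is $C^1$ up to $-2$ with derivative $c_k^{aV}$, so that the $N^{2/3}$-rescaled edge statistics are transported with a well-defined finite factor. Everything else is bookkeeping: checking that the two approximate-transport comparisons compose, which is immediate from the additive, triangle-inequality structure of the estimate \eqref{aptr} and of its \cite{BFG} analogue together with the fact that a linearly rescaled bounded Lipschitz function is again bounded Lipschitz with controlled norms, and collecting the error terms over the $d$ blocks.
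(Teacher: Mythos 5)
Your argument is correct and follows exactly the route taken in the paper: chain Corollary~\ref{thm:univ} (comparing $\tilde P^{N,aV}_\beta$ with $\tilde P^{N,0}_\beta$ via $(T_0^k)'$) with the one-matrix result of~\cite{BFG}, upgraded to the improved error estimates of this paper (comparing $\tilde P^{N,0}_\beta$ with $(\tilde P^N_{{\rm GVE},\beta})^{\otimes d}$ via $(S_0^k)'$), use \eqref{eq:Sk}--\eqref{eq:Tk0} and the fact that monotone transports carry quantiles to quantiles to identify the composed factor $(T_0^k\circ S_0^k)'(\gamma_{i_k/N})$ as the density ratio, and finally replace the $i_k/N$-quantiles by the $\sigma_k$-quantiles using $|i_k/N-\sigma_k|\leq C_0/N$ and the regularity of the densities in the bulk. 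Your remark that the edge constant $c_k^{aV}$ is a $0/0$-limit made finite by the square-root behavior guaranteed by Lemma~\ref{lem:support} correctly identifies the one non-trivial analytic point, and the bookkeeping (the rescaled test function remains bounded Lipschitz with controlled norms; the errors from the two comparisons simply add) is as you say.

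One small point worth making explicit in your write-up: the passage from $\tilde P^{N,0}_\beta$ to $(\tilde P^N_{{\rm GVE},\beta})^{\otimes d}$ is not literally a ``block-by-block'' application of a single-matrix result, since the test function couples the $d$ blocks. Rather, because $P^{N,0}_\beta=\prod_k R^{N,W_k}_{\beta,M}$ and the BFG maps $S^N_k$ assemble into the product map $S^N=(S^N_1,\ldots,S^N_d)$, one has a genuine $dN$-dimensional approximate transport from $(P^N_{{\rm GVE},\beta})^{\otimes d}$ to $P^{N,0}_\beta$, and the proof of Corollary~\ref{thm:univ} then applies verbatim in that product setting; this is the sense in which the paper invokes ``\cite[Theorem 1.5]{BFG} with the same estimates as we obtained here.''
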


While the previous results deal only with bounded test function,
in the next theorem we take full advantage of the estimate \eqref{aptr}
to show averaged energy universality in our multi-matrix setting.
Note that, to show this result, we need to consider as test functions averages (with respect to $E$) of $m$-points correlation functions of the form
$\sum_{i_1\neq \ldots\neq i_m} f\bigl(N(\lambda_{i_1}^k-E),\ldots,N(\lambda_{i_m}^k-E)\bigr)$
where $E$ belongs to the bulk of the spectrum. In particular, these 
test functions have $L^\infty$ norm of size $N^m$.
Actually, as in Corollaries  \ref{thm:univ} and \ref{thm:univ2}, we can deal with 
test functions depending at the same time on the eigenvalues of the different matrices.

Here and in the following, we use $\nnegint_I$ to denote the averaged integral over an interval $I\subset \R$, namely
$\nnegint_I=\frac{1}{|I|}\int_I$.

\begin{cor}
\label{thm:energy}
Fix $m \in \mathbb N$ and $\zeta \in (0,1)$, and let $\alpha$ be as in Theorem \ref{thm:transport}.
Also, let $T_0^k$ and $S_0^k$ be as in \eqref{eq:Tk0} and \eqref{eq:Sk},
and define $R_k:=T_0^k\circ S_0^k$.
Then, given $\{E_k\}_{1\leq k \leq d} \subset (-2,2)$, $\theta \in (0,\min\{\zeta,1-\zeta\})$,
and $f:\R^{dm}\to \R^+$ a nonnegative Lipschitz function with compact support, there exists a constant $\hat C>0$, independent of $N$, such that the following holds true provided $|a|\leq \alpha$:
\begin{align*}
&\bigg|\int\bigg[ 
\negint_{R_1(E_1)-N^{-\zeta}\,R_1'(E_1)}^{R_1(E_1)+N^{-\zeta}\,R_1'(E_1)}d\tilde E_1\ldots\negint_{R_d(E_d)-N^{-\zeta}\,R_d'(E_d)}^{R_d(E_d)+N^{-\zeta}\,R_d'(E_d)}d\tilde E_d\\
&\qquad\qquad\qquad\qquad\sum_{i_{k,1}\neq \ldots\neq i_{k,m}} 
f\Bigl(\bigl(N(\lambda_{i_{k,1}}^k-\tilde E_k),\ldots,N(\lambda_{i_{k,m}}^k-\tilde E_k)\bigr)_{1\leq k \leq d}\Bigr)\bigg]\,dP^{N,aV}_{\beta}\\
&\qquad -\int\bigg[ 
\negint_{E_1-N^{-\zeta}}^{E_1+N^{-\zeta}}d\tilde E_1\ldots\negint_{E_d-N^{-\zeta}}^{E_d+N^{-\zeta}}d\tilde E_d\\
&\qquad\qquad\qquad \sum_{i_{k,1}\neq \ldots\neq i_{k,m}} 
f\Bigl(\bigl(R_k'(E_k)\,\,N(\lambda_{i_{k,1}}^k-\tilde E_k),\ldots,R_k'(E_k)\,N(\lambda_{i_{k,m}}^k-\tilde E_k)\bigr)_{1\leq k \leq d}\Bigr)\bigg] \,d P^N_{{\rm GVE}}\bigg|\\
&\qquad\qquad\qquad\qquad\qquad\qquad\qquad\qquad\qquad\qquad \le  \hat C\,\Bigl(N^{\theta+\zeta-1}+
N^{\theta-\zeta} \Bigr).
 \end{align*}
\end{cor}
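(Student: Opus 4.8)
The plan is to transfer the estimate from the $aV$-model to the GUE/GOE model through the composition of two approximate transport maps, exploiting that \eqref{aptr} holds for test functions growing polynomially in $N$ --- which is exactly the situation here, since $m$-point correlation functions have $L^\infty$ norm of order $N^{dm}$. Let $\chi^{aV}$ denote the energy-averaged $m$-point correlation function appearing in the first integral, regarded as a function of the eigenvalues of the $aV$-model, and $\chi^{\rm GVE}$ the one appearing in the second integral, regarded as a function of the GUE/GOE eigenvalues; both are nonnegative and, as $f$ is bounded, $\|\chi^{aV}\|_\infty,\|\chi^{\rm GVE}\|_\infty\le C\,N^{dm}$. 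Let $T^N$ be the map of Theorem~\ref{thm:transport} (from $P^{N,0}_\beta$ to $P^{N,aV}_\beta$) and $S^N=(S^N_1,\dots,S^N_d)$ the product of the single-matrix transports of \cite{BFG} (from $(P^N_{{\rm GVE},\beta})^{\otimes d}$ to $P^{N,0}_\beta=\prod_k R^{N,W_k}_{\beta,M}$), for which the analogue of \eqref{aptr} holds by the same method. Applying \eqref{aptr} to $\chi=\chi^{aV}$, and then its analogue for $S^N$ to $\chi=\chi^{aV}\circ T^N$ (whose sup norm is still $\le C\,N^{dm}$), we obtain for every $\eta>0$
\[
\Bigl|\log\bigl(1+{\textstyle\int}\chi^{aV}\,dP^{N,aV}_\beta\bigr)-\log\bigl(1+{\textstyle\int}\chi^{aV}\circ T^N\circ S^N\,d(P^N_{{\rm GVE},\beta})^{\otimes d}\bigr)\Bigr|\le C_\eta\,N^{\eta-1}.
\]

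Energy-averaged $m$-point correlation functions have expectation bounded by $C\,(\log N)^{C_0}$, uniformly in $N$: for GUE/GOE this follows from the known local law, and for the $aV$-model from the one-point density and rigidity estimates for the associated $d$-fold mean-field $\beta$-ensemble established in Sections~\ref{sect:rest}--\ref{sect:extensions} (here the compact support of $f$ and the $N^{-\zeta}$-window average are used to see that, on average, only $O(1)$ $m$-tuples of indices contribute to the sum). Since a small bound on $|\log(1+A)-\log(1+B)|$ together with $A\le C(\log N)^{C_0}$ forces $B\le C'(\log N)^{C_0}$ and $|A-B|\le C(\log N)^{C_0}\,|\log(1+A)-\log(1+B)|$, the previous display yields, for $\eta$ small enough,
\[
\Bigl|{\textstyle\int}\chi^{aV}\,dP^{N,aV}_\beta-{\textstyle\int}\chi^{aV}\circ T^N\circ S^N\,d(P^N_{{\rm GVE},\beta})^{\otimes d}\Bigr|\le C\,N^{\theta+\zeta-1}.
\]

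It then remains to compare $\int\chi^{aV}\circ T^N\circ S^N\,d(P^N_{{\rm GVE},\beta})^{\otimes d}$ with $\int\chi^{\rm GVE}\,d(P^N_{{\rm GVE},\beta})^{\otimes d}$. By the structure of the two maps, $(T^N\circ S^N)^k_i(\hat\mu)=R_k(\mu^k_i)+\frac1N\Delta^k_i(\hat\mu)$ with $R_k:=T_0^k\circ S_0^k$ (so that $R_k'=\rho_{\rm sc}/(\rho^{aV}_k\circ R_k)$ by \eqref{eq:Sk}--\eqref{eq:Tk0}) and with $\Delta^k$ obtained by composing the first-order corrections of Theorem~\ref{thm:transport} with those of $S^N$; in particular $\|\Delta^k_i\|_{L^4}\le C(\log N)^2$ and, on an event of $(P^N_{{\rm GVE},\beta})^{\otimes d}$-probability $\ge1-e^{-c(\log N)^2}$, one has the pointwise and Lipschitz bounds $|\Delta^k_i|\le C\log N\,N^{1/(\sigma-c')}$ and $|\Delta^k_i-\Delta^k_{i'}|\le C\log N\,N^{1/(\sigma-c')}|\mu^k_i-\mu^k_{i'}|$ for a universal $c'$. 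Off this event we bound the integrand crudely by $C\,N^{dm}$, which the probability factor kills. On the event, in $\chi^{aV}\circ T^N\circ S^N$ we change variables $\tilde E_k\mapsto R_k(\hat E_k)$: this turns the window $[R_k(E_k)-N^{-\zeta}R_k'(E_k),\,R_k(E_k)+N^{-\zeta}R_k'(E_k)]$ into a window $\hat I_k$ around $E_k$ of half-length $N^{-\zeta}(1+O(N^{-\zeta}))$, with Jacobian $R_k'(\hat E_k)/R_k'(E_k)=1+O(N^{-\zeta})$ on $\hat I_k$ --- here we use that $R_k\in C^{\sigma-3}$ is a diffeomorphism with $R_k'$ bounded away from $0$ near $E_k$, since $\rho^0_k,\rho^{aV}_k$ are bounded above and below in the bulk. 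On the support of $f$ every contributing eigenvalue lies within $O(1/N)$ of the relevant $\hat E_k$, so Taylor expansion gives $N(R_k(\mu^k_i)-R_k(\hat E_k))=R_k'(E_k)N(\mu^k_i-\hat E_k)+O(N^{-\zeta})$; and, by the Lipschitz bound, the corrections $\Delta^k_i$ of the finitely many eigenvalues entering a given nonzero term of the correlation function all agree, up to $O(\log N\,N^{1/(\sigma-c')}/N)$, with a common value $\Delta^k_*$ that varies along $\hat I_k$ only by $O(\log N\,N^{1/(\sigma-c')}/N^2)$ per gap between consecutive eigenvalues (and there are $O(N^{1-\zeta})$ such gaps). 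Hence $\Delta^k_*$ can be absorbed by a further shift $\hat E_k\mapsto\hat E_k-\Delta^k_*/(R_k'(E_k)N)$, at the cost of a window mismatch of relative size $O(N^{-\zeta})+o(1/N)$. Using that $f$ is Lipschitz (which also controls the terms near $\partial\,{\rm supp}(f)$ where the support indicators of the two correlation functions disagree), that only $O(1)$ $m$-tuples contribute on average, and the $L^4$-bound on $\Delta^k$ to handle the expectation, we arrive at
\[
\Bigl|{\textstyle\int}\chi^{aV}\circ T^N\circ S^N\,d(P^N_{{\rm GVE},\beta})^{\otimes d}-{\textstyle\int}\chi^{\rm GVE}\,d(P^N_{{\rm GVE},\beta})^{\otimes d}\Bigr|\le \hat C\,(\log N)^{C_0}\,N^{-\zeta}\le \hat C\,N^{\theta-\zeta}
\]
for $N$ large. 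Combining the last two displays and recalling the definitions of $\chi^{aV}$ and $\chi^{\rm GVE}$ yields the claim.

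The main obstacle is the first-order correction $\frac1N\Delta^k$: multiplied by $N$ inside the rescaled argument $N((T^N\circ S^N)^k_i-\tilde E_k)$ it contributes $\Delta^k_i$, which is of size $N^{1/(\sigma-c')}$ (or $(\log N)^2$ in $L^4$), hence \emph{not} negligible, so a naive substitution fails. The way around this is precisely the Lipschitz-type estimates of Theorem~\ref{thm:transport} (and their $S^N$-counterparts): the eigenvalues entering a nonzero term of an $m$-point correlation function are clustered in a window of size $O(1/N)$, so their corrections nearly coincide and act as a common energy shift that can be undone by translating the averaging variable. Carrying this out uniformly over the compact range of energies, over all compactly supported Lipschitz $f$, and over the $d$ matrices simultaneously --- while keeping the accumulated error at the claimed level $N^{\theta+\zeta-1}+N^{\theta-\zeta}$ --- is the delicate part of the argument; a secondary technical point is the a priori one-point and rigidity control for the $aV$-model invoked above, which is where the convexity of the $W_k$ and the smallness of $|a|$ enter.
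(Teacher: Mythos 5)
Your overall strategy --- composing the two approximate transport maps, using the polynomially-growing-test-function version of \eqref{aptr} after the log-transform, changing variables in the energy average, and observing that the Lipschitz bound on the first-order correction means the contributing eigenvalues' corrections nearly coincide and can be absorbed as a common shift of the averaging variable --- is exactly the approach the paper takes (first for $T^N$ alone, then for $S^N$). However, there is a quantitative gap in the middle of your argument that would make it fail for $\theta$ small.

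The issue is the pointwise size of the first-order correction $\Delta^k_i$. You invoke the generic high-probability bound of Theorem~\ref{thm:transport}, namely $|\Delta^k_i|\le C\,\log N\,N^{1/(\sigma-c')}$ on an event of probability $\ge 1-e^{-c(\log N)^2}$. But the common shift $\Delta^k_*/(R_k'(E_k)N)$ you perform affects the averaging window of length $\sim N^{-\zeta}$ by a relative amount $\sim |\Delta^k_*|\,N^{\zeta-1}$, and this mismatch (multiplied by the pointwise bound on the correlation count, which on the rigidity set $G_\vartheta$ is $N^{m\vartheta}$) must land at or below $N^{\theta+\zeta-1}$. With your bound, the relative mismatch is $\sim \log N\,N^{1/(\sigma-c')+\zeta-1}$, which exceeds $N^{\theta+\zeta-1}$ whenever $\theta<1/(\sigma-c')$; and with $\sigma\ge 36$ and $c'\ge 14$ this excludes all $\theta$ below roughly $0.05$, even though the corollary asserts the estimate for every $\theta\in(0,\min\{\zeta,1-\zeta\})$. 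Your claim that the window mismatch has ``relative size $O(N^{-\zeta})+o(1/N)$'' is therefore incorrect. The paper avoids this by working directly on the rigidity set $G_\vartheta$ of \eqref{eq:Gtheta}, where one has the sharper bounds $\max_{i,k}|(X_{1,1}^N)_i^k|\le C\,N^\vartheta$, $|\partial_{\tilde E}X_{1,\hat\lambda}^k(\tilde E)|\le C\,N^\vartheta$, and $|(X_{1,1}^N)_i^k(\hat\lambda)-X_{1,\hat\lambda}^k(\tilde E)|\le C\,N^\vartheta|\lambda_i^k-\tilde E|$ (see \eqref{eq:X1theta} and \eqref{eq:DXE}), with $\vartheta$ a free parameter that can be taken smaller than $\theta/(m+2)$; this turns the shift error into $N^{(m+1)\vartheta+\zeta-1}\le N^{\theta+\zeta-1}$. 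You should replace your generic bound by the rigidity-set bound, and correspondingly replace the assertion that ``only $O(1)$ $m$-tuples contribute on average'' (which is a local-law statement, not immediate from rigidity alone) by the pointwise bound $\#\{\text{contributing }m\text{-tuples}\}\le CN^{m\vartheta}$ on $G_\vartheta$, which suffices once $\vartheta$ is small. A secondary point: the a priori bound you state for the $aV$-model correlation function is neither established by rigidity alone nor needed --- the comparison propagates the GUE/GOE boundedness to the other side, as your own remark about the log-transfer already implies.
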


It is worth mentioning that, in the single-matrix case, Bourgade, Erdos, Yau, and Yin \cite{BEYYfixed}
have recently been able to remove the average with respect to $E$
and prove the Wigner-Dyson-Mehta conjecture at fixed energy in the bulk of the spectrum for generalized
symmetric and Hermitian Wigner matrices.
We believe that combining their techniques with ours one should be able to remove the average with respect to $E$ in the previous theorem. However, this would go beyond the scope of this paper and we shall not investigate this here.\smallskip

Another consequence of our transportation approach is 
the universality of other observables, such as
the minimum spacing in the bulk.
The next result is restricted to the case $\beta=2$ since we rely on \cite[Theorem 1.4]{BAB} which is proved in the case
$\beta=2$ and is currently unknown for $\beta=1$.

\begin{cor}\label{univmax} Let $\beta=2$, fix $k \in \{1,\ldots,d\}$, let 
  $I_k$ be a compact subset of $(-a_k^{aV},b_k^{aV})$ with non-empty interior, and denote the renormalized gaps by
$$\Delta^{k}_i:= \frac{\lambda_{i+1}^k-\lambda_i^k}{(T_0^k \circ S_0^k)'(\gamma_{i/N})} \,, \qquad\lambda_i^k \in I_k,$$
where $\gamma_{i/N} \in \R$ is such that $\mu_{\rm sc}((-\infty,\gamma_{i/N}))={i/N}$.
 Also, denote by
 $\tilde P^{N,aV}_{\beta,k}$ the distribution of
the increasingly ordered  eigenvalues $\{\lambda_i^k\}_{1\leq i \leq N}$ under $P^{N,aV}_{\beta,k}$, the law
of the eigenvalues of the $k$-{th} matrix under $P^{N,aV}_\beta$.
Then, under the hypotheses of Theorem \ref{thm:transport}, it holds:
\begin{itemize}\item {\rm {Smallest gaps.}}  
Let $\tilde t^1_{N,k}<\tilde t^2_{N,k}\cdots<\tilde t^p_{N,k}$ denote the $p$ smallest renormalized spacings $\Delta_i^k$ of the eigenvalues
of the $k$-th matrix lying in $I$, and set 
$$\tilde\tau^p_{N,k}:=\left(\frac{1}{144\pi^2}\int_{(T_0^k \circ S_0^k)^{-1}(I)}(4-x^2)^2\,dx\right)^{1/3}\tilde t^p_{N,k}.$$
Then, as $N \to \infty,$ $N^{4/3}\tilde\tau^p_{N,k}$ converges in law towards $\tau^p$ whose density is given by
$$\frac{3}{(p-1)!}x^{3p-1} e^{-x^3} dx\,.$$

\item {\rm {Largest gaps.}}  Let $\ell_{N,k}^1(I)>\ell_{N,k}^2(I)>\ldots$ be the largest gaps of the form $\Delta^k_i$ 
with $\lambda_i^k\in I_k$. Let  $\{r_N\}_{N \in \mathbb N}$ be a family of positive integers such that
$$
\frac{\log r_N}{\log{N}} \to 0\qquad \text{as $N\to \infty$}.
$$
Then, as $N \to \infty,$
$$\frac{N}{\sqrt{32\log N}}\ell_{N,k}^{r_N} \to 1 \qquad \text{ in $L^q(\tilde P_{\beta,k}^{N,aV})$}$$
for any $q<\infty$.
\end{itemize}
\end{cor}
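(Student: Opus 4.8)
The plan is to transport both statements from the case of $d$ independent GUE matrices, where they are exactly \cite[Theorem 1.4]{BAB}, using the approximate transport maps constructed above together with rigidity. Recall from \cite{BFG} (and Section \ref{sect:extensions}) that there are approximate transport maps $S_k^N\colon\R^N\to\R^N$ from $P^N_{{\rm GVE},2}$ to $R^{N,W_k}_{2,M}$ with leading order $(S_0^k)^{\otimes N}$ and uniform regularity estimates of the type in Theorem \ref{thm:transport}. Composing with $T^N$ gives a map $\Phi^N:=T^N\circ(S_1^N,\dots,S_d^N)$ from $(P^N_{{\rm GVE},2})^{\otimes d}$ to $P^{N,aV}_2$ which still satisfies an estimate of the form \eqref{aptr}, namely
$$
\Bigl|\log\bigl(1+\int\chi\circ\Phi^N\,d(P^N_{{\rm GVE},2})^{\otimes d}\bigr)-\log\bigl(1+\int\chi\,dP^{N,aV}_2\bigr)\Bigr|\le C_{k,\eta}\,N^{\eta-1}\quad\text{whenever }\|\chi\|_\infty\le N^k,
$$
and whose form is $(\Phi^N)^\ell_i(\hat\nu)=R_\ell(\nu^\ell_i)+\frac1N\Upsilon^\ell_i(\hat\nu)$ with $R_\ell:=T_0^\ell\circ S_0^\ell$ (cf. Corollary \ref{thm:energy}) and $\hat\nu=(\nu_1^1,\dots,\nu_N^d)$ the eigenvalues of $d$ independent GUE matrices, where $\Upsilon$ inherits, from composing the estimates of \cite{BFG} and Theorem \ref{thm:transport}, the uniform bounds $\max_{i,\ell}|\Upsilon^\ell_i|\le N^{o(1)}$ and $\max_{i,i',\ell}|\Upsilon^\ell_i(\hat\nu)-\Upsilon^\ell_{i'}(\hat\nu)|\le N^{o(1)}|\nu^\ell_i-\nu^\ell_{i'}|$ on a set of $(P^N_{{\rm GVE},2})^{\otimes d}$-probability $\ge 1-e^{-c(\log N)^2}$; here $N^{o(1)}=(\log N)^{O(1)}N^{O(1/\sigma)}=o(N^{1/3})$ since $\sigma\ge 36$.

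Next I would show that along $\Phi^N$ the renormalized gaps are GUE gaps up to a negligible error. Fix $k$ and let $\nu^k_1\le\dots\le\nu^k_N$ be the eigenvalues of the $k$-th GUE matrix. Since $R_k$ is a smooth diffeomorphism with $R_k'$ bounded above and below on the compact set $R_k^{-1}(I_k)\subset(-2,2)$, a first order Taylor expansion of $R_k$ on $[\nu^k_i,\nu^k_{i+1}]$, together with GUE bulk rigidity $|\nu^k_i-\gamma_{i/N}|\le N^{o(1)-1}$ and the a priori bound $|\nu^k_{i+1}-\nu^k_i|\le N^{o(1)-1}$ (both valid with overwhelming probability), and the Lipschitz bound on $\Upsilon$, give, uniformly over $i$ with $\nu^k_i\in R_k^{-1}(I_k)$,
$$
(\Phi^N)^k_{i+1}(\hat\nu)-(\Phi^N)^k_i(\hat\nu)=R_k'(\gamma_{i/N})\,(\nu^k_{i+1}-\nu^k_i)\,\bigl(1+O(N^{o(1)-1})\bigr).
$$
Hence the renormalized gap of the configuration $\Phi^N(\hat\nu)$ satisfies $\Delta^k_i=(\nu^k_{i+1}-\nu^k_i)(1+O(N^{o(1)-1}))$. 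Since $N^{o(1)-1}\to0$, after multiplication by the normalizations $N^{4/3}$ (smallest gaps) or $N/\sqrt{32\log N}$ (largest gaps) the discrepancy between $\Delta^k_i$ and the bare GUE gap disappears in the limit, the extreme GUE gaps being of size $N^{-4/3}$, resp. $\sqrt{\log N}/N$. One also has to match the index sets $\{i:\lambda^k_i\in I_k\}$ and $\{i:\nu^k_i\in R_k^{-1}(I_k)\}$, which differ only on $O(N^{o(1)})$ indices near $\partial R_k^{-1}(I_k)$; since the extreme gaps over so few indices are, with overwhelming probability, much larger (for the minima) or much smaller (for the maxima) than the true extremes over the $\Theta(N)$ bulk indices, no extreme gap is lost or gained this way.

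Finally I would conclude by the GUE results and transfer back. For the smallest gaps, given bounded continuous $g$ apply the estimate of the first step to $\chi(\hat\lambda):=g\bigl(N^{4/3}\tilde\tau^p_{N,k}(\hat\lambda)\bigr)$ (bounded, so the error is $o(1)$): then $\int\chi\,dP^{N,aV}_2=\int g\bigl(N^{4/3}\tilde\tau^p_{N,k}(\Phi^N(\hat\nu))\bigr)\,d(P^N_{{\rm GVE},2})^{\otimes d}+o(1)$, and by the previous step and bounded convergence the right-hand side converges to $\mathbb E[g(\tau^p)]$, using \cite[Theorem 1.4]{BAB} in its form localized to the bulk sub-interval $(T_0^k\circ S_0^k)^{-1}(I)$ (the constant $\frac1{144\pi^2}\int_{(T_0^k\circ S_0^k)^{-1}(I)}(4-x^2)^2\,dx$ is exactly its rescaling parameter). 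For the largest gaps one argues identically with the polynomially bounded test function $\chi(\hat\lambda):=\bigl|\tfrac{N}{\sqrt{32\log N}}\ell^{r_N}_{N,k}(\hat\lambda)-1\bigr|^q$: precisely because \eqref{aptr} is phrased through $\log(1+\cdot)$, it shows that $\int\chi\,dP^{N,aV}_2\to0$ as soon as $\int\chi\circ\Phi^N\,d(P^N_{{\rm GVE},2})^{\otimes d}\to0$, without any polynomial blow-up, and the latter follows by discarding the super-polynomially unlikely bad event (on which $\chi\circ\Phi^N\le N^{C}$) and using on the good event that $\ell^{r_N}_{N,k}\circ\Phi^N$ equals $(1+o(1))$ times the $r_N$-th largest GUE gap, which tends to $1$ in $L^q$ by \cite[Theorem 1.4]{BAB}.

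The main obstacle is the second step: one must propagate the pathwise regularity estimates of both transport maps through their composition, combine them with GUE rigidity, and verify that the resulting error is negligible on the anomalously small scale $N^{-4/3}$ of the smallest gaps (resp. the anomalously large scale $\sqrt{\log N}/N$ of the largest gaps) after the sharp rescalings in the statement; the assumption $\sigma\ge 36$ is what renders the exponents $N^{1/(\sigma-k)}$ of Theorem \ref{thm:transport} harmless. A secondary point, for the largest gaps, is that one needs an $L^q$ and not merely an in-law statement, which is why the polynomial-growth version of \eqref{aptr} is essential here.
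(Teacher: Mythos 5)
Your proposal is correct and follows essentially the same route as the paper: compose the two transport maps $T^N$ and $S^N_k$, use the pathwise expansion and Lipschitz estimates of Theorem \ref{thm:transport} (and its analogue from \cite{BFG}) together with GUE rigidity to show that the composed map preserves gaps in the bulk up to a multiplicative $1+O(N^{o(1)-1})$ error, then invoke the GUE extreme-gap results of \cite{BAB} and transfer back via the $\log(1+\cdot)$ form of \eqref{aptr}. The only cosmetic difference is that the paper isolates a residual term $\mathcal E_i$ controlled in $L^2$ (estimate \eqref{eq:error i}) rather than packaging the entire first-order correction into a single $\Upsilon$ with pointwise Lipschitz bounds as you do — both formulations are valid here and lead to the same conclusion — and you spell out two small points the paper leaves implicit, namely the matching of the $O(N^{o(1)})$ boundary indices between $\{i:\lambda_i^k\in I_k\}$ and $\{i:\nu_i^k\in R_k^{-1}(I_k)\}$, and why the $\log(1+\cdot)$ structure of \eqref{aptr} yields the $L^q$ convergence for the largest gaps without polynomial blow-up.
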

All the above corollaries 
are proved in Section \ref{sect:extensions}.

As an important application of our results, we consider the law of the eigenvalues
of a self-adjoint polynomials in several GUE or GOE matrices. Indeed, if $\epsilon$ is sufficiently small
and $X_1,\ldots,X_d$ are independent GUE or GOE matrices,
 a change of variable formula
shows that the law of the eigenvalues of the $d$ random matrices given by
$$Y_i=X_i+\epsilon \,P_i(X_1,\ldots,X_d),\qquad 1\le i\le d,$$
follows a distribution of the form $P^{N,aV}_\beta$ with $r=2$ and $V$ a convergent series, see  Section \ref{lawofpol}. 
Hence we have:
\begin{cor}
\label{cor:P} Let $P_1,\ldots,P_d\in \mathbb C\langle x_1,\ldots,x_d,b_1,\ldots,b_m\rangle$ be self-adjoint polynomials.
There exists $\epsilon_0>0$ such that the following holds:
Let $X_i$ be independent GUE or GOE matrices and set 
$$Y_i:=X_i+\epsilon\, P_i(X_1,\ldots,X_d).$$
Then, for $\epsilon\in [-\epsilon_0,\epsilon_0]$,
the eigenvalues of the matrices $\{Y_i\}_{1\le i\le d}$ fluctuate in the bulk or at the edge as when $\epsilon=0$,
up to rescaling. 
The same result holds for
$$Y_i=X_i+\epsilon \,P_i(X_1,\ldots,X_d,B_1,\ldots, B_m)$$
provided $\tau^N_B$ satisfies \eqref{expB}. Namely, in both models, the law  $\tilde P^{N,\epsilon P}_{\beta}$ of  the ordered eigenvalues of the matrices $Y_k$ satisfies the same conclusions as
$\tilde P^{N,aV}_{\beta}$ in Corollaries \ref{thm:univ2} and \ref{univmax}.

 \end{cor}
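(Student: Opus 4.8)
The plan is to reduce the statement to an application of Corollaries \ref{thm:univ2} and \ref{univmax} via the change of variables discussed in Section \ref{lawofpol}. First I would observe that the map $\Phi_\epsilon:(X_1,\ldots,X_d)\mapsto (Y_1,\ldots,Y_d)$ with $Y_i=X_i+\epsilon\,P_i(X_1,\ldots,X_d,B_1,\ldots,B_m)$ is, for $\epsilon$ small enough, a smooth diffeomorphism of a neighborhood of the relevant (compact, with high probability) region of the space of $d$-uples of Hermitian (resp. symmetric) matrices. Indeed, $D\Phi_\epsilon = \mathrm{Id}+\epsilon\,D(P_1,\ldots,P_d)$, and since on the event $\{\|X_i\|_\infty\le M\}$ the differential of the polynomial map $(P_1,\ldots,P_d)$ is bounded by a constant depending only on $M$ and the coefficients of the $P_i$, for $|\epsilon|\le \epsilon_0$ small the Jacobian $J_\epsilon:=\det D\Phi_\epsilon$ is bounded away from $0$ and $\infty$, and $\Phi_\epsilon$ is invertible on that region with inverse $\Psi_\epsilon=\Phi_\epsilon^{-1}$ also a convergent power series in $\epsilon$ with polynomial coefficients. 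Pushing forward the law of $d$ independent GUE/GOE matrices (i.e. $W_k(x)=x^2/2$ up to normalization, with a cutoff at scale $M$ which changes nothing in the bulk/edge since the spectra are contained in $[-2-o(1),2+o(1)]$ with overwhelming probability) by $\Phi_\epsilon$, and using $X_i=\Psi_{\epsilon,i}(Y_1,\ldots,Y_d)$, one gets that the joint density of $(Y_1,\ldots,Y_d)$ is proportional to
$$
e^{-N\sum_k \tr W_k(\Psi_{\epsilon,k}(Y))}\,\bigl|J_\epsilon(\Psi_\epsilon(Y))\bigr|^{-1}\,\prod_i \one_{\cdots}.
$$

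The next step is to massage this density into the form $\mathbb P^{N,aV}_\beta$ with $r=2$. The Gaussian factor $\tr\bigl((\Psi_{\epsilon,k}(Y))^2\bigr)$ expands, using $\Psi_{\epsilon,k}(Y)=Y_k - \epsilon\,P_k(Y)+O(\epsilon^2)$, as $\tr(Y_k^2) + \epsilon\,(\text{trace polynomials in the }Y_i,B_j) + \cdots$; since $\tr(AB)=\tr^{\otimes 2}(a\otimes b)$ is a rank-one tensor-power statistic, and $\tr(A^2B)$ etc. are also realized as $\tr\,\otimes\,$nothing (they are ordinary traces, i.e. $r=1\le 2$), the whole correction to the Gaussian weight lands in the span of $\tr$ and $\tr^{\otimes 2}$, hence fits the model with $r=2$ after absorbing $-N\sum_k\tr(Y_k^2)$ into the reference measures $R^{N,W_k}_{\beta,M}$ with $W_k$ quadratic uniformly convex. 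The Jacobian term needs a little more care: $\log|J_\epsilon|=\tr\log(\mathrm{Id}+\epsilon\,D(P)(\Psi_\epsilon(Y)))$ is a convergent series whose terms are traces of words in the $Y_i$, $B_j$ and the ``multiplication/tensor'' operators coming from $DP$; expanding the logarithm and using that for a polynomial $P$ the operator $DP$ acting on matrices produces, under the trace, sums of $\tr^{\otimes 2}$ statistics of words of bounded degree, one sees $\log|J_\epsilon|^{-1}$ is $N^{2-r}\tr^{\otimes r}$ of a convergent (for $|\epsilon|$ small, in the $\|\cdot\|_{M\xi,\zeta}$ norm) element with $r\le 2$. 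Collecting, the density of $(Y_1,\ldots,Y_d)$ is exactly $d\mathbb P^{N,aV}_\beta$ with $a=a(\epsilon)=O(\epsilon)$, $V=V_\epsilon$ self-adjoint (because the original problem is, being a pushforward of a real measure) and $\|V_\epsilon\|_{M\xi,\zeta}\to 0$ as $\epsilon\to 0$; hence Hypothesis \ref{hypo} holds for $|\epsilon|\le\epsilon_0$ with $\epsilon_0$ small. In the case with deterministic matrices $B_j$, the hypothesis \eqref{expB} on $\tau_B^N$ is exactly what is assumed, so Theorem \ref{thm:transport} applies verbatim.

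Finally, having identified $\tilde P^{N,\epsilon P}_\beta$ with $\tilde P^{N,aV}_\beta$ for $a=a(\epsilon)$, I would simply invoke Corollaries \ref{thm:univ2} and \ref{univmax}: they give, for $|a|\le\alpha$ (hence $|\epsilon|\le\epsilon_0$ after possibly shrinking $\epsilon_0$), the comparison of bulk spacings, edge fluctuations, and extremal gaps of the eigenvalues of the $Y_k$ with those of independent GUE/GOE matrices, up to the explicit rescaling by $(T_0^k\circ S_0^k)'$ — which is exactly the claimed conclusion. The main obstacle is the second paragraph: one must check carefully that \emph{all} the terms produced by expanding $\tr W_k(\Psi_{\epsilon,k}(Y))$ and $\log|J_\epsilon|$ are genuinely of the form $N^{2-r}\tr^{\otimes r}(\text{polynomial})$ with $r\le 2$, with coefficients summable in the norm \eqref{tyu} for $|\epsilon|$ small — i.e. that the change of variables does not generate trace-power statistics of order $r\ge 3$ nor a non-convergent series; this is where the restriction that $P_i$ be polynomials (so that $DP_i$ has bounded degree) and that $\epsilon$ be small (for the convergence of the logarithm and of $\Psi_\epsilon$) is essential, and it is the content of the computation carried out in Section \ref{lawofpol}.
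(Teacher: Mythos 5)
Your proposal is correct and follows essentially the same route as the paper's: inverting the polynomial map for small $\epsilon$, expanding the Gaussian weight and the $\log\det$ of the Jacobian into $\Tr$ and $\Tr^{\otimes 2}$ statistics so that the law of $(Y_1,\ldots,Y_d)$ becomes $\mathbb P^{N,aV}_\beta$ with $r=2$ and $\|V\|_{M\xi,\zeta}=O(\epsilon)$, and then invoking Corollaries \ref{thm:univ2} and \ref{univmax}. You correctly flag that the substantive verification — that every term produced really is a $\Tr^{\otimes r}$ statistic with $r\le 2$ of a convergent series, including the extra symmetrization term $\Sigma$ in the Jacobian for $\beta=1$, and that the total-variation error from the cutoff is $O(e^{-cN})$ — is exactly the content of Section \ref{lawofpol}.
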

 
 \begin{rem}{\rm 
 Recall that, as already stated at the beginning of Section \ref{sect:statement}, when $\beta=1$ the matrices $B_i$ are assumed to be real as well as the coefficients of $P$. In particular, in the statement above, if $X_i$ are GOE then the matrices $Y_i$ must be orthogonal. The reason for that is that 
 we need  the map $(X_1,\ldots, X_d) \mapsto (Y_1,\ldots, Y_d)$ to be an isomorphism close to identity at least for uniformly bounded matrices. Our result should generalize  to mixed polynomials in GOE and GUE which satisfy this property, but it does not include  the case of the perturbation of a GOE matrix by a small  GUE matrix which is Hermitian but not orthogonal. 
 }
 \end{rem}
 
\medskip

\textit{Acknowledgments:} 
AF was partially supported by NSF Grant DMS-1262411 and NSF Grant DMS-1361122. AG was partially supported by the Simons Foundation and by NSF Grant DMS-1307704. The authors would like to thank an anonymous referee for his challenging questions.

\section{Study of the equilibrium measure}\label{eqmeas}
In this section we study the macroscopic behavior of the eigenvalues, that is the convergence of their empirical measures
and the properties of their limits.
Note here that  we are restricting ourselves to measures supported on $[-M,M]$ so that the weak topology is equivalent to the
topology of moments induced by the norm $\|\nu\|_{\zeta M}:=\max_{k\ge 1}(\zeta M)^{-k}|\nu(x^k)|$.
As a consequence, a large deviation principle for the law $\Pi^{N,aV}_\beta$ of $(L^{N}_1,\ldots,L^{N}_d)$ under $ P^{N,aV}_{\beta}$
can be proved:
\begin{lem}\label{ldplem}
Assume that $M>1$ is sufficiently large and that $\tau_B^N$ converges towards $\tau_B$ (see \eqref{eq:tauBN} and \eqref{eq:convtauBN}).
Then the measures $(\Pi^{N,aV}_\beta)_{N\ge 0}$ on ${\mathcal P}([-M,M])^d$ equipped with the weak topology satisfy
a large deviation principle in the scale $N^2$ with good rate function
$$I^{a}(\mu_1,\ldots,\mu_d):= J^a(\mu_1,\ldots,\mu_d)-\inf_{\nu_k\in\mathcal P([-M,M])}J^a(\nu_1,\ldots,\nu_d),$$
where
$$J^a(\mu_1,\ldots,\mu_d):=
\frac{1}{2}\sum_{k=1}^d  \biggl(\iint \bigl[ W_k(x)+W_k(y) -\beta\log|x-y|\bigr]\,d\mu_k(x)\,d\mu_k(y)\biggr)
-F_0^{a}(\mu_1,\ldots,\mu_d,\tau_B)\,. $$
\end{lem}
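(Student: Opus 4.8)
The plan is to establish the large deviation principle (LDP) for $\Pi^{N,aV}_\beta$ via the standard two-step strategy: first prove the LDP for the reference measures, then transfer it using the expansion of $I^{N,aV}_\beta$ from Theorem \ref{mainexp}. More precisely, I would first recall that the laws of $(L^N_1,\ldots,L^N_d)$ under the product measure $\prod_k R^{N,W_k}_{\beta,M}$ satisfy an LDP at speed $N^2$ with rate function $\sum_k \frac12\iint[W_k(x)+W_k(y)-\beta\log|x-y|]\,d\mu_k(x)\,d\mu_k(y)$ (up to the additive normalization constant); this is the classical Ben Arous--Guionnet type result for $\beta$-ensembles on a compact set, adapted here to each coordinate independently, using that $[-M,M]$ is compact, $W_k$ is continuous, and the logarithmic interaction is bounded above on the compact support (the usual regularization of the singularity along the diagonal handles the lower bound). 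The continuity of the weak topology and the moment topology generated by $\|\cdot\|_{\zeta M}$ on $\mathcal P([-M,M])^d$, already noted in the text, makes these two formulations interchangeable.

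Next I would handle the tilt by $e^{N^{2-r}\,\tr^{\otimes r}V}$, or rather its integrated version $I^{N,aV}_\beta(\lambda^1,\ldots,\lambda^d)$. By Theorem \ref{mainexp}, for $|a|\le a_0$ we have
\begin{equation*}
I^{N,aV}_\beta(\lambda^1,\ldots,\lambda^d)=\Bigl(1+O(1/N)\Bigr)\,\exp\Bigl(\sum_{l=0}^2 N^{2-l}F_l^a(L^N_1,\ldots,L^N_d,\tau^N_B)\Bigr),
\end{equation*}
so that $\frac{1}{N^2}\log I^{N,aV}_\beta = F_0^a(L^N_1,\ldots,L^N_d,\tau^N_B)+O(1/N)$, uniformly on the compact configuration space since the $O$ depends only on $M$. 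Because $F_0^a$ is, by Theorem \ref{mainexp}, a \emph{continuous} (indeed smooth) function of $(\mu_1,\ldots,\mu_d,\tau)$ for the relevant weak/moment topology, and because $\tau^N_B\to\tau_B$, the function $(\mu_1,\ldots,\mu_d)\mapsto F_0^a(\mu_1,\ldots,\mu_d,\tau_B)$ is a bounded continuous functional on $\mathcal P([-M,M])^d$; one also needs uniform convergence of $F_0^a(\cdot,\cdot,\tau^N_B)$ to $F_0^a(\cdot,\cdot,\tau_B)$, which follows from the continuity of $F_0^a$ in the $\tau$-variable together with the quantitative convergence of $\tau^N_B$. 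Then Varadhan's integral lemma (the exponential tilt version of the LDP, using that the tilting functional is bounded continuous, which makes the moment/tail condition automatic) yields that $\Pi^{N,aV}_\beta = \frac{1}{\tilde Z^{N,V}_\beta}\,I^{N,aV}_\beta \cdot (\text{law under }\prod_k R^{N,W_k}_{\beta,M})$ satisfies an LDP at speed $N^2$ with rate function $J^a(\mu_1,\ldots,\mu_d)$ minus its infimum, exactly as stated; the normalization $\tilde Z^{N,V}_\beta$ supplies the $-\inf J^a$ term.

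The main obstacle, and the only genuinely non-routine point, is verifying the hypotheses needed to apply Varadhan's lemma to the tilt $F_0^a(L^N_1,\ldots,L^N_d,\tau^N_B)$: one must know that $F_0^a$ is continuous (not merely measurable) on $\mathcal P([-M,M])^d$ in the weak topology, and uniformly bounded there. Both are furnished by Theorem \ref{mainexp}, which asserts smoothness of $F_0^a$ for precisely this topology, so the argument is really a bookkeeping matter of combining that input with the classical $\beta$-ensemble LDP; the dependence on $\tau^N_B$ rather than its limit $\tau_B$ is absorbed by one extra uniform-continuity estimate. A secondary technical point is the contribution of the $(1+O(1/N))$ prefactor and the $N^{1-r}\tr^{\otimes r}$ versus $N^{2-r}$ scaling, but since $r\ge 1$ these are lower-order at speed $N^2$ and contribute nothing to the rate function. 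Finally, goodness of the rate function $I^a$ follows from compactness of $\mathcal P([-M,M])^d$ together with lower semicontinuity of $J^a$, the latter coming from lower semicontinuity of the logarithmic energy and continuity of $F_0^a$.
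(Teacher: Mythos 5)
Your argument is correct and follows the same route as the paper's own (very terse) proof: the classical LDP for $\beta$-ensembles on a compact set, followed by the Varadhan/Laplace tilting via the leading term $F_0^a$ of the expansion in Theorem \ref{mainexp}, with continuity of $F_0^a$ for the weak topology supplying the required hypotheses. You fill in the routine verifications (boundedness, goodness, the $\tau^N_B\to\tau_B$ replacement, and the negligibility of the $N F_1^a$ and $F_2^a$ terms at scale $N^2$) that the paper leaves implicit.
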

\begin{proof} The proof is given in \cite{BAG,AGZ} in the case $F_0^{a}=0$, while the general case follows from Laplace method (known also as Varadhan lemma)
since $F_0^{a}$ is continuous for the $\|\cdot\|_{\zeta M} $ topology (and therefore for the usual weak topology, which is stronger).\end{proof}
It follows by the result above that  $\{L^{N}_k\}_{1\le k\le d}$ converge to the minimizers of $I^a$.
We next prove that, for $a$ small enough, $I^a$ admits a unique minimizer, and show some of its properties.
This is an extended and  refined version of \eqref{limit} which shall be useful later on.
\begin{lem}
\label{lem:support} Let Hypothesis \ref{hypo} hold. There  exists $a_0>0$ such that, for $a\in [-a_0,a_0]$,
$I^a$ admits a unique minimizer ($\mu_1^{aV},\ldots,\mu_d^{aV})$. Moreover the support of each $\mu_k^{aV}$ is connected
and strictly contained inside $[-M,M]$, and each $\mu_k^{aV}$ has a density which is smooth and strictly positive inside its support
except at the two boundary points, where it goes to zero as a square root.
\end{lem}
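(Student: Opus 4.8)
The approach is to regard the joint rate function $I^a$ as a perturbation --- small in the coupling constant $a$ --- of the decoupled sum of classical one-matrix energy functionals, using that $F_0^a$ vanishes at $a=0$ and depends smoothly on its arguments by Theorem \ref{mainexp}. For $a=0$ the integrand defining $I^{N,0}_\beta$ is identically $1$, so $I^{N,0}_\beta=1$ for every $N$ and the expansion \eqref{po} forces $F_l^0\equiv0$ for $l=0,1,2$. Hence $J^0$ splits as $\sum_{k=1}^d\mathcal E_k(\mu_k)$, with $\mathcal E_k(\mu):=\int W_k\,d\mu-\tfrac\beta2\iint\log|x-y|\,d\mu(x)\,d\mu(y)$; each $\mathcal E_k$ is strictly convex on the set of probability measures with finite logarithmic energy, so $J^0$ has a unique minimizer, namely the $d$-tuple $(\mu_1^0,\ldots,\mu_d^0)$ formed by the unique minimizers of the $\mathcal E_k$. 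Since $W_k$ is uniformly convex it grows at least quadratically, so $\mu_k^0$ is compactly supported; choosing $M$ a large universal constant yields ${\rm supp}(\mu_k^0)\subset\subset(-M,M)$, and a $C^\sigma$ uniformly convex potential is one-cut regular, so by the classical description of equilibrium measures (see e.g. \cite{Deiftbook,BEYbulk}) ${\rm supp}(\mu_k^0)$ is a single interval, $\rho_k^0$ is smooth and strictly positive in the interior of its support, and vanishes like a square root at the two endpoints.

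The first real step is to show that strict convexity persists for $|a|$ small. By Theorem \ref{mainexp}, $F_0^a$ is smooth in $(\mu_1,\ldots,\mu_d,\tau_B)$ for the topology of the norm $\|\mu\|_{\zeta M}=\max_{j\ge1}(\zeta M)^{-j}|\mu(x^j)|$, and since $F_0^0\equiv0$ its $C^2$ norm in these variables is $O(a)$. For signed measures $\nu_1,\ldots,\nu_d$ with $\nu_k(\R)=0$, the second variation of $J^a$ along $t\mapsto(\mu_k+t\nu_k)_k$ equals $\beta\sum_k\Sigma(\nu_k)-D^2F_0^a[\nu,\nu]$, where $\Sigma(\nu):=-\iint\log|x-y|\,d\nu(x)\,d\nu(y)\ge0$ is the logarithmic-energy quadratic form. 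A direct estimate gives $\|\nu_k\|_{\zeta M}^2\le C\,\Sigma(\nu_k)$ for $\nu_k$ supported in $[-M,M]$ with $\nu_k(\R)=0$ (this is where $\zeta\ge1$ is used), so $D^2F_0^a[\nu,\nu]\le C|a|\sum_k\Sigma(\nu_k)$; hence for $|a|\le a_0$ small the second variation is $\ge\tfrac\beta2\sum_k\Sigma(\nu_k)>0$ whenever $\nu\neq0$. Since $J^a$ is moreover lower semicontinuous on the weakly compact set $\mathcal P([-M,M])^d$, it follows that $J^a$ (equivalently $I^a$) admits a unique minimizer $(\mu_1^{aV},\ldots,\mu_d^{aV})$.

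The last step analyses this minimizer through its Euler--Lagrange equations. Setting $h_k^a(x):=\frac{\delta F_0^a}{\delta\mu_k}\big|_{(\mu_1^{aV},\ldots,\mu_d^{aV})}(x)$, there are constants $\ell_k$ such that $W_k(x)-h_k^a(x)-\beta\int\log|x-y|\,d\mu_k^{aV}(y)$ equals $\ell_k$ on ${\rm supp}(\mu_k^{aV})$ and is $\ge\ell_k$ on $[-M,M]$. By the regularity estimates on $F_0^a$ --- its dependence on the moments of the $\mu_j$ being analytic on the scale $M\xi$ with $\xi>1$, cf. Hypothesis \ref{hypo} and Lemma \ref{lem:smooth} --- the function $h_k^a$ extends holomorphically to a fixed complex neighbourhood of $[-M,M]$, with $\|h_k^a\|_{C^2}=O(a)$ there; hence $\widetilde W_k^a:=W_k-h_k^a$ is of class $C^\sigma$ with $(\widetilde W_k^a)''\ge c_0/2>0$ once $|a|\le a_0$, i.e. it is uniformly convex. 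Freezing $h_k^a$, the displayed relation is exactly the Euler--Lagrange condition --- sufficient, by strict convexity --- for $\mu_k^{aV}$ to be the minimizer of $\mu\mapsto\int\widetilde W_k^a\,d\mu-\tfrac\beta2\iint\log|x-y|\,d\mu\,d\mu$; therefore the one-cut description recalled above applies verbatim to $\mu_k^{aV}$ and yields connected support, a density smooth and strictly positive in the interior, and square-root vanishing at the two endpoints. Finally, as $a\to0$ we have $\widetilde W_k^a\to W_k$ with all derivatives, so the endpoints of ${\rm supp}(\mu_k^{aV})$ converge to those of ${\rm supp}(\mu_k^0)\subset\subset(-M,M)$; shrinking $a_0$ if necessary gives ${\rm supp}(\mu_k^{aV})\subset\subset(-M,M)$, which confirms a posteriori that the constraint $|x|\le M$ plays no role.

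The main obstacle is twofold. First, one must verify that the weak norm $\|\cdot\|_{\zeta M}$ controlling $F_0^a$ is itself dominated by the logarithmic-energy form $\Sigma$ on measures supported in $[-M,M]$, so that the strict convexity coming from the Coulomb interaction is not destroyed by $F_0^a$ --- this comparison is marginal precisely at $\zeta=1$. Second, and more structurally, $h_k^a$ depends on the a priori unknown minimizer, so one may ``freeze'' the effective potential and invoke the single-matrix one-cut theory only after the joint minimizer has been shown to exist and be unique. A more hands-on alternative, in the spirit of \cite{BGK,BFG}, would bypass the convexity argument and instead solve the coupled system of Euler--Lagrange equations for the densities directly by a quantitative implicit function argument around $(\rho_1^0,\ldots,\rho_d^0)$; there the obstacle is the invertibility of the linearized operator, again guaranteed by the uniform convexity of the $W_k$ together with the smallness of $a$.
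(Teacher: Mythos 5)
Your proof is correct in outline but diverges from the paper's argument in two substantive ways, and has one fixable gap.

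For uniqueness, you establish strict \emph{linear} convexity of $J^a$ by estimating the second variation $\beta\sum_k\Sigma(\nu_k)-D^2F_0^a[\nu,\nu]$. The paper instead derives the Euler--Lagrange equations and the regularity of \emph{any} minimizer first, and only then proves uniqueness via McCann-style displacement convexity along $\tau\mapsto\bigl((\tau\,{\rm Id}+(1-\tau)T_k)_\#\mu_k\bigr)_k$, where $T_k$ is the monotone transport between the two putative minimizers: concavity of $\log$ and uniform convexity of the effective potential $W_k-D_kF_0^a[\delta_\cdot]$ make the functional strictly convex in $\tau$. Both routes are sound, and yours is arguably cleaner because it does not require the minimizers to be absolutely continuous before uniqueness is known. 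However, the intermediate bound $\|\nu_k\|_{\zeta M}^2\lesssim\Sigma(\nu_k)$ requires $\zeta>1$ (as you acknowledge, $j\zeta^{-j}$ is unbounded at $\zeta=1$), while Hypothesis~\ref{hypo} only imposes $\zeta\geq 1$ and the lemma is claimed in that generality. The gap closes if you bypass $\|\cdot\|_{\zeta M}$ entirely and bound $|D^2_{k\ell}F_0^a[\nu_k,\nu_\ell]|\le d(\nu_k)\,d(\nu_\ell)\,\bigl(\iint|\widehat{D^2_{k\ell}F_0^a}(\xi,\eta)|^2|\xi||\eta|\,d\xi\,d\eta\bigr)^{1/2}$ directly by Cauchy--Schwarz in Fourier, using only that $D^2_{k\ell}F_0^a[\delta_x,\delta_y]$ is smooth and compactly supported with $O(a)$ size; this is precisely the estimate the paper uses inside the proof of Lemma~\ref{lem:Lip 12}, and it is valid at $\zeta=1$.

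For the structure of the minimizer, you freeze the effective potential $\widetilde W_k^a=W_k-h_k^a$ and appeal to the classical one-cut theory. The paper instead proves connectedness elementarily (the ``vacuum'' function $F_k-C_k$ is strictly convex and nonnegative off the support, vanishing at its boundary, which forbids gaps) and extracts the square-root edge behavior directly from the quadratic equation for the Stieltjes transform $G_k$, with control of the coefficient $d_k$ via continuity in $a$. Your citation-based route is shorter and correct, though you should claim only $C^\infty$ (rather than holomorphic) extension of $h_k^a$ to a neighborhood of $[-M,M]$; that is exactly what Lemma~\ref{lem:smooth} gives, and $C^\sigma$ with uniform bounds is all the one-cut theory needs.
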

\begin{proof}
We first notice that if $I^a(\mu_1,\ldots,\mu_k)$ is finite, so is $-\int \log|x-y|\,d\mu_k(x)\,d\mu_k(y)$. In particular the minimizers $\{\mu^{aV}_i\}_{1\le i\le d}$ of $I^a$
have no atoms.  We then consider the small perturbation $I^a(\mu_1^{aV}+\varepsilon\nu_1,\ldots,\mu_d^{aV}+\varepsilon\nu_d)$  for centered measures $(\nu_1,\ldots,\nu_d)$ (that is, $\int d\nu_k=0$)
such that $\nu_k \geq 0$ outside the support of $\mu_k^{aV}$ and $\mu_k^{aV}+\varepsilon \nu _k \geq 0$
for $|\varepsilon| {\ll} 1$.
Hence, by differentiating $I^a(\mu_1^{aV}+\varepsilon\nu_1,\ldots,\mu_d^{aV}+\varepsilon\nu_d)$ with respect to $\varepsilon$ and setting $\varepsilon=0$,
we deduce that
\begin{equation}
\label{eq:nuk}
0=\int F_k(x)\,d\nu_k(x),
\end{equation}
where 
$$
F_k(x):=W_k(x)-D_kF_0^{a}(\mu_1^{aV},\ldots,\mu_d^{aV},\tau_B)[\delta_x]-\beta\int\log|x-y|\,d\mu_k^{aV}(y)
$$
and
$x\mapsto D_k F_0^{a}(\mu_1,\ldots,\mu_d,\tau_B) [\delta_x]$ denotes the function such that, for any measure $\nu$,
\begin{multline}
\label{eq:DF}
\frac{d}{d\varepsilon}|_{\varepsilon=0} F_0^{a}(\mu_1^{aV},\ldots,\mu_{k-1}^{aV},\mu_k^{aV}+\varepsilon \nu,\mu_{k+1}^{aV},\ldots,\mu_d^{aV},\tau_B)\\
=\int D_k F_0^{a}(\mu_1^{aV},\ldots,\mu_d^{aV},\tau_B) [\delta_x] \,d\nu(x)\,.
\end{multline}
It is shown in Lemma \ref{lem:smooth} that this function is smooth and of size $a$ (as well as its derivatives). 
Since $\nu_k$ is centered and $\nu_k \geq 0$ outside the support of $\mu_k$,
it follows from \eqref{eq:nuk} that there exists a constant $C_k \in \R$ such that
$$
F_k \left\{
\begin{array}{ll}
=C_k & \text{on ${\rm supp}(\mu_k^{aV})$},\\
\geq C_k & \text{on $\R\setminus {\rm supp}(\mu_k^{aV})$},\\
\end{array}
\right.
$$
Since   $\partial_x^2 \bigl(D_kF_0^a(\mu_1^{aV},\ldots,\mu_k^{aV})[\delta_x]\bigr)$ is uniformly 
bounded by  $C(M)a$ for some finite constant $C(M)$
which only depends on $M$,  the effective potential
\begin{equation}
\label{eq:Veff}
W^{\rm eff}_{k}(x):=W_k(x)-D_kF_0^a(\mu_1^{aV},\ldots,\mu_k^{aV},\tau_B)[\delta_x]
\end{equation}
is uniformly convex for $a<c_0/C(M)$ thanks to Hypothesis \ref{hypo}.
In addition $x\mapsto -\int\log|x-y|d\mu_k^{aV}(y)$ is convex for $x \in \R\setminus {\rm supp}(\mu_k^{aV})$.
This implies that the nonnegative  function $F_k-C_k$ is uniformly convex on $ \R\setminus {\rm supp}(\mu_k^{aV})$ and vanishes at the boundary of the support of $\mu_k$, hence 
$\mu_k^{aV}$ has  necessarily a connected support, that we denote by $[a_k^{aV},b_k^{aV}]$.

We now consider the measures $\mu^\varepsilon_k:=({\rm Id}+\varepsilon f_k)_\#\mu_k^{aV}$, where $f_k:\R\to\R$ is a smooth function. Then, since $I^a(\mu_1^\varepsilon,\ldots,\mu_d^\varepsilon)\geq I^a(\mu_1^{aV},\ldots,\mu_d^{aV})$, we deduce by comparing the terms linear in  $\varepsilon$ that
\begin{equation}
\label{eq:muk*}
\int (W_k^{\rm eff})'(x)f(x)\,d\mu_k^{aV}(x)= \iint \frac{f(x)-f(y)}{x-y}\,d\mu_k^{aV}(x)\,d\mu_k^{aV}(y)\qquad \forall\,k=1,\ldots,d,\,\,\forall\,f.
\end{equation}
In particular, choosing $f(x):=(z-x)^{-1}$ with $z \in \R\setminus [a_k^{aV},b_k^{aV}]$ 
we obtain that $G_k(z):=\int (z-x)^{-1}\,d\mu_k^{aV}(x)$ satisfies the equation
$$G_k(z)^2=(W_k^{\rm eff})' (z)G_k(z)+H_k(z), \qquad H_k(z):=\int \frac{(W_k^{\rm eff})' (x)-(W_k^{\rm eff})' (z)}{z-x}\,d\mu_k^{aV}(x).$$
Solving this quadratic equation so that $G(z) \to 0$ as $|z| \to \infty$ yields
$$G_k(z)=\frac{1}{2}\Bigl((W_k^{\rm eff})' (z)-\sqrt{(W_k^{\rm eff})' (z)^2+4H_k(z)}\Bigr)$$
from which it follows (by smoothness of $H_k$, see also \cite[Proof of Lemma 3.2]{BFG}) that
$$\frac{d\mu_k^{aV}(x)}{dx}=d_k(x)\sqrt{(x-a_k^{aV})(b_k^{aV}-x)}$$
where
 $$d_k(x)^2(x-a_k^{aV})(b_k^{aV}-x)=-(W_k^{\rm eff})' (x)^2-4H_k(x)=:g_k(x)\qquad
 \text{for $x\in [a_k^{aV},b_k^{aV}]$.}$$
 Note that $g_k$ is a smooth function. In the case where $a=0$, it is well known that the strict convexity of $W_k$ implies that
 $g_k$ has simple zeroes in $a_k^{aV},b_k^{aV}$, and that $d_k$ does not vanish in an open neighborhood of $[a_k^{aV},b_k^{aV}]$. On the other hand we also know (see e.g. Lemma \ref{expina}) that  the measures $\mu_k^{aV}$'s depends continuously on the parameter $a$ (the set of probability measures being 
equipped with the weak topology) as they are compactly supported measures with moments depending analytically on $a$. As a consequence, $g_k$ and $g_k'$ are smooth functions of $a$, uniformly in the variable $x$. This implies that, for $a$ small enough,
$g_k$ can only vanish in a small neighborhood of $a_k^{aV}$ and $b_k^{aV}$ where its derivative does not vanish. Hence $g_k$ can only have one simple zero in a small neighborhood of $a_k^{aV}$ (resp. $b_k^{aV}$), and $d_k$ cannot vanish in an open neighborhood of $[a_k^{aV},b_k^{aV}]$. Also, notice that $d_k$ is smooth as so are $W_k^{\rm eff}$ and $H_k$.
In addition, if one chooses $M>\max\{|a_k^0|,|b_k^0|\}$ for all $k=1,\ldots,d$, then by continuity we deduce that $[a_k^{aV},b_k^{aV}]\subset (-M,M)$ for any $a \in [-a_0,a_0]$.

We finally deduce uniqueness: Assume there are two minimizers $(\mu_1,\ldots,\mu_d)$ and $(\mu_1',\ldots,\mu_d')$.
By the previous considerations, both $\mu_i$ and $\mu_i'$ have smooth densities with respect to the Lebesgue measure on $\R$ and we can therefore
consider the unique monotone nondecreasing maps $T_i:\R\to\R$ such that that $\mu_i'=(T_i)_\#\mu_i$.
We then consider $$j^a(\tau):=J^a\bigl((\tau {\rm Id}+(1-\tau) T_1)_\#\mu_1,\ldots, (\tau{\rm Id}+(1-\tau)T_d)_\#\mu_d\bigr).$$
By concavity of the logarithm and  uniform convexity of $W_k-D_kF_0^a(\nu_1,\ldots,\nu_d,\tau_B)[\delta_x]$ (uniform with respect to $\nu_\ell\in \mathcal P([-M,M])$),
we conclude that $j^a$ is uniformly convex on $[0,1]$, which contradicts the minimality of $\mu_i$ and $\mu_i'$.
\end{proof}

We next show that, since the support of each $\mu_k^{aV}$ is strictly contained inside $[-M,M]$,
the eigenvalues will not touch $\R\setminus [-M,M]$ with large probability.
\begin{lem}\label{confinement}
Let Hypothesis \ref{hypo} hold.  There exists $a_0>0$ such that the following holds for $a\in [-a_0,a_0]$: if $[a_k^{aV},b_k^{aV}]$ denotes the support of $\mu_k^{aV}$ (see Lemma \ref{lem:support}), then
for any $\varepsilon>0$ there exists $c(\varepsilon)>0$ such that, for $N$ large enough,
$$ P^{N,aV}_\beta\bigl(\exists \,i\in \{1,\ldots,N\},\, \exists\, k\in\{1,\ldots,d\} : \lambda_i^k\in [a_k^{aV}-\varepsilon,b_k^{aV}+\varepsilon]^c\bigr)
\le e^{-c(\varepsilon)N}$$
\end{lem}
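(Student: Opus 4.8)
The plan is to exploit the exponential weight $e^{-N\sum_k W_k(\lambda_i^k)}$ coming from the reference measures $R^{N,W_k}_{\beta,M}$, which, since each $W_k$ is uniformly convex, dominates all the other terms in the density. First I would isolate one eigenvalue, say $\lambda_1^1$, and estimate the probability that it lands outside $[a_1^{aV}-\varepsilon, b_1^{aV}+\varepsilon]$. The starting point is the explicit form of the density \eqref{poo}: combining \eqref{eq:RNW} with Theorem \ref{mainexp}, which gives $I^{N,aV}_\beta = (1+O(1/N))\exp\bigl(\sum_{l=0}^2 N^{2-l}F_l^a(L_1^N,\ldots,L_d^N,\tau_B^N)\bigr)$ with $F_l^a$ of size $a$ by Lemma \ref{lem:smooth}, we see that the full log-density of $\lambda_1^1$, after integrating out all other variables, has the shape
$$
-N W_1(\lambda_1^1) + \beta \sum_{j\ge 2}\log|\lambda_1^1-\lambda_j^1| + (\text{terms involving } F_l^a) + (\text{contributions from other matrices}).
$$
The second term is controlled by $\beta(N-1)\log(2M) \le CN$ since all eigenvalues lie in $[-M,M]$, and the $F_l^a$ terms, being $O(a)$, are bounded by $C(M)aN$ plus lower order. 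So for $a$ small enough and $|\lambda_1^1|$ large, the term $-NW_1(\lambda_1^1)$, which by uniform convexity grows at least quadratically, dominates.

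The key step is to make this quantitative using the variational characterization from Lemma \ref{lem:support}. From \eqref{eq:muk*} and the quadratic equation for $G_k$, the effective potential $W_k^{\rm eff}$ defined in \eqref{eq:Veff} satisfies the Euler--Lagrange inequality $F_k \ge C_k$ outside ${\rm supp}(\mu_k^{aV})$, with $F_k - C_k$ uniformly convex there (as shown in the proof of Lemma \ref{lem:support}); hence there is $\delta(\varepsilon)>0$ such that $F_k(x) \ge C_k + \delta(\varepsilon)$ for $x \notin [a_k^{aV}-\varepsilon, b_k^{aV}+\varepsilon]$. I would then write, for a configuration with $\lambda_1^1 = x \notin [a_1^{aV}-\varepsilon,b_1^{aV}+\varepsilon]$, the comparison
$$
\frac{d P^{N,aV}_\beta}{d P^{N,aV}_\beta}\Big|_{\lambda_1^1 \to y}
$$
where $y$ is a point inside the support: the ratio of densities is bounded below by $e^{N(F_1(x)-F_1(y)) - o(N)} \ge e^{N\delta(\varepsilon) - o(N)}$, using that the first-order change of the log-density along moving one eigenvalue is exactly (up to $O(N^0\log N)$ errors controlled by concentration, or simply by crude $\log|x-y|\le \log(2M)$ bounds) the quantity $N F_1$. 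Integrating this over $y$ in a fixed region of the bulk where the density is bounded below (again by Lemma \ref{lem:support}, $\mu_1^{aV}$ has a strictly positive smooth density there) shows
$$
P^{N,aV}_\beta(\lambda_1^1 \notin [a_1^{aV}-\varepsilon,b_1^{aV}+\varepsilon]) \le e^{-c(\varepsilon)N}
$$
for $N$ large. A union bound over the $dN$ pairs $(i,k)$ absorbs the factor $dN$ into the exponential, giving the claim.

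The main obstacle is handling the interaction terms rigorously when moving a single eigenvalue: the Vandermonde factor $\prod_{j\ge 2}|\lambda_1^1-\lambda_j^1|^\beta$ and the $F_l^a(L_1^N,\ldots)$ terms both depend on $\lambda_1^1$ in a way that is not uniformly bounded (the log-interaction can be very negative if $\lambda_1^1$ collides with another eigenvalue, and $F_l^a$ depends on $L_1^N$ which changes by $O(1/N)$). For the Vandermonde term, one avoids the issue by the standard trick of comparing with the sup over a fixed good set rather than a single point, or by first restricting to the event that no two eigenvalues are too close — but in fact a cleaner route is to bound $\beta\sum_{j\ge 2}\log|\lambda_1^1-\lambda_j^1| \le \beta(N-1)\log(2M)$ from above and use the confinement $-NW_1(\lambda_1^1)$ directly against the partition function lower bound, rather than a pointwise density comparison; the lower bound on $Z$ follows from restricting the integral to configurations near the equilibrium measure, where Lemma \ref{lem:support} and the smoothness of $F_l^a$ give $Z^{N,aV}_\beta \ge e^{-C N^2}$ with an explicit constant. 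Matching the linear-in-$N$ constants from the upper bound on the numerator (for $|\lambda_1^1|$ outside the enlarged support) and this lower bound, and checking that the gap is strictly positive for $a$ small using the $O(a)$-smallness of all $F_l^a$ contributions, is the delicate bookkeeping that the full proof must carry out; everything else is routine. This is essentially the argument of \cite{BAG,AGZ} adapted to the extra smooth interaction $F_l^a$, and I expect the write-up to invoke those references for the parts not involving $V$.
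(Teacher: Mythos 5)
Your overall plan is the right one and matches the paper's in spirit: the statement is a large-deviation estimate for one misplaced eigenvalue, the rate function is the quantity $F_k$ (or $\mathcal J_k$) established in Lemma~\ref{lem:support}, and the crucial input is the Euler--Lagrange inequality $F_k\ge C_k$ with strict, uniformly convex excess outside the support. The paper simply quotes the corresponding LDP from \cite[Lemma~3.1]{BGK} (adapted to the extra smooth mean-field interaction $F_l^a$), rather than redoing the numerator/denominator comparison by hand.

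There is, however, a genuine gap in the ``cleaner route'' you settle on. You propose to bound the Vandermonde contribution by $\beta\sum_{j\ge 2}\log|\lambda_1^1-\lambda_j^1|\le\beta(N-1)\log(2M)$ and then play the confinement term $-NW_1(\lambda_1^1)$ against a lower bound $Z\ge e^{-CN^2}$. This cannot work in the regime that actually matters: the cutoff already forces $\lambda_1^1\in[-M,M]$, so the event to be controlled is $\lambda_1^1\in[-M,M]\setminus[a_1^{aV}-\varepsilon,b_1^{aV}+\varepsilon]$, on which $W_1(\lambda_1^1)$ is simply a bounded number. After your crude bound the numerator and denominator both scale like $e^{-N^2 J^a(\mu^*)+O(N\log N)}$ with the \emph{same} leading constant, and there is no surviving $e^{-cN}$ decay. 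The exponential decay comes precisely from the logarithmic potential $-\beta\int\log|x-y|\,d\mu_1^{aV}(y)$ that you discarded: the rate function is $\mathcal J(x)=W_1^{\rm eff}(x)-\beta\int\log|x-y|\,d\mu_1^{aV}(y)$, which is \emph{constant} on the support and strictly larger by $\delta(\varepsilon)$ outside the $\varepsilon$-enlargement, and you need the cancellation between $W_1^{\rm eff}$ and the log-term to isolate this excess at scale $N$, not $N^2$. In other words, you must compare the sum $\sum_{j\ge 2}\log|x-\lambda_j^1|$ to $(N-1)\int\log|x-y|\,d\mu_1^{aV}(y)$ using concentration of $L^N_1$ (or restriction to a tube around the quantiles), not bound it by $\log(2M)$. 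Your first route (density comparison combined with concentration for the log-interaction) is closer to what is needed; it is exactly the mechanism in \cite[Lemma~3.1]{BGK}, which is what the paper invokes. As a smaller point, your first sketch wrote the rate as driven by $-NW_1$; the object that should appear is $W_1^{\rm eff}$ from \eqref{eq:Veff}, since the $F_0^a$-interaction contributes a linear-in-$N$ correction of size $O(a)$ that has to be folded into the potential before one can claim strict positivity of the excess.
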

\begin{proof} By \cite[Lemma 3.1]{BGK} (see also \cite{BG1,BG2}) we can prove that for any closed sets $F_k$
$$\limsup_{N\ra\infty}\frac{1}{N}\log P^{N,aV}_\beta\left(\exists\, i,k:\lambda_i^k\in F_k \right)\le -\inf_{F_1\times \cdots \times F_d} {\mathcal I}$$
where ${\mathcal I}$ is the good rate function
$${\mathcal I}(x_1,\ldots,x_i):= {\mathcal J}(x_1,\ldots,x_k)-\inf _{y_1,\ldots,y_k\in [-M,M]^d} \mathcal J(y_1,\ldots,y_k)$$
with
$${\mathcal J}(x_1,\ldots,x_d):=
\sum_{k=1}^d\Bigl[ W^{\rm eff }_k(x_k) -\beta\int\log|x_k-y|\,d\mu_k^{aV}(y)\Bigr]$$
where $W^{\rm eff}$ is defined in \eqref{eq:Veff}.
As in the proof of Lemma \ref{lem:support} one sees that, for $|a|$ sufficiently small,
${\mathcal J}$ is uniformly convex outside the support of the measure,
whereas it is constant on each support. Hence it is strictly greater than its minimal value at positive distance of
this support, from which the conclusion follows.
\end{proof}

\section{Construction of approximate transport maps: proof of Theorem \ref{thm:transport}}\label{transport}
As explained in the introduction,
one of the drawbacks of the results in \cite{BFG} is that it only allows one to deal with bounded test functions.
To avoid this, we shall prove a multiplicative
closeness result (see \eqref{aptr}).\
\subsection{Simplification of the measures and strategy of the proof}
\label{sect:simplify}
We begin from the measure $P^{N,V}_\beta$ as in \eqref{poo}.
Because of Theorem \ref{mainexp}, it makes sense to introduce the
probability measures 
$$
dP^{N,aV}_{ t,\beta}(\lambda^1,\ldots,\lambda^d):=\frac{1}{\tilde Z^{N,aV}_{t,\beta}} e^{N^2 t F_0^a(L^{N}_1,\ldots,L^{N}_d,\tau_B^N)+ NtF_1^a(L^{N}_1,\ldots,L^{N}_d,\tau_B^N)+tF_2^a(L^{N}_1,\ldots,L^{N}_d,\tau_B^N)} 
\prod_{k=1}^d dR^{N,W_k}_{\beta,M} (\lambda^k)
$$
for $t\in [0,1]$, where $R^{N,W}_\beta$ is as in \eqref{eq:RNW}.
Then, it follows by \eqref{poo} and \eqref{po} that, for any nonnegative function $\chi:\R^N\to \R^+$,
$$
\frac{1+\int \chi \,dP^{N,aV}_{\beta}}{1+\int \chi\,dP^{N,aV}_{1,\beta}}=
\frac{\int (1+\chi) \,dP^{N,aV}_{\beta}}{\int (1+\chi)\,dP^{N,aV}_{1,\beta}}=1+O\biggl(\frac{1}{N}\biggr),
$$
therefore
\begin{equation}
\label{eq:close1}
\biggl|\log\biggl(1+\int \chi \,dP^{N,aV}_{\beta}\biggr)
- \log\biggl(1+\int \chi \,dP^{N,aV}_{1,\beta}\biggr)\biggr| \leq \frac{C}{N}.
\end{equation}
 Hereafter we do not stress the dependency in $\beta$, so $P^{N,aV}_{ t,\beta}=P^{N,aV}_{ t}$.

To remove the cutoff in $M$, let
$$
dQ^{N,aV}_{t}(\lambda^1,\ldots,\lambda^d):=\frac{1}{Z^{N,aV}_{t}} e^{\sum_{l=0}^2 N^{2-l} t F_l^a(\phi^M_\#L^{N}_1,\ldots,\phi^M_\#L^{N}_d,\tau_B^N)} \prod_{k=1}^d dR^{N,W_k}_{\beta,\infty} (\lambda^k),
$$
where 
\begin{equation}
\label{eq:ZN}
Z^{N,aV}_{t}:=\int e^{\sum_{l=0}^2 N^{2-l} t F_l^a(\phi^M_\#L^{N}_1,\ldots,\phi^M_\#L^{N}_d,\tau_B^N)} \prod_{k=1}^d dR^{N,W_k}_{\beta,\infty} (\lambda^k)
\end{equation}
and
$\phi^M:\R \to \R$ is a smooth function equal to $x$ on a neighborhood of the supports $[a_k^{aV},b_k^{aV}]$, vanishing outside of $[-2M,2M]$, 
and bounded by $2M$ everywhere.
Then Lemma \ref{confinement}  (as well as similar considerations for $Q^{N,aV}_{t}$) implies that, for some $\delta>0$,
\begin{equation}
\label{eq:close2}
\|Q^{N,aV}_1-P^{N,aV}_1\|_{TV}\le e^{-\delta N}\,.
\end{equation}
Notice that $Q^{N,aV}_{0}=Q^{N,0}_1=P^{N,0}_\beta$ so, if we can construct an approximate transport map
from $Q^{N,aV}_{0}$ to $Q^{N,aV}_1$ as in the statement of Theorem \ref{thm:transport}, by \eqref{eq:close1} and \eqref{eq:close2} the same map will be an approximate transport
from $P_\beta^{N,0}$ to $P_\beta^{N,aV}$.
Thus it suffices to prove Theorem \ref{thm:transport}
with $Q^{N,aV}_{0}$ and $Q^{N,aV}_1$ in place of $P^{N,0}_\beta$ and $P_\beta^{N,aV}$.

For this, we improve the strategy developed in \cite{BFG}: we construct a one parameter family of maps $T^N_{t}:\R^{dN}
\to \R^{dN}$ that approximately sends $Q^{N,aV}_{0}$ onto $Q^{N,aV}_{t}$
by solving
$$\partial_t T^N_{t}=\YY^N_t(T^N_{t}), \qquad T_0^N=\operatorname{Id},$$
where $\YY^N_{t}=\bigl((\YY^N_{t})_1^1,\ldots,(\YY^N_{t})_N^d\bigr):\R^{dN}\to \R^{dN}$ is constructed so that the following quantity is small in $L^q(Q^{N,aV}_{t})$ for any $q<\infty$:
\begin{equation}
\label{eq:RNt}
\begin{split}
{\mathcal R}^N_t(\YY^N)&:= c_t^N-\beta\sum_{k} \sum_{i<j}\frac{(\YY^N_t)_i^k-(\YY^N_t)_j^k}{\lambda_i^k-\lambda_j^k}
-\sum_{i,k} \partial_{\lambda_i^k}(\YY^N_t)_i^k \\
&-N^2 F_0^a(\phi^M_\#L^{N}_1,\ldots,\phi^M_\#L^{N}_d,\tau_B^N)-NF_1^a(\phi^M_\#L^{N}_1,\ldots,\phi^M_\#L^{N}_d,\tau_B^N)\\
&-F_2^a(\phi^M_\#L^{N}_1,\ldots,\phi^M_\#L^{N}_d,\tau_B^N)
+ \sum_{i,k} \partial_{\lambda_i^k} H_t(\hat\lambda)(\YY^N_t)_i^k,
\end{split}
\end{equation}
where $\hat\lambda:=(\lambda^1,\ldots,\lambda^d)=(\lambda_1^1,\ldots,\lambda_N^1,\ldots,\lambda_1^d,\ldots\lambda_N^d)$,
$c^N_t:= \partial_t \log Z^{N,aV}_{t}\,,$ $L^{N}_k:=\frac{1}N \sum_{i=1}^N \delta_{\lambda_i^k}$,
and
\begin{multline}
\label{eq:def H}
H_t(\hat\lambda):=N\sum_{i,k} W_k(\lambda_i^k)-tN^2 F_0^a(\phi^M_\#L^{N}_1,\ldots,\phi^M_\#L^{N}_d,\tau_B^N)\\
-tNF_1^a(\phi^M_\#L^{N}_1,\ldots,\phi^M_\#L^{N}_d,\tau_B^N)-tF_2^a(\phi^M_\#L^{N}_1,\ldots,\phi^M_\#L^{N}_d,\tau_B^N)\,.
\end{multline}
In  \cite{BFG} it is proved that  the flow of $\YY^N_t$ is an approximate transport map provided 
${\mathcal R}^N_t(\YY^N)$ is small: more precisely, if
$X_t^N$
solves the ODE
\begin{equation}
\label{eq:XtN}
 \dot X_t^N=\YY_t^N(X_t^N),\qquad X_0^N=\operatorname{Id},
\end{equation}
and we set
$T^N:=X_1^N$, then \cite[Lemma 2.2]{BFG} shows that
\begin{equation}
\label{aptr2}
\left|\int \chi\circ T^N\, d Q_0^{N,aV}-\int \chi\, d Q_1^{N,aV}\right|\le \|\chi\|_\infty\int_0^1 \|{\mathcal R}^N_t(\YY^N)\|_{L^1(Q^{N,aV}_{t})}\,dt
\end{equation}
for any bounded measurable function $\chi:\mathbb R^{dN}\to \mathbb R$.

Although this result is powerful enough if $\chi$ is a bounded test function, it becomes immediately useless if 
we would like to integrate a function that grows polynomially in $N$.
For this reason we prove here a new estimate that considerably improves \cite[Lemma 2.2]{BFG}.

\begin{lem}
\label{lem:key}
Assume that, for any $q<\infty$, there exists a constant $C_q$ such that
\begin{equation}\label{eq:toprove}
\|{\mathcal R}^N_t(\YY^N)\|_{L^q(Q^{N,aV}_{t})} \leq C_q \,\frac{(\log N)^3}{N} \qquad \forall \,t\in [0,1],
\end{equation}
define $X_t^N$ as in \eqref{eq:XtN},
and set $T^N:=X_1^N$.
Let $\chi:\R^N\to \R^+$ be a nonnegative measurable function satisfying $\|\chi\|_\infty \leq N^k$ for some $k \geq 0$.
Then, for any $\eta>0$ there exists a constant $C_{k,\eta}$, independent of $\chi$, such that
$$
\biggl| \log\biggl(1+\int \chi\, d Q_1^{N,aV}\biggr)
-  \log\biggl(1+\int \chi\circ T^N\, d Q_0^{N,aV}\biggr)\biggr| \leq C_{k,\eta} \,N^{\eta-1}.
$$
\end{lem}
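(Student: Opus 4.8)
The plan is to sharpen the flow argument of \cite{BFG} by keeping track of the \emph{full} logarithmic density ratio between the transported measure and the target, rather than only testing against bounded functions. Let $\nu_t:=(X^N_t)_\#Q^{N,aV}_0$ be the push-forward of $Q^{N,aV}_0$ along the flow \eqref{eq:XtN}; since $X^N_t$ is a diffeomorphism and all the measures involved have positive smooth Lebesgue densities away from the diagonals, we may set
$$
g_t(\hat\lambda):=\log\frac{d\nu_t}{dQ^{N,aV}_t}\bigl(X^N_t(\hat\lambda)\bigr),
$$
so that $g_0\equiv 0$ and, at $t=1$, $\frac{d\nu_1}{dQ^{N,aV}_1}(T^N(\hat\lambda))=e^{g_1(\hat\lambda)}$. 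The first step is the differential identity
$$
\frac{d}{dt}g_t(\hat\lambda)=\mathcal{R}^N_t(\YY^N)\bigl(X^N_t(\hat\lambda)\bigr),
$$
obtained by writing $g_t(\hat\lambda)=\log(\text{density of }Q^{N,aV}_0)(\hat\lambda)-\log\det DX^N_t(\hat\lambda)-\log(\text{density of }Q^{N,aV}_t)(X^N_t(\hat\lambda))$ via the Jacobian formula, differentiating in $t$, using $\frac{d}{dt}\log\det DX^N_t=(\operatorname{div}\YY^N_t)\circ X^N_t$ and the explicit form of the density of $Q^{N,aV}_t$ (a Vandermonde factor times $e^{-H_t}/Z^{N,aV}_t$, with $H_t$ as in \eqref{eq:def H} and $c^N_t=\partial_t\log Z^{N,aV}_t$), and symmetrising the Vandermonde term: the outcome is exactly the right-hand side of \eqref{eq:RNt} evaluated at $X^N_t(\hat\lambda)$. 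Integrating gives $g_1(\hat\lambda)=\int_0^1\mathcal{R}^N_t(\YY^N)(X^N_t(\hat\lambda))\,dt$. This identity is already implicit behind \eqref{aptr2}; the new point is to exploit it quantitatively.

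The core of the proof is to show that $g_t$ remains small, uniformly in $t\in[0,1]$: $\int(e^{2g_t}+e^{-2g_t})\,dQ^{N,aV}_0\le 2$ and $\|g_t\|_{L^q(Q^{N,aV}_0)}\le C_q\,(\log N)^3/N$ for every $q<\infty$. Here lies a genuine circularity: \eqref{eq:toprove} controls $\mathcal{R}^N_t$ in $L^q(Q^{N,aV}_t)$, whereas the identity above evaluates $\mathcal{R}^N_t$ along the flow, i.e.\ in $L^q(\nu_t)$, and closeness of $\nu_t$ to $Q^{N,aV}_t$ is precisely what we are proving. I would resolve this by a bootstrap on the set $I:=\{\,t\in[0,1]:\int(e^{2g_s}+e^{-2g_s})\,dQ^{N,aV}_0\le2\text{ for all }s\le t\,\}$, which is non-empty (contains $0$) and closed (continuity of $t\mapsto\int(e^{2g_t}+e^{-2g_t})\,dQ^{N,aV}_0$, from the regularity of the flow and the uniform integrability provided by \eqref{eq:toprove}). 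On $I$ one has $\|d\nu_s/dQ^{N,aV}_s\|_{L^2(Q^{N,aV}_s)}^2=\int e^{g_s}\,dQ^{N,aV}_0\le 2$, so a Cauchy--Schwarz argument transfers \eqref{eq:toprove} from $Q^{N,aV}_s$ to $\nu_s$ at the price of doubling the exponent, $\|\mathcal{R}^N_s\|_{L^q(\nu_s)}\le 2\,C_{2q}\,(\log N)^3/N$; combined with $\|g_t\|_{L^q(Q^{N,aV}_0)}\le\int_0^t\|\mathcal{R}^N_s\|_{L^q(\nu_s)}\,ds$ (Minkowski) and a Taylor expansion of $e^{\pm 2g_t}$ (using $(\log N)^3/N\to0$), this forces $\int(e^{2g_t}+e^{-2g_t})\,dQ^{N,aV}_0\le 1+o(1)<2$ for $N$ large. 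Hence $I$ is also open, so $I=[0,1]$; in particular the displayed bounds on $g_t$ hold at $t=1$, and by the same change of variables $\tilde g_1:=g_1\circ(X^N_1)^{-1}$ satisfies $\|e^{\tilde g_1}\|_{L^2(Q^{N,aV}_1)}\le\sqrt2$ and $\|\tilde g_1\|_{L^q(Q^{N,aV}_1)}\le C_q\,(\log N)^3/N$.

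Given this, the conclusion is short. Writing $1+A=\int(1+\chi)\,d\nu_1=\int(1+\chi)\,e^{\tilde g_1}\,dQ^{N,aV}_1$ and $1+B=\int(1+\chi)\,dQ^{N,aV}_1\ge1$, we get
$$
\Bigl|\frac{1+A}{1+B}-1\Bigr|\le\frac{\int(1+\chi)\,|e^{\tilde g_1}-1|\,dQ^{N,aV}_1}{\int(1+\chi)\,dQ^{N,aV}_1},
$$
and we split the numerator according to whether $|\tilde g_1|\le N^{\eta-1}$ or not. On the first region $|e^{\tilde g_1}-1|\le 2N^{\eta-1}$, contributing $\le 2N^{\eta-1}$. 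On the second, bound $|e^{\tilde g_1}-1|\le e^{\tilde g_1}+1$ and $1+\chi\le 1+N^k$, use $\int(1+\chi)\,dQ^{N,aV}_1\ge1$ together with $\|e^{\tilde g_1}\|_{L^2(Q^{N,aV}_1)}\le\sqrt2$ and Markov's inequality $Q^{N,aV}_1(|\tilde g_1|>N^{\eta-1})\le (C_q(\log N)^3N^{-\eta})^q$; choosing $q=q(k,\eta)$ large enough (so that $\eta q/2>k+1$) makes that part $\le N^{\eta-1}$ as well. Thus $|\tfrac{1+A}{1+B}-1|\le 3N^{\eta-1}$, and since this is $<1/2$ for $N$ large, $|\log(1+A)-\log(1+B)|\le 6\,N^{\eta-1}$, which is the claim (the finitely many remaining $N$ being absorbed into $C_{k,\eta}$).

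The step I expect to be the main obstacle is the bootstrap in the second paragraph: one must make sure that the transfer of norms between $\nu_s$ and $Q^{N,aV}_s$ really closes, i.e.\ that the Taylor series for $e^{\pm 2g_t}$ converges and stays below $2$, which relies crucially on having \eqref{eq:toprove} for \emph{all} $q<\infty$ with constants $C_q$ of moderate growth. Everything else — the identification of $\tfrac{d}{dt}g_t$ with $\mathcal{R}^N_t\circ X^N_t$, the Cauchy--Schwarz transfer of $L^q$-norms, and the final two-region splitting — is routine once this circularity has been broken.
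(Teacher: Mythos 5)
Your construction — tracking the full logarithmic density ratio $g_t=\log\bigl(d\nu_t/dQ_t^{N,aV}\bigr)\circ X_t^N$ along the flow, deriving $\frac{d}{dt}g_t=\mp\mathcal R^N_t\circ X_t^N$ (the sign is actually minus, though that is harmless), and then controlling $g_1$ in $L^q$ — is in the same spirit as the paper, but the bootstrap you need to close the circularity between $\nu_t$ and $Q_t^{N,aV}$ does \emph{not} follow from the hypotheses of the lemma, and you correctly identify this as the main obstacle. Two concrete problems. First, the set $I$ you bootstrap on cannot be non-trivially open: since $e^{2x}+e^{-2x}\ge 2$ for every $x$, one has $\int(e^{2g_t}+e^{-2g_t})\,dQ_0^{N,aV}\ge 2$ with equality only at $g_t\equiv 0$, so the claimed conclusion $\le 1+o(1)<2$ is impossible; the threshold would have to be raised (say to $3$), with the closure step showing $\le 2+o(1)<3$. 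Second — and this is the genuine gap — even with the correct threshold, the Taylor expansion of $\int e^{\pm 2g_t}\,dQ_0^{N,aV}$ sums terms $\frac{2^n}{n!}\int |g_t|^n\,dQ_0^{N,aV}$, each controlled via $\|g_t\|_{L^n}^n$ and hence via $C_{2n}^n$; to make this series converge and be $2+O((\log N)^3/N)$ you need $C_q$ to grow at most like a small power of $q$. But the lemma's hypothesis only asserts that for each $q$ \emph{some} constant $C_q$ exists, with no growth control, so nothing prevents $C_q$ from growing factorially, in which case the series diverges and the bootstrap never closes. (The same deficiency infects your transfer $\int|\tilde g_1|^q\,dQ_1^{N,aV}\le \sqrt2\,\|g_1\|_{L^{2q}(Q_0^{N,aV})}^q$, which again pushes $q$ upward.)

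The paper's proof avoids this entirely. Rather than following the density ratio, it sets $\chi_t:=\chi\circ X_1^N\circ(X_t^N)^{-1}$ (so $\chi_1=\chi$, $\chi_0=\chi\circ T^N$), uses the continuity equation $\partial_t\rho_t+\operatorname{div}(\YY_t^N\rho_t)=\mathcal R_t^N\rho_t$ together with the transport equation $\partial_t\chi_t+\YY_t^N\cdot\nabla\chi_t=0$ to show $\frac{d}{dt}\int\chi_t\rho_t\,d\mathcal L=\int\chi_t\,\mathcal R_t^N\,\rho_t\,d\mathcal L$, and then bounds this by a single application of H\"older with $p=1+\eta/(2k)$ and $q=p/(p-1)$, giving $|\dot Z(t)|\le C\,N^{\eta-1}(1+Z(t))$ for $Z(t)=\int\chi_t\rho_t\,d\mathcal L$; Gronwall then delivers the claim immediately. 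The key structural advantage is that only \emph{one} exponent $q=q(k,\eta)$ is ever used, so the hypothesis "for each $q$ there is a $C_q$" suffices verbatim, no uniformity in $q$ and no bootstrap being required.
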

Notice that this lemma proves the validity of \eqref{aptr} with $Q^{N,aV}_{0}$ and $Q^{N,aV}_1$ in place of $P^{N,0}_\beta$ and $P_\beta^{N,aV}$, provided we can show that \eqref{eq:toprove}
holds.

Here, we shall first prove Lemma \ref{lem:key} and then
we show the validity of \eqref{eq:toprove}.
More precisely, in Section \ref{sect:proof lemma key} we prove Lemma \ref{lem:key}.
Then in Sections \ref{sect:constr}-\ref{sect:rest}
we show that 
\begin{equation}
\label{eq:toprove1}
|\mathcal R^N_t(\YY^N)|\leq C\,\frac{(\log N)^3}N\quad\text{
on a set $G_t\subset \R^N$ satisfying $Q_t^{N,aV}(G_t)\geq 1-N^{-cN}$.}
\end{equation}
Since $\RR_t^N(\YY^N)$ is trivially bounded by $CN^2$ everywhere (being the sum of $O(N^2)$ bounded terms, see \eqref{eq:RNt}),
\eqref{eq:toprove1} implies that
$$
\|{\mathcal R}^N_t(\YY^N)\|_{L^q(Q^{N,aV}_{t})} 
\leq C\,\frac{(\log N)^3}{N} +C\,N^2 \Bigl(Q^{N,aV}_{t}(\R^N\setminus G_t)\Bigr)^{1/q} \leq  C\,\frac{(\log N)^3}{N},
$$
proving \eqref{eq:toprove}.

Finally, in Section \ref{sect:flow} we show that $T^N=X_1^N$ satisfies all the properties stated in Theorem \ref{thm:transport}.

\subsection{Proof of Lemma \ref{lem:key}}
\label{sect:proof lemma key}
Let $\rho_t$ denote the density of $Q_t^{N,aV}$ with respect to the Lebesgue measure $\mathcal L$.
Then, by a direct computation one can check that $\rho_t$, $\YY^N$, and
$\mathcal R_t^N=\RR_t^N(\YY^N)$ are related by
the following formula:
\begin{equation}
\label{eq:rho cont}
\partial_t \rho_t+{\rm div}(\YY_t^N\rho_t)=\RR_t^N\rho_t.
\end{equation}
Now, given a smooth function $\chi:\R^N\to\R^+$ satisfying $\|\chi\|_\infty \leq N^k$
we define 
\begin{equation}
\label{eq:chit}
\chi_t:=\chi\circ X_1^N \circ (X_t^N)^{-1}\qquad \forall\,t \in [0,1].
\end{equation}
Note that with this definition $\chi_1=\chi$.
Also, since $\chi_t\circ X_t^N$ is constant in time, differentiating with respect to $t$ we deduce that
$$
0=\frac{d}{dt}\bigl(\chi_t\circ X_t^N\bigr)=\Bigl(\partial_t \chi_t+\YY_t^N\cdot \nabla \chi_t\Bigr)\circ X_t^N,
$$
hence $\chi_t$
solves the transport equation
\begin{equation}
\label{eq:chi transp}
\partial_t \chi_t+\YY_t^N\cdot \nabla \chi_t=0,\qquad \chi_1=\chi.
\end{equation}
Combining \eqref{eq:rho cont} and \eqref{eq:chi transp}, we compute
\begin{align*}
\frac{d}{dt}\int \chi_{t}\, \rho_t \,d\mathcal L&=\int \partial_t \chi_t\,\rho_t \,d\mathcal L + \int \chi_t\,\partial_t\rho_t \,d\mathcal L\\
&=- \int \YY_t^N\cdot \nabla \chi_t\,\rho_t \,d\mathcal L -\int \chi_{t} \,{\rm div}(\YY_t^N\rho_t)\,d\mathcal L+ \int \chi_t\, \RR_t^N\,\rho_t \,d\mathcal L\\
&=\int \chi_t\, \RR_t^N\,\rho_t \,d\mathcal L.
\end{align*}
We want to control the last term.
To this aim we notice that, since $\|\chi\|_\infty \leq N^k$, it follows immediately from \eqref{eq:chit}
that $\|\chi_t\|_\infty \leq N^k$ for any $t \in [0,1]$.
Hence, using H\"older inequality and \eqref{eq:toprove}, for any $p>1$ we can bound
\begin{align*}
\biggl|\int \chi_t\, \RR_t^N\,\rho_t \,d\mathcal L\biggr| &\leq  \|\chi_t\|_{L^p(Q^{N,aV}_{t})} \|\RR_t^N\|_{L^q(Q^{N,aV}_{t})}
\leq \|\chi_t\|_{\infty}^{\frac{p-1}{p}}\|\chi_t\|_{L^1(Q^{N,aV}_{t})}^{1/p}  \|\RR_t^N\|_{L^q(Q^{N,aV}_{t})}\\
&\leq  N^{\frac{k(p-1)}{p}}\|\chi_t\|_{L^1(Q^{N,aV}_{t})}^{1/p}  \|\RR_t^N\|_{L^q(Q^{N,aV}_{t})}
\leq C_q \frac{N^{\frac{k(p-1)}{p}}(\log N)^3}{N}\|\chi_t\|_{L^1(Q^{N,aV}_{t})}^{1/p},
\end{align*}
where $q:=\frac{p}{p-1}$.
Hence, given $\eta>0$, we can choose $p:=1+\frac{\eta}{2k}$
to obtain
$$
\biggl|\int \chi_t\, \RR_t^N\,\rho_t \,d\mathcal L\biggr| \leq C_q \,N^{\eta-1}\|\chi_t\|_{L^1(Q^{N,aV}_{t})}^{1/p}
\leq C \,N^{\eta-1}\Bigl(1+\|\chi_t\|_{L^1(Q^{N,aV}_{t})}\Bigr),
$$
where $C$ depends only on $C_q$, $k$, and $\eta$.
Therefore, setting
$$
Z(t):=\int \chi_{t}\, \rho_t \,d\mathcal L=\|\chi_t\|_{L^1(Q^{N,aV}_{t})}
$$
(recall that $\chi_t\ge 0$), we proved that 
$$
|\dot Z(t)| \leq C \,N^{\eta-1}\bigl(1+Z(t)\bigr),
$$
which implies that
$$
\bigl|\log\bigl(1+Z(1)\bigr)-\log\bigl(1+Z(0)\bigr)\bigr|\leq C \,N^{\eta-1}.
$$
Recalling that $T^N=X_1^N$,
this proves the desired result when $\chi$ is smooth.
By approximation the result extends to all measurable functions $\chi:\R^N\to \R^+$ satisfying $\|\chi\|_\infty \leq N^k$, concluding the proof.
\qed

 \subsection{Construction of approximate transport maps}\label{sect:constr}
Define
$$
M^{N}_k:=\sum_{i=1}^N \delta_{\lambda_i^k} - N\mu_{k,t}^{*},
$$
where $\mu_{k,t}^{*}:=\mu_{k,t}^{aV}$ are the limiting measures for $L^N_k$ under $Q^{N,aV}_t$; their existence
and properties are derived exactly as in the case $t=1$, see Section \ref{eqmeas}. In analogy with
 \cite[Section 2.3]{BFG} we make the following ansatz: we
look for a vector field $\YY_{t}^N$ of the form
\begin{equation}
\label{eq:psi}
(\YY_t^N)_i^k(\hat\lambda)
=\yy_{k,t}^0 (\lambda_i^k)+ \frac1N \yy_{k,t}^1 (\lambda_i^k)
+
\frac{1}{N}\sum_{\ell=1}^d\zzeta_{k\ell,t}(\lambda_i^k,M^{N}_\ell)
\end{equation}
where $\yy_{k,t}^0:\R\to\R$, $\yy_{k,t}^1:\R\to\R$, 
$\zz_{k\ell,t}=\zz_{\ell k,t}:\R^2\to\R$, and $
\zzeta_{k\ell,t}(x,M^{N}_\ell):= \int \zz_{k\ell,t}(x,y)\,dM^{N}_\ell(y)$.
With this particular choice of $\YY_t^N$ we see that
\begin{multline*}
\sum_{i}\partial_{\lambda_i^k}(\YY_t^N)_i^k(\hat\lambda)
=N\int (\yy_{k,t}^0)' (x)\,dL^{N}_k(x)+ \int (\yy_{k,t}^1)' (x)\,dL^{N}_k(x)
\\
+\sum_\ell \int \partial_1 \zzeta_{k\ell,t}(x,M^{N}_\ell)\,dL^{N}_k(x)
+ \int \partial_{2} \zz_{kk,t}(x,x)\,dL^{N}_k(x).
\end{multline*}
We now expand $\{F_l^a\}_{l=0,1,2}$ around the stationary measures $\mu_{k,t}^{*}$ (recall that $F_l^a$ are smooth by Lemma \ref{lem:smooth},
and that $M_N$ has mass bounded by $2N$) and use that $\phi^M_\#\mu_{k,t}^{*}=\mu_{k,t}^{*}$ to get
\begin{align*}
F_l^a(\phi^M_\#L^{N}_1&,\ldots,\phi^M_\#L^{N}_d,\tau_B^N)
=F_l^a(\mu_{1,t}^{*},\ldots,\mu_{d,t}^{*},\tau_B^N) + \frac{1}N
\sum_{k} D_kF_l^a(\mu_{1,t}^{*},\ldots,\mu_{d,t}^{*},\tau_B^N)[\phi^M_\#M^{N}_k]
\\
&+ \frac{1}{N^2}
\sum_{k\ell} D_{k\ell}^2F_l^a(\mu_{1,t}^{*},\ldots,\mu_{d,t}^{*},\tau_B^N)[\phi^M_\#M^{N}_k,\phi^M_\#M^{N}_\ell] \\
&+ \frac{1}{N^3}
\sum_{k\ell m} D_{k\ell m}^3F_l^a(\mu_{1,t}^{*},\ldots,\mu_{d,t}^{*},\tau_B^N)[\phi^M_\#M^{N}_k,\phi^M_\#M^{N}_\ell,\phi^M_\#M_m^N] + O\biggl(\frac{|\phi^M_\#M^N|^4}{N^4}\biggr)
\end{align*}
where $O\left(\frac{|\phi^M_\#M^N|^p}{N^k}\right):=O\left(N^{-k}\Bigl\| \phi^M_\#M^N\Bigr\|^p_{M\zeta}\right)$, see Lemma \ref{lem:smooth}. 

We now use assumption \eqref{expB} and the smoothness of the functions $F^a_l$ (see Lemma \ref{lem:smooth} again) to expand $D_kF_l^a$, $D_{k\ell}^2F_l^a$, and
$D_{k\ell m}^3F_l^a$ with respect to $\tau$. To simplify notation, we define the following  functions: 
\begin{align*}
f_{k,l}(x)&:=D_kF_l^a(\mu_{1,t}^{*},\ldots,\mu_{d,t}^{*},\tau_B^0)[\delta_{\phi^M(x)}],\\
 f_{k\tau^1,l}(x)&:=D_{k,\tau}^2F_l^a(\mu_{1,t}^{*},\ldots,\mu_{d,t}^{*},\tau_B^0)[\delta_{\phi^M(x)},\tau_B^1],\\
 f_{k\tau^2,l}(x)&:=D_{k,\tau}^2F_l^a(\mu_{1,t}^{*},\ldots,\mu_{d,t}^{*},\tau_B^0)[\delta_{\phi^M(x)},\tau_B^2]\\
&\quad+\textstyle{\frac12}D_{k,\tau\tau}^3F_l^a(\mu_{1,t}^{*},\ldots,\mu_{d,t}^{*},\tau_B^0)[\delta_{\phi^M(x)},\tau_B^1,\tau_B^1],\\
 f_{k\ell,l}(x,y)&:=D_{k\ell}^2F_l^a(\mu_{1,t}^{*},\ldots,\mu_{d,t}^{*},\tau_B^0)[\delta_{\phi^M(x)},\delta_{\phi^M(y)}]\\
f_{k\ell \tau^1,l}(x,y)&:=
D_{k\ell,\tau}^3F_l^a(\mu_{1,t}^{*},\ldots,\mu_{d,t}^{*},\tau_B^0)[\delta_{\phi^M(x)},\delta_{\phi^M(y)},\tau_B^1],\\
 f_{k\ell m,l}(x,y)&:=D_{k\ell m}^3F_l^a(\mu_{1,t}^{*},\ldots,\mu_{d,t}^{*},\tau_B^0)[\delta_{\phi^M(x)},\delta_{\phi^M(y)},\delta_{\phi^M(z)}].
\end{align*}
We can assume without loss of generality that these functions are symmetric with respect to their arguments. 
Then we get the following formulas: 
\begin{align*}
&F_l^a(\phi^M_\#L^{N}_1,\ldots,\phi^M_\#L^{N}_d,\tau_B^N)=F_l^a(\mu_{1,t}^{*},\ldots,\mu_{d,t}^{*},\tau_B^0)+\frac1N \sum_k\int f_{k,l}(x)\,dM^{N}_k(x)\\
&+\frac1{N^2} \sum_k\int f_{k\tau^1,l}(x)\,dM^{N}_k(x)
+\frac{1}{N^2}\sum_{k\ell} \iint f_{k\ell,l}(x,y)\,dM^{N}_k(x)\,dM^{N}_\ell(y)\\
&+\frac1{N^3} \sum_k\int f_{k\tau^2,l}(x)\,dM^{N}_k(x)
+\frac{1}{N^3}\sum_{k\ell} \iint f_{k\ell \tau^1,l}(x,y)\,dM^{N}_k(x)\,dM^{N}_\ell(y)\\
&+
\frac{1}{N^3}\sum_{k\ell m} \iiint f_{k\ell m,l}(x,y,z)\,dM^{N}_k(x)\,dM^{N}_\ell(y)\,dM_{m}^N(z)
+ O\biggl(\frac{|\phi^M_\#M^N|^4}{N^4}\biggr),
\end{align*}
and
\begin{align*}
\partial_{\lambda_i^k}  F_l^a(\phi^M_\#L^{N}_1,\ldots,\phi^M_\#L^{N}_d,\tau_B^N)
&=\frac1N f_{k,l}'(\lambda_i^k)+\frac1{N^2} f_{k \tau^1,\alpha}'(\lambda_i^k)+
\frac{2}{N^2}\sum_{\ell} \int \partial_1f_{k\ell,l}(\lambda_i^k,y)\,dM^{N}_\ell(y)
\\
&+\frac1{N^3} f_{k \tau^2,\alpha}'(\lambda_i^k)
+\frac{2}{N^3}\sum_{\ell} \int \partial_1f_{k\ell\tau^1,\alpha}(\lambda_i^k,y)\,dM^{N}_\ell(y)\\
&+\frac{3}{N^3}\sum_{\ell m} \iint \partial_1f_{k\ell m,l}(\lambda_i^k,y,z)\,dM^{N}_\ell(y)\,dM_{m}^N(z)
+ O\biggl(\frac{|\phi^M_\#M^N|^3}{N^4}\biggr).
\end{align*}
This gives, for $H$ defined in \eqref{eq:def H}, 
\begin{align*}
\partial_{\lambda_i^k} H_t(\hat\lambda)&=N W_k'(\lambda_i^k)
-tN f_{k,0}'(\lambda_i^k)-t \Bigl[f_{k\tau^1,0}'(\lambda_i^k)  - f_{k,1}'(\lambda_i^k)\Bigr]
-2t\sum_{\ell} \int \partial_1f_{k\ell,0}(\lambda_i^k,y)\,dM^{N}_\ell(y) \\
&-\frac{t}N \Bigl[f_{k\tau^2,0}'(\lambda_i^k)+ f_{k\tau^1,1}'(\lambda_i^k)+f_{k,2}'(\lambda_i^k)\Bigr]-\frac{2t}{N}\sum_{\ell} \int \Bigl[\partial_1f_{k\ell \tau^1,0}(\lambda_i^k,y)+\partial_1f_{k\ell,1}(\lambda_i^k,y)\Bigr]\,dM^{N}_\ell(y)\\
&-\frac{3t}{N}\sum_{\ell m} \iint \partial_1f_{k\ell m,0}(\lambda_i^k,y,z)\,dM^{N}_\ell(y)\,dM_{m}^N(z)
+ O\biggl(\frac{|\phi^M_\#M^N|^2}{N^2}\biggr).
\end{align*}
Also, with this notation, the analogue of \eqref{eq:Veff} for $t \in [0,1]$ becomes
\begin{equation}
\label{eq:Veff2}
W^{\rm eff}_{k,t}(x):=W_k(x)-t f_{k,0}(x).
\end{equation}
Hence, with all this at hand, we can estimate the term ${\mathcal R}^N_t(\YY^N)$ defined in \eqref{eq:RNt}:
using the convention that when we integrate a function of the form $\frac{\psi(x)-\psi(y)}{x-y}$
with respect to $L_k^N\otimes L_k^N$ the diagonal terms give $\psi'(x)$, we get 
\begin{align*}
{\mathcal R}^N_t(\YY^N)&= c_t^N-\frac{\beta N^2}{2}
\sum_{k} \iint \frac{\yy_{k,t}^0(x) - \yy_{k,t}^0(y)}{x-y}\,dL^{N}_k(x)\,dL^{N}_k(y)
-N\biggl(1-\frac\beta2\biggr)\sum_{k}  \int (\yy_{k,t}^0)'
\,dL^{N}_k\\
&-\frac{\beta N}{2}
\sum_{k} \iint \frac{\yy_{k,t}^1(x) - \yy_{k,t}^1(y)}{x-y}\,dL^{N}_k(x)\,dL^{N}_k(y)
-\biggl(1-\frac\beta2\biggr)\sum_{k}  \int (\yy_{k,t}^1)'\,dL^{N}_k\\
&-\frac{\beta N}{2}
\sum_{k\ell} \iint \frac{\zzeta_{k\ell,t}(x,M^{N}_\ell) -\zzeta_{k\ell,t}(y,M^{N}_\ell)}{x-y}\,dL^{N}_k(x)\,dL^{N}_k(y)\\
&-\biggl(1-\frac\beta2\biggr)\sum_{k\ell}  \int \partial_1\zzeta_{k\ell,t}(x,M^{N}_\ell)\,dL^{N}_k
-\sum_k \int \partial_{2}\zz_{kk,t}(x,x)\,dL^{N}_k(x)\\
&-N^2 
F_0^a(\mu_{1,t}^*,\ldots,\mu_{d,t}^*,\tau_B^0)-N \sum_k\int f_{k,0}(x)\,dM^{N}_k(x)
-\sum_{k\ell} \iint f_{k\ell,0}(x,y)\,dM^{N}_k(x)\,dM^{N}_\ell(y)\\
&-N
F_1^a(\mu_{1,t}^*,\ldots,\mu_{d,t}^*,\tau_B^0)-\sum_k\int f_{k,1}(x)\,dM^{N}_k(x)
-F_2^a(\mu_{1,t}^*,\ldots,\mu_{d,t}^*,\tau_B^0)\\
&+N^2\sum_k \int (W^{\rm eff}_{k,t})'(x)\yy_{k,t}^0(x)\,dL^{N}_k(x)
+N\sum_k \int (W^{\rm eff}_{k,t})'(x)\yy_{k,t}^1\,dL^{N}_k(x)\\
&+N\sum_{k\ell} \int (W^{\rm eff}_{k,t})'(x)\,\zzeta_{k\ell,t}(x,M^{N}_\ell)\,dL^{N}_k(x)
-tN\sum_k \int \Bigl[f_{k\tau^1,0}'  - f_{k,1}'\Bigr](x)\yy_{k,t}^0(x)\,dL^{N}_k(x)\\
&-t\sum_k \int \Bigl[f_{k\tau^1,0}'  - f_{k,1}'\Bigr](x)\yy_{k,t}^1(x)\,dL^{N}_k(x) - t\sum_{k\ell}\int \Bigl[f_{k\tau^1,0}'  - f_{k,1}'\Bigr](x)\,\zzeta_{k\ell,t}(x,M^{N}_\ell)\,dL^{N}_k(x)\\
&-2tN\sum_{k\ell} \iint \partial_1f_{k\ell,0}(x,y)\yy_{k,t}^0(x)\,dM^{N}_\ell(y)\,dL^{N}_k(x)\\
&-2t\sum_{k\ell} \iint \partial_1f_{k\ell,0}(x,y)\yy_{k,t}^1(x)\,dM^{N}_\ell(y)\,dL^{N}_k(x)\\
& - 2t\sum_{k\ell  m}\iint \partial_1f_{k\ell,0}(x,y)\,\zzeta_{km,t}(x,M_m^N)\,dM^{N}_\ell(y)\,dL^{N}_k(x)\\
&-3t\sum_{k\ell m} \iiint \partial_1f_{k\ell m,0}(x,y,z)\,\yy_{k,t}^0(x)\,dM^{N}_\ell(y)\,dM_{m}^N(z)\,dL^{N}_k(x)\\
&-2t\sum_{k\ell} \iint \Bigl[\partial_1f_{k\ell \tau^1,0}+\partial_1f_{k\ell,1}\Bigr](x,y)\,\yy_{k,t}^0(x)\, dM^{N}_\ell(y)\,dL^{N}_k(x)\\
&-t \sum_k \int \Bigl[f_{k\tau^2,0}'+ f_{k\tau^1,1}'+f_{k,2}'\Bigr](x)\yy_{k,t}^0(x)\,dL^{N}_k(x)+ O\biggl(\frac{|\phi^M_\#M^N|^3}{N}\biggr).
\end{align*}
Recalling \eqref{eq:muk*} we observe that, for any function $f$,
\begin{multline}
\label{eq:identity}
N^2\int (W_{k,t}^{\rm eff})'f\,dL^{N}_k -\frac{\beta N^2}2 \iint \frac{f(x)-f(y)}{x-y}\,dL^{N}_k(x)\,dL^{N}_k(y)\\
=N\int \Xi_kf\,dM^{N}_k -\frac{\beta}2 \iint \frac{f(x)-f(y)}{x-y}\,dM^{N}_k(x)\,dM^{N}_k(y),
\end{multline}
where
\begin{equation}
\label{eq:Xi}
\Xi_kf(x):=-\beta \int \frac{f(x)-f(y)}{x-y}\,d\mu_{k,t}^*(y) + (W_{k,t}^{\rm eff})'(x)f(x).
\end{equation}
Also, observe that up to now the term $O\Big(\frac{|\phi^M_\#M^N|^3}{N}\Big)$ does not depend on the smoothness of the functions $\yy_{k,t}^0,\yy_{k,t}^1, \zz_{k\ell,t}$. However, in order to be able later to quantify the degree of smoothness required on the potentials $W_k$, we introduce a further notation:
we will denote by
$O\Big(\frac{|\phi^M_\#M^N|^3}{N};g_1,g_2,\ldots,g_p\Big)$ 
a quantity bounded by 
\begin{equation}
\label{eq:rest precise}
\sum_{m=1}^p R[g_m]+\frac{C}{N} \|\phi^M_\# M^N\|_{M\zeta}\,,
\end{equation}
where the functions $g_m$ map $\mathbb R^{\ell_m}$ into $\mathbb R$ for $\ell_m\in\{1,2\}$, and 
\begin{multline*}
R[g_m]:=\sum_{r_1,r_2,r_3=1}^d  \frac{1}{N}\, \int_0^1 d\alpha \biggl| \iiint g_m(\alpha z_1+(1-\alpha) z_2,z_3) \,dM_{r_1}^N(z_1) \, dM_{r_2}^N(z_2) \,dM_{r_{3}}^N(z_{3})\biggr|
\\
+\sum_{r_1,r_2=1}^d  \frac{1}{N}\,  \biggl| \iint g_m(z_1,z_2)
\,dM_{r_1}^N(z_1)\, dM_{r_2}^N(z_2)\biggr|
+\sum_{r_1=1}^d  \frac{1}{N}\,  \biggl| \int g_m(z_1,z_1)
\,dM_{r_1}^N(z_1)\biggr|
\end{multline*}
if $\ell_m=2$, while
\begin{multline*}
R[g_m]:=\sum_{r_1,r_2=1}^d  \frac{1}{N}\, \int_0^1 d\alpha \biggl| \iint g_m(\alpha z_1+(1-\alpha) z_2) \,dM_{r_1}^N(z_1) \, dM_{r_2}^N(z_2) \biggr|
\\
+\sum_{r_1=1}^d  \frac{1}{N}\,  \biggl| \int g_m(z_1)
\,dM_{r_1}^N(z_1)\biggr|
\end{multline*}
if $\ell_m=1$.
For instance, 
writing
$$
\frac{\zz_{k\ell,t}(x,z)-\zz_{k\ell,t}(y,z)}{x-y}=\int_0^1\partial_1\zz_{k\ell,t}(\alpha x+(1-\alpha)y,z)\,d\alpha
$$
and recalling the definition of $\zzeta_{k\ell,t}$,
we see that
$$
\frac{1}{N}\iint \frac{\zzeta_{k\ell,t}(x,M^{N}_\ell) -\zzeta_{k\ell,t}(y,M^{N}_\ell)}{x-y}\,dM^{N}_k(x)\,dM^{N}_k(y)
=O\biggl(\frac{|\phi^M_\#M^N|^3}{N};\partial_1\zz_{k\ell,t}\biggr).
$$
Thus, applying \eqref{eq:identity} to $f=\yy_{k,t}^0,\yy_{k,t}^1,\zzeta_{k\ell,t}(\cdot,M^{N}_\ell)$,
and using that
$L^{N}_k=\mu_{k,t}^*+\frac{M^{N}_k}{N}$ (recall that $\zz_{k\ell,t}=\zz_{\ell k,t}$ for all $k,\ell$), we get
\begin{align*}
\mathcal R_t^N(\YY^N)&=N\sum_k \int \biggl[\Xi_k\yy_{k,t}^0 
-2t\Bigl(\sum_\ell\int \yy_{\ell,t}^0 (y)\partial_1f_{k\ell,0}(y,\cdot)\,d\mu_{\ell,t}^*(y)\Bigr)
-f_{k,0}\biggr]\,dM^{N}_k \\
&+\sum_k\int \bigg(\Xi_k\yy_{k,t}^1 -2t \Bigl(\sum_{\ell} \int\yy_{\ell,t}^1(y) \partial_1f_{k\ell,0}(y,\cdot)\,d\mu_{\ell,t}^*(y)\Bigr)\\
&\qquad\qquad\qquad\qquad-f_{k,1}-t\Bigl[f_{k\tau^1,0}'  - f_{k,1}'\Bigr]\yy_{k,t}^0-\biggl(\frac{\beta}2-1\bigg)(\yy_{k,t}^0)'\\
&\qquad\qquad\qquad\qquad -\biggl(1-\frac\beta2\biggr)\sum_{\ell}  \int \partial_1\zz_{k\ell,t}(y,\cdot)\,d\mu_{\ell,t}^*(y)\\
&\qquad\qquad\qquad\qquad -
t\sum_\ell\int \bigl[f'_{\ell\tau^1,0}- f_{\ell,1}'\bigr](y) \,\zz_{k\ell,t}(y,\cdot)\,d\mu_{\ell,t}^*(y)\\
&\qquad\qquad\qquad\qquad -2t \sum_{\ell} \int\yy_{\ell,t}^0(y)\Bigl[\partial_1f_{k\ell \tau^1,0}+\partial_1f_{k\ell,1}\Bigr](y,\cdot)\,d\mu_{\ell,t}^*(y) \biggr)\,dM^{N}_k\\
&+\sum_{k\ell}\iint \bigg(\Xi_k[\zz_{k\ell,t}(\cdot,y)](x)
-2t\sum_{m}\int \zz_{km,t}(z,y)\,\partial_1f_{m\ell,0}(z,x)\,d\mu_{m,t}^*(z)\\
&\qquad\qquad\qquad\qquad- f_{k\ell,0}(x,y)-2t\,\partial_1f_{k\ell,0}(x,y) \yy_{k,t}^0(x)-\frac{\beta}{2}1_{k=\ell}  \frac{\yy_{k,t}^0(x)-\yy_{k,t}^0(y)}{x-y}\\
&\qquad\qquad\qquad\qquad -3 t \sum_m\int \yy_{m,t}^0(z)\,\partial_1f_{k\ell m,0}(x,y,z)\,d\mu_{m,t}^*(z)\bigg)\,dM^{N}_k(x)\,dM^{N}_\ell(y)\\
&+ C_t^N+O\biggl(\frac{|\phi^M_\#M^N|^3}{N};(\yy_{k,t}^1)',\partial_1\zz_{k\ell,t}, \partial_{2}\zz_{kk,t}\biggr)
\end{align*}
where $C^N_t$ is a constant. 
Let us consider the operator $\XXi_t$ defined on $d$-uple of functions by 
$$
\XXi_t(\Psi_1,\ldots,\Psi_d):=\Bigl(\XXi_t(\Psi_1,\ldots,\Psi_d)_1,\ldots,\XXi_t(\Psi_1,\ldots,\Psi_d)_d\Bigr),
$$
where
\begin{equation}
\label{eq:Xi t}
\XXi_t(\Psi_1,\ldots,\Psi_d)_k:=\Xi_k\Psi_k - 2t\sum_{\ell=1}^d\int \Psi_\ell(y)\,\partial_1f_{k\ell,0}(y,\cdot)\,d\mu_{\ell,t}^*(y)\qquad \forall\,k=1,\ldots,d.
\end{equation}
Then, 
for $\mathcal R_t^N(\YY^N)$ to be small we want to impose
\begin{equation}
\label{eq:main Xi}
\begin{array}{l}
\XXi_t\bigl(\yy_{1,t}^0,\ldots,\yy_{d,t}^0\bigr)_k=\bigl(g_1^0,\ldots,g_d^0\bigr),
\\
\XXi_t\bigl(\zz_{1\ell,t}(\cdot,y),\ldots,\zz_{d\ell,t}(\cdot,y)\bigr)_k=
\bigl(g_{1\ell}^2(\cdot,y),\ldots,g_{d\ell}^2(\cdot,y)\bigr)\qquad \forall\,\ell=1,\ldots,d,\,\forall\,y,
\\
\XXi_t\bigl(\yy_{1,t}^1,\ldots,\yy_{d,t}^1\bigr)_k=\bigl(g_1^1,\ldots,g_d^1\bigr),
\end{array}
\end{equation}
where 
$$
g_k^0(x):=f_{k,0}(x)+c_k,
$$
\begin{align*}
g_{k\ell}^2(x,y)&:= f_{k\ell,0}(x,y)+2t\,\partial_1f_{k\ell,0}(x,y) \yy_{k,t}^0(x)\\
&\quad+3 t \sum_m\int \yy_{m,t}^0(z)\,\partial_1f_{k\ell m,0}(x,y,z)\,d\mu_{m,t}^*(z)+c_{k\ell}(y)\qquad \text{if $k \neq \ell$},\\
g_{kk}^2(x,y)&:= f_{kk,0}(x,y)+2t\,\partial_1f_{kk,0}(x,y) \yy_{k,t}^0(x) -\frac{\beta}2  \frac{\yy_{k,t}^0(x)-\yy_{k,t}^0(y)}{x-y} \\
&\quad+3 t \sum_m\int \yy_{m,t}^0(z)\,\partial_1f_{kk m,0}(x,y,z)\,d\mu_{m,t}^*(z)+c_{kk}(y),\\
g_{k}^1(x)&:=f_{k,1}(x)+t\Bigl[f_{k\tau^1,0}'(x)  - f_{k,1}'(x)\Bigr]\yy_{k,t}^0(x)+\biggl(\frac{\beta}2-1\bigg)(\yy_{k,t}^0)'(x)\\
&\quad+\biggl(1-\frac\beta2\biggr)\sum_{\ell}  \int \partial_1\zz_{k\ell,t}(y,x)\,d\mu_{\ell,t}^*(y)
+
\sum_\ell\int f_{\ell,1}'(y) \,\zz_{k\ell,t}(y,x)\,d\mu_{\ell,t}^*(y)\\
&\quad+2t \sum_{\ell} \int\yy_{\ell,t}^0(y)
\Bigl[\partial_1f_{k\ell \tau^1,0}+\partial_1f_{k\ell,1}\Bigr](y,\cdot)\,d\mu_{\ell,t}^*(y)+c_{k}' ,
\end{align*}
where $c_k,c_k'$ are constants to be fixed later, and $c_{k\ell}(y)$ is a family of functions depending only on $y$ also to be fixed.

Indeed, noticing that $\int dM_k^N=0$ for all $k$, we see that all constants integrate to zero against $M^N$,
and we conclude that the following holds:
\begin{lem}
\label{lem:small Rt}
Let $\Xi_t$ be defined as in \eqref{eq:Xi t}, with $\{\Xi_k\}_{k=1}^d$ as in \eqref{eq:Xi}.
Also, recall the notation \eqref{eq:rest precise}.
Assume that we can find functions $\yy_{k,t}^0,\yy_{k,t}^1,\zz_{k\ell,t}$ 
solving \eqref{eq:main Xi}. Then 
$$
\mathcal R_t^N(\YY^N)=C_t^N+O\biggl(\frac{|\phi^M_\#M^N|^3}{N};(\yy_{k,t}^1)',\partial_1\zz_{k\ell,t}, \partial_{2}\zz_{kk,t}\biggr),
$$
where $C^N_t$ is a constant.
\end{lem}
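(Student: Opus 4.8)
The plan is to substitute the solutions $\yy^0_{k,t},\yy^1_{k,t},\zz_{k\ell,t}$ of \eqref{eq:main Xi} into the explicit expansion of $\mathcal R^N_t(\YY^N)$ obtained above (the long display preceding the definition of $\XXi_t$) and to check that every $\hat\lambda$-dependent term either collapses to zero or is already of the form recorded in the remainder. First I would use the identity \eqref{eq:identity} with $f$ taken successively equal to $\yy^0_{k,t}$, $\yy^1_{k,t}$ and $\zzeta_{k\ell,t}(\cdot,M^N_\ell)$: this trades the $N^2$- and $N$-scale quadratic forms in $L^N_k$ appearing in $\mathcal R^N_t$ for expressions in which the operator $\Xi_k$ acts and only $M^N_k$ enters, at one lower power of $N$. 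Replacing $L^N_k=\mu^*_{k,t}+\frac1N M^N_k$ everywhere then organizes $\mathcal R^N_t(\YY^N)$ into: a term $N\sum_k\int[\cdots]\,dM^N_k$, a term $\sum_k\int[\cdots]\,dM^N_k$, a term $\sum_{k\ell}\iint[\cdots]\,dM^N_k\,dM^N_\ell$, a deterministic ($\hat\lambda$-independent) part, and a genuinely cubic-in-$M^N$ remainder which, after isolating diagonal contributions as in \eqref{eq:rest precise}, is already of the claimed type.

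The key observation is that the three brackets $[\cdots]$ are, up to the additive constants $c_k$, $c_{k\ell}(y)$, $c'_k$, precisely $\XXi_t(\yy^0_{\cdot,t})_k-g^0_k$, $\XXi_t(\zz_{\cdot\ell,t}(\cdot,y))_k-g^2_{k\ell}(\cdot,y)$ and $\XXi_t(\yy^1_{\cdot,t})_k-g^1_k$, with $\XXi_t$ as in \eqref{eq:Xi t} and $g^0_k,g^2_{k\ell},g^1_k$ as defined after \eqref{eq:main Xi}. Matching these requires the symmetrization conventions imposed on the functions $f_{k\ell,l}$, $f_{k\ell m,l}$, etc., together with the symmetry $\zz_{k\ell,t}=\zz_{\ell k,t}$, so that the cross term $\int \zz_{km,t}(z,y)\,\partial_1 f_{m\ell,0}(z,x)\,d\mu^*_{m,t}(z)$ produced by $\mathcal R^N_t$ coincides with the term $\int \Psi_\ell(y)\,\partial_1 f_{k\ell,0}(y,\cdot)\,d\mu^*_{\ell,t}(y)$ appearing in the definition of $\XXi_t$. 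Once \eqref{eq:main Xi} is imposed each bracket equals the corresponding constant in the integration variables, so each of the three statistics reduces to a constant multiple of $\int dM^N_k=0$ (resp.\ $\iint dM^N_k\,dM^N_\ell=0$), and therefore vanishes identically.

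It then remains to absorb the deterministic contributions — the $N^2 F^a_0(\mu^*_{\cdot,t},\tau^0_B)$, $N F^a_1$ and $F^a_2$ terms evaluated at the equilibrium measures, the $\int(W^{\rm eff}_{k,t})'\,\yy^{\bullet}_{k,t}\,d\mu^*_{k,t}$ pieces generated by \eqref{eq:identity}, and $c^N_t=\partial_t\log Z^{N,aV}_t$ — into a single constant $C^N_t$, leaving exactly $O\bigl(\frac{|\phi^M_\#M^N|^3}{N};(\yy^1_{k,t})',\partial_1\zz_{k\ell,t},\partial_2\zz_{kk,t}\bigr)$; the three recorded arguments are the functions whose cubic, quadratic and diagonal $M^N$-moments survive, arising respectively from the $(1-\frac\beta2)\int(\yy^1_{k,t})'\,dL^N_k$ term, from the quadratic form in $\zzeta_{k\ell,t}$ after performing one division by $x-y$, and from the diagonal term $\int\partial_2\zz_{kk,t}(x,x)\,dL^N_k$. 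I expect the only nontrivial point to be the index bookkeeping in the quadratic-statistic bracket — verifying that the symmetrized $f$'s and the symmetry of $\zz$ make its contraction against $\XXi_t$ exact — after which the conclusion is immediate.
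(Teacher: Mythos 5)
Your proposal is correct and is essentially a reconstruction of the paper's own computation: after applying \eqref{eq:identity} to $\yy^0_{k,t}$, $\yy^1_{k,t}$ and $\zzeta_{k\ell,t}(\cdot,M^N_\ell)$ and replacing $L^N_k$ by $\mu^*_{k,t}+\frac1N M^N_k$, the expression $\mathcal R^N_t(\YY^N)$ is grouped into $O(N)$-, $O(1)$- and bilinear-in-$M^N$ brackets whose integrands become constants once \eqref{eq:main Xi} is imposed, and the constants integrate to zero against $M^N$, leaving only the deterministic part $C^N_t$ and the cubic remainder $O\bigl(\frac{|\phi^M_\#M^N|^3}{N};(\yy^1_{k,t})',\partial_1\zz_{k\ell,t},\partial_2\zz_{kk,t}\bigr)$. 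You correctly single out the index bookkeeping in the quadratic bracket as the delicate point; to make that contraction exact one must not only invoke the symmetries $f_{k\ell,0}=f_{\ell k,0}$ and $\zz_{k\ell,t}=\zz_{\ell k,t}$ but also relabel the summation indices (swapping the role of the $\mu^*$-index with one of the $M^N$-indices and the corresponding integration variables), after which the cross term genuinely matches the second piece of $\XXi_t$.
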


\subsection{Invertibility properties of $\XXi_t$}
Lemma \ref{lem:small Rt} suggests that, to construct an approximate map, we need to solve an equation of the form
$$
\XXi_t (\Psi_1,\ldots,\Psi_d)=(g_1,\ldots,g_d).
$$
We remind that, in our setting, the functions $\partial_1f_{k\ell,0}(\cdot,y)$ 
are smooth and their $C^s$ norm is of size $O(|a|)$ for any $s>0$, where $a$ is a small number.
Also, note that the operators $\Xi_k$ defined in \eqref{eq:Xi} are continuous with respect to the $C^1$ topology.
This will allow us to show invertibility of $\XXi_t$ using Lemma \ref{lem:Xi} below and a fixed point argument

Before stating that result in our setting we recall that, given a function $f:\R\to \R$, the norm $C^s$ is defined as
$$
\|f\|_{C^{s}(\R)}:=\sum_{j=0}^s\|f^{(j)}\|_{L^\infty(\R)},
$$
where $f^{(j)}$ denotes the $j$-th derivative of $f$.
The next result is contained in  \cite[Lemma 3.2]{BFG}.

\begin{lem} \label{lem:Xi}
Given $V:\R\to \R$ be a function of class $C^\sigma$ with $\sigma  \geq 4$, assume that $\mu_{V}$ has support given by $[a,b]$
and that
\begin{equation}
\label{eq:non deg V}
\frac{d\mu_V}{dx}(x)=S(x)\sqrt{(a-x)(x-b)}\quad \text{with $S(x)\geq \bar c>0$ a.e. on $[a,b]$. }
\end{equation}
Define the operator
$$
\Xi \Psi(x):=-\beta\int\frac{\Psi(x)-\Psi(y)}{x-y} d\mu_V(x) +V'(x)\Psi(x),
$$
and fix an integer $3 \leq s \leq \sigma-1$.
Then, for 
any function $g:\mathbb R\ra\mathbb R$ of class $C^s$,
there exists a unique constant $c_g$ such that
the equation
$$\Xi \Psi(x)=g(x)+c_g$$
has a unique solution $\Psi:\R\to \R$ of class $C^{s-2}$, also denoted by $\Xi^{-1}g$,
which satisfies the estimate
\begin{equation}
\label{eq:bound norms}
\|\Psi\|_{C^{s-2}(\R)}\le 
 \hat C_s\|g\|_{C^{s}(\R)}.
\end{equation}

Moreover $\Psi$ (and its derivatives) behaves like $(g(x)+c_g)/V'(x)$ (and its corresponding derivatives) when $|x|\to +\infty$.
\end{lem}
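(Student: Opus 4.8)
The plan is to reduce the equation $\Xi\Psi=g+c_g$ on the support $[a,b]$ to an inversion problem for the finite Hilbert transform (the ``airfoil equation''), solve it by Tricomi's formula — which simultaneously forces the choice of the constant $c_g$ — and then extend the solution to $\R\setminus[a,b]$ by an explicit algebraic identity; this is the scheme of \cite[Lemma 3.2]{BFG}. First I would use that $\mu_V$ is the equilibrium measure of $V$: differentiating its Euler--Lagrange equation on the support gives $V'(x)=\beta\,{\rm P.V.}\!\int\frac{d\mu_V(y)}{x-y}$ for $x\in(a,b)$. Splitting
\[
-\beta\int\frac{\Psi(x)-\Psi(y)}{x-y}\,d\mu_V(y)=-\beta\,\Psi(x)\,{\rm P.V.}\!\int\frac{d\mu_V(y)}{x-y}+\beta\,{\rm P.V.}\!\int\frac{\Psi(y)}{x-y}\,d\mu_V(y),
\]
the first term on the right cancels $V'(x)\Psi(x)$, so on $(a,b)$ the operator collapses to $\Xi\Psi(x)=\beta\,{\rm P.V.}\!\int_a^b\frac{\Psi(y)}{x-y}\,\frac{d\mu_V}{dy}(y)\,dy$. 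Writing $\phi(x):=\Psi(x)\,\frac{d\mu_V}{dx}(x)=\Psi(x)\,S(x)\,\sqrt{(x-a)(b-x)}$, the equation $\Xi\Psi=g+c_g$ on $(a,b)$ is then exactly $\beta\,{\rm P.V.}\!\int_a^b\frac{\phi(y)}{x-y}\,dy=g(x)+c_g$.

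Next I would invert this airfoil equation. By Tricomi's formula it admits a solution $\phi$ vanishing (to order $1/2$) at both endpoints if and only if $\int_a^b\frac{g(y)+c_g}{\sqrt{(y-a)(b-y)}}\,dy=0$; since $\int_a^b\frac{dy}{\sqrt{(y-a)(b-y)}}=\pi\neq0$, this determines $c_g$ uniquely, after which $\phi$ is given by an explicit principal-value integral of $g+c_g$ against the weight $\sqrt{(x-a)(b-x)}/\sqrt{(y-a)(b-y)}$. Dividing by $S(x)\sqrt{(x-a)(b-x)}$ — legitimate because $S\geq\bar c>0$ and $\phi$ vanishes exactly to order $1/2$ at $a,b$ — recovers $\Psi$ on $[a,b]$. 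That $\Psi\in C^{s-2}([a,b])$, together with the bound $\|\Psi\|_{C^{s-2}}\le\hat C_s\|g\|_{C^s}$, would follow from the classical mapping properties of Cauchy/finite Hilbert transforms of H\"older functions carrying square-root endpoint weights (Plemelj-type endpoint expansions), which cost two derivatives and use that $g\in C^s$ while $S$ and $V'$ inherit the $C^\sigma$-regularity of $V$.

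Finally I would extend $\Psi$ outside $[a,b]$ and conclude. For $x\notin[a,b]$ the integral in $\Xi$ is non-singular, so $\Xi\Psi(x)=R_V(x)\,\Psi(x)+\beta\int_a^b\frac{\Psi(y)}{x-y}\,d\mu_V(y)$ where $R_V(x):=V'(x)-\beta\int\frac{d\mu_V(y)}{x-y}$; the non-degeneracy \eqref{eq:non deg V} and the standard analysis of the effective potential give that $R_V$ does not vanish on $\R\setminus[a,b]$, vanishes like $\sqrt{{\rm dist}(x,[a,b])}$ at the endpoints, and grows like $V'$ at infinity. Since the integral term involves only the values of $\Psi$ on $[a,b]$ already produced above, I simply set $\Psi(x):=R_V(x)^{-1}\!\left(g(x)+c_g-\beta\int_a^b\frac{\Psi(y)}{x-y}\,d\mu_V(y)\right)$ for $x\notin[a,b]$; this yields a bounded solution, and its global $C^{s-2}$ regularity (including across $a,b$) is obtained by matching the one-sided endpoint expansions of the interior and exterior formulas, while \eqref{eq:bound norms} follows by propagating constants. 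The asymptotics $\Psi(x)\sim(g(x)+c_g)/V'(x)$ (and the corresponding statement for derivatives) are read off from the last display using $R_V(x)=V'(x)(1+o(1))$ and that the integral term is $O(1/|x|)$. Uniqueness of the pair $(\Psi,c_g)$: if $\Xi\Psi={\rm const}$ with $\Psi\in C^{s-2}$, then on $(a,b)$ the function $\phi=\Psi\,\frac{d\mu_V}{dx}$ solves the airfoil equation with constant right-hand side, so the Tricomi solvability condition forces that constant to be $0$; then $\phi\equiv0$ (the homogeneous solution $1/\sqrt{(x-a)(b-x)}$ being excluded by continuity), hence $\Psi\equiv0$ on $[a,b]$ and, by the exterior formula, on all of $\R$.

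The main obstacle is the endpoint analysis underlying the last two steps: obtaining the sharp $C^{s-2}$ control of the finite Hilbert/Cauchy transforms with square-root weights, and verifying that the interior (Tricomi) piece and the exterior (algebraic) piece glue to the claimed order of regularity while retaining the stated linear dependence of the constant $\hat C_s$ on $\|g\|_{C^s}$.
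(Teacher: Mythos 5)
The paper itself does not prove this lemma; it simply cites \cite[Lemma 3.2]{BFG}, and your outline follows essentially the same strategy as that cited proof: Euler--Lagrange cancellation of the multiplication term (so that on $(a,b)$ the operator collapses to $\beta\,{\rm P.V.}\!\int\frac{\Psi(y)}{x-y}\,d\mu_V(y)$), reduction to the airfoil equation for $\phi=\Psi\,\frac{d\mu_V}{dx}$, Tricomi inversion (which fixes $c_g$ by orthogonality against $(y-a)^{-1/2}(b-y)^{-1/2}$), division by the density using \eqref{eq:non deg V}, algebraic extension outside $[a,b]$ via $R_V(x)^{-1}$ with $R_V=V'-\beta\int\frac{d\mu_V(y)}{x-y}$, and the asymptotics read off from $R_V\sim V'$ at infinity. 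The one piece you leave schematic --- the sharp $C^{s-2}$ estimate with linear dependence on $\|g\|_{C^s}$ and the matching of the interior Tricomi formula with the exterior algebraic one across $a,b$ so that the $\sqrt{\cdot}$ factors cancel --- is precisely where the bulk of the work in \cite[Lemma 3.2]{BFG} lies, and you correctly flag it as the main obstacle; otherwise your sketch is an accurate account of the cited proof.
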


We now want to apply this lemma with $V=W_{k,t}^{\rm eff}$
and $\mu_V=\mu_{k,t}^*$ (so that $\Xi=\Xi_k$, see \eqref{eq:Xi}),
and prove the invertibility of $\XXi_t$ by a fixed point argument.
We notice that the constants appearing in the above result depend only on the smoothness 
of $V$ and on the assumption \eqref{eq:non deg V}, that is satisfied by 
 $\mu_{k,t}^*$ thanks to Lemma \ref{lem:support}.
In particular, when applied with $V=W_{k,t}^{\rm eff}$ and $\mu_V=\mu_{k,t}^*$ all the constants are uniform for $t \in [0,1]$.
Also, being $F_0^a$ of class $C^\infty$, the smoothness of $W_{k,t}^{\rm eff}$ is the same as the one of $W_k$
(see \eqref{eq:Veff2}).

\begin{prop}
\label{prop:Wk}
There exists $\alpha>0$ such that the following holds.
Assume that the functions $W_{1},\ldots,W_d:\R\to \R$ are  of class $C^\sigma$ for some $\sigma \geq 4$.
Suppose that $|a| \leq \alpha$, and let $t \in [0,1]$. 
Then,
for any family of functions $g_1,\ldots,g_d:\mathbb R\ra\mathbb R$ of class $C^{s}$ with $s \in [3,\sigma-1]$,
there exist a unique family of constants $(c_{g_1},\ldots,c_{g_d})$ such that
the equation
\begin{equation}
\label{eq:Xi t g}
\XXi_t (\Psi_1,\ldots,\Psi_d)=(g_1,\ldots,g_d)+(c_{g_1},\ldots,c_{g_d})
\end{equation}
has a solution $\Psi_1,\ldots,\Psi_d:\mathbb R\ra\mathbb R$ of class $C^{s-2}$.
In addition, there exists a finite constant $\bar C_0$ such that
\begin{equation}
\label{eq:bound norms2}
\max_{k=1,\ldots,d}\|\Psi_k\|_{C^{1}(\R)}\le 
 \bar C_0 \max_{k=1,\ldots,d} \|g_k\|_{C^{3}(\R)}.
\end{equation}
Furthermore,
there exists $\gamma_s>0$ such that $\Psi_k$ and its derivatives 
up to order $s-1$ decay like $O\Big(\frac{1}{[(W_{k,t}^{\rm eff})'(x)]^{\gamma_s}}\Big)$ as $|x|\to +\infty$.
\end{prop}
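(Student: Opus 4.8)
## Proof Proposal for Proposition \ref{prop:Wk}

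The plan is to realize $\XXi_t$ as a perturbation of the diagonal operator $\operatorname{diag}(\Xi_1,\ldots,\Xi_d)$, whose invertibility is already furnished by Lemma \ref{lem:Xi}, and then to invert it by a fixed point argument in a suitable Banach space of tuples of functions. Write $\XXi_t = \boldsymbol{\Xi} - 2t\,\mathcal{K}_t$, where $\boldsymbol{\Xi}(\Psi_1,\ldots,\Psi_d) := (\Xi_1\Psi_1,\ldots,\Xi_d\Psi_d)$ and $\mathcal{K}_t(\Psi_1,\ldots,\Psi_d)_k := \sum_{\ell=1}^d \int \Psi_\ell(y)\,\partial_1 f_{k\ell,0}(y,\cdot)\,d\mu_{\ell,t}^*(y)$. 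The point is that $\mathcal{K}_t$ is a smoothing operator whose operator norm (between the relevant $C^s$ spaces) is $O(|a|)$, since the kernels $\partial_1 f_{k\ell,0}$ are smooth with $C^s$ norm $O(|a|)$ by Lemma \ref{lem:smooth}, and the measures $\mu_{\ell,t}^*$ are probability measures supported in a fixed compact set. Meanwhile, by Lemma \ref{lem:Xi} applied with $V = W_{k,t}^{\rm eff}$ and $\mu_V = \mu_{k,t}^*$ — which is legitimate because Lemma \ref{lem:support} guarantees the square-root nondegeneracy \eqref{eq:non deg V} uniformly in $t\in[0,1]$, and because $W_{k,t}^{\rm eff}$ has the same $C^\sigma$ regularity as $W_k$ by \eqref{eq:Veff2} — each $\Xi_k$ is invertible on $C^s$, producing a unique constant $c_{g_k}$ and a solution in $C^{s-2}$ with $\|\Xi_k^{-1}g_k\|_{C^{s-2}} \le \hat C_s\|g_k\|_{C^s}$.

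The key steps, in order, are as follows. First, I would set up the reformulation of \eqref{eq:Xi t g}: applying $\boldsymbol{\Xi}^{-1}$ formally, solving $\XXi_t(\Psi_1,\ldots,\Psi_d) = (g_1,\ldots,g_d) + (c_{g_1},\ldots,c_{g_d})$ is equivalent to the fixed point equation
\begin{equation*}
(\Psi_1,\ldots,\Psi_d) = \boldsymbol{\Xi}^{-1}\Bigl[(g_1,\ldots,g_d) + 2t\,\mathcal{K}_t(\Psi_1,\ldots,\Psi_d) + (\text{correcting constants})\Bigr],
\end{equation*}
where at each application of $\Xi_k^{-1}$ one subtracts the unique constant that makes the right-hand side lie in the range of $\Xi_k$. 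Second, I would check that the map $\Phi: (\Psi_1,\ldots,\Psi_d) \mapsto \boldsymbol{\Xi}^{-1}[(g_1,\ldots,g_d) + 2t\,\mathcal{K}_t(\Psi_1,\ldots,\Psi_d)]$ (with the constant adjustments) is a contraction on the closed ball of radius $2\hat C_s \max_k\|g_k\|_{C^s}$ in $\bigl(C^{s-2}(\R)\bigr)^d$: indeed $\|\Phi(\Psi) - \Phi(\Psi')\|_{C^{s-2}} \le 2\hat C_s |a| \cdot C \|\Psi - \Psi'\|_{C^{s-2}}$, which is $\le \tfrac12\|\Psi-\Psi'\|_{C^{s-2}}$ provided $|a| \le \alpha$ for $\alpha$ small (depending only on $\hat C_s$ and the structural constants, hence universal). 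The fixed point gives existence and uniqueness of $(\Psi_1,\ldots,\Psi_d) \in (C^{s-2})^d$, and the constants $c_{g_k}$ are read off from the construction. Third, the a priori bound \eqref{eq:bound norms2} in $C^1$ follows by taking $s=3$ in the above and noting $C^1 \subset C^{s-2} = C^1$: the fixed point lies in the ball of radius $2\hat C_3 \max_k\|g_k\|_{C^3}$, so set $\bar C_0 := 2\hat C_3$. Fourth, the decay at infinity: by the last sentence of Lemma \ref{lem:Xi}, each application of $\Xi_k^{-1}$ produces a function behaving like (numerator)$/(W_{k,t}^{\rm eff})'(x)$; since $\mathcal{K}_t(\Psi)_k$ is bounded (being an integral against a compactly supported probability measure of a function that is itself controlled), iterating the fixed point preserves a decay of the form $O\bigl(1/[(W_{k,t}^{\rm eff})']^{\gamma_s}\bigr)$ for derivatives up to order $s-1$, with $\gamma_s$ depending only on $s$ — one tracks how the exponent degrades through the $s-1$ differentiations exactly as in \cite{BFG}.

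The main obstacle I anticipate is bookkeeping the constants correctly: $\XXi_t$ does not act diagonally, so the "unique constant $c_{g_k}$" in the statement is not simply $c_{g_k}^{\text{diag}}$ for the decoupled problem — it is determined self-consistently together with the solution, because $\mathcal{K}_t(\Psi)$ feeds back into the range condition for each $\Xi_k$. One must check that the combined system "(equation) $+$ (range conditions for each $k$)" has a unique solution $(\Psi_1,\ldots,\Psi_d,c_{g_1},\ldots,c_{g_d})$; this is where the contraction must be run in the right space, namely on tuples of functions with the constants eliminated via the projections $g \mapsto g - (\text{its }\Xi_k\text{-range-correcting constant})$, so that $\Phi$ genuinely maps the function-ball into itself. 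A secondary subtlety is verifying that $\mathcal{K}_t$ maps $C^{s-2}$ into $C^s$ (gaining two derivatives) with the $O(|a|)$ bound, so that the composition $\boldsymbol{\Xi}^{-1}\circ\mathcal{K}_t$ maps $C^{s-2}$ into itself boundedly; this is immediate from smoothness of the kernels but must be stated. Uniformity in $t\in[0,1]$ of all constants follows since $\mu_{k,t}^*$, $W_{k,t}^{\rm eff}$, and the kernels $f_{k\ell,0}$ depend continuously on $t$ with the nondegeneracy \eqref{eq:non deg V} holding throughout, by Lemma \ref{lem:support}.
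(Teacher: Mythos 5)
Your proposal is correct and follows essentially the same route as the paper: the paper's proof introduces the same perturbation operator (called $\Upsilon_k^{aV}$ there, your $\mathcal K_t$), invokes Lemma \ref{lem:Xi} for each $\Xi_k$, and runs the Picard iteration $\Psi_{k,(j+1)}=(\Xi_k)^{-1}\bigl(2t\,\Upsilon_k^{aV}(\Psi_{(j)})+g_k\bigr)$, using $\|\Upsilon_k^{aV}\|_{C^1\to C^3}=O(|a|)$ to get a contraction. The only minor difference is that the paper contracts in $C^1$ (so $\alpha$ depends only on $\hat C_3$) and then bootstraps to $C^{s-2}$ using the smoothness of the kernel, whereas you contract directly in $C^{s-2}$; also, the final decay statement in the paper is obtained by interpolating $\|\Psi_k\|_{C^s}\le\bar C_s$ against the $1/(W_{k,t}^{\rm eff})'$ decay from Lemma \ref{lem:Xi}, rather than by tracking decay through the iterates, but this is cosmetic.
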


\begin{proof}
Define the operator 
$$
\Upsilon_k^{aV}(\Psi_1,\ldots,\Psi_d):=\sum_{\ell=1}^d\int \Psi_\ell(y)\,\partial_1f_{k\ell,0}(y,\cdot)\,d\mu_{\ell,t}^*(y),
$$
so that \eqref{eq:Xi t g} can be rewritten as
$$
\Xi_k \Psi_k -2t\,\Upsilon_k^{aV}(\Psi_1,\ldots,\Psi_d)=g_k+c_{g_k}\qquad \forall\,k=1,\ldots,d.
$$
Recalling that $\partial_1f_{k\ell,0}(\cdot,y)$ is a smooth function with all derivatives of size $O(|a|)$,
for any family of bounded functions $\Psi_k:\R\to \R$ it holds
\begin{equation}
\label{eq:norm Ak}
\bigl\|\Upsilon_k^{aV}(\Psi_1,\ldots,\Psi_d)\bigr\|_{C^3(\R)}
\leq \bar C |a| \,\max_{k=1,\ldots,d}\|\Psi_k\|_{C^1(\R)}
\end{equation}
for some universal constant $\bar C$.
To prove the result we simply apply a fixed point argument:
more precisely, we set $(\Psi_{1,(0)},\ldots,\Psi_{d,(0)})=(0,\ldots,0)$ and we recursively define, for $j \geq 1$,
$$
\Psi_{k,(j+1)}:=(\Xi_k)^{-1}\Bigl(2t\,\Upsilon_k^{aV}(\Psi_{1,(j)},\ldots,\Psi_{d,(j)}) + g_k \Bigr),\qquad k=1,\ldots,d.
$$
Applying Lemma \ref{lem:Xi} with $V=W_{k,t}^{\rm eff}$
and $\mu_V=\mu_{k,t}^*$ (so that $\Xi=\Xi_k$) we deduce that
$$
\Psi_{k,(j)} \in C^{1}(\R) \qquad \forall\,j \geq 1,\,\forall\, k=1,\ldots,d.
$$
Also, by the linearity of $\Xi_k$ and $\Upsilon_k^{aV}$ we have 
$$
\Psi_{k,(j+1)} -\Psi_{k,(j)} = (\Xi_k)^{-1}\Bigl(2t\,\Upsilon_k^{aV}(\Psi_{1,(j)}-\Psi_{1,(j-1)},\ldots,
\Psi_{d,(j)} - \Psi_{d,(j-1)}) \Bigr),
$$
so it follows from \eqref{eq:bound norms} and \eqref{eq:norm Ak} that 
$$
\max_{k=1,\ldots,d}\bigl\|\Psi_{k,(j+1)} -\Psi_{k,(j)}\bigr\|_{C^{1}(\R)} \leq 2t \hat C_3\bar C |a|\,
\,\max_{k=1,\ldots,d}\|\Psi_{k,(j)} - \Psi_{k,(j-1)}\|_{C^1(\R)}.
$$
Hence, if we choose $\alpha$ small enough so that $\hat C_3\bar C \alpha \leq 1/4$
we deduce that $\{\Psi_{k,(j)}\}_{j \geq 1}$ is a Cauchy sequence in $C^{1}$ for all $k=1,\ldots,d$.
Recalling that the operator $\Xi_k$ are continuous with respect to the $C^1$ topology,
we deduce that the sequence $(\Psi_{1,(j)},\ldots,\Psi_{d,(j)})$ converges a solution of our problem $(\Psi_1,\ldots,\Psi_d)$.

Applying \eqref{eq:bound norms} and \eqref{eq:norm Ak}  again, we deduce that
\begin{align*}
\max_{k=1,\ldots,d}\|\Psi_{k,(j+1)}\|_{C^{1}(\R)}&\le 
2t\, \hat C_3 \bar C|a|\,
\,\max_{k=1,\ldots,d}\|\Psi_{k,(j)}\|_{C^1(\R)}
+ \hat C_3 \max_{k=1,\ldots,d} \|g_k\|_{C^{3}(\R)} \\
&\leq \frac12
\,\max_{k=1,\ldots,d}\|\Psi_{k,(j)} \|_{C^{1}(\R)}
+ \hat C_3\max_{k=1,\ldots,d} \|g_k\|_{C^{3}(\R)},
\end{align*}
so \eqref{eq:bound norms2} follows by letting $j \to \infty$.
In addition, Lemma \ref{lem:Xi} implies that 
$\Psi_k$ decays like $O\Big(\frac{1}{(W_{k,t}^{\rm eff})'(x)}\Bigr)$ as $|x|\to +\infty$.
Furthermore, since $\Upsilon_k^{aV}(\Psi_1,\ldots,\Psi_d) \in C^\infty$, it follows by 
 \eqref{eq:bound norms} that
$$
\max_{k=1,\ldots,d}\|\Psi_{k}\|_{C^{s}(\R)}\leq \bar C_s,
$$
showing that $\Psi_k \in C^s$.

To prove the final statement we note that, since $\|\Psi_k\|_{C^s(\R)}\leq \bar C_s$ and $\Psi_k$
decays like $O\Big(\frac{1}{(W_{k,t}^{\rm eff})'}\Big)$, by interpolation inequalities
the derivatives of $\Psi_k$ up to order $s-1$ decay as an inverse power of $(W_{k,t}^{\rm eff})'$.
\end{proof}

We can now apply the above proposition to invert the first equation in \eqref{eq:main Xi}
and find a solution $\yy_{k,t}^0$ of class $C^{\sigma-3}$.
Then (since now $\yy_{k,t}^0$ is given) we solve the second equation in \eqref{eq:main Xi}
using again the proposition above, and finally we invert the third equation.
In this way, in analogy with \cite[Lemma 3.3]{BFG}
 we obtain the following result (we recall that a function of two variables belongs to $C^{\tau,\tau'}$
 if it is $\tau$ times continuously differentiable with respect to the first variable and $\tau'$ times with respect to the second):

\begin{cor}\label{cor:regular} 
Let $\alpha$ be as in Proposition \ref{prop:Wk}.
Assume that $W_{k}:\R\to \R$ are  of class $C^\sigma$ for all $k=1,\ldots,d$ for some $\sigma \geq 10$,
and that $|a| \leq \alpha$. Then there exist functions $\yy_{k,t}^0,\yy_{k,t}^1,\zz_{k\ell,t}$
solving \eqref{eq:main Xi}, and a finite universal constant $C_\sigma$, such that
$$
\|\yy_{k,t}^0\|_{C^{\sigma-3}(\R)}+\|\yy_{k,t}^1\|_{C^{\sigma-9}(\R)}+\sum_{\tau+\tau' \leq \sigma-6}\|\zz_{k\ell,t}\|_{C^{\tau,\tau'}(\R\times \R)}\leq C_\sigma\qquad \forall\,k,\ell=1,\ldots,d.
$$
Moreover these functions and their derivatives (except the last ones) decay as an inverse power of $(W_{k,t}^{\rm eff})'(x)$ as $|x|\to +\infty$.\end{cor}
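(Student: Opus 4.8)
The plan is to solve the three equations of \eqref{eq:main Xi} in cascade, exploiting their triangular structure: the right-hand side $g_k^0$ of the first equation depends only on the functions $f_{k,0}$; once $\yy_{k,t}^0$ has been constructed, the right-hand side $g_{k\ell}^2(\cdot,y)$ of the second equation is completely determined; and once both $\yy_{k,t}^0$ and $\zz_{k\ell,t}$ are known, the right-hand side $g_k^1$ of the third equation is determined. At each step I would invoke Proposition \ref{prop:Wk} with $V=W_{k,t}^{\rm eff}$ and $\mu_V=\mu_{k,t}^*$. This is legitimate because Lemma \ref{lem:support} (applied for every $t\in[0,1]$, exactly as in Section \ref{eqmeas}) guarantees the non-degeneracy \eqref{eq:non deg V} uniformly in $t$, and because $W_{k,t}^{\rm eff}$ has the same $C^\sigma$ regularity as $W_k$ by \eqref{eq:Veff2} together with the $C^\infty$ smoothness of $F_0^a$ (Lemma \ref{lem:smooth}). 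Consequently all constants produced by Proposition \ref{prop:Wk} are uniform in $t$, which is what gives a universal $C_\sigma$; the analogous computation in the single-matrix case is \cite[Lemma 3.3]{BFG}.

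First I would solve for $\yy_{k,t}^0$: since $f_{k,0}$ is a $C^\infty$ function of $x$ (being $D_kF_0^a$ evaluated at $\delta_{\phi^M(x)}$, with $\phi^M$ smooth and $F_0^a$ smooth), Proposition \ref{prop:Wk} with $s=\sigma-1$ yields constants $c_k$ and solutions $\yy_{k,t}^0\in C^{\sigma-3}$ whose derivatives decay as an inverse power of $(W_{k,t}^{\rm eff})'$. Next, for each fixed $y$, the right-hand side $g_{k\ell}^2(\cdot,y)$ is, apart from $C^\infty$ contributions and the undetermined constant $c_{k\ell}(y)$, a sum of terms that are as regular in $x$ as $\yy_{k,t}^0$ (namely $\partial_1f_{k\ell,0}(x,y)\,\yy_{k,t}^0(x)$ and $\int\yy_{m,t}^0(z)\,\partial_1f_{k\ell m,0}(x,y,z)\,d\mu_{m,t}^*(z)$), except for the divided-difference term $\frac{\yy_{k,t}^0(x)-\yy_{k,t}^0(y)}{x-y}$ occurring when $k=\ell$, which costs one derivative; writing this term as $\int_0^1(\yy_{k,t}^0)'\bigl(y+s(x-y)\bigr)\,ds$ shows that $g_{k\ell}^2\in C^{\tau,\tau'}$ for $\tau+\tau'\le\sigma-4$, and that $c_{k\ell}(y)$ — the unique constant furnished by Proposition \ref{prop:Wk}, hence a continuous linear functional of the right-hand side — depends smoothly on $y$. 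Applying Proposition \ref{prop:Wk} in the variable $x$, and differentiating the equation in the parameter $y$ (which $\XXi_t$ does not involve) to propagate the $y$-regularity, gives $\zz_{k\ell,t}\in C^{\tau,\tau'}$ for $\tau+\tau'\le\sigma-6$, again with the decay. Finally I would inspect $g_k^1$: besides $C^\infty$ pieces it contains $(\yy_{k,t}^0)'$, which is $C^{\sigma-4}$, and the term $\int\partial_1\zz_{k\ell,t}(y,x)\,d\mu_{\ell,t}^*(y)$ which, after using one derivative in the first slot of $\zz_{k\ell,t}$ and integrating it out, is only $C^{\sigma-7}$ in $x$; this is the worst term, so $g_k^1\in C^{\sigma-7}$, and since $\sigma-7\ge 3$ — which is precisely why one needs $\sigma\ge 10$ — Proposition \ref{prop:Wk} produces $\yy_{k,t}^1\in C^{\sigma-9}$ with the required decay. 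Collecting the three uniform-in-$t$ bounds gives the claimed estimate.

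The hard part will be the bookkeeping of the joint regularity in the two-variable equation for $\zz_{k\ell,t}$: one has to track $C^{\tau,\tau'}$ estimates through the divided-difference term and through the convolutions against $\mu_{m,t}^*$, verify that the constants $c_{k\ell}(y)$ really inherit smoothness in $y$, and check that differentiating the equation in $y$ does not degrade the uniform-in-$t$ bounds of Proposition \ref{prop:Wk}. A secondary point requiring care is that the inverse-power decay in $x$ must be shown to propagate from $\yy_{k,t}^0$ into the right-hand sides $g_{k\ell}^2$ and $g_k^1$, so that it is inherited by $\zz_{k\ell,t}$ and $\yy_{k,t}^1$; this uses that the kernels $f_{k\ell,0},f_{k\ell m,0}$ and their derivatives are bounded and that integration against the compactly supported measures $\mu_{m,t}^*$ preserves decay in the remaining free variable.
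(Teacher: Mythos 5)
Your proposal is correct and follows precisely the cascade argument the paper intends when it says "We can now apply the above proposition to invert the first equation ... Then we solve the second equation ... and finally we invert the third equation. In this way, in analogy with \cite[Lemma 3.3]{BFG} we obtain the following result." The paper leaves all the bookkeeping implicit, and your step-by-step accounting fills it in faithfully: $g_k^0\in C^\infty$ so $\yy_{k,t}^0\in C^{\sigma-3}$; the divided-difference term makes $g_{k\ell}^2\in C^{\tau,\tau'}$ for $\tau+\tau'\le\sigma-4$ which yields $\zz_{k\ell,t}\in C^{\tau,\tau'}$ for $\tau+\tau'\le\sigma-6$; and the term $\int\partial_1\zz_{k\ell,t}(y,x)\,d\mu_{\ell,t}^*(y)$ is the worst piece of $g_k^1$, giving $C^{\sigma-7}$ and hence $\yy_{k,t}^1\in C^{\sigma-9}$, with the constraint $\sigma-7\ge 3$ exactly matching the hypothesis $\sigma\ge 10$. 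Your treatment of the $y$-dependent constant $c_{k\ell}(y)$ as a continuous linear functional of the data, and of the propagation of inverse-power decay into the right-hand sides via boundedness of the kernels and compact support of $\mu_{m,t}^*$, are the right things to check and resolve in the way you indicate.
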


Recalling \eqref{eq:RNt},
it follows by Lemma \ref{lem:small Rt} and Corollary \ref{cor:regular} that
 $${\mathcal R}^N_t(\YY^N)=
 C^N_t+O\biggl(\frac{|\phi^M_\#M^N|^3}{N};(\yy_{k,t}^1)',\partial_1\zz_{k\ell,t},\partial_{2}\zz_{kk,t}\biggr).
$$
But in fact, since ${\mathcal R}^N_t(\YY^N)$ is centered (compare with \cite[Section 3.5]{BFG}),
we deduce that
$$
{\mathcal R}^N_t(\YY^N)=
O\biggl(\frac{|\phi^M_\#M^N|^3}{N};(\yy_{k,t}^1)',\partial_1\zz_{k\ell,t}, \partial_{2}\zz_{kk,t}\biggr).
$$
The goal of the next section is to control the right hand side.

\subsection{Getting rid of the rest}\label{sect:rest}
 We start by using concentration inequalities to control $M_k^N-\mathbb E[M_k^N]$.
\begin{lem}\label{ub1} Let Hypothesis \ref{hypo} hold, and let $a_0$ be as in Section \ref{eqmeas}.
 For $a\in [-a_0,a_0]$ there exists $c'>0$ such that,
for any Lipschitz function $f:\R\to\R$, for all $\delta>0$, all $t\in [0,1]$ and $k\in\{1,\ldots,d\}$,
$$Q^{N,aV}_{t}\biggl(\biggl|\sum_{i=1}^N f(\lambda_i^k)-\mathbb E\Bigl[\sum_{i=1}^N f(\lambda_i^k)\Bigr]\biggr|\ge \|f\|_L\delta\biggr)\le 2e^{-c'\delta^2},$$
where $\|f\|_L$ denotes the Lipschitz constant of $f$.
\end{lem}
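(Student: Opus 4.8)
The plan is to show that, after restricting to a suitable convex region, $Q^{N,aV}_t$ is uniformly log-concave with a log-concavity constant of order $N$, and then to run the classical Bakry--\'Emery/Herbst argument for concentration of Lipschitz functions, as in \cite{AGZ,BFG}. Since $Q^{N,aV}_t$ is invariant under permuting $\{\lambda_i^k\}_{1\le i\le N}$ separately for each $k$, and the statement only involves symmetric functionals, it suffices to prove the estimate for the renormalized restriction of $Q^{N,aV}_t$ to the convex open set $\W:=\prod_{k=1}^d\{\lambda_1^k<\cdots<\lambda_N^k\}\subset\R^{dN}$. On $\W$ this measure has a density proportional to $e^{-\Psi_t}$ with
\[
\Psi_t(\hat\lambda)=N\sum_{k,i}W_k(\lambda_i^k)-\beta\sum_{k}\sum_{i<j}\log(\lambda_j^k-\lambda_i^k)-t\,\mathcal F(\hat\lambda),\qquad \mathcal F(\hat\lambda):=\sum_{l=0}^{2}N^{2-l}F_l^a(\phi^M_\#L^{N}_1,\ldots,\phi^M_\#L^{N}_d,\tau_B^N).
\]
On $\W$ the first term has Hessian $\ge Nc_0\,\mathrm{Id}$ by uniform convexity of the $W_k$, while $-\beta\sum_{i<j}\log(\lambda_j^k-\lambda_i^k)$ is a sum of $-\log$ of affine functions positive on $\W$, hence convex; thus $\mathrm{Hess}\,\Psi_t\ge Nc_0\,\mathrm{Id}-t\,\mathrm{Hess}\,\mathcal F$ on $\W$.

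The crux is to bound $\|\mathrm{Hess}\,\mathcal F\|_{\rm op}$ by $C|a|N$, uniformly in $t\in[0,1]$; this is where the smallness of $a$ enters. The functionals $F_l^a$ are smooth for the topology generated by $\|\cdot\|_{\zeta M}$, with all their derivatives of size $O(|a|)$ (Lemma \ref{lem:smooth}), and $\phi^M$ keeps $\phi^M(\lambda_i^k)$ in the fixed compact set $[-2M,2M]$ regardless of $\hat\lambda$. Since $\phi^M_\#L^{N}_k=\tfrac1N\sum_i\delta_{\phi^M(\lambda_i^k)}$, each derivative in an eigenvalue falling on an empirical measure produces a factor $\tfrac1N$; hence, writing $g=F_l^a(\,\cdot\,,\tau_B^N)$,
\[
\partial_{\lambda_i^k}\bigl[g(\phi^M_\#L^{N}_1,\ldots)\bigr]=\tfrac1N(\phi^M)'(\lambda_i^k)\,h_{k}(\phi^M(\lambda_i^k),\hat\lambda),
\]
where $h_k$ is, together with all its derivatives, a bounded function of size $O(|a|)$. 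Differentiating once more, the term $l=0$ gives $\partial^2_{\lambda_i^k\lambda_j^\ell}[N^2 g]=\delta_{ij}\delta_{k\ell}\,O(|a|N)+O(|a|)$, while the terms $l=1,2$ contribute Hessians of operator norm $O(|a|)$ and $O(|a|/N)$ respectively. Since a diagonal matrix with entries $O(|a|N)$ and a full $dN\times dN$ matrix with entries $O(|a|)$ both have operator norm $O(|a|N)$, we get $\|\mathrm{Hess}\,\mathcal F\|_{\rm op}\le C|a|N$ with $C$ universal. Choosing $a_0\le c_0/(2C)$ then gives $\mathrm{Hess}\,\Psi_t\ge\tfrac{c_0}{2}N\,\mathrm{Id}$ on $\W$ for all $|a|\le a_0$ and $t\in[0,1]$.

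By the Bakry--\'Emery criterion, the restriction of $Q^{N,aV}_t$ to the convex set $\W$ then satisfies a logarithmic Sobolev inequality with constant $\tfrac{2}{c_0N}$, hence (Herbst's argument) $Q^{N,aV}_t(|G-\mathbb E[G]|\ge r)\le 2e^{-\frac{c_0}{4}Nr^2}$ for every $1$-Lipschitz $G:\R^{dN}\to\R$. Applying this to $G=\|f\|_L^{-1}\sum_i f(\lambda_i^k)$, whose Euclidean Lipschitz constant is $\le\sqrt N$, and taking $r=\delta/\sqrt N$, yields the claim with $c'=c_0/4$.

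I expect the main obstacle to be the Hessian estimate on $\mathcal F$: one must carefully verify that differentiating the functionals $F_l^a$ twice in the eigenvalues costs only a factor $N$, not $N^2$, which relies on the $\tfrac1N$ gained each time a derivative hits an empirical measure together with the quantitative smoothness bounds of Lemma \ref{lem:smooth} and the cutoff $\phi^M$ keeping all arguments in a fixed compact set. The remaining steps — from uniform log-concavity on the convex domain $\W$ to Gaussian concentration, and the reduction to $\W$ by symmetry — are classical.
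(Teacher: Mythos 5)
Your proof is correct and follows essentially the same route as the paper: the paper's one-line proof invokes uniform log-concavity of $Q^{N,aV}_t$ (established implicitly via the uniform convexity of the effective potentials in Section \ref{eqmeas}) together with the Bakry--\'Emery/Herbst argument from [AGZ, Section 4.4], and you have simply made explicit the two steps the paper leaves to the reader — the restriction to the ordered Weyl chamber $\W$ and the operator-norm bound $\|\mathrm{Hess}\,\mathcal F\|_{\rm op}\le C|a|N$. Your Hessian accounting (diagonal entries of $N^2F_0^a$ of size $O(|a|N)$, off-diagonal entries of size $O(|a|)$, lower-order corrections from $l=1,2$) and the final rescaling of $G$ by $\sqrt N$ are both right; the paper just does not spell any of this out.
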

\begin{proof}  $Q^{N,aV}_{t}$  being a probability measure with  uniformly log-concave density (see Section \ref{eqmeas}), Bakry-Emery and Herbst argument applies
(see e.g. \cite[Section 4.4]{AGZ}).
\end{proof}

We now need  to control the difference between $\mathbb E[L^{N}_k]$ and its limit $\mu^*_{k,t}$. We shall do this in two steps:
we first derive a rough estimate which only provides a bound of order $N^{-{1}/{2}}$ following  ideas initiated in  \cite{MMS}, and in a second step
we use loop equations to get a bound of order $\log N/N$, see e.g. \cite{Sh}. This two steps approach was already developed in \cite{BG1,BG2,BGK}.
To get the rough estimate, we shall use the distance $d(\mu,\mu')=d(\mu-\mu')$ on the space of probability measures on $\mathbb R$ defined on centered measures $\nu$ by
$$
d(\nu):=\left(2\iint \log |x-y|^{-1} d\nu(x)\,d\nu(y)\right)^{1/2}=\sqrt{\int_\R \frac{1}{|\tau|}\,|\hat \nu(\tau)|^2\, d\tau},
$$
where $\hat\nu$ denotes the Fourier transform of the measure $\nu$. Because this distance blows up on measures
with atoms, we shall consider the following regularization of the empirical measure: For a given vector $\lambda:=(\lambda_1<\lambda_2<\cdots<\lambda_N)$, we denote by $\tilde \lambda:=(\tilde\lambda_1<\cdots<\tilde\lambda_N)$ its transformation given by
$$\tilde\lambda_1:=\lambda_1,\quad\tilde\lambda_{i+1}:=\tilde\lambda_i+\max(\lambda_{i+1}-\lambda_i, N^{-3})\,.$$
We denote by $\tilde L^{N}_k$ the empirical measure of the $\tilde\lambda_i^k$, and by $\bar L^{N}_k$ its convolution with the uniform measure on $[0, N^{-4}]$. We then claim that:
\begin{lem}
\label{lem:Lip 12} Let Hypothesis \ref{hypo} hold.
Then there is $a_0>0$ so that, for $a\in [-a_0,a_0]$, there exist $c,C$ positive constants such that, for
all $\delta>0$ and $t\in [0,1]$: 
\begin{itemize}
\item $$Q^{N,aV}_{t}\left(\max_{1\le k\le d} d(\bar L^{N}_k,\mu^*_{k,t})\ge \delta\right)\le e^{CN\log N-\frac\beta6 \delta^2 
 N^2} +Ce^{-cN^2}\,.$$
\item If $f:\R\to\R$ is Lipschitz and belongs to $L^2(\R)$, then
$$Q^{N,aV}_{t}\left(\left|\int f(x)\,d (L^{N}_k-\mu^*_{k,t})(x)\right|\ge \delta \|f\|_{\frac{1}{2}}+N^{-4}\|f\|_L\right)
\le e^{CN\log N-\frac\beta8 \delta^2 N^2} +Ce^{-cN^2},$$
where $\|f\|_{\frac 1 2}:=(\int_\R |\tau|\,|\hat f(\tau)|^2 d\tau)^{1/2}$.
\end{itemize}
\end{lem}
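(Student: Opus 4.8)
\emph{Plan.} The second bullet follows from the first via the duality bound $|\int f\,d\nu|\le\|f\|_{\frac12}\,d(\nu)$, valid for centered $\nu$ (it is the Cauchy--Schwarz inequality behind the two expressions for $d$ recalled before the statement), applied to $\nu=\bar L^N_k-\mu^*_{k,t}$, together with the elementary estimate $|\int f\,d(L^N_k-\bar L^N_k)|\le\|f\|_L\,W_1(L^N_k,\bar L^N_k)$; since $\bar L^N_k$ is obtained from $L^N_k$ by first spreading consecutive eigenvalues (total displacement $O(N^{-2})$) and then smoothing at scale $N^{-4}$, the Wasserstein distance $W_1(L^N_k,\bar L^N_k)$ is a negative power of $N$, and the small loss in these two comparisons is what turns $\tfrac\beta6$ into $\tfrac\beta8$. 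So the real task is the first bullet, which I would prove along the lines of \cite{MMS} (see also \cite{BG1,BG2,BGK}): compare the density of $Q^{N,aV}_t$ with $\exp(-N^2 J^a_t)$, where $J^a_t$ is the rate function obtained exactly as in Lemma \ref{ldplem} with $F_0^a$ replaced by $tF_0^a$, and then exploit the coercivity of $J^a_t$ in the metric $d$ around its minimiser $(\mu^*_{1,t},\dots,\mu^*_{d,t})$.

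Concretely, on the set where the eigenvalues of each $X_k$ are ordered, the density of $Q^{N,aV}_t$ equals
$$\frac{1}{Z^{N,aV}_t}\exp\Bigl(\tfrac\beta2\sum_k\sum_{i\neq j}\log|\lambda_i^k-\lambda_j^k|-N\sum_k\sum_i W_k(\lambda_i^k)+\sum_{l=0}^2 N^{2-l}\,t\,F_l^a(\phi^M_\#L^N_1,\dots,\phi^M_\#L^N_d,\tau_B^N)\Bigr).$$
Since $d$ is infinite on atomic measures, I would replace each $L^N_k$ by its regularisation $\bar L^N_k$. Because $i\mapsto\tilde\lambda_i^k$ spreads out consecutive eigenvalues, $|\tilde\lambda_i^k-\tilde\lambda_j^k|\ge|\lambda_i^k-\lambda_j^k|$, and because the gaps of $\tilde\lambda^k$ are at least $N^{-3}\gg N^{-4}$, a direct computation gives
$$\sum_{i\neq j}\log|\lambda_i^k-\lambda_j^k|\ \le\ N^2\iint\log|x-y|\,d\bar L^N_k(x)\,d\bar L^N_k(y)+CN\log N,$$
while $|\int W_k\,d(L^N_k-\bar L^N_k)|$ and, by smoothness of $F_0^a$ (Lemma \ref{lem:smooth}), the change in the $F_0^a$-term when each $L^N_k$ is replaced by $\bar L^N_k$ are bounded by a negative power of $N$ on the event $\mathcal G$ that all eigenvalues stay in a fixed compact neighbourhood of $\bigcup_k[a_k^{aV},b_k^{aV}]\subset\subset(-M,M)$; on $\mathcal G$, moreover, $\phi^M$ acts as the identity so $\phi^M_\#\bar L^N_k=\bar L^N_k$, and $\tau_B^N$ may be replaced by its limit up to an error absorbed in the prefactor. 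The lower-order pieces $NtF_1^a$ and $tF_2^a$ are $O(N)$ and $O(1)$. Collecting everything, on $\mathcal G$
$$\frac{dQ^{N,aV}_t}{d\mathcal L}(\hat\lambda)\ \le\ \frac{1}{Z^{N,aV}_t}\exp\bigl(-N^2 J^a_t(\bar L^N_1,\dots,\bar L^N_d)+CN\log N\bigr),$$
whereas on $\mathcal G^c$ the confining term $-N\sum_i W_k(\lambda_i^k)$ is smaller than $-C'N^2$ by uniform convexity of the $W_k$, which combined with the crude bound $Z^{N,aV}_t\ge e^{-CN^2}$ shows $Q^{N,aV}_t(\mathcal G^c)\le e^{-cN^2}$.

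It then remains to bound $Z^{N,aV}_t$ from below and to quantify the coercivity of $J^a_t$. For the former, restricting the integral defining $Z^{N,aV}_t$ to a product of intervals of length $\sim N^{-3}$ around the quantised configuration $\lambda_i^{k,*}:=\inf\{x:\mu^*_{k,t}((-\infty,x])\ge i/N\}$ gives, by the standard argument of \cite{BAG,AGZ}, $Z^{N,aV}_t\ge\exp(-N^2 J^a_t(\mu^*_{1,t},\dots,\mu^*_{d,t})-CN\log N)$. For the latter, $J^a_t$ is quadratic in $(\mu_1,\dots,\mu_d)$ up to the term $-tF_0^a$, whose second differential is $O(|a|)$ uniformly (Lemma \ref{lem:smooth}) and which I bound by $O(|a|)\sum_k d(\nu_k)^2$ since $d$ controls the moment seminorms on a fixed compact of $(-M,M)$; as the logarithmic part contributes exactly $\tfrac\beta2\sum_k d(\nu_k)^2$ to the second differential, Taylor expansion around $(\mu^*_{1,t},\dots,\mu^*_{d,t})$ with $\nu_k:=\bar L^N_k-\mu^*_{k,t}$ (centered) yields
$$J^a_t(\bar L^N_1,\dots,\bar L^N_d)-J^a_t(\mu^*_{1,t},\dots,\mu^*_{d,t})\ \ge\ \tfrac12\Bigl(\tfrac\beta2-O(|a|)\Bigr)\sum_k d(\nu_k)^2\ \ge\ \tfrac\beta6\sum_k d(\nu_k)^2$$
for $|a|$ small, uniformly in $t\in[0,1]$; here the first-order term is discarded because $D_kJ^a_t$ evaluated at the minimiser equals a constant on ${\rm supp}(\mu^*_{k,t})$ and is no smaller elsewhere (the Euler--Lagrange inequality from the proof of Lemma \ref{lem:support}), while $\bar L^N_k\ge0$. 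Plugging the last three displays into one another, $dQ^{N,aV}_t/d\mathcal L\le\exp(C''N\log N-\tfrac\beta6 N^2\sum_k d(\nu_k)^2)$ on $\mathcal G$; integrating over $\{\,d(\bar L^N_k,\mu^*_{k,t})\ge\delta\,\}$, whose volume inside $\mathcal G$ is $\le e^{CN}$, and summing over $k$ yields the first bullet.

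\emph{Main obstacle.} The delicate part is the very first step: the logarithmic energy of the genuine empirical measure is $-\infty$, so one must interpose $\bar L^N_k$, and the regularisation scales must be tuned so that (i) the substitution costs only $O(N\log N)$ in the exponent -- negligible against $N^2\delta^2$ precisely once $\delta\gg\sqrt{\log N/N}$, the resolution this lemma reaches -- and (ii) the coercivity constant of $J^a_t$ stays large enough to survive the passage to $\beta/6$, which is where the smallness of $a$ and the uniform convexity of the $W_k$ (hence of the effective potentials, cf. Lemma \ref{lem:support}) are really used. The partition-function lower bound and the concentration estimate of Lemma \ref{ub1} (Bakry--Emery) are, by contrast, routine.
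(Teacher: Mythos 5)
Your plan is structurally the same as the paper's: compare the $Q^{N,aV}_t$-density with $\exp\bigl(-N^2J^a_t(\bar L^N_\cdot)\bigr)$ up to an $e^{O(N\log N)}$ prefactor, lower-bound $Z^{N,aV}_t$ by localising eigenvalues near the quantiles of $\mu^*_{\cdot,t}$, extract $d$-coercivity of $J^a_t$ around $(\mu^*_{1,t},\dots,\mu^*_{d,t})$, and deduce the second bullet by duality plus the $L^N_k\to\bar L^N_k$ regularisation cost. Two of your technical steps, however, differ from (or fall short of) the paper's and should be flagged. For the coercivity estimate $\frac\beta2\sum_k d(\nu_k)^2-\sum_{k,\ell}D^2_{k\ell}F_0^a[\nu_k,\nu_\ell]\ge\frac\beta4\sum_k d(\nu_k)^2$ at small $|a|$, the paper writes the kernel $D^2_{k\ell}F_0^a[\delta_x,\delta_y]$ in Fourier and applies Cauchy--Schwarz with weight $|\xi||\zeta|$, producing $d(\nu_k)\,d(\nu_\ell)$ directly; your route --- that $d$ dominates the moment seminorm $\|\cdot\|_{\zeta M}$ for compactly supported centred measures --- is true (Plancherel against a smooth cut-off of $x^k$ together with Remark~\ref{remnorm}, taking $\zeta$ large enough), but it is a nontrivial intermediate claim that should be spelled out. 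More seriously, the assertion that on $\mathcal G^c$ the confining term $-N\sum_iW_k(\lambda_i^k)$ is $\le-C'N^2$ is false: a single eigenvalue escaping just outside $[-M,M]$ only costs $\sim NM^2$, not $N^2$. The paper never performs such a split; it keeps the nonnegative Euler--Lagrange term $N^2\sum_k\int f_k\,dL^N_k$ (with $f_k$ vanishing on ${\rm supp}\,\mu^*_{k,t}$, nonnegative elsewhere, and growing like $W_k$) in the exponent and simply drops it, while the $Ce^{-cN^2}$ correction in the statement instead arises from a concentration bound (via the LDP of Lemma~\ref{ldplem}) guaranteeing that the cubic remainder $D^3F_0^a(\mu^*+\theta\nu)[\nu^{\otimes3}]$ is dominated by the quadratic term. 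Lastly, your explanation of the $\beta/6\to\beta/8$ relaxation is not quite what drives it: the two comparisons you invoke reduce the second-bullet event exactly to the first-bullet one, so in principle the constant could be preserved; the paper just records a weaker one.
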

\begin{rem}\label{remnorm}{\rm
Note for later use that if $f$ is supported in $[-M,M]$, then there exists a constant $C(M)$ finite such that
$$\|f\|_{\frac 1 2}\le C(M)\|f'\|_\infty\,.$$
Indeed,
$$\|f\|_{\frac 1 2}^2=\int |s||\hat f(s)|^2 ds=\int\frac{1}{|s|}|\widehat{f'}(s)|^2 ds=-2\iint\log|x-y|\,f'(x)\,f'(y)\,dx\,dy\le C(M)\,\|f'\|_\infty^2\,.$$}
\end{rem}
\begin{proof}[Proof of Lemma \ref{lem:Lip 12}]
We just recall the main point of the proof, which is almost identical to that of \cite[Corollary 3.5]{BGK}. In the
latter article, the potential is only depending polynomially on the measures rather than being an infinite series.
 It turns out that  the main point is to show that
$$S(\nu):=\frac{\beta}{2} \sum_k d(\nu_k)^2  -\sum_{k,\ell} D^2_{k\ell} F^a_0(\mu^*_{1,t},\ldots,\mu^*_{d,t},\tau_B^N) [\nu_k,\nu_\ell]$$
is uniformly convex on the  set $P([-M,M])^d$ of probability measures on $[-M,M]$, so that its square root defines a Lipschitz distance. Here, we more simply notice that for $a$ small enough
\begin{equation}\label{contd}S(\nu)\ge \frac{\beta}{4} \sum_{k=1}^d d(\nu_k)^2\,.\end{equation}
Indeed, the latter  amounts to bound from above the second term in the definition of $S$. But since $D^2_{k\ell} F_0^a(\mu_{1,t}^*,\ldots,\mu_{d,t}^*)[\delta_x,\delta_y]$ is smooth and compactly supported, so we can always write 
$$D^2_{k\ell} F_0^a(\mu_{1,t}^*,\ldots,\mu_{d,t}^*,\tau_B^N)[\delta_x,\delta_y]=\int d\xi\,\int d\zeta\, e^{i\xi x+i\zeta y} \widehat{D^2_{k\ell} F_0^a}(\mu_{1,t}^*,\ldots,\mu_{d,t}^*,\tau_B^N)(\xi,\zeta)$$
and for any centered measures $\nu_k,\nu_\ell$ we get, by Cauchy-Schwartz inequality,
$$ |D^2_{k\ell} F^a_0(\mu^*_{1,t},\ldots,\mu^*_{d,t},\tau_B^N) [\nu_k,\nu_\ell]|\le d(\nu_k)\,d(\nu_\ell)
\left(\int d\xi\int d\zeta\, | \widehat{D^2_{k\ell} F_0^a}(\mu_{1,t}^*,\ldots,\mu_{d,t}^*,\tau_B^N)(\xi,\zeta)|^2|\xi||\zeta|\right)^{\frac 1 2}.$$
Hence we can always choose $a$ small enough so that the last term is as small as wished, proving \eqref{contd}.

Let us sketch the rest of the proof. By localizing the eigenvalues in a very tiny neighborhood around the quantiles of $\mu^*_{k,t}$ it is possible to show (see e.g. \cite[Lemma 3.11]{BGK}) that there exists a finite constant $C$ such that
$$Z^{N,aV}_{t}\ge e^{-N^2 J_t^a(\mu_{1,t}^*,\ldots,\mu_{d,t}^*)-CN\log N}$$
where $Z^{N,aV}_t$ is as in \eqref{eq:ZN} and
$$J^a_t(\mu_1,\ldots,\mu_k):=
\frac{1}{2}\sum_{k=1}^d  \biggl(\iint \bigl[ W_k(x)+W_k(y) -\beta\log|x-y|\bigr]\,d\mu_k(x)\,d\mu_k(y)\biggr)
-tF_0^a(\mu_1,\ldots,\mu_k,\tau_B^N) \,.$$
Then, writing $L_N:=(L^N_1,\ldots,L_d^N)$, $\bar L_N:=(\bar L^N_1,\ldots,\bar L_d^N)$,
and $\mu^*:=(\mu_{1,t}^*,\ldots,\mu_{d,t}^*)$, one has
\begin{align*}
&\frac{\beta}{2}\int_{x\neq y} \log |x-y|\,dL_N(x)\, dL_N(y)-tF_0^a(L_N,\tau_B^N)-\sum_k\int W_k \,dL_k^N+J^a_t(\mu_{1,t}^*,\ldots,\mu_{d,t}^*)
\\
&=\frac{\beta}{2}
\int_{x\neq y}\log |x-y|\,d[L_N-\mu^*](x)\,d[L_N-\mu^*](y)+R(L_N-\mu^*)\\
&= \frac{\beta}{2}\int\log|x-y|\,d[\bar L_N-\mu^*](x)\,d[\bar L_N-\mu^*](y)
+R(L_N-\mu^*)+O(\log N/N),
\end{align*}
where we used the regularization $\bar L_N$ of $L_N$ to add the diagonal term $x=y$ in the logarithmic term up
to an error of order $N\log N$, we bounded uniformly $F_1^a$ and $F_2^a$ up to an error of order $N$, and we set
$$R(\nu):=\sum_k\int f_k(x)d\nu_k(x)-D^3 F^a_0(\mu^*+\theta \nu,\tau_B^N)[\nu^{\otimes 3}]$$
for some $\theta\in [0,1]$ and some functions $f_k$ vanishing on the support of the equilibrium measure $\mu^*_{k,t}$, positive outside, and going to infinity like $W_k$
(see \cite[Lemma 3.11]{BGK} for more details).
In this way one deduces that
$$Q^{N,aV}_{t}\left(\max_{1\le k\le d}d(\bar L^{N}_k,\mu^*_{k,t})\ge \delta\right)
\le e^{CN\log N} \int_{\max_{1\le k\le d}d(\bar L^{N}_k,\mu^*_{k,t})\ge \delta} e^{-N^2 d(\bar L_N,\mu^*)^2-N^2R(\bar L_N-\mu^*)} 
\prod d\lambda_i^k.$$
By the large deviation principle in Theorem \ref{ldplem}, we see that the cubic term in $R$  is negligible compared to the quadratic term on a set with probability greater than $1-e^{-c N^2}$. Thus, setting $\bar M^N_k:=N(\bar L^{N}_k-\mu^{*}_{k,t})$, we get
\begin{align*}
&Q^{N,aV}_{t}\left(\max_{1\le k\le d}d(\bar M^N_k)\ge  N\delta\right)\\
&\le  e^{CN\log N}\biggl[ \int_{\max_{1\le k\le d}d(\bar M^N_k)\ge N\delta} e^{-\frac{\beta}{5}
\sum_{k=1}^d d(\bar M^N_k)^2-N^2\sum_k\int f_k(x)dL^{N}_k(x)}
\prod d\lambda_i^k +e^{-cN^2}\biggr]\\
&\le e^{CN\log N}(e^{-\frac{\beta}{6} N^2 \delta^2}+e^{-cN^2})\end{align*} 
This gives the first bound of the lemma, from which the second is easily deduced since
$$
\biggl|\int f(x)\,d\nu(x) \biggr|=\biggl|\int \hat f(\tau) \hat \nu(\tau) \,d\tau\biggr|\le \|f\|_{\frac 1 2}d(\nu)
$$
and
$$
\biggl|\int f(x)\,d(L^{N}_k-\bar L^{N}_k)(x)\biggr|\le \frac{\|f\|_L}{N^{4}}.
$$
\end{proof}
We finally improve the previous bounds to get an error of order $\log N/N$ instead of $\log N/\sqrt{N}$. 
\begin{lem} \label{ub2}  Let Hypothesis \ref{hypo} hold, and given a function $f:\R\to \R$ define the norm given by
\begin{equation}
\label{eq:norm}
|||f|||:=\int (1+|\tau|^{7})|\hat f(\tau)| \,d\tau\,.
\end{equation}
There exists $a_0>0$ so that,
for all $a\in [-a_0,a_0]$ and all functions $f:\R\to \R$ with $|||f|||<\infty$,
$$
\biggl|\int \Bigl[\int f(x) \,d(L^{N}_k-\mu^{*}_{k,t})(x)\Bigr]\,dQ^{N,aV}_{t}\biggr|\le C ||| f|||\,\frac{\log N}{N}
$$
for some constant $C$ independent of $a$ and $f$.
\end{lem}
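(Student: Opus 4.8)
The plan is to deduce the estimate from the loop (Dyson--Schwinger) equations of $Q^{N,aV}_{t}$, in the spirit of \cite{Sh,BG1,BG2,BGK}; this is the ``second step'' announced just before the lemma, the rough bound of Lemma \ref{lem:Lip 12} being the first, and the new feature is the coupling of the $d$ families of eigenvalues. Fix $t\in[0,1]$, $k\in\{1,\dots,d\}$ and $f$ with $|||f|||<\infty$ (so $f\in C^{7}$, with all derivatives up to order $7$ bounded and vanishing at $\pm\infty$). First I invert the coupled operator $\XXi_t$ of \eqref{eq:Xi t}: by Proposition \ref{prop:Wk} (applied with $V=W_{k,t}^{\rm eff}$ and $\mu_V=\mu_{k,t}^{*}$, which is legitimate for $|a|$ small thanks to Lemma \ref{lem:support} and the uniform-in-$t$ control on $W_{k,t}^{\rm eff}$, see \eqref{eq:Veff2}) there are constants $c_1,\dots,c_d$ and smooth functions $\Psi_1,\dots,\Psi_d$, decaying at infinity, with $\XXi_t(\Psi_1,\dots,\Psi_d)_j=f\,\delta_{jk}+c_j$ for all $j$. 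In addition I need the quantitative bound $|||\Psi_j|||\le C\,|||f|||$; this is obtained by revisiting the inversion of $\Xi_k$ (Lemma \ref{lem:Xi}) and the fixed-point scheme of Proposition \ref{prop:Wk} in the Fourier-weighted norm $|||\cdot|||$ rather than in the $C^{s}$ norms --- exactly as in the resolvent estimates of \cite[Section 3]{BGK} --- the point being that the coupling kernels $\partial_1 f_{k\ell,0}$ are analytic with $C^{s}$ norms $O(|a|)$, which makes the iteration converge with an $|a|$-independent constant. The weight $(1+|\tau|^{7})$ in \eqref{eq:norm} is chosen precisely to leave enough room for the derivatives lost in these inversions and for the difference quotients and products of the $\Psi_j$'s formed below.

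Next I plug the vector field $Y^k_i:=\Psi_k(\lambda^k_i)$ into the integration-by-parts identity for $Q^{N,aV}_{t}$, namely $\mathbb{E}_{Q^{N,aV}_{t}}\bigl[\sum_{i,k}\partial_{\lambda^k_i}Y^k_i-\sum_{i,k}Y^k_i\,\partial_{\lambda^k_i}(-\log\rho_t)\bigr]=0$, which is licit since $\Psi_k$ decays and $\rho_t$ is fast-decaying. Performing on this identity exactly the manipulations that lead to \eqref{eq:RNt} and Lemma \ref{lem:small Rt} (cancel the $O(N^2)$ terms by the time-$t$ form of the equilibrium relation \eqref{eq:muk*}, reorganize the $O(N)$ terms linear in $M^N_j:=\sum_i\delta_{\lambda^j_i}-N\mu_{j,t}^{*}$ by means of the very definition \eqref{eq:Xi t} of $\XXi_t$, expand the $F^a_l$ around $(\mu^{*}_{1,t},\dots,\mu^{*}_{d,t},\tau^0_B)$ using \eqref{expB}, and replace $\phi^M_\#L^N_j$ by $L^N_j$ at the cost of an $e^{-cN}$ probability, as in Lemma \ref{confinement}), one arrives at
$$
N\,\mathbb{E}_{Q^{N,aV}_{t}}\!\Bigl[\int f\,dM^N_k\Bigr]=c^{(N)}+\mathbb{E}_{Q^{N,aV}_{t}}[\mathcal E_N].
$$
Here $c^{(N)}$ collects the residual deterministic terms of order $N$ (the $(1-\beta/2)\,N\!\int\Psi'_k\,d\mu^{*}_{k,t}$ term and the order-$N$ parts, evaluated at the equilibrium measures, of the $F_1^a$- and $\tau^1_B$-contributions), so $|c^{(N)}|\le C\,|||f|||\,N$; and $\mathcal E_N$ is a sum of: (i) \emph{quadratic} expressions $\iint K_{j\ell}(x,y)\,dM^N_j(x)\,dM^N_\ell(y)$ and their diagonal parts, with kernels $K_{j\ell}$ built from $\Psi_j,f_{j\ell,0},\partial_1 f_{j\ell,0},\partial_1 f_{j\ell m,0}$, hence $|||K_{j\ell}|||\le C\,|||f|||$; (ii) terms $\int h_j\,dM^N_j$ \emph{without} the prefactor $N$ --- the diagonal corrections, the $(1-\beta/2)$ terms, and all contributions of $F_1^a,F_2^a,\tau^1_B,\tau^2_B$ --- with $\|h_j\|_{\frac12}+\|h_j\|_L\le C\,|||f|||$; and (iii) a remainder bounded by $C\,|||f|||\,N^{-1}\bigl(\|\phi^M_\#M^N\|_{M\zeta}^{3}+\|\phi^M_\#M^N\|_{M\zeta}\bigr)$. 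The crucial structural point, which is the reason $\XXi_t$ is defined as in \eqref{eq:Xi t}, is that \emph{no} surviving term of $\mathcal E_N$ is of the form $N\times(\text{linear in }M^N)$: every such contribution has been absorbed, either into the $O(N^2)$ cancellation or into $\XXi_t$.

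It remains to estimate $\mathbb{E}_{Q^{N,aV}_{t}}[\mathcal E_N]$. For (i), replacing each $M^N_j$ by its regularization $\bar M^N_j:=N(\bar L^N_j-\mu^{*}_{j,t})$ (at the cost of $O(N^{-2})$, the kernels being Lipschitz), writing $K_{j\ell}$ through its Fourier transform and using Cauchy--Schwarz against the weight $|\xi|^{-1/2}|\eta|^{-1/2}$ gives $|\iint K_{j\ell}\,d\bar M^N_j\,d\bar M^N_\ell|\le C\,|||K_{j\ell}|||\,d(\bar M^N_j)\,d(\bar M^N_\ell)\le C\,|||f|||\,(d(\bar M^N_j)^2+d(\bar M^N_\ell)^2)$, while the first bullet of Lemma \ref{lem:Lip 12} yields $\mathbb{E}_{Q^{N,aV}_{t}}[d(\bar M^N_j)^2]\le C\,N\log N$; the diagonal parts of (i) and all terms of (ii), being of the form $\int h_j\,dM^N_j$, are controlled by $\mathbb{E}_{Q^{N,aV}_{t}}[|\int h_j\,dM^N_j|]\le C\,|||f|||\,\sqrt{N\log N}$ via the second bullet of Lemma \ref{lem:Lip 12} and Remark \ref{remnorm}; and for (iii), on the event $\{\max_j d(\bar M^N_j)\le\sqrt{N\log N}\}$, which has $Q^{N,aV}_{t}$-probability at least $1-e^{-cN^2}$, one has $\|\phi^M_\#M^N\|_{M\zeta}\le C\sqrt{N\log N}$ (polynomials being admissible test functions in Lemma \ref{lem:Lip 12}), so the remainder there is $\le C\,|||f|||\,N^{1/2}(\log N)^{3/2}$, while on the complementary event it is crudely $\le C\,|||f|||\,N^2$ and hence negligible after taking expectation. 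Summing, $|\mathbb{E}_{Q^{N,aV}_{t}}[\mathcal E_N]|\le C\,|||f|||\,N\log N$, whence $N\,|\mathbb{E}_{Q^{N,aV}_{t}}[\int f\,dM^N_k]|\le C\,|||f|||\,N\log N$, i.e.
$$
\Bigl|\int\Bigl[\int f\,d(L^N_k-\mu^{*}_{k,t})\Bigr]\,dQ^{N,aV}_{t}\Bigr|=\frac1N\Bigl|\mathbb{E}_{Q^{N,aV}_{t}}\!\Bigl[\int f\,dM^N_k\Bigr]\Bigr|\le C\,|||f|||\,\frac{\log N}{N},
$$
uniformly in $t\in[0,1]$ since Lemma \ref{lem:support}, Proposition \ref{prop:Wk} and Lemma \ref{lem:Lip 12} are. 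The hard part is twofold: controlling the inversion of the \emph{coupled} operator $\XXi_t$ in the single norm $|||\cdot|||$, so that no term of $\mathcal E_N$ blows up like $N\times(\text{linear})$ and every quadratic kernel inherits a bound proportional to $|||f|||$; and the interlocking of the two a priori estimates --- the loop equation trades the prefactor $N$ against the quadratic quantity $d(\bar M^N)^2$, whose expectation is only of size $N\log N$ by the (still rather crude) concentration bound of Lemma \ref{lem:Lip 12}, which is exactly what produces the final factor $\log N$.
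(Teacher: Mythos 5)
Your proposal takes a genuinely different route from the paper's. The paper does \emph{not} invert the coupled operator $\XXi_t$ of \eqref{eq:Xi t}; instead it writes the loop equation for a general test function $f$, keeps only the single-species inverse $\Xi_k^{-1}$ on the right-hand side, and deliberately leaves the cross-interaction term $\iint \partial_x D_{k\ell}F^a_0[\delta_y,\delta_x]\,f(x)\,d\mu^*_{k,t}(x)\,dM^N_\ell(y)$ on the left of \eqref{vc}. It then specialises $f(x)=e^{i\lambda x}$, uses the $C^s$ bound \eqref{eq:bound norms} (which is polynomial in $\lambda$), and obtains the self-consistent scalar inequality \eqref{eq:deltaN} for $\delta_N(\lambda)=\max_k|\int\widehat{M^N_k}(\lambda)\,dQ^{N,aV}_t|$. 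The fixed point is then closed \emph{in the scalar Fourier variable}: integrate against $(1+|\lambda|^{10})^{-1}$, use \eqref{poi} and the $O(|a|)$ smallness to absorb the cross term, and bootstrap back to a pointwise-in-$\lambda$ bound. This keeps the coupling between the $d$ families, but at the level of a number $\delta_N(\lambda)$ rather than at the level of the operator $\XXi_t$. Your approach — build the vector field $\Psi_j=\XXi_t^{-1}(f\delta_{jk}+c_j)$ and feed it into integration by parts — is structurally the same move as the transport construction of Section \ref{sect:constr}, and it is natural; but the paper's route neatly avoids having to invert the coupled operator in a Banach space of test functions.

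The gap is in your key quantitative claim $|||\Psi_j|||\le C\,|||f|||$. This is not true as stated, and the fixed-point iteration you describe cannot converge in $|||\cdot|||$. By Lemma \ref{lem:Xi}, $\Xi_k^{-1}$ loses two derivatives: $g\in C^s\Rightarrow\Xi_k^{-1}g\in C^{s-2}$, and this loss is unavoidable (it reflects the vanishing of the equilibrium density at the edges). Concretely, $|||f|||<\infty$ controls roughly $\|f\|_{C^6}$, so $\Psi_j$ is only $C^4$, hence $|\hat\Psi_j(\tau)|$ decays no faster than $|\tau|^{-4}$, and $\int(1+|\tau|^7)|\hat\Psi_j(\tau)|\,d\tau$ diverges. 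Already the first step of your Picard iteration, $\Psi_{(1)}=\Xi_k^{-1}f$, lies outside the ball you need, so the contraction argument is moot. You correctly sense that the exponent $7$ in $|||\cdot|||$ is chosen to absorb derivative losses, but that then forces the \emph{output} norm to be weaker than the input norm: what you actually can (and should) prove, by applying \eqref{eq:bound norms} with $s=7$, the decay of $\Psi_j$ from Proposition \ref{prop:Wk}, and your fixed-point scheme, is something like $\|\Psi_j\|_{C^1}+\int(1+|\tau|^3)|\hat\Psi_j(\tau)|\,d\tau\le C\,|||f|||$, which is exactly the combination the paper's estimate of $R^N_2$ requires, and which then also controls the two-variable kernels $K_{j\ell}$ after a further derivative loss in the divided differences. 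With the claim stated this way (one norm in, a strictly weaker norm out, with a two-derivative gap) your plan goes through; as written, the inversion step fails. The paper sidesteps the whole issue because, after testing against $e^{i\lambda\cdot}$, all it needs from Lemma \ref{lem:Xi} is a bound polynomial in $\lambda$, and it never has to assert that $\Xi_k^{-1}$ is bounded on a fixed Fourier-weighted space.
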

\begin{proof} 
Before starting the proof, we recall the notation $L_N:=(L^N_1,\ldots,L_d^N)$ and $\mu^*:=(\mu_{1,t}^*,\ldots,\mu_{d,t}^*)$.

To improve the bound we just obtained, we use the loop equation. Such an equation is simply obtained by integration by parts and, for any smooth test function,
reads as follows:
\begin{align*}
-\frac{1}{N}\iint f'(x)\,dL^{N}_k(x) \,dQ^{N,aV}_{t}&=
\frac{1}{N^2}\sum_{i=1}^N \int f(\lambda_i^k) \partial_{\lambda_i^k} \left(\frac{dQ^{N,aV}_{t}}{\prod d\lambda_j^\ell}\right)\prod d\lambda_j^\ell\\
&= \int\bigg(\int f(x)\left(t[\partial_x D_k F^a](L_N,\tau_B^N)[\delta_x]-W_k'(x)\right) dL^{N}_k(x)\\
&\qquad+\frac{\beta}{2}\iint \frac{f(x)-f(y)}{x-y} dL^{N}_k(x)dL^{N}_k(y)-\frac{\beta}{2N}\int f'(x) dL^{N}_k(x)\bigg)\,dQ^{N,aV}_{t},\end{align*}
where $F^a:=\sum_{l=0}^2 F^a_l N^{-l}$. Recalling that $M^N_k=N(L^{N}_k-\mu^{*}_{k,t})$ and \eqref{eq:Xi},
we rewrite the above equation as 
\begin{equation}\label{vc}
\int \biggl[\int \Xi_k f \,dM^{N}_k-t\sum_{\ell\neq k}  \iint \partial_ x D_{k\ell} F^a_0 (\mu^*,\tau_B^N)[\delta_y,\delta_x]f(x)\,d\mu^{*}_{k,t}(x) \,dM^{N}_\ell(y)\biggr]\,dQ^{N,aV}_{t}
=\sum_{\gamma=1}^4 R^N_\gamma (f)\end{equation}
where
\begin{align*}
R^N_1(f)&:=\biggl(1-\frac{\beta}{2}\biggr)\iint f'(x) \,dL^{N}_k(x) \,dQ^{N,aV}_{t},\\
R^N_2(f)&:= \frac{\beta }{2 N}\iint \frac{f(x)-f(y)}{x-y} \,dM^N_k(x)\,dM^N_k(y) \,dQ^{N,aV}_{t},\\
R^N_3(f)&:= Nt \int\biggl[\int f(x)\, \partial_x D_k (F^a-F^a_0)(L_N,\tau_B^N)[\delta_x] \,dL^{N}_k(x)\biggr] dQ^{N,aV}_{t},\\
R^N_4(f)&:=\frac{t}{N}\sum_{\ell }\int\left( \int f(x)\, \partial_x D_{k\ell} F^a_0 (\mu^*+\theta( L_{N}-\mu^*),\tau_B^N)[M^N_k,M^{N}_\ell]\right)\,dQ^{N,aV}_{t},\\
\end{align*}
and the last term was computed using a Taylor expansion.
Writing $f(x)=\int e^{ixs}\hat f(s) \,ds$ and
 noticing that $\|e^{i\lambda.}\|_{\frac 1 2}+ \|e^{i\lambda.}\|_L\le 2(1+|\lambda|)$ so that Lemma \ref{lem:Lip 12} entails
 $$\int \Bigl|\widehat{(M^N_k)}({\lambda})\Bigr|^2 dQ^{N,aV}_{t}\le C N \log N (1+|\lambda|)^2,$$
 we get
\begin{align*}
|R^N_1(f)|&\le\|f\|_L,\\
|R^N_2(f)|&\le N^{-1}\int d\tau  |\hat f(\tau)|\int_0^1 d\alpha \,|\tau|\,  \int \Bigl|\widehat{(M^N_k)}(\alpha\tau)\Bigr|\, \Bigl|\widehat{(M^N_k)}\bigl((1-\alpha)\tau\bigr)\Bigr| \,dQ^{N,aV}_{t}\\
&\le N^{-1}\int d\tau |\tau| |\hat f(\tau)|\int_0^1d\alpha\, \int \Bigl|\widehat{(M^N_k)}(\alpha\tau)\Bigr| ^2 dQ^{N,aV}_{t}\le \log N \int (1+|\tau|^3)|\hat f(\tau)|\,d\tau,\\
|R^N_3(f)|&\le C\|f\|_\infty,\\
|R^N_4(f)|&\le C\log N\|f\|_\infty,
\end{align*}
where we used Lemma \ref{lem:Lip 12} for the second and fourth terms, and to bound the last term we noticed that, since $F^a_0$ is smooth 
and it is of size $O(|a|)$ together with its derivatives, we have 
\begin{equation}\label{poi} \max_{k\ell} \left|\bigl[\widehat{ \partial_x D_{k\ell}F^a_0}\bigr](\lambda,\zeta)\right|\le\frac{
\hat C\,|a|}{(1+\lambda^2 )(1+|\zeta|^{10})}.\end{equation}
Hence, since
\begin{multline*}
\biggl| \int\biggl[ \int \partial_x D_{k\ell} F^a_0 (\mu^*,\tau_B^N)[\delta_y,\delta_x]\,f(x)\,d\mu^{*}_{k,t} (x) \,d M^{N}_\ell(y)\biggr]\,dQ^{N,aV}_{t}\biggr|\\
\le \iint \left|\widehat{f\cdot d\mu^{*}_{k,t}}(\zeta)  \right| |\widehat {\partial_x D_{k\ell }F_0^a}(\xi,\zeta)|\left|\int
\widehat{(M^{N}_\ell)}({\xi})\,dQ^{N,aV}_{t}\right|\, d\zeta\, d\xi \,,
 \end{multline*}
 we deduce from \eqref{vc}
that
\begin{multline*}
\biggl|\iint f(x) \,dM^N_k(x)\, dQ^{N,aV}_{t}\biggr|
\le \|\Xi_k^{-1}f\|_\infty \sum_{\ell\neq k }\int |\widehat {\partial_x D_{k\ell}F^a_0}(\xi,\zeta)|\left|\int\widehat{(M^{N}_\ell)}(\zeta)\,dQ^{N,aV}_{t}\right| \,d\zeta \,d\xi \\
\quad +C\|\Xi_k^{-1}f \|_{C^1(\mathbb R)}+\log N \int (1+|\tau|^3)|\widehat {\Xi_k^{-1} f}(\tau)|\,d\tau. \end{multline*}
Applying the above bound with $f(x)=e^{i\lambda x}$ and using  \eqref{eq:bound norms} with $\Xi=\Xi_k$, we get
\begin{equation}
\label{eq:deltaN}
\delta_N(\lambda):= \max_{1\le k\le d} \biggl|\int \widehat{M^N_k} (\lambda) \,dQ^{N,aV}_{t}\biggr|
\le\lambda^2 \int \max_{k,\ell} \left|\widehat{ \partial_x D_{k\ell}F^a_0}(\lambda,\zeta)\right| \delta_N(\zeta) \,d\zeta + C(1+|\lambda|^7)\log N.
\end{equation}
By \eqref{poi}, we deduce from the above equation that 
\begin{align*}
\int\frac{1}{1+|\lambda|^{10}}\,\delta_N(\lambda)\,d\lambda&\le \hat C\,|a|\,\biggl(\int \frac{1}{1+|\lambda|^{10}}\,d\lambda\biggr) \int\frac{1}{1+|\zeta|^{10}}\,\delta_N(\zeta)\,d\zeta
+C\biggl(\int \frac{1+|\lambda|^7}{1+|\lambda|^{10}}\,d\lambda \biggr)\log N\\
&\leq C\,\hat C\,|a|\,\int\frac{1}{1+|\zeta|^{10}}\,\delta_N(\zeta)\,d\zeta+C\,\log N.
\end{align*}
In particular, if $a$ is sufficiently small so that $C\,\hat C\,|a| \leq 1/2$,
we can reabsorb
the first term in the right hand side and obtain 
$$
\int\frac{1}{1+|\lambda|^{10}}\,\delta_N(\lambda)\,d\lambda \leq 2C\,\log N.
$$
Plugging back this control in \eqref{eq:deltaN} and using again  \eqref{poi},  we finally  get the bound
$$\delta_N(\lambda)\le C(1+|\lambda|^7 )\log N\,.$$
Therefore, using the identity
$f(x)=\int \hat f(\tau) e^{i\tau x} d\tau$ we conclude
$$\max_{1\le k\le d} \biggl|\int\Bigr[  \int f (x)\,dM^N_k(x)\Bigr]\,dQ^{N,aV}_{t}\biggr|\le
\int |\hat f(\tau)| \delta_N(\tau) \,d\tau\le C\log N \int (1+|\tau|^{7}) |\hat f(\tau)|\, d\tau\,,$$
as desired.
\end{proof}
A straightforward corollary of Lemmas \ref{ub1} and \ref{ub2} is the following:
\begin{cor}\label{ub3}
There exists $a_0>0$ so that,
for all $a\in [-a_0,a_0]$, there are finite positive constants $C,c'$ 
such that, for all $f:\R\to \R$ with $|||f|||<\infty$ and
all $\delta\ge 0$, we have
\begin{equation}
Q^{N,aV}_{t}\left( \biggl|\int f(x) \,dM^{N}_k(x)\biggr|\ge \delta \|f\|_L +
 C ||| f|||\log N\right)\le 2e^{-c'\delta^2}.
\end{equation}
In particular, for all $p\geq 1$ there exists a finite constant $C_p$ such that
$$\bigl\| M^{N}_k[f]\bigr\|_{L^p(Q^{N,aV}_{t})}=\biggl\|\int f(x) \,dM^{N}_k(x)\biggr\|_{L^p(Q^{N,aV}_{t})}\le C_p\bigl(\|f\|_L+|||f|||\log N\bigr).$$
\end{cor}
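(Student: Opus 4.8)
The plan is to split $M^N_k[f]=\int f\,dM^N_k$ into its $Q^{N,aV}_{t}$-centered part plus its mean, and to bound each of the two pieces by one of the two preceding lemmas. Since $M^N_k=\sum_{i=1}^N\delta_{\lambda_i^k}-N\mu^{*}_{k,t}$, the centered part is exactly $\sum_{i=1}^N f(\lambda_i^k)-\mathbb E_{Q^{N,aV}_{t}}[\sum_{i=1}^N f(\lambda_i^k)]$, while the mean is $\mathbb E_{Q^{N,aV}_{t}}[M^N_k[f]]=N\int[\int f\,d(L^N_k-\mu^{*}_{k,t})]\,dQ^{N,aV}_{t}$.

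For the centered part I would first note that $|||f|||<\infty$ forces $f$ to be Lipschitz, with $\|f\|_L\le\int|\tau|\,|\hat f(\tau)|\,d\tau\le|||f|||$, so that Lemma \ref{ub1} applies and gives, for every $\delta\ge0$, that $|M^N_k[f]-\mathbb E_{Q^{N,aV}_{t}}[M^N_k[f]]|\le\delta\|f\|_L$ off a set of $Q^{N,aV}_{t}$-measure at most $2e^{-c'\delta^2}$, with $c'$ uniform in $t\in[0,1]$. For the mean, Lemma \ref{ub2} bounds $|\int[\int f\,d(L^N_k-\mu^{*}_{k,t})]\,dQ^{N,aV}_{t}|$ by $C|||f|||\log N/N$, hence $|\mathbb E_{Q^{N,aV}_{t}}[M^N_k[f]]|\le C|||f|||\log N$ with $C$ independent of $t$ and $f$. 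Adding the two estimates and using the triangle inequality produces the tail bound $Q^{N,aV}_{t}(|M^N_k[f]|\ge\delta\|f\|_L+C|||f|||\log N)\le2e^{-c'\delta^2}$, which is the first assertion.

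For the moment bound I would invoke the elementary fact that a nonnegative random variable $Y$ satisfying $\mathbb P(Y\ge a+b\delta)\le2e^{-c'\delta^2}$ for all $\delta\ge0$ obeys $\|Y\|_{L^p}\le a+C_p\,b$ for a constant $C_p$ depending only on $p$ and $c'$; this follows either by writing $\|Y\|_{L^p}^p=\int_0^\infty p\,t^{p-1}\,\mathbb P(Y\ge t)\,dt$ and splitting the integral at $t=a$, or by observing that $(Y-a)_+/b$ is sub-Gaussian. Applying it with $Y=|M^N_k[f]|$, $a=C|||f|||\log N$, $b=\|f\|_L$ yields $\|M^N_k[f]\|_{L^p(Q^{N,aV}_{t})}\le C_p(\|f\|_L+|||f|||\log N)$, as desired.

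The main obstacle here is essentially nonexistent: all the substantive work has already been carried out in Lemmas \ref{ub1} and \ref{ub2} (the former via a log-concavity/Bakry-Emery concentration argument, the latter via the loop equation), and what remains is only the bookkeeping of combining a concentration estimate with a mean estimate and then passing from a sub-Gaussian tail to $L^p$ moments. The only mild points to keep track of are that the constants $c'$, $C$ provided by those lemmas are uniform in $t\in[0,1]$, and that $|||f|||<\infty$ indeed supplies the Lipschitz regularity needed to apply Lemma \ref{ub1}.
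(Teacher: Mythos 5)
Your proposal is correct and is exactly the intended argument: the paper itself gives no explicit proof, introducing the corollary with the phrase ``A straightforward corollary of Lemmas \ref{ub1} and \ref{ub2},'' and what is meant is precisely your decomposition of $M^N_k[f]$ into its $Q^{N,aV}_t$-centered fluctuation (handled by the concentration estimate of Lemma \ref{ub1}, using that $\|f\|_L\le\int|\tau|\,|\hat f(\tau)|\,d\tau\le|||f|||$) plus its mean (handled by the loop-equation estimate of Lemma \ref{ub2}, multiplied by $N$), followed by the standard integration-by-parts passage from a sub-Gaussian tail to $L^p$ moments. Nothing to add.
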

Thanks to this corollary we get:
\begin{cor}\label{ub3b} Assume $\phi^M\in C^9(\mathbb R)$, vanishes outside  $[-M,M]$ and   is bounded by $M$.
There exists $a_0>0$ so that,
for all $a\in [-a_0,a_0]$ and for all $\zeta>M$, there  are  finite constants $c_\zeta, C_\zeta,c>0$ 
so that, for all $\delta\ge 0$, we have
\begin{equation}
Q^{N,aV}_{t}\Big( \| \phi^M_\# M^{N}_k\|_\zeta \ge \delta c_\zeta +C_\zeta \log N\Big)\le 2e^{-c\delta^2}\,.
\end{equation}
\end{cor}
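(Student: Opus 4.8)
The plan is to reduce the estimate on $\|\phi^M_\#M^N_k\|_\zeta$ to a moment-by-moment control and then apply Corollary \ref{ub3}. Recall that for a signed measure $\nu$ one has $\|\nu\|_\zeta=\sup_{j\ge 1}\zeta^{-j}|\nu(x^j)|$, and that for every $j\ge 1$ the pushforward identity gives $\int x^j\,d(\phi^M_\#M^N_k)=\int \phi^M(x)^j\,dM^N_k=M^N_k[(\phi^M)^j]$; hence
$$\|\phi^M_\#M^N_k\|_\zeta=\sup_{j\ge 1}\zeta^{-j}\,\bigl|M^N_k[(\phi^M)^j]\bigr|.$$
So I would apply Corollary \ref{ub3} to the test functions $f_j:=(\phi^M)^j$, $j\ge1$, and combine the resulting subgaussian tail bounds by a union bound over $j$, playing the geometric gain $\zeta^{-j}$ (recall $\zeta>M$ and $\|\phi^M\|_\infty\le M$) against the at-most-polynomial-in-$j$ growth of $\|f_j\|_L$ and $|||f_j|||$ that enters the right-hand side of Corollary \ref{ub3}.

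The preliminary estimate to establish is precisely this growth. Since $\phi^M\in C^9(\R)$ is supported in $[-M,M]$ with $\|\phi^M\|_\infty\le M$, the general Leibniz rule shows that for $0\le\ell\le 9$ each term of $((\phi^M)^j)^{(\ell)}$ is a product of at most $\ell$ differentiated factors of $\phi^M$ (each contributing $\le\|\phi^M\|_{C^9(\R)}$) and at least $j-\ell$ undifferentiated ones (each $\le M$), and there are at most $C\,j^9$ such terms; hence $\|(\phi^M)^j\|_{C^9(\R)}\le C_M\,j^9 M^j$, with $C_M$ depending only on $M$ and $\|\phi^M\|_{C^9(\R)}$. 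This bounds the Lipschitz constant $\|(\phi^M)^j\|_L\le C_M\,j^9 M^j$, and since any $h\in C^9$ supported in $[-M,M]$ satisfies $(1+|\tau|^9)|\hat h(\tau)|\le 2M\|h\|_{C^9(\R)}$, so that $|||h|||\le C_M\|h\|_{C^9(\R)}$, we also get $|||(\phi^M)^j|||\le C_M\,j^9 M^j$.

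With these bounds I would, for each $j\ge1$, apply Corollary \ref{ub3} to $f=(\phi^M)^j$ with the free parameter ``$\delta$'' there replaced by $\delta+j$, obtaining
$$Q^{N,aV}_t\Bigl(\bigl|M^N_k[(\phi^M)^j]\bigr|\ge C_M\,j^9 M^j\,(\delta+j+\log N)\Bigr)\le 2\,e^{-c'(\delta+j)^2}.$$
Setting $r:=M/\zeta<1$, off the union of these events one has, for every $j$, $\zeta^{-j}|M^N_k[(\phi^M)^j]|<C_M\,j^9 r^j(\delta+j+\log N)\le c_\zeta\,\delta+C_\zeta\,\log N$, where $c_\zeta:=C_M\sup_{j\ge1}j^9r^j<\infty$ and $C_\zeta:=C_M\sup_{j\ge1}(j^9+j^{10})r^j<\infty$ (for $N$ large, $\log N\ge1$, so the $j^{10}r^j$ contribution is absorbed into $C_\zeta\log N$). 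Hence off this union $\|\phi^M_\#M^N_k\|_\zeta\le c_\zeta\delta+C_\zeta\log N$, and a union bound gives
$$Q^{N,aV}_t\bigl(\|\phi^M_\#M^N_k\|_\zeta\ge c_\zeta\delta+C_\zeta\log N\bigr)\le\sum_{j\ge 1}2\,e^{-c'(\delta+j)^2}\le 2\,e^{-c'\delta^2}\sum_{j\ge 1}e^{-c'j^2},$$
which is $\le 2e^{-c\delta^2}$ once $c$ is chosen small enough (for small $\delta$ the right-hand side is $\ge1$, so there is nothing to prove, and for large $\delta$ one uses $c<c'$).

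I do not expect a serious obstacle; the only point requiring care is that the union bound ranges over all moments $j\ge1$. Naively restricting to the ``relevant'' range $j\lesssim\log N$ — beyond which $\zeta^{-j}|M^N_k[(\phi^M)^j]|\le 2N r^j\le1$ trivially, since $\phi^M$ and $\mu^*_{k,t}$ are bounded by $M$ — would introduce a spurious factor $\log N$ in the probability; shifting the Gaussian parameter of Corollary \ref{ub3} by $j$ makes the tails $e^{-c'(\delta+j)^2}$ summable in $j$ and keeps the conclusion of the clean form $2e^{-c\delta^2}$ with $N$-independent constants. Everything else is routine bookkeeping of constants depending on $M$, $\|\phi^M\|_{C^9(\R)}$ and $\zeta$, uniform in $t\in[0,1]$ because Corollary \ref{ub3} is.
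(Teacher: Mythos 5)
Your argument is correct and follows the same route as the paper: reduce $\|\phi^M_\#M^N_k\|_\zeta$ to the moment observables $M^N_k[(\phi^M)^j]$, bound $\|(\phi^M)^j\|_L$ and $|||(\phi^M)^j|||$ by $C_M\,j^9 M^j$, apply Corollary \ref{ub3}, and union-bound over $j$ exploiting $M/\zeta<1$. The only difference is how you make the union bound summable: you shift the Gaussian parameter $\delta\mapsto\delta+j$ so that the tails $e^{-c'(\delta+j)^2}$ are summable over all $j\ge1$, whereas the paper rescales $\delta$ by the geometric factor $\zeta^p/(pM^{p-1})$ (producing tails $e^{-c'\delta^2\zeta^{2p}/(M^{2p}p^2)}$), truncates the union bound at $p\le e^{cN^2/2}$, and disposes of the remaining $p$ by the deterministic bound $\zeta^{-p}|M^N_k[(\phi^M)^p]|\le 2N(M/\zeta)^p$; your variant is marginally cleaner since it avoids the case split, and both yield the same conclusion.
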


\begin{proof}
Using  Corollary \ref{ub3} with $f(x)=(\phi^M(x))^p$, together with Remark \ref{remnorm},  we deduce that there exist constants $c_0,C_0>0$, only depending on $\phi^M$, such that 
$$Q^{N,aV}_t\Big( \bigl|M^N_k\bigl((\phi^M)^p\bigr)\bigr|\ge c_0 pM^{p-1} \delta + C_0M^p p^7\log N\Big)\le 2e^{-c'\delta^2}\,.$$
Therefore, for $\zeta>M$ we find $c_1,C_1>0$ such that 
$$Q^{N,aV}_t\Big(\bigl|M^N_k\bigl((\phi^M)^p\bigr)\bigr| \ge c_1 \zeta^{p} \delta + C_1\zeta^p \log N\Big)\le 2e^{-c'\delta^2\left(\frac{\zeta^{2p}}{M^{2p}p^2}\right)}\,.$$
Applying this bound for $p\in [1,  e^{cN^2/2}]$, by a union bound we deduce that there exists $c''>0$ such that 
$$Q^{N,aV}_t\biggl( \max_{1\le p\le e^{cN^2/2}}\zeta^{-p}
\bigl|M^N_k\bigl((\phi^M)^p\bigr)\bigr|\ge c_1  \delta + C_1 \log N\biggr)\le 2e^{-c''\delta^2}\,.$$
On the other hand, for $p\ge e^{cN^2/2}$ the bound is trivial as 
$$\zeta^{-e^{cN^2/2}}
\bigl|M^N_k\bigl((\phi^M)^{e^{cN^2/2}}\bigr)\bigr| \leq N \biggl(\frac{M}{\zeta}\biggr)^{e^{cN^2/2}}\le c_1\delta+C_1\log N$$
as soon as $N$ is large enough. 
This concludes the proof.
\end{proof}
\smallskip

Thanks to this corollary, we can finally estimate the rest
$$
{\mathcal R}^N_t(\YY^N)=O\biggl(\frac{|\phi^M_\#M^N|^3}{N};(\yy_{k,t}^1)',\partial_1\zz_{k\ell,t}, \partial_{2}\zz_{kk,t}\biggr)
$$
with $C(\log N)^3/N$.
Indeed, recalling \eqref{eq:rest precise}, using Fourier transform we have
$$
\iiint \psi(x,y,z) dM^{N}_k(x)dM^{N}_\ell(y)dM_m^N(z)
=\iiint \hat \psi(\xi,\zeta,\theta) \,M^{N}_k[e^{i\xi \cdot}]\,M^{N}_\ell[e^{i\zeta \cdot}]\,M_m^N[e^{i\theta \cdot}]\,d\xi\,d\zeta\,d\theta,
$$
so applying Corollaries \ref{ub3} and \ref{ub3b}, and recalling \eqref{eq:norm}, we can bound our rest by
$$
C\frac{(\log N)^3}N +C\frac{(\log N)^3}{N} \iiint |\hat \psi(\xi,\zeta,\theta)|\,(1+|\xi|)^7\,(1+|\zeta|)^7\,(1+|\theta|)^7\,d\xi\,d\zeta\,d\theta
$$
with probability greater than $1-N^{-cN}$.
Since all the functions involved decay at infinity, for the above integral to converge it is enough to assume that $\psi \in C^{26}$,
as this ensures that
$$
|\hat \psi(\xi,\zeta,\theta)|\,(1+|\xi|)^7\,(1+|\zeta|)^7\,(1+|\theta|)^7\leq \frac{C}{1+|\xi|^5+|\zeta|^5+|\theta|^5} \in L^1(\R^3).
$$
Recalling that by assumption $\psi$ is as smooth as
$(\yy_{k,t}^1)',\partial_1\zz_{k\ell,t},$ or $ \partial_{2}\zz_{kk,t}$,
by Corollary \ref{cor:regular} the assumption is satisfied provided $W_k \in C^\sigma$ with $\sigma \geq 36$.
Thanks to our Hypothesis \ref{hypo}, this concludes the proof of \eqref{eq:toprove1}.
As explained at the end of Section \ref{sect:simplify} this implies \eqref{eq:toprove}, which combined with 
\eqref{eq:close1},  \eqref{eq:close2}, and \eqref{aptr2} proves \eqref{aptr}.\\

Before concluding this section, we prove an additional estimate on the size of the integral of smooth functions against the measure $M_N$.
Corollary \ref{ub3} provides a very strong bound on the probability that $\int f\,dM_N$ is large when $f$
is a fixed function.
We now show how to obtain an estimate that holds true when we replace $\int f\,dM_N$ by its supremum over smooth functions.
\begin{lem}\label{lem:ub4}
There exists $a_0>0$ so that,
for all $a\in [-a_0,a_0]$,
the following hold:
for any $\ell \geq 0$ there are finite positive constants $C_\ell,c_\ell$ such that  
\begin{equation}
Q^{N,aV}_{t}\biggl(\sup_{\|f\|_{C^{\ell+9}(\R)}\leq 1} \biggl|\int f(x) \,dM^{N}_k(x)\biggr|\ge  \log N\,N^{1/(\ell+1)} \biggr)\le C_\ell e^{-c_\ell(\log N)^{2+2/\ell}}.
\end{equation}
\end{lem}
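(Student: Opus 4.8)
The plan is to combine a confinement estimate with a Fourier representation that collapses the supremum over all test functions into a single random integral, which is then controlled through Corollary \ref{ub3} together with a net argument in frequency.

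\emph{Reduction to a compact frequency window.} By the confinement estimate for $Q^{N,aV}_{t}$ (the analogue of Lemma \ref{confinement}, already invoked for \eqref{eq:close2}) there are universal constants $K>M$ and $c>0$ with $Q^{N,aV}_{t}(E_{\rm good})\ge 1-e^{-cN}$, where $E_{\rm good}:=\{\max_{i,k}|\lambda_i^k|\le K\}$; since $e^{-cN}$ lies far below the target probability for $N$ large, it suffices to bound $\sup_{\|f\|_{C^{\ell+9}}\le 1}|\int f\,dM^N_k|$ on $E_{\rm good}$. Fix a smooth $\chi$ with $\chi\equiv 1$ on $[-K,K]$ and ${\rm supp}\,\chi\subset[-K-1,K+1]$. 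On $E_{\rm good}$ every $\lambda_i^k$ lies where $\chi\equiv 1$, and ${\rm supp}\,\mu^*_{k,t}\subset(-M,M)\subset(-K,K)$ as well, so $\int f\,dM^N_k=\int \chi f\,dM^N_k$ there. The function $\chi f$ has compact support with $\|\chi f\|_{C^{\ell+9}(\R)}\le C_\ell$ and $\|\chi f\|_{L^1(\R)}\le C_\ell$, hence by repeated integration by parts $|\widehat{\chi f}(\tau)|\le C_\ell(1+|\tau|)^{-(\ell+9)}$ \emph{uniformly} over $\|f\|_{C^{\ell+9}}\le 1$. Writing $\chi f(x)=\int\widehat{\chi f}(\tau)e^{i\tau x}\,d\tau$ and applying Fubini, on $E_{\rm good}$ we obtain $|\int f\,dM^N_k|\le C_\ell\, I_N$, where $I_N:=\int(1+|\tau|)^{-(\ell+9)}\Delta_N(\tau)\,d\tau$ and $\Delta_N(\tau):=\bigl|\int e^{i\tau x}\,dM^N_k(x)\bigr|$. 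The key point is that the right-hand side no longer depends on $f$, so it remains to estimate the single random quantity $I_N$.

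\emph{Control of $I_N$.} Split the integral at $T:=N^{\ell/((\ell+1)(\ell+8))}$. For $|\tau|>T$ the deterministic bound $\Delta_N(\tau)\le 2N$ (total variation of $M^N_k$) gives $\int_{|\tau|>T}(1+|\tau|)^{-(\ell+9)}\Delta_N(\tau)\,d\tau\le CNT^{-(\ell+8)}=CN^{1/(\ell+1)}$. For $|\tau|\le T$, take a net $\{\tau_j\}$ of spacing $\eta:=N^{1/(\ell+1)-1}$, with $O(T/\eta)\le N^{2}$ points. Applying Corollary \ref{ub3} to $f=e^{i\tau_j\cdot}$ (for which $\|f\|_L=|\tau_j|$ and $|||f|||=1+|\tau_j|^{7}$) and taking a union bound, with probability at least $1-CN^{2}e^{-c'\delta^{2}}$ one has $\Delta_N(\tau_j)\le\delta(1+|\tau_j|)+C(1+|\tau_j|^{7})\log N$ at every net point. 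On $E_{\rm good}$ the entire function $\tau\mapsto\int e^{i\tau x}\,dM^N_k(x)$ has derivative bounded by $2KN$ (it equals $i\int x e^{i\tau x}\,dM^N_k(x)$ and $|x|\le K$ on the relevant set), so interpolating between net points yields $\Delta_N(\tau)\le\delta(1+|\tau|+\eta)+C\bigl(1+(|\tau|+\eta)^{7}\bigr)\log N+2KN\eta$ for all $|\tau|\le T$. Integrating against $(1+|\tau|)^{-(\ell+9)}$, and using that $\int(1+|\tau|)^{-(\ell+8)}$, $\int(1+|\tau|)^{-(\ell+2)}$, $\int(1+|\tau|)^{-(\ell+9)}$ all converge, gives $\int_{|\tau|\le T}(1+|\tau|)^{-(\ell+9)}\Delta_N(\tau)\,d\tau\le C_\ell(\delta+\log N+N\eta)$. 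Altogether $I_N\le C_\ell(\delta+\log N+N\eta)+CN^{1/(\ell+1)}$ off an event of probability $\le CN^{2}e^{-c'\delta^{2}}+e^{-cN}$.

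\emph{Choice of parameters and conclusion.} Take $\delta:=A(\log N)^{1+1/\ell}$ with $A=A(\ell)$ large. Then $N\eta=N^{1/(\ell+1)}$ and $\delta\le N^{1/(\ell+1)}$ for $N$ large, so $I_N\le C_\ell N^{1/(\ell+1)}$ on the complement of the error event; moreover, choosing $A$ so that $c'A^{2}$ dominates the ambient constants, $CN^{2}e^{-c'\delta^{2}}+e^{-cN}\le C_\ell e^{-c_\ell(\log N)^{2+2/\ell}}$. Hence on $E_{\rm good}$ intersected with the good-net event, $\sup_{\|f\|_{C^{\ell+9}}\le 1}|\int f\,dM^N_k|\le C_\ell I_N\le \log N\, N^{1/(\ell+1)}$ once $\log N\ge C_\ell$ (enlarging $C_\ell$ absorbs the remaining finitely many $N$); the case $\ell=0$ is trivial since $|\int f\,dM^N_k|\le 2N\|f\|_\infty<\log N\, N$ for $N$ large. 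The main obstacle is the simultaneous calibration of $T$, $\eta$, $\delta$: the net spacing $\eta$ must be small enough for the interpolation error $N\eta$ to reach $N^{1/(\ell+1)}$ and $T$ must be placed so the $2N$-tail also reaches $N^{1/(\ell+1)}$, while $T/\eta$ must stay polynomial in $N$ so that the union bound still leaves the stretched-exponential probability $e^{-c_\ell(\log N)^{2+2/\ell}}$ — which is precisely why $\delta$, and hence the exponent in the statement, has to be of order $(\log N)^{1+1/\ell}$.
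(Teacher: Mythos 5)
Your argument is correct, but it takes a genuinely different route than the paper's. The paper's proof works directly in physical space: it writes $g:=f^{(9)}$, approximates $g$ by its piecewise Taylor polynomial of degree $\ell-1$ on a mesh of $L$ points (error $\lesssim L^{-\ell}$ by $\|g\|_{C^\ell}\le 1$), integrates back $9$ times, and expresses the result as a linear combination, with coefficients in $[-1,1]$, of $\ell L$ fixed $C^{8,1}$ compactly supported basis functions $\hat f_{m,j}$ each of bounded $|||\cdot|||$ norm; it then applies Corollary \ref{ub3} to each $\hat f_{m,j}$ and a union bound. Your proof instead collapses the supremum via the Fourier representation $f=\int\hat f(\tau)e^{i\tau\cdot}\,d\tau$, using $C^{\ell+9}$ regularity for the decay $|\hat{\chi f}(\tau)|\lesssim(1+|\tau|)^{-(\ell+9)}$, and then discretizes in frequency: a deterministic $2N$-bound above $T$, Corollary \ref{ub3} on a net of size $\lesssim T/\eta$ below $T$, and the $2KN$-Lipschitz property of $\tau\mapsto\int e^{i\tau x}\,dM_k^N$ on the confinement event to interpolate. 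The two constructions land on the same budget: the paper's mesh size $L\sim N^{1/(\ell+1)}(\log N)^{-1/\ell}$ and your choice $\delta\sim(\log N)^{1+1/\ell}$, $\eta=N^{1/(\ell+1)-1}$ both force the deviation parameter in Corollary \ref{ub3} to be $(\log N)^{1+1/\ell}$, which is exactly what produces the stretched-exponential rate $(\log N)^{2+2/\ell}$. Your version needs an interpolation step to pass from the net to all $\tau$, which the paper avoids by working with a finite dictionary from the start, but it is also more self-contained (the $e^{i\tau\cdot}$ computation reuses what is already done in the proof of Lemma \ref{ub2}). One small point where you are actually more careful than the paper: $Q^{N,aV}_t$ is built from $R^{N,W_k}_{\beta,\infty}$ and so is not literally supported in $[-M,M]^N$ (the paper's opening sentence overstates this); your reduction via the confinement event $E_{\rm good}$, up to an $e^{-cN}$ error absorbed by the target probability, is the cleaner way to justify the restriction to a compact window.
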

\begin{proof}
Since the measure $Q^{N,aV}_{t}$ is supported inside the cube $[-M,M]^N$ (see Section \ref{sect:simplify}), we can assume that 
all functions $f$ are supported on $[-2M,2M]$.
Fix $L \in \mathbb N$ and define the points
$$
x_{m,L}:=-2M +m\frac{4M}L,\qquad m=0,\ldots, L.
$$
Given $f \in C^{\ell+9}_0([-2M,2M])$ with $\|f\|_{C^{\ell+9}}\leq 1$, we set
$g:=f^{(9)} \in C^\ell_0([-2M,2M])$ and define the function
$$
g_L(x):=\sum_{j=0}^{\ell-1}\frac{g^{(j)}(x_{m,L})}{j!}(x-x_{m,L})^j\qquad \forall\,x \in [x_{m,L},x_{m+1,L}].
$$
Note that, since $\|g\|_{C^{\ell}}\leq 1$,
$$
|g(x)-g_L(x)| \leq \|g^{(\ell)}\|_\infty(x-x_{m,L})^{\ell}\leq \biggl(\frac{4M}{L}\biggr)^{\ell} \qquad \forall\,x \in [x_{m,L},x_{m+1,L}],\,\forall\,m=0,\ldots, L-1,
$$
so, by the arbitrariness of $x$,
$$
\|g-g_L\|_{L^\infty([-2M,2M])} \leq (4M)^\ell L^{-\ell}.
$$
Hence, if we set
$$
f_L(x):=\int_{-2M}^x\frac{(x-y)^8}{8!}g_L(y)\,dy,
$$
since $f_L^{(9)}=g_L$ and $f^{(j)}(-2M)=0$ for all $j=0,\ldots,8$, we get
$$
\|f-f_L\|_{L^\infty([-2M,2M])} \leq C_{M,\ell} L^{-\ell}.
$$
Recalling that $M_N$ has mass bounded by $2N$, this implies that 
\begin{equation}
\label{eq:f fL}
\Bigl|\int f\,dM_N-\int f_L\,dM_N\Bigr| \leq 2\,C_{M,\ell}\, N\,L^{-\ell}.
\end{equation}
Fix now a smooth cut-off function $\psi_M:\R\to [0,1]$ satisfying $\psi_M=1$ inside $[-M,M]$
and $\psi_M=0$ outside $[-2M,2M]$, and define
$$
f_{L,M}(x)=\sum_{m=0}^{L-1}\sum_{j=0}^{\ell-1} g^{(j)}(x_{m,L})\hat f_{m,j}(x),
$$
where
$$
\hat f_{m,j}(x):=\psi_M(x)\,\int_{-2M}^x\frac{(x-y)^8}{8!} (y-x_{m,L})^j\chi_{[x_{m,L},x_{m+1,L}]}(y)\,dy
$$
It is immediate to check that $\hat f_{m,j} \in C^{8,1}_0([-2M,2M])$
(that is, $\hat f_{m,j}$ has $8$ derivatives, and its 8-th derivative is Lipschitz), and that $f_{L,M}=f_L$ on $[-M,M]$.
Also, since $\|f\|_{C^{\ell+9}}\leq 1$ we see that $| g^{(j)}(x_{m,L})|\leq 1$ for all $m,j$.
Hence, recalling \eqref{eq:f fL} and the fact that $M_N$ is supported on $[-M,M]$, this proves that for any function $f \in C^{\ell+9}_0([-2M,2M])$ with $\|f\|_{C^{\ell+9}}\leq 1$ there exist some coefficients $\a_{m,j} \in [-1,1]$ such that
$$
\Bigl|\int f\,dM_N-\sum_{m,j}\a_{m,j}\int \hat f_{m,j}\,dM_N\Bigr| \leq 2\,C_{M,\ell}\, N\,L^{-\ell}.
$$
Since $\#\{\hat f_{m,j}\}=\ell L$, this implies that 
\begin{multline}
\label{eq:prob sum}
Q_t^{N,aV}\biggl(\sup_{\|f\|_{C^{\ell+9}}\leq 1}\Bigl|\int f\,dM_N\Bigr|>\log N\,N^{1/(\ell+1)} \biggr) \\
\leq \sum_{m,j}Q_t^{N,aV}\biggl(\Bigl|\int \hat f_{m,j}\,dM_N\Bigr|>\frac{\log N\,N^{1/(\ell+1)} -2\,C_{M,\ell} N\,L^{-\ell}}{\ell L}\biggr).
\end{multline}
We now observe that $\|\hat f_{m,j}\|_{C^{8,1}} \leq A_{M,\ell}$, where $A_{M,\ell}$
is a constant depending only on $M$ and $\ell$.
Thus, recalling that the functions $\hat f_{m,j}$ are supported on $[-2M,2M],$
this yields 
$$
|||\hat f_{m,j}||| \leq A_{M,\ell}',
$$
where the norm $|||\cdot|||$ is defined in \eqref{eq:norm}.
Hence, choosing 
\begin{equation}
\label{eq:def L}
L:=\left \lfloor\hat C_{M,\ell}N^{1/(\ell+1)}(\log N)^{-1/\ell}\right\rfloor
\end{equation}
with $\hat C_{M,\ell}$ large enough so that
$$
\log N\,N^{1/(\ell+1)} -2\,C_{M,\ell} N\,L^{-\ell} \geq \frac{1}{2}\log N\,N^{1/(\ell+1)} ,
$$
we can apply Corollary \ref{ub3} to the functions $\hat f_{m,j}$,
and it follows from \eqref{eq:prob sum} and \eqref{eq:def L} that
\begin{multline*}
Q_t^{N,aV}\biggl(\sup_{\|f\|_{C^{\ell+9}}\leq 1}\Bigl|\int f\,dM_N\Bigr|>\log N\,N^{1/(\ell+1)} \biggr)\\
\leq 
C_{M,\ell}'  L \,e^{-c_{M,\ell}'\left(\frac{\log N\,N^{1/(\ell+1)} }{L}\right)^{2}} 
\leq C_{M,\ell}'' e^{-c_{M,\ell}''(\log N)^{2+2/\ell}}.
\end{multline*}
\end{proof}

\subsection{Reconstructing the transport map via the flow}
\label{sect:flow}
In this section we study the properties of the flow $X_t^N:\mathbb R^{dN}\to \mathbb R^{dN}$ generated by a vector field $\YY_t^N$
 as in \eqref{eq:psi},  i.e., $X_t^N$
solves the ODE
$$
 \dot X_t^N=\YY_t^N(X_t^N),\qquad X_0^N=\operatorname{Id},
$$
and we prove that
$T^N:=X_1^N$ satisfies all the properties stated in Theorem \ref{thm:transport}.

Recalling the form of $\YY_t^N$ (see \eqref{eq:psi}), it is natural to expect that for all $t \in [0,1]$ we can give an expansion for $X_t^N$ as
$$
X_t^N=X_{0,t}+\frac{1}{N}X_{1,t}+\frac{1}{N^2}X_{2,t},
$$
where each component $(X_{0,t})_i^k$ of $X_{0,t}$ should flow accordingly to 
$\yy_{k,t}^0$: more precisely, we define $(X_{0,t}{)_i^k}:=X_{0,t}^k(\lambda_i^k)$ with $X_{0,t}^k:\R\to \mathbb R$  the solution of
\begin{equation}
\label{eq:X0}
\dot X_{0,t}^k=\yy_{k,t}^0(X_{0,t}^k),\qquad X_{0,t}^k(\lambda)=\lambda.
\end{equation}
Recalling the notation
$\hat \lambda=(\lambda^1,\ldots,\lambda^d)$ where
$\lambda^k:=(\lambda_1^k,\ldots,\lambda_{N}^k)$,
we define $$X_{1,t}=\Bigl((X_{1,t})_1^1,\ldots,(X_{1,t})_N^1,\ldots,
(X_{1,t})_1^d,\ldots,(X_{1,t})_N^d\Bigr):\mathbb R^{dN}\to \mathbb R^{dN}$$ to be the solution of the linear ODE 
\begin{equation}
\label{eq:X1}
\begin{split}
(\dot X_{1,t})_i^k(\hat \lambda)&=(\yy_{k,t}^0)'\Bigl(X_{0,t}^k(\lambda_i^k)\Bigr)\cdot (X_{1,t})_i^k(\hat\lambda)
+\yy_{k,t}^1\Bigl(X_{0,t}^k(\lambda_i^k)\Bigr)\\
&+\sum_{\ell=1}^d\int \zz_{k\ell,t}\Bigl(X_{0,t}^k(\lambda_i^k), y\Bigr)\,dM^N_{X_{0,t}^\ell}(y)\\
&+\frac1N\sum_{\ell=1}^d\sum_{j=1}^N \partial_2\zz_{k\ell,t}\Bigl(X_{0,t}^k(\lambda_i^k),X_{0,t}^\ell(\lambda_j^\ell)\Bigr)\cdot (X_{1,t})_j^\ell(\hat \lambda)
\end{split}
\end{equation}
with the initial condition $(X_{1,0})_i^k=0$, where $M^N_{X_{0,t}^\ell}$ is defined as 
$$
\int f(y) \,dM^N_{X_{0,t}^\ell}(y)=\sum_{i=1}^N \biggl[f\bigl(X_{0,t}^\ell(\lambda_i^\ell)\bigr)-\int f\, d\mu_{\ell,t}^*\biggr]\qquad \forall\,f \in C_c(\mathbb R).
$$

\begin{prop} \label{theflow}
Let $\alpha$ be as in Proposition \ref{prop:Wk}.
Assume that $W_{k}:\R\to \R$ are  of class $C^\sigma$ for all $k=1,\ldots,d$, for some $\sigma \geq 16$,
and that $|a| \leq \alpha$.
Then the flow $$X_t^N=\left( (X_t^{N})_1^1,\ldots,(X_t^{N})_N^1,\ldots,(X_t^{N})_1^d,\ldots, (X_t^{N})_N^d\right):\R^{dN}\to\R^{dN}$$ is of class $C^{\sigma-9}$ and the following properties hold:
Let $(X_{0,t})_i^k$ and $(X_{1,t})_i^k$ be as in \eqref{eq:X0} and \eqref{eq:X1} above,
and define $X_{2,t}:\R^{dN}\to\R^{dN}$ via the identity 
$$X_t^N=X_{0,t}+ \frac1N X_{1,t}+\frac{1}{N^2} X_{2,t}\,.$$
Then, for any $t \in [0,1]$,
\begin{equation}\label{boi1}
\max_{k,i}\|(X_{1,t})_i^k\|_{L^4(Q^{N,aV}_0)} \leq C\log N,\qquad \max_{k,i} \|(X_{2,t})_i^k\|_{L^2(Q^{N,aV}_0)} \leq C(\log N)^2.
\end{equation}
In addition, there exist constants $C,c>0$ such that, with probability greater than $1-e^{-c(\log N)^2}$,
\begin{equation}\label{eq:bound infty X12}
\sup_{t\in[0,1]} \max_{i,k}\bigl|(X_{1,t})_i^k\bigr|\leq C\,\log N \,N^{1/(\sigma-14)},
\qquad 
\sup_{t\in[0,1]} \max_{i,k}\bigl|(X_{2,t})_i^k\bigr|\leq C\,(\log N)^2 \,N^{2/(\sigma-15)},
\end{equation}
\begin{equation}\label{boi2}
\sup_{t\in[0,1]} \max_{i,i'}\bigl|(X_{1,t})_i^k(\hat\lambda)-(X_{1,t})_{i'}^k(\hat \lambda)\bigr|\le C\,\log N\,N^{1/(\sigma-15)}|\lambda_i^k-\lambda_{i'}^{k}|\qquad \forall\,k=1,\dots,d,\end{equation}
\begin{equation}\label{eq:X2lip}
\sup_{t\in[0,1]} \max_{i,i'}\bigl|(X_{2,t})_i^k(\hat\lambda)-(X_{2,t})_{i'}^k(\hat \lambda)\bigr|\le C\,(\log N)^2\,N^{2/(\sigma-17)}|\lambda_i^k-\lambda_{i'}^{k}|\qquad \forall\,k=1,\dots,d,\end{equation}
\begin{equation}\label{eq:DX1}
\sup_{t\in[0,1]}\max_{i,j}
\left|\partial_{\lambda_j^\ell}(X_{1,t})_i^k\right|(\hat\lambda)\le C\,\log N\,N^{1/(\sigma-15)}\qquad \forall\,k,\ell=1,\dots,d.
\end{equation}
\end{prop}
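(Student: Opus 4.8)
The plan is to combine classical ODE theory for the flow $X_t^N$ with the concentration estimates of Corollaries \ref{ub3}, \ref{ub3b} and Lemma \ref{lem:ub4}.

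First I would establish existence, uniqueness and regularity of $X_t^N$ on $[0,1]$. By Corollary \ref{cor:regular} the functions $\yy_{k,t}^0,\yy_{k,t}^1,\zz_{k\ell,t}$ are bounded together with enough of their derivatives, uniformly in $t\in[0,1]$, and (except their top derivatives) decay as an inverse power of $(W_{k,t}^{\rm eff})'$ at infinity; hence $\YY_t^N$ is globally bounded and globally Lipschitz on $\R^{dN}$, uniformly in $N$ and $t$, so Cauchy--Lipschitz produces a global flow. Since the least smooth ingredient of $\YY_t^N$ is $\yy_{k,t}^1\in C^{\sigma-9}$ (the $\zz$-term only requires the joint regularity $C^{\sigma-6}$), one gets $\YY_t^N\in C^{\sigma-9}(\R^{dN})$ and therefore $X_t^N\in C^{\sigma-9}$. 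Next I would introduce the ansatz $X_t^N=X_{0,t}+\frac1N X_{1,t}+\frac1{N^2}X_{2,t}$: here $X_{0,t}$ is defined componentwise through \eqref{eq:X0}, and — since $\yy^0_{k,t}$ is constructed (first equation in \eqref{eq:main Xi}) precisely so that its flow transports $\mu_{k,0}^*$ onto $\mu_{k,t}^*$, the time-$t$ analogue of \eqref{eq:Tk0} — the map $X_{0,t}^k$ pushes $\mu_{k,0}^*$ forward to $\mu_{k,t}^*$. Plugging the ansatz into $\dot X_t^N=\YY_t^N(X_t^N)$ and Taylor expanding $\yy_{k,t}^0,\yy_{k,t}^1,\zz_{k\ell,t}$ around $X_{0,t}$, the $N^{-1}$ order terms give exactly the linear ODE \eqref{eq:X1} for $X_{1,t}$, while $X_{2,t}:=N^2(X_t^N-X_{0,t}-\frac1N X_{1,t})$ is seen to solve a linear ODE whose inhomogeneity is a sum of second–order Taylor remainders (controlled by $|X_{1,t}|^2$), the genuinely lower–order pieces coming from $\yy^1_{k,t},\zz_{k\ell,t}$, and quadratic-in-$M^N$ terms.

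For the $L^p$ bounds \eqref{boi1} the key remark is that the push-forward property above yields $\int f\,dM^N_{X_{0,t}^\ell}=M^N_\ell[f\circ X_{0,t}^\ell]$ for every $f$, so the inhomogeneous terms in \eqref{eq:X1} are $\yy^1_{k,t}(X_{0,t}^k(\lambda_i^k))$ (bounded) plus $M^N_\ell[\zz_{k\ell,t}(X_{0,t}^k(\lambda_i^k),X_{0,t}^\ell(\cdot))]$, of $L^p(Q^{N,aV}_0)$ norm $O(\log N)$ by Corollary \ref{ub3}. A Gronwall argument on \eqref{eq:X1}, with the nonlocal coupling term $\frac1N\sum_{\ell,j}\partial_2\zz_{k\ell,t}(\cdot,\cdot)(X_{1,t})_j^\ell$ bounded in $L^4$ by $\max_{\ell,j}\|(X_{1,t})_j^\ell\|_{L^4}$ via the triangle inequality on the $j$--average, then closes the estimate and gives $\max_{k,i}\|(X_{1,t})_i^k\|_{L^4}\le C\log N$. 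The same scheme on the ODE for $X_{2,t}$ gives $\max_{k,i}\|(X_{2,t})_i^k\|_{L^2}\le C(\log N)^2$, since its source terms are products of two $O(\log N)$ factors (components of $X_{1,t}$ or expressions $M^N_\ell[\cdot]$, each estimated in $L^4$), or quadratic expressions $\iint\partial\zz\,dM^N\,dM^N$ bounded in $L^2$ by $C(\log N)^2$ through the Fourier argument of Section \ref{sect:rest}.

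Finally, for the high–probability bounds \eqref{eq:bound infty X12}--\eqref{eq:DX1} I would rerun the Gronwall estimates on the ODEs for $X_{1,t}$, $X_{2,t}$ and their $\lambda$–derivatives, this time replacing Corollary \ref{ub3} by Lemma \ref{lem:ub4}, which bounds $\sup_{\|f\|_{C^{\ell+9}}\le1}|M^N_k[f]|$ by $\log N\,N^{1/(\ell+1)}$ on an event of probability $\ge1-Ce^{-c(\log N)^{2+2/\ell}}$; because that supremum already ranges over the whole $C^{\ell+9}$–ball, it absorbs at once the time-- and $x$--indexed families $\zz_{k\ell,t}(x,X_{0,t}^\ell(\cdot))$ and their first derivatives, so the bounds come out uniform in $t\in[0,1]$. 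The number of derivatives Corollary \ref{cor:regular} provides on $\zz_{k\ell,t}$ ($\tau+\tau'\le\sigma-6$), after composing with $X_{0,t}^\ell\in C^{\sigma-3}$ and possibly differentiating once in the first variable, pins down the exponents: $C^{\sigma-6}$ regularity in the inner variable gives $N^{1/(\sigma-14)}$ in \eqref{eq:bound infty X12}, spending one further derivative in the first variable for the Lipschitz and gradient bounds gives $N^{1/(\sigma-15)}$ in \eqref{boi2} and \eqref{eq:DX1}, and squaring (forced for $X_{2,t}$) produces $2/(\sigma-15)$ and $2/(\sigma-17)$ in \eqref{eq:bound infty X12} and \eqref{eq:X2lip}. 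The main obstacle is exactly this bookkeeping of smoothness — counting how many derivatives each operation costs so as to hit the stated exponents — together with keeping the nonlocal coupling term under control when the Gronwall is run pointwise rather than in $L^p$; checking that $X_{2,t}$ satisfies an honest linear ODE with the claimed source terms is routine but must be carried out with care.
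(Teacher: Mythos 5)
Your proposal follows essentially the same route as the paper: Cauchy--Lipschitz for the $C^{\sigma-9}$ flow, the decomposition $X_t^N=X_{0,t}+N^{-1}X_{1,t}+N^{-2}X_{2,t}$ with $X_{0,t},X_{1,t}$ defined by \eqref{eq:X0}--\eqref{eq:X1} and $X_{2,t}$ solving the Taylor-remainder ODE (the paper's \eqref{eq:ODE X2 main}), the push-forward identity $\int f\,dM^N_{X_{0,t}^\ell}=M^N_\ell[f\circ X_{0,t}^\ell]$ feeding Corollary \ref{ub3} for the $L^4/L^2$ bounds and Lemma \ref{lem:ub4} for the high-probability ones, and Gronwall throughout. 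One small slip: the source terms in the $X_{2,t}$ ODE are squared \emph{linear} statistics $\bigl|\int\partial_1\zz_{k\ell,t}\,dM^N_{X_{0,t}^\ell}\bigr|^2$ and $|X_{1,t}|^2$ (see \eqref{eq:ODEX2 1}), not genuine double integrals $\iint\partial\zz\,dM^N dM^N$; aside from this the bookkeeping you defer is exactly what the paper carries out, and the exponents fall out as you indicate.
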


\begin{proof}
Since $\YY_t^N \in C^{\sigma-9}$ (see Corollary \ref{cor:regular})
it follows by Cauchy-Lipschitz theory that $X_t^N$ is of class $C^{\sigma-9}$.
Define $$
(X_t^{N,\tau})_i^k(\hat\lambda):=
X_{0,t}^k(\lambda_i^k)+
\tau\frac{(X_{1,t})_i^k}{N}(\hat\lambda)+\tau\frac{(X_{2,t})_i^{k}}{N^2}(\hat\lambda)
=(1-\tau)X_{0,t}^k(\lambda_i^k)+\tau (X_t^{N})_i^k(\hat\lambda).
$$
Also, we define the measure $M^N_{(X_t^{N,\tau})^k}$ as 
\begin{equation}
\label{eq:MN1}
\int f(y)\, dM^N_{(X_t^{N,\tau})^k}(y)=\sum_{i=1}^N \biggl[f\bigl((1-\tau)X_{0,t}^k(\lambda_i^k)+\tau(X_t^{N})_i^k(\hat\lambda)\bigr)-\int f \,d\mu_{k,t}^*\biggr]\qquad \forall\,f \in C_c(\mathbb R).
\end{equation}
In order to get an ODE for $X_{2,t}$, the strategy is to use the Taylor formula with integral rest to expand the ODE $\dot X_t^N=\YY_t^N(X_t^N)$, and then use \eqref{eq:X0} and \eqref{eq:X1} to simplify the terms involving
$\dot X_{0,t}$ and $\dot X_{1,t}$.
In this way we get  
\begin{equation}
\label{eq:ODE X2 main}
\begin{split}
(\dot X_{2,t})_i^{k}(\hat \lambda)&= 
\int_0^1 (\yy_{k,t}^0)'\Bigl((X_{t}^{N,\tau})_i^k(\hat\lambda) \Bigr)\,d\tau\cdot
(X_{2,t})_i^k(\hat\lambda)\\
&+N\int_0^1\Bigl[(\yy_{k,t}^0)'\Bigl((X_{t}^{N,\tau})_i^k(\hat\lambda)\Bigr)
-(\yy_{k,t}^0)'\Bigl(X_{0,t}^k(\lambda_i^k) \Bigr)\Bigr]\,d\tau
\cdot (X_{1,t})_i^k(\hat\lambda)\\
&+ \int_0^1 (\yy_{k,t}^1)'\Bigl((X_{t}^{N,\tau})_i^k(\hat\lambda) \Bigr)
\,d\tau\cdot \Bigl((X_{1,t})_i^k(\hat\lambda)+\frac{(X_{2,t})_i^k(\hat\lambda)}{N} \Bigr)\\
&+\int_0^1\sum_\ell \bigg[
\int \partial_1\zz_{k\ell,t}\Bigl((X_{t}^{N,\tau})_i^k(\hat\lambda),y \Bigr)\,dM^N_{(X_t^{N,\tau})^\ell}(y)\\
&\qquad    -
\int \partial_1\zz_{k\ell,t}\Bigl(X_{0,t}^k(\lambda_i^k),y \Bigr)\,dM^N_{X_{0,t}^\ell}(y)\bigg]\,d\tau\cdot \Bigl((X_{1,t})_i^k(\hat\lambda)+\frac{(X_{2,t})_i^k(\hat\lambda)}{N} \Bigr)\\
&+ \sum_\ell\int \partial_1\zz_{k\ell,t}\Bigl(X_{0,t}^k(\lambda_i^k),y \Bigr)\,dM^N_{X_{0,t}^\ell}(y)\cdot \Bigl((X_{1,t})_i^k(\hat\lambda)+\frac{(X_{2,t})_i^k(\hat\lambda)}{N} \Bigr)\\
&+\sum_\ell\sum_{j=1}^N\int_0^1 \bigg[\partial_2\zz_{k\ell,t}\Bigl((X_{t}^{N,\tau})_i^k(\hat\lambda),(X_t^{N,\tau})_j^\ell(\hat\lambda)\Bigr)\\
&\qquad\qquad \qquad\qquad-\partial_2\zz_{k\ell,t}\Bigl(X_{0,t}^k(\lambda_i^k),X_{0,t}^\ell(\lambda_j^\ell)\Bigr)\bigg]\,d\tau\cdot (X_{1,t})_j^\ell(\hat \lambda)\\
&+\sum_\ell\sum_{j=1}^N\int_0^1 \biggl[\partial_2\zz_{k\ell,t}\Bigl((X_{t}^{N,\tau})_i^k(\hat\lambda),(X_t^{N,\tau})_j^\ell(\hat\lambda)\Bigr)\biggr]\,d\tau\cdot \frac{(X_{2,t})_j^\ell(\hat\lambda)}{N},
\end{split}
\end{equation}
with the initial condition $(X_{2,t})_i^{k}=0$.
Using that 
$$
\|\yy_{k,t}^0\|_{C^{\sigma-3}(\R)}\leq C
$$
(see Corollary \ref{cor:regular})
we obtain \begin{equation}
\label{eq:bound X0}
\|X_{0,t}\|_{C^{\sigma-4}(\R)}\leq C.
\end{equation}
We now start to control $(X_{1,t})_i^k$.
First, simply by using that $M_N$ has mass bounded by $2N$ we obtain the rough bound $|(X_{1,t})_i^k|\leq C\,N$.
Inserting this bound into \eqref{eq:ODE X2 main} one easily obtains
$|(X_{2,t})_i^k|\leq C\,N^2$.

We now prove finer estimates.
First, by Lemma \ref{lem:ub4} together with the fact that $(X_{0,t})_i^k$ and $y\mapsto \zz_{k\ell,t}(x,y)$ are of class $C^{\sigma-6}$ uniformly in $x$ and $t$ (see Corollary \ref{cor:regular}),
it follows that there exists a finite constant $C$ such that, with  probability greater than $1- e^{-c(\log N)^2}$,
$$
\sup_{x \in \R,\,t\in [0,1]}\biggl| \int \zz_{k\ell,t}(x, \lambda)\,dM^N_{X_{0,t}^\ell}(\lambda)\biggr|\leq C\,
\log N \,N^{1/(\sigma-14)}.
$$
Hence, using  \eqref{eq:X1} we easily deduce the first bound in \eqref{eq:bound infty X12}.

In order to control $X_{2,t}$ we first estimate $(X_{1,t})_i^k$ in $L^4(Q^{N,aV}_0)$:
using \eqref{eq:X1} again, we get
\begin{multline}
\label{eq:ODE X1}
\frac{d}{dt}\max_{i,k}\|(X_{1,t})_i^k\|_{L^4(Q^{N,aV}_0)} \leq C\bigg(
\max_{i,k}\|(X_{1,t})_i^k\|_{L^4(Q^{N,aV}_0)}+1\\
+  \max_{i,k,\ell}\biggl\|\int\zz_{k\ell,t}\Bigl(X_{0,t}^k(\lambda_i^k), y\Bigr)\,dM^N_{X_{0,t}^\ell}(y)\biggr\|_{L^4(Q^{N,aV}_0)}\bigg).
\end{multline}
To bound $(X_{1,t})_i^k$ in $L^4(Q^{N,aV}_0)$, and then to be able to estimate $X_{2,t}$, we will use the following:
\begin{lem} Assume that $s\geq 15$.
Then, for any $i=1,\ldots,N$ and $k,\ell=1,\ldots,d$,
\begin{equation}
\label{eq:bound psi2 p1}
\biggl\|\int\zz_{k\ell,t}\Bigl(X_{0,t}^k(\lambda_i^k), y\Bigr)\,dM^N_{X_{0,t}^\ell}(y)\biggr\|_{L^4(Q^{N,aV}_0)}
 \leq C \log N,
\end{equation}
\begin{equation}
\label{eq:bound psi2}
\biggl\|\int \partial_1\zz_{k\ell,t}\Bigl(X_{0,t}^k(\lambda_i^k),y \Bigr)\,dM^N_{X_{0,t}^\ell}(y)\biggr\|_{L^4(Q^{N,aV}_0)} \leq C\log N.
\end{equation}
\end{lem}\begin{proof}
Fix indices $i,k,\ell$ and
write the Fourier decomposition of 
$$
\eta_{2,t}(x,y):=\zz_{k\ell,t}\Bigl(X_{0,t}^k(x), X_{0,t}^\ell(y)\Bigr)
$$
to get 
$$\int  \eta_{2,t}(x,y)\, d M^{N}_\ell(y)=\int \hat  \eta_{2,t}(x,\xi)  \int e^{i\xi y} \,dM^{N}_\ell(y) \,d\xi\,.$$
Since $\zz_{k\ell,t} \in C^{u,v}$ for $u,v\le \sigma-6$
 and $X_{0,t}^k\in C^{\sigma-4}$ (see \eqref{eq:bound X0}) with derivatives decaying fast at infinity, we deduce that 
$$ |  \hat\eta_{2,t}(x,\xi) |\le \frac{C}{1+ |\xi|^{\sigma-6}},$$
so, using Corollary \ref{ub3}, we get 
\begin{align*} 
  \biggl\|\sup_x\biggl|\int \eta_{2,t}(x,y) \,dM^{N}_k(y)\biggr|\biggr\|_{L^4(Q^{N,aV}_0)}
&\le \int  \Bigl\| \hat\eta_{2,t}(\cdot,\xi)\Bigr\|_{\infty}  \biggl\| \int e^{i\xi y} dM^{N}_k(y)\biggr\|_{L^4(Q^{N,aV}_0)}\,d\xi \\
&\le C\log N \int \Bigl\|\hat\eta_{2,t}(\cdot,\xi)\Bigr\|_{\infty} \left(1+|\xi|^7\right)\,d\xi\\
& \le C\log N
\end{align*}
provided $\sigma>13$.
 The same argument works
for  $\partial_1\zz_{k\ell,t}$ provided $\sigma>15,$ which  concludes the proof.
\end{proof}
 Inserting \eqref{eq:bound psi2 p1} into \eqref{eq:ODE X1}, we obtain the validity of
the first bound in \eqref{boi1}.

We now bound the time derivative of $X_{2,t}$:
using that $M_N$ has mass bounded by $2N$,
in \eqref{eq:ODE X2 main} we can easily estimate
\begin{multline*}
\biggl| N\int_0^1\Bigl[(\yy_{k,t}^0)'\Bigl((X_{t}^{N,\tau})_i^k(\hat\lambda)\Bigr)
-(\yy_{k,t}^0)'\Bigl(X_{0,t}^k(\lambda_i^k) \Bigr)\Bigr]\,d\tau
\cdot (X_{1,t})_i^k(\hat\lambda)\biggr|\\
\leq C|(X_{1,t})_i^k|^2 + \frac{C}N|(X_{1,t})_i^k|\,|(X_{2,t})_i^k|,
\end{multline*}
\begin{multline*}
\int_0^1 \bigg|
\int \partial_1\zz_{k\ell,t}\Bigl((X_{t}^{N,\tau})_i^k(\hat\lambda),y \Bigr)\,dM^N_{(X_t^{N,\tau})^\ell}(y)-
\int \partial_1\zz_{k\ell,t}\Bigl(X_{0,t}^k(\lambda_i^k),y \Bigr)\,dM^N_{X_{0,t}^\ell}(y)\bigg|\,d\tau\\
\leq C|(X_{1,t})_i^k| + \frac{C}N|(X_{2,t})_i^k|+\frac{C}N \sum_j \biggl(|(X_{1,t})_j^\ell| + \frac{1}N|(X_{2,t})_j^\ell|\biggr),
\end{multline*}
and
\begin{multline*}
\sum_{j=1}^N\int_0^1 \bigg|\partial_2\zz_{k\ell,t}\Bigl((X_{t}^{N,\tau})_i^k(\hat\lambda),(X_t^{N,\tau})_j^\ell(\hat\lambda)\Bigr)
-\partial_2\zz_{k\ell,t}\Bigl(X_{0,t}^k(\lambda_i^k),X_{0,t}^\ell(\lambda_j^\ell)\Bigr)\bigg|\,d\tau\,|(X_{1,t})_j^\ell|\\
\leq\frac{C}N\biggl(|(X_{1,t})_i^k| + \frac{1}N|(X_{2,t})_i^k|\biggr) \sum_j |(X_{1,t})_j^\ell|+ \frac{C}N \sum_j \biggl(|(X_{1,t})_j^\ell|^2 + \frac{1}N|(X_{2,t})_j^\ell|\,|(X_{1,t})_j^\ell|\biggr),
\end{multline*}
hence, noticing that $\frac{d}{dt}|(X_{2,t})_i^k|\leq |(\dot X_{2,t})_i^k|$, we get
\begin{align*}
\frac{d}{dt}|(X_{2,t})_i^k|
& \leq C\, |(X_{2,t})_i^k| + 
C \,|(X_{1,t})_i^k|^2+\frac{C}{N} \,|(X_{1,t})_i^k| |(X_{2,t})_i^k|
+C \,|(X_{1,t})_i^k| +\frac{C}{N^2}\,|(X_{2,t})_i^k|^2\\
& + \frac{C}N\,\sum_{\ell,j} |(X_{1,t})_j^\ell|\,|(X_{1,t})_i^k|
+\frac{C}{N^3}\,\sum_{\ell,j}|(X_{1,t})_i^k||\,|(X_{2,t})_j^\ell|
+\frac{C}{N^3}\,\sum_{\ell,j}|(X_{2,t})_i^k|\,|(X_{2,t})_j^\ell|\\
&+
 \biggl|\int \partial_1\zz_{k\ell,t}\Bigl(X_{0,t}^k(\lambda_i^k),y \Bigr)\,dM^N_{X_{0,t}^\ell}(y)\biggr|\,|(X_{1,t})_i^k|
 + \frac{C}N\,\sum_{\ell,j} |(X_{1,t})_j^\ell|^2\\
 &
 +\frac{C}{N^2}\,\sum_{\ell,j}|(X_{2,t})_j^\ell|\,|(X_{1,t})_j^\ell|
+ \frac{C}{N^2}\,\sum_{\ell,j} |(X_{1,t})_j^\ell|\,|(X_{2,t})_i^k|+\frac{C}N \,\sum_{\ell,j}|(X_{2,t})_j^\ell|.
\end{align*}
Using the trivial bounds 
$|(X_{1,t})_i^k|\leq C\,N$ and
$|(X_{2,t})_i^k|\leq C\,N^2$,
and the elementary inequality $ab\leq a^2+b^2$,
we obtain
\begin{multline}
 \label{eq:ODEX2 1}
\frac{d}{dt}|(X_{2,t})_i^k|
 \leq C\,\bigg( |(X_{2,t})_i^k|+|(X_{1,t})_i^k|^2
 + \frac1N \sum_{\ell,j}|(X_{1,t})_j^\ell|^2 +\frac1{N} \sum_{\ell,j}|(X_{2,t})_j^\ell|
\\+ \biggl|\int \partial_1\zz_{k\ell,t}\Bigl(X_{0,t}^k(\lambda_i^k),y \Bigr)\,dM^N_{X_{0,t}^\ell}(y)\biggr|^2\bigg).
\end{multline}
In particular, if we set $A_{1,t}:=\max_{i,k}|(X_{1,t})_i^k|$
and $A_{2,t}:=\max_{i,k}|(X_{2,t})_i^k|$ we obtain
\begin{equation}
 \label{eq:ODEX2 2}
\frac{d}{dt} A_{2,t} \leq C\biggl(A_{2,t}+(A_{1,t})^2+\max_{i,k} \biggl|\int \partial_1\zz_{k\ell,t}\Bigl(X_{0,t}^k(\lambda_i^k),y \Bigr)\,dM^N_{X_{0,t}^\ell}(y)\biggr|^2 \biggr).
\end{equation}
Hence, noticing that
\begin{equation}
\label{eq:bounds X1 2}
\sup_{x \in \R,\,t\in [0,1]}\biggl| \int \partial_1\zz_{k\ell,t}(x, \lambda)\,dM^N_{X_{0,t}^\ell}(\lambda)\biggr|\leq C\,
\log N\, N^{1/(\sigma-15)}
\end{equation}
with probability greater than $1- e^{-c(\log N)^2}$ (see Corollary \ref{cor:regular} and Lemma \ref{lem:ub4})
and recalling the first bound in \eqref{eq:bound infty X12}, using \eqref{eq:ODEX2 2}
and a Gronwall argument we deduce the validity also of the second bound in \eqref{eq:bound infty X12}.

Going back to \eqref{eq:ODEX2 1} and again the inequality $ab \leq a^2+b^2$, we also see that 
\begin{multline*}
\frac{d}{dt}\|(X_{2,t})_i^k\|_{L^2(Q^{N,aV}_0)}^2 \leq 
C\bigg( \|(X_{2,t})_i^k\|_{L^2(Q^{N,aV}_0)}^2 
 + \|(X_{1,t})_i^k\|_{L^4(Q^{N,aV}_0)}^4  +\frac1N\sum_{\ell,j} \|(X_{1,t})_j^\ell\|_{L^4(Q^{N,aV}_0)}^4 \\
 +\frac1{N} \sum_{\ell,j} \|(X_{2,t})_j^\ell\|_{L^2(Q^{N,aV}_0)}^2
 +\biggl\|\int \partial_1\zz_{k\ell,t}\Bigl(X_{0,t}^k(\lambda_i^k),y \Bigr)\,dM^N_{X_{0,t}^\ell}(y)\biggr\|_{L^4(Q^{N,aV}_0)}^4\biggr).
\end{multline*}
Hence, recalling the first bound in \eqref{boi1} and \eqref{eq:bound psi2}, we get
$$
\frac{d}{dt}\|(X_{2,t})_i^k\|_{L^2(Q^{N,aV}_0)}^2 \leq C\biggl(\|(X_{2,t})_i^k\|_{L^2(Q^{N,aV}_0)}^2 +(\log N)^4 \biggr),
$$
so a Gronwall argument concludes the proof of \eqref{boi1}.

We now prove \eqref{boi2}: recalling \eqref{eq:X1} we have 
\begin{align*}
&| (\dot X_{1,t})_i^k(\hat \lambda)-(\dot X_{1,t})_{i'}^{k}(\hat \lambda)|\\
&\leq
\bigl|(\yy_{k,t}^0)'(X_{0,t}^k(\lambda_i^k))-(\yy_{k,t}^0)'(X_{0,t}^{k}(\lambda_{i'}^{k})) \bigr|\,|(X_{1,t})_i^k(\hat\lambda)|\\
&+\bigl|(\yy_{k,t}^0)'(X_{0,t}^{k}(\lambda_{i'}^{k})) \bigr|\,|(X_{1,t})_i^k(\hat\lambda)-X_{1,t}^{N,k'}(\lambda_{i'}^{k})|
+ \bigl|\yy_{k,t}^1(X_{0,t}^k(\lambda_i^k))-\yy_{k,t}^1(X_{0,t}^{k}(\lambda_{i'}^{k}))\bigr|\\
&+\sum_\ell\biggl|\int \Bigl(\zz_{k\ell,t}\Bigl(X_{0,t}^k(\lambda_i^k), y\Bigr)-
\zz_{k\ell,t}(X_{0,t}^{k}(\lambda_{i'}^{k}), y)\Bigr)\,dM^N_{X_{0,t}^\ell}(y)\biggr|\\
&+\frac1N\sum_{\ell,j} \Bigl|\partial_2\zz_{k\ell,t}\Bigl(X_{0,t}^k(\lambda_i^k),X_{0,t}^\ell(\lambda_j^\ell)\Bigr) -\partial_2\zz_{k\ell,t}\Bigl(X_{0,t}^{k}(\lambda_{i'}^{k}),X_{0,t}^\ell(\lambda_j^\ell)\Bigr) \Bigr|\,|(X_{1,t})_j^\ell(\lambda_j^{\ell})|.
\end{align*}
Hence, using that $|X_{0,t}^k(\lambda_i^k)-X_{0,t}^{k}(\lambda_{i'}^{k})| \leq C|\lambda_i^k - \lambda_{i'}^{k}|$,
the bounds \eqref{eq:bound infty X12}
 and \eqref{eq:bounds X1 2},
and the Lipschitz regularity of $(\yy_{k,t}^0)'$, $\yy_{k,t}^1$, $\zz_{k\ell,t}$,
and $\partial_2\zz_{k\ell,t}$,
we get
$$
|(\dot X_{1,t})_i^k(\hat \lambda)-(\dot X_{1,t})_{i'}^{k}(\hat \lambda)|
\leq C|(X_{1,t})_i^k(\hat\lambda)- (X_{1,t})_{i'}^{k}(\hat \lambda)|+C\,\log N\, N^{1/(\sigma-15)}|\lambda_i^k - \lambda_{i'}^{k}|
$$
outside a set of probability less than $e^{-c(\log N)^2}$,
so \eqref{boi2} follows from Gronwall's inequality.

By a completely analogous argument, 
it follows from \eqref{eq:ODE X2 main}, \eqref{boi2},  \eqref{eq:bound infty X12}, and estimates analogue to
\eqref{eq:bounds X1 2} for the higher derivatives of $\zz_{k\ell,t}$, that
$$
|(\dot X_{2,t})_i^k(\hat \lambda)-(\dot X_{2,t})_{i'}^{k}(\hat \lambda)|
\leq C|(X_{2,t})_i^k(\hat\lambda)- (X_{2,t})_{i'}^{k}(\hat \lambda)|+C\,(\log N)^2\, N^{2/(\sigma-17)}|\lambda_i^k - \lambda_{i'}^{k}|
$$
holds outside a set of probability less than $e^{-c(\log N)^2}$.
Thus \eqref{eq:X2lip} follows.

Finally, denoting by $\delta_{j}^\ell$ the vector with zero entries except at position $j,\ell$ where there is a one
(so that $\hat \lambda+\epsilon \delta_{j}^\ell=(\lambda_1^1,\ldots, \lambda_j^\ell +\epsilon,\ldots \lambda_N^d)$),
one can differentiate in time $| (X_{1,t})_i^k(\hat \lambda+\epsilon \delta_{j}^\ell)-(X_{1,t})_{i'}^{k}(\hat \lambda)|$
and argue as above to deduce that
$$
|(X_{1,t})_i^k(\hat \lambda+\epsilon \delta_{j}^\ell)-(X_{1,t})_{i'}^{k}(\hat \lambda)| \leq C\,\log N\,N^{1/(\sigma-15)}\,\epsilon
$$
outside a set of probability less than $e^{-c(\log N)^2}$.
Dividing by $\epsilon$ and letting $\epsilon \to 0$, this proves \eqref{eq:DX1}.
\end{proof}

\section{Universality results}
\label{sect:extensions}
In this section we explain how Corollaries  \ref{thm:univ}, \ref{thm:univ2} and  \ref{univmax} 
follow from our Theorem~\ref{thm:transport}.

\begin{proof}[Proof of Corollary \ref{thm:univ}]
Given $\vartheta>0$, we define the set
\begin{equation}\label{eq:Gtheta}
G_{\vartheta}:=\Bigl\{ \hat\lambda \in \R^{dN} \, :\, 
|\lambda_{i}^\ell-\gamma_{i/N}^\ell|\le N^{\vartheta-2/3}\min\{ i,N+1-i\}^{1/3}\quad\forall\,i,\ell\Bigr\}.
\end{equation}
As proved in \cite{EYY} in the special case of the Gaussian ensembles
and then generalized in \cite[Theorem 2.4]{BEY1} to  potentials $W_k$ satisfying much weaker conditions
than the ones assumed here,
the following rigidity estimate holds:
for all $\vartheta>0$ there exist $\bar c>0$ and $\bar C<\infty$  such that for all $N\ge 0$
\begin{equation}\label{rigidity}
\tilde P^{N,0}_{\beta}\bigl( \R^N\setminus G_\vartheta \bigr)\le \bar C e^{-
N^{\bar c}}\,.
\end{equation}
Also, thanks to the fact that $\mu_k^0$ has a density which is strictly positive inside its support $[a_k^0,b_k^0]$
except at the two boundary points where it goes to zero as a square root (see Lemma \ref{lem:support}), we deduce that
$$
\frac{m}{N} \geq \frac{1}{C} \int_{\gamma_{i/N}^k}^{\gamma_{(i+m)/N}^k}\min\Bigl\{\sqrt{s-a_k^0},\sqrt{b_k^0-s}\Bigr\}\,ds,
$$
from which it follows easily that
\begin{equation}\label{rigidity gamma}
\bigl|\gamma_{(i+m)/N}^k- \gamma_{i/N}^k\bigr| \leq \frac{C}{N^{2/3}}\min\biggl\{ m^{2/3},\frac{m}{\min\{ i,N+1-i\}^{1/3}}\biggr\}\,.
\end{equation}
Since
\begin{equation}
\label{eq:spacing}
|\lambda_{i+m}^k-\lambda_i^k| \leq |\lambda_i^k -\gamma_{i/N}^k|+|\lambda_{i+m}^k -\gamma_{(i+m)/N}^k|+\bigl|\gamma_{(i+m)/N}^k- \gamma_{i/N}^k\bigr|,
\end{equation}
using \eqref{rigidity} and \eqref{rigidity gamma} and recalling that by assumption $m \ll N$,
we deduce that
\begin{equation}
\label{eq:Gtheta bulk}
|N(\lambda_{i_k+j}^k-\lambda_{{i_k}}^k)|\leq 
C\left(N^\vartheta + m\right)\qquad \forall\,\hat\lambda \in G_\vartheta,\quad {i_k}\in [N\epsilon, N(1-\epsilon)],\,j=1,\ldots,m,
\end{equation}
and
\begin{equation}
\label{eq:Gtheta edge}
|N^{2/3}(\lambda_{j}^k-a_k^0)|\leq 
C\left(N^\vartheta + m^{2/3}\right)\qquad \forall\,\hat\lambda \in G_\vartheta,\quad j=1,\ldots,m.
\end{equation}

Now, given a bounded function $\chi:\R^{dN}\to \R$,
applying \eqref{aptr} to $\frac{1}{2}(1+\frac{\chi}{\|\chi\|_\infty})$ with $k=0$ and $\eta=\vartheta$,
we deduce that
\begin{equation}
\label{eq:aptr2}
\left|\int \chi\circ T^N\, d P^{N,0}_\beta-\int \chi\, d P^{N,aV}_\beta\right|\le C\,N^{\vartheta-1} \|\chi\|_\infty.
\end{equation}
Recall that the map $T^N$ is given by $X^N_1,$ where $X^N_t$ is the flow of the vector-field $\YY_t^N$ that has the very special
form \eqref{eq:psi} (see Proposition \ref{theflow}).
In particular, since the functions $\yy_{k,t}^0$, $\yy_{k,t}^1$, $\zeta_{k\ell,t}(\cdot,y)$ 
are uniformly Lipschitz, we see that
$$
|(\dot X^N_t)_i^k -(\dot X^N_t)_j^k| \leq L\,|(X^N_t)_i^k -(X^N_t)_j^k|\qquad \forall\,i,j=1,\ldots,N,\,k=1,\ldots,d.
$$
Hence, since $X_1^N=T^N$ and $X_0^N=\operatorname{Id}$, Gronwall's inequality yields
\begin{equation}
\label{eq:TN}
e^{-L}(\lambda_i^k-\lambda_j^k) \leq (T^N)_i^k(\hat \lambda) -(T^N)_j^k(\hat \lambda) \leq e^L(\lambda_i^k-\lambda_j^k)
\qquad \forall\,\lambda_i^k \geq \lambda_j^k.
\end{equation}
We now remark that the law $\tilde P^{N,aV}_{\beta}$ is obtained as the image of the law of $\lambda^k
=(\lambda^k_1,\ldots,\lambda^k_N), 1\le k\le d$ under $P^{N,aV}_\beta$ under the map
\begin{equation}
\label{eq:Rk}
\hat\RR:\R^{dN}\to \R^{dN}, \qquad \hat\RR(\lambda^1,\ldots,\lambda^k,\ldots,\lambda^d):=
\bigl(\RR(\lambda^1),\ldots,\RR(\lambda^k),\ldots,\RR(\lambda^d)\bigr),
\end{equation}
where $\RR:\R^N\to \R^N$ is defined as
\begin{equation}
\label{eq:R}
[\RR(x_1,\ldots,x_N)]_i:=\min_{\#J=i}\max_{j \in J}x_j\qquad \forall\,i=1,\ldots,N.
\end{equation}
Hence, thanks to \eqref{eq:TN},
it follows that $T^N$ and $\hat \RR$ commute, namely
\begin{equation}
\label{eq:TNR}
\hat \RR\circ T^N=T^N\circ \hat \RR.
\end{equation}
We now consider a test function $\chi$ of the form
\begin{equation}
\label{eq:chi 1}
\chi(\hat\lambda)=f\Bigl(\bigl(N(\lambda_{i_{k}+1}^k-\lambda_{i_{k}}^k),\ldots,N(\lambda_{i_k+m}^k-\lambda_{i_k}^k)\bigr)_{1\le k\le d}\Bigr).
\end{equation}
Then
$$
\int f\Bigl(\bigl(N(\lambda_{i_{k}+1}^k-\lambda_{i_{k}}^k),\ldots,N(\lambda_{i_k+m}^k-\lambda_{i_k}^k)\bigr)_{1\le k\le d}\Bigr)\,
d \tilde P^{N,aV}_{\beta}
=\int \chi\circ \hat \RR\,
d P^{N,aV}_\beta,
$$
and it follows by \eqref{eq:aptr2} and \eqref{eq:TNR}
that
$$
\biggl|\int \chi\,d \tilde P^{N,aV}_{\beta} - \int \chi\circ T^N\circ \RR\, d P^{N,0}_\beta
\biggl| \le C\,N^{\vartheta-1} \|f\|_\infty.
$$
Let $X_{0,t}$, $X_{1,t},$ and $X_{2,t}$ be as in Proposition \ref{theflow}, and note the following fact:
whenever $\hat\lambda \in G_{\vartheta}$ we know that,
for any $\ell=1,\ldots,d$, the numbers $\{\lambda_i^\ell\}_{1\leq i \leq N}$
are close, up to an error $N^\vartheta$, to the quantiles of the stationary measure $\mu_\ell^0=\mu_{\ell,0}^*$.
Hence, given any $1$-Lipschitz function $\psi$,
$$
\biggl|\int \psi \,dM^N_\ell\biggr| \leq C\,N^\vartheta\qquad \forall\,\ell=1,\ldots,d.
$$
Since $X_{0,t}^\ell$ is a smooth diffeomorphism which sends the quantiles of $\mu_{\ell,0}^*$
onto the quantiles of $\mu_{\ell,t}^*$, we deduce that 
$$
\biggl|\int \psi \,dM^N_{X_{0,t}^\ell}\biggr| \leq C\,N^\vartheta\qquad \forall\,\ell=1,\ldots,d,\,\forall\,t \in [0,1].
$$
This implies that
$$
\sup_{x,t}\int \zz_{k\ell,t}(x, y)\,dM^N_{X_{0,t}^\ell}(y)=O\bigl(N^{\vartheta}\bigr),
\qquad
\sup_{x,t} \int \partial_1\zz_{k\ell,t}(x, \lambda)\,dM^N_{X_{0,t}^\ell}(\lambda)=O\bigl(N^{\vartheta}\bigr),
$$
and by the same argument as the one used in the proof of Proposition \ref{theflow} to show \eqref{eq:bound infty X12} and \eqref{boi2}
we get
\begin{equation}
\label{eq:X1theta}
\max_{i,k}\bigl|(X_{1,1})_{i}^k(\hat \lambda)\bigr|\leq C\,N^\vartheta,\qquad \bigl|(X_{1,1})_{i}^k(\hat \lambda)-(X_{1,1})_{i'}^k(\hat \lambda)\bigr|\leq C\,N^\vartheta\,|\lambda_i^k-\lambda_{i'}^k|
\qquad \forall\,\hat\lambda \in G_{\vartheta}.
\end{equation}
Then, noticing that  $\|\nabla \chi \|_\infty \leq N\, \|\nabla f \|_\infty $,
thanks to \eqref{eq:X1theta}, \eqref{eq:X2lip}, and \eqref{eq:Gtheta bulk},
we get 
\begin{align*}
&\biggl|\int_{G_\vartheta} \chi \circ T^N\circ \hat \RR\, d P^{N,0}_\beta - \int_{G_\vartheta} \chi \circ X_{0,1}\circ \hat \RR\, d P^{N,0}_\beta\biggr|\\
&\leq \,\|\nabla \chi \|_\infty 
\int_{G_\vartheta}\biggl[\sum_{k=1}^d\sum_{j=1}^m \biggl(\frac{|(X_{1,1})_{i_k+j}^k - (X_{1,1})_{i_k}^k|}{N}+\frac{|(X_{2,1})_{i_k+j}^k - (X_{2,1})_{i_k}^k|}{N^2}\biggr)^2\biggr]^{1/2}\, d \tilde P^{N,0}_\beta\\
&\leq C\,\|\nabla f\|_\infty \,N^\vartheta\int_{G_\vartheta} \biggl(\sum_{k=1}^d\sum_{j=1}^m|\lambda_{i_k+j}^k - \lambda_{i_k}^k|^2\biggr)^{1/2}\, d \tilde P^{N,0}_\beta\leq  C\,\|\nabla f\|_\infty\,\frac{m^{1/2} \,N^{\vartheta}\,(N^\vartheta+m)}{N}.
\end{align*}
Note now that $(X_{0,1})_i^k=T_0^k$ for all $i=1,\ldots,N$,
and that
\begin{equation}
\label{eq:T0k}
e^{-L}\leq (T_0^k)' \leq e^L
\end{equation}
(this follows by the same proof as the one of \eqref{eq:TN}, compare also with \cite[Equation (5.2)]{BFG}).
In addition
$$
(T_{0,1})_{i_k+j}^k(\hat \lambda) - (T_{0,1})_{i_k}^k(\hat\lambda)=(T_0^k)'(\lambda_{i_k}^k)\,[\lambda_{{i_k}+j}^k-\lambda_{i_k}^k]
+O\bigl(|\lambda_{{i_k}+j}^k-\lambda_{i_k}^k|^2\bigr),
$$
hence, by the definition of $G_\vartheta$,
\begin{multline*}
\int_{G_\vartheta} \chi \circ X_{0,1}\circ \hat \RR\, d P^{N,0}_\beta\\
=\int_{G_\vartheta} f\Bigl(\bigl ( (T_{0}^k)'(\lambda_{i_k}^k)\,N(\lambda_{i_k+1}^k-\lambda_{i_k}^k),\ldots,(T_{0}^k)'(\lambda_{i_k}^k)\,N(\lambda_{i_k+m}^k-\lambda_{i_k}^k)\bigr)_{1\le k\le d} \Bigr)\, d\tilde P^{N,0}_{\beta}\\
+O\Bigl(\|\nabla f\|_\infty\,m^{1/2}\,(N^{\vartheta}+m)^2\,N^{-1}\Bigr).
\end{multline*}
Also, in the integral above we can replace 
$(T_{0}^k)'(\lambda_{i_k}^k)$ with
$(T_{0}^k)'(\gamma_{i_k/N}^k)$,
up to an error bounded by
$$
C\,\|\nabla f\|_\infty  \int_{G_\vartheta}
\biggl(\sum_{k=1}^d\sum_{j=1}^m|\lambda_{i_k}^k -\gamma_{i/N}^k|^2 \,(N|\lambda_{i_k+j}^k-\lambda_{i_k}^k|)^2\biggr)^{1/2}\,d\tilde P^{N,0}_{\beta}
=O\Bigl(\|\nabla f\|_\infty\,m^{1/2}\,(N^{\vartheta}+m)\,N^{\vartheta-1}\Bigr).
$$
Finally, 
it follows by \eqref{rigidity} that
all integrals on ${\R^N\setminus G_\vartheta}$ are bounded by $C\,\|f\|_\infty \,e^{-N^{\bar c}}$.
Hence, we proved that\footnote{This estimate, as well as the one at the edge that we shall prove below, should be compared with the one obtained in
 \cite[Theorem 1.5]{BFG}. While the estimates here are considerably stronger that the ones in \cite[Theorem 1.5]{BFG} (this follows from the fact that we have better bounds on our
 approximate transport maps), as a small ``loss'' we now have $N^{\vartheta-1}$
 instead of a term $(\log N)^3/N$. The reason for this small difference comes from the fact that we decided to apply \eqref{aptr} to deduce \eqref{eq:aptr2}. It is worth noticing that the argument in Section \ref{transport}
 combined with \cite[Lemma 2.2]{BFG} proves that also the stronger bound
$$
\left|\int \chi\circ T^N\, d P^{N,0}_\beta-\int \chi\, d P^{N,aV}_\beta\right|\le C\,\frac{(\log N)^3}{N} \|\chi\|_\infty
$$
holds. However, since in general \eqref{aptr} is much more powerful than the estimate above (as it allows to deal with 
functions that grow polynomially with respect to the dimension) and the improvement between $(\log N)^3/N$ and $N^{\vartheta-1}$ is minimal, we have decided not to state also this second estimate.
}

\begin{multline*}
\bigg|\int f\Bigl(\bigl(N(\lambda_{i_{k}+1}^k-\lambda_{i_{k}}^k),\ldots,N(\lambda_{i_k+m}^k-\lambda_{i_k}^k)\bigr)_{1\le k\le d}\Bigr)\, d\tilde P^{N,aV}_{\beta}\\
\qquad -\int f\Bigl(\bigl((T_{0}^k)'(\gamma_{i_k/N}^k)\,N(\lambda_{i_k+1}^k-\lambda_{i_k}^k),\ldots,(T_{0}^k)'(\gamma_{i_k/N}^k)\,N(\lambda_{i_k+m}^k-\lambda_{i_k}^k)\bigr)_{1\le k\le d})\Bigr) \,d\tilde P^{N,0}_{\beta}\bigg|\\
\leq \hat C \,\Big(N^{\vartheta-1} +e^{-N^{\bar c}}\Big)
\|f\|_\infty + \hat C\,\frac{m^{1/2} \,N^{2\vartheta}+m^{3/2}\,N^\vartheta}{N}\,
\|\nabla f\|_{\infty}
\end{multline*}
Since
$e^{-N^{\bar c}} \leq C\,N^{\theta-1}$, choosing $\vartheta \leq \theta/2$
we conclude the validity of the first statement.

For the second statement we choose 
$\chi(\hat\lambda)=f\Bigl(\bigl(N^{2/3}(\lambda_{1}^k-a_k^{aV}),\ldots,N(\lambda_{m}^k-a_k^{aV})\bigr)_{1\le k\le d}\Bigr)$ 
and we note that $T_{0}^k(a_k^0)=a_k^{aV}$.
Then, thanks to \eqref{eq:bound infty X12} and  \eqref{eq:X1theta},
we get
\begin{align*}
&\biggl|\int_{G_\vartheta} \chi \circ T^N\circ \hat \RR\, d P^{N,0}_\beta - \int_{G_\vartheta} \chi \circ X_{0,1}\circ \hat \RR\, d P^{N,0}_\beta\biggr|\\
&\leq \frac{\|\nabla f \|_\infty }{N^{1/3}}
\int_{G_\vartheta} \biggl[\sum_{k=1}^d\sum_{j=1}^m\biggl(|(X_{1,1})_{j}^k|+\frac{|(X_{2,1})_{j}^k|}{N}\biggr)^2\circ \hat \RR\biggr]^{1/2}
\, d P^{N,0}_\beta\\
&\leq \frac{\|\nabla f \|_\infty }{N^{1/3}}\,(d\,m)^{1/2} 
\int_{G_\vartheta} \biggl(\max_{i,k}|(X_{1,1})_{i}^k|+\frac{\max_{i,k}|(X_{2,1})_{i}^k|}{N}\biggr)\, d P^{N,0}_\beta\leq  C\,\|\nabla f\|_\infty\,\frac{m^{1/2} \,N^{\vartheta}}{N^{1/3}}.
\end{align*}
Also,
since
$$
T_0^k(\lambda_1^k)-T_0^k(a_{k}^{0})=
(T_{0}^k)'(a_{k}^{0})\,[\lambda_1^k-a_{k}^{0}]+O\bigl(|(\lambda_1^k-a_{k}^{0}|^2\bigr),
$$
using the rigidity estimate \eqref{eq:Gtheta edge}, we can replace 
$N^{2/3}\bigl(T_0^k(\lambda_1^k)-T_0^k(a_{k}^{0})\bigr)$
with $(T_{0}^k)'(a_{k}^{0})\,N^{2/3}(\lambda_1^k-a_{k}^{0})$ up to an error 
of size $m^{1/2}\,(N^\vartheta+m^{2/3})\,N^{-2/3}$.
Hence, arguing as above we conclude that 
\begin{multline*}
\bigg|
\int f\Bigl(\bigl(N^{2/3}(\lambda_1^k-a_{k}^{aV}), \ldots,N^{2/3}(\lambda_m^k-a_{k}^{aV})\bigr)_{1\le k\le d}\Bigr) \,d\tilde P^{N,aV}_{\beta}\\
-\int f\Bigl(\bigl((T_{0}^k)'(a_{k}^{0})\,N^{2/3}(\lambda_1^k-a_{k}^{0}), \ldots,(T_{0}^k)'(a_{k}^{0})\,N^{2/3}(\lambda_m^k-a_{k}^{0})\bigr)_{1\le k\le d}\Bigr)\, d\tilde P^{N,0}_{\beta}\bigg|\\
\le \hat C \,N^{\vartheta-1} 
\|f\|_\infty+\hat C\,\biggl(\frac{m^{1/2} \,N^{\vartheta}}{N^{1/3}}+\frac{m^{1/2}\,(N^\vartheta+m^{2/3})}{N^{2/3}}\biggr)\,
\|\nabla f\|_{\infty}.
\end{multline*}
which proves the second statement choosing $\vartheta \leq \theta$.
\end{proof}

\begin{proof}[Proof of Corollary \ref{thm:univ2}]
We first note that the proof of Corollary \ref{thm:univ} could be repeated verbatim in the context of \cite{BFG}
to show that \cite[Theorem 1.5]{BFG} holds with the same estimates as we obtained here.
Hence, by combining this result with Corollary \ref{thm:univ}  
we have
\begin{multline*}
\bigg|\int f\Bigl(\bigl(N(\lambda_{i_{k}+1}^k-\lambda_{i_{k}}^k),\ldots,N(\lambda_{i_k+m}^k-\lambda_{i_k}^k)\bigr)_{1\le k\le d}\Bigr)\, d\tilde P^{N,aV}_{\beta}\\
\qquad -\int f
\Bigl(\bigl((T_0^k \circ S_0^k)'(\gamma_{i_k/N})\,N(\lambda_{i_{k}+1}^k-\lambda_{i_{k}}^k),\ldots,(T_0^k \circ S_0^k)'(\gamma_{i_k/N})\,N(\lambda_{i_k+m}^k-\lambda_{i_k}^k)\bigr)_{1\le k\le d}\Bigr) \,d (\tilde P_{{\rm GVE},\beta}^N)^{\otimes d}\bigg|\\
 \le 
  \hat C
  \,N^{\theta-1} \,
\|f\|_\infty + \hat C\,
m^{3/2}\,N^{\theta-1}\,
\|\nabla f\|_{\infty},
\end{multline*}
where $\gamma_{i_k/N}$ satisfies 
 $\mu_{\rm sc}((-\infty,\gamma_{i_k/N}))=i_k/N$.
 Then we notice that the transport relations \eqref{eq:Sk}
 and \eqref{eq:Tk0} 
 imply that $T_0^k \circ S_0^k(\gamma_{i_k/N})=\gamma_{i_k/N,a}^k$ where $\gamma_{i_k/N,a}^k$ satisfies 
 $\mu_{k}^{aV}((-\infty,\gamma_{i_k/N,a}^k))=i_k/N$, hence (again by \eqref{eq:Sk}
 and \eqref{eq:Tk0})
 $$
 (T_0^k \circ S_0^k)'(\gamma_{i_k/N})=\frac{\rho_{sc}(\gamma_{i_k/N})}{\rho_{k}^{aV}(\gamma_{i_k/N,a}^k)}.
 $$
 Finally, since $|\sigma_k-i_k/N| \leq C/N$ and $\sigma_k \in (0,1)$,
 arguing as we did for proving \eqref{rigidity gamma},
 we deduce that $|\gamma_{i_k/N}-\gamma_{\sigma_k}| \leq \tilde C/N$,
 so up to another small error we can replace $\frac{\rho_{sc}(\gamma_{i_k/N})}{\rho_{k}^{aV}(\gamma_{i_k/N,a}^k)}$ with $\frac{\rho_{sc}(\gamma_{\sigma_k})}{\rho_{k}^{aV}(\gamma_{\sigma_k,k})}$.
 This concludes the proof of of the first statement, while the second one is just 
a consequence of Corollary \ref{thm:univ}(2) and \cite[Theorem 1.5(2)]{BFG}.
\end{proof}

\begin{proof}[Proof of Corollary \ref{thm:energy}]
As it is clear by looking at the proof of 
Corollaries \ref{thm:univ} and \ref{thm:univ2}, 
the fact of dealing at the same time with the eigenvalues of different matrices does not complicate the proof.
For this reason, since the proof of Corollary \ref{thm:energy} is already very involved,
to make the argument more transparent we shall prove the result when the test function is of the form
$$
 \negint_{R_k(E)-N^{-\zeta}\,R_k'(E)}^{R_k(E)+N^{-\zeta}\,R_k'(E)}\sum_{i_1\neq \ldots\neq i_m} f\bigl(N(\lambda_{i_1}^k-\tilde E),\ldots,N(\lambda_{i_m}^k-\tilde E)\bigr)\,d\tilde E
$$
for some $E \in (-2,2),$ the proof in the general case being completely analogous and just notationally heavier.

To simplify the notation, we set 
$$
g_{\tilde E}(\hat \lambda):=\sum_{i_1\neq \ldots\neq i_m} f\bigl(N(\lambda_{i_1}^k-\tilde E),\ldots,N(\lambda_{i_m}^k-\tilde E)\bigr),
\qquad
A_{k}:=\int\biggl[ \negint_{R_k(E)-N^{-\zeta}\,R_k'(E)}^{R_k(E)+N^{-\zeta}\,R_k'(E)}g_{\tilde E}\,d\tilde E\, \biggr]\,dP^{N,aV}_{\beta}.
$$
It follows by \eqref{aptr} with $\eta=\theta$ that
\begin{equation}
\label{eq:log 1}
|\log(1+A_{k}) - \log(1+{A}_{1,k})| \leq C\,N^{\theta-1},
\end{equation}
where
\begin{align*}
{A}_{1,k}&:=\int\biggl[ \negint_{R_k(E)-N^{-\zeta}\,R_k'(E)}^{R_k(E)+N^{-\zeta}\,R_k'(E)}
g_{\tilde E}\circ (T^N)^k\,d\tilde E\, \biggr]\,d P^{N,0}_{\beta}\\
&=\int\biggl[ \negint_{R_k(E)-N^{-\zeta}\,R_k'(E)}^{R_k(E)+N^{-\zeta}\,R_k'(E)}\sum_{i_1\neq \ldots\neq i_m} f\Bigl(N\bigl((T^N)_{i_1}^k(\hat\lambda)-\tilde E\bigr),\ldots,N\bigl((T^N)_{i_m}^k(\hat\lambda)-\tilde E\bigr)\Bigr)\,d\tilde E\, \biggr]\,dP^{N,0}_{\beta}.
\end{align*}
Define the quantiles $\gamma_{i/N}^k \in \bigl(S_k^0(-2),S_k^0(2)\bigr)$ 
as in Corollary \ref{thm:univ}, and given $\vartheta>0$ small (to be fixed later) we consider the set $G_\vartheta$ defined in \eqref{eq:Gtheta}.

Since the integrand $g_{\tilde E}\circ (T^N)^k$ is pointwise bounded by $\|f\|_\infty N^m$, it follows by \eqref{rigidity}
that
\begin{equation}
\label{eq:log 2}
\begin{split}
{A}_{1,k}&=A_{2,k}+O(e^{-N^c})\\
&:=\int_{G_{\vartheta}}\biggl[ \negint_{R_k(E)-N^{-\zeta}\,R_k'(E)}^{R_k(E)+N^{-\zeta}\,R_k'(E)}
g_{\tilde E}\circ (T^N)^k\,d\tilde E\, \biggr]\,dP^{N,0}_{\beta}+O(e^{-N^{\bar c}}).
\end{split}
\end{equation}
Observe that if $\hat\lambda \in G_{\vartheta}$ then, by definition,
$$
|\lambda_i^k-\lambda_j^k| \geq |\gamma_{i/N}^k-\gamma_{j/N}^k|-N^{-2/3+\vartheta}\min\{ i,N+1-i\}^{-1/3}-N^{-2/3+\vartheta}\min\{ j,N+1-j\}^{-1/3}.
$$
Hence, since $\gamma_{(i+1)/N}^k-\gamma_{i/N}^k \geq c_0 N^{-2/3}\min\{ i,N+1-i\}^{-1/3}$ for all $i$, we deduce that
$$
|\lambda_i^k-\lambda_j^k| \geq N^{\vartheta-1}\qquad \text{provided $|i-j|\geq C_0N^\vartheta$,}
$$
that combined with \eqref{eq:TN} yields, for
$\hat\lambda \in G_{\vartheta}$,
\begin{equation}
\label{eq:lipTN}
|(T^N)_i^k(\hat \lambda) -(T^N)_j^k(\hat \lambda)| \geq e^{-L}N^{\vartheta-1}
\qquad \text{provided $|i-j|\geq C_0N^\vartheta$}.
\end{equation}
We now notice that, since $f$ is compactly supported, the
quantity 
$$
f\Bigl(N\bigl((T^N)_{i_1}^k(\hat \lambda)-\tilde E\bigr),\ldots,N\bigl((T^N)_{i_m}^k(\hat \lambda) -\tilde E\bigr)\Bigr)
$$
can be nonzero only if 
$$
|(T^N)_{i_j}^k(\hat \lambda)-\tilde E|\leq \frac{C_1}{N}\qquad \forall\,j=1,\ldots,m.
$$
Therefore, if $\bar i \in \{1,\ldots,N\}$ is an index (depending on $\hat \lambda$ and $\tilde E$)
such that
$$
|(T^N)_{\bar i}^k(\hat \lambda)-\tilde E|\leq \frac{C_1}{N},
$$
then \eqref{eq:lipTN} yields
$$
|(T^N)_{i}^k(\hat \lambda)-\tilde E|\leq \frac{C_1}{N} \quad\Rightarrow\quad |i-\bar i|\leq C_0N^\vartheta.
$$
This proves that, for any $\hat\lambda \in G_\vartheta$, there exists a set
of indices
$$
J_{\hat\lambda,\tilde E}\subset \{(i_1,\ldots,i_m) \in \{1,\ldots,N\}^m\,:\,i_1\neq \ldots\neq i_m\}$$ such that $\# J_{\hat\lambda,\tilde E} \leq CN^{m\vartheta}$
and
$$
A_{2,k}=\int_{G_\vartheta}\biggl[ \negint_{R_k(E)-N^{-\zeta}\,R_k'(E)}^{R_k(E)+N^{-\zeta}\,R_k'(E)}
\hat g_{\tilde E}\circ (T^N)^k\,d\tilde E\, \biggr]\,dP^{N,0}_{\beta},
$$
where
$$
\hat g_{\tilde E}(\hat\lambda):=\sum_{(i_1, \ldots, i_m) \in J_{\hat\lambda,\tilde E}} f\bigl(N(\lambda_{i_1}^k-\tilde E),\ldots,N(\lambda_{i_m}^k-\tilde E)\bigr)
$$
satisfies $|\hat g_{T_0^k(\tilde E)}| \leq C \|f\|_\infty N^{m\vartheta}$.

We now perform the change of variable $\tilde E\mapsto T_0^k(\tilde E)$, which gives
$$
\int_{R_k(E)-N^{-\zeta}\,R_k'(E)}^{R_k(E)+N^{-\zeta}\,R_k'(E)}
\hat g_{\tilde E}\circ (T^N)^k\,d\tilde E
=\int_{(T_0^k)^{-1}[R_k(E)-N^{-\zeta}\,R_k'(E)]}^{(T_0^k)^{-1}[R_k(E)+N^{-\zeta}\,R_k'(E)]}
\hat g_{T_0^k(\tilde E)}\circ (T^N)^k\,(T_0^k)'(\tilde E)\,d\tilde E.
$$
Recalling that $R_k=T_0^k\circ S_0^k$ and that these maps are all smooth diffeomorphisms of $\R$, we see that
for $\tilde E \in \bigl[(T_0^k)^{-1}[R_k(E)-N^{-\zeta}\,R_k'(E)],(T_0^k)^{-1}[R_k(E)+N^{-\zeta}\,R_k'(E)]\bigr]$
it holds
$$
|(T_0^k)'(\tilde E) - (T_0^k)'\circ S_0^k(E)| \leq CN^{-\zeta},
\qquad R_k'(E)=[(T_0^k)'\circ S_0^k(E)]\,(S_0^k)'(E),
$$
and
$$
(T_0^k)^{-1}[R_k(E)\pm N^{-\zeta}\,R_k'(E)]=S_0^k(E)\pm N^{-\zeta}(S_0^k)'(E)+O(N^{-2\zeta}).
$$
Hence, since $|\hat g_{T_0^k(\tilde E)}| \leq CN^{m\vartheta},$
\begin{multline*}
\negint_{(T_0^k)^{-1}[R_k(E)-N^{-\zeta}\,R_k'(E)]}^{(T_0^k)^{-1}[R_k(E)+N^{-\zeta}\,R_k'(E)]}
\hat g_{T_0^k(\tilde E)}\circ (T^N)^k\,(T_0^k)'(\tilde E)\,d\tilde E\\
=\negint_{S_0^k(E)-N^{-\zeta}(S_0^k)'(E)}^{S_0^k(E)-N^{-\zeta}(S_0^k)'(E)}
\hat g_{T_0^k(\tilde E)}\circ (T^N)^k\,d\tilde E+O(N^{m\vartheta-\zeta}),
\end{multline*}
which proves that 
\begin{equation}
\label{eq:log 3}
\begin{split}
A_{2,k}&=A_{3,k}+ O(N^{m\vartheta-\zeta})\\
&:=\int_{G_{\vartheta}}\biggl[ \negint_{S_0^k(E)-N^{-\zeta}(S_0^k)'(E)}^{S_0^k(E)-N^{-\zeta}(S_0^k)'(E)}
\hat g_{T_0^k(\tilde E)}\circ (T^N)^k\,d\tilde E\, \biggr]\,dP^{N,0}_{\beta}+ O(N^{m\vartheta-\zeta}).
\end{split}
\end{equation}
We now estimate $A_{3,k}$.

Thanks to Theorem \ref{thm:transport} we can write 
\begin{multline*}
\hat g_{T_0^k(\tilde E)}\circ (T^N)^k(\hat \lambda)=\sum_{(i_1, \ldots, i_m) \in J_{\hat\lambda,\tilde E}} f\Big(N\bigl(
T_0^k(\lambda_{i_1}^k)-T_0^k(\tilde E)\bigr) +(X_{1,1}^N)_{i_1}^k(\hat \lambda),\ldots\\
\ldots,N\bigl(T_0^k(\lambda_{i_m}^k)-T_0^k(\tilde E)\bigr)
+(X_{1,1}^N)_{i_m}^k(\hat \lambda)\Big)\\+ O\biggl(\frac{\|\nabla f\|_\infty}{N} \,\sum_{(i_1, \ldots, i_m) \in J_{\hat\lambda,\tilde E}}|(X_{2,1}^N)_{i_j}^k|\biggr).
\end{multline*}
thus
\begin{equation}
\label{eq:log 4}
\begin{split}
A_{3,k}&=A_{4,k}+O\biggl(\frac{1}{N} \,\int_{G_\vartheta} \sum_{(i_1, \ldots, i_m) \in J_{\hat\lambda,\tilde E}}|(X_{2,1}^N)_{i_j}^k|\,dP_{\beta}^{N,0}\biggr)\\
&:=\int_{G_{\vartheta}}\biggl[ \negint_{S_0^k(E)-N^{-\zeta}(S_0^k)'(E)}^{S_0^k(E)-N^{-\zeta}(S_0^k)'(E)}
h_{\tilde E}\,d\tilde E\, \biggr]\,d\tilde P^{N,0}_{\beta,k}+O\biggl(\frac{1}{N} \,\int_{G_\vartheta} \sum_{(i_1, \ldots, i_m) \in J_{\hat\lambda,\tilde E}}|(X_{2,1}^N)_{i_j}^k|\,dP_{\beta}^{N,0}\biggr),
\end{split}
\end{equation}
with
$$
h_{\tilde E}(\hat\lambda):=\sum_{(i_1, \ldots, i_m) \in J_{\hat\lambda,\tilde E}} f\Bigl(N\bigl(
T_0^k(\lambda_{i_1}^k)-T_0^k(\tilde E)\bigr) +(X_{1,1}^N)_{i_1}^k(\hat \lambda),\ldots,N\bigl(T_0^k(\lambda_{i_m}^k)-T_0^k(\tilde E)\bigr)
+(X_{1,1}^N)_{i_m}^k(\hat \lambda)\Bigr).
$$
We now want to get rid of the terms $(X_{1,1}^N)_{i_j}^k$ and $|(X_{2,1}^N)_{i_j}^k|$.

Motivated by \eqref{eq:X1},
for any $\tilde E \in \R$ we define
$X_{1,\hat\lambda}^k(\tilde E)$ as the solution of the ODE
\begin{multline*}
\dot X_{t,\hat\lambda}^k(\tilde E)=(\yy_{k,t}^0)'\Bigl(X_{0,t}^k(\tilde E)\Bigr)\cdot X_{t,\hat\lambda}^k(\tilde E)
+\yy_{k,t}^1\Bigl(X_{0,t}^k(\tilde E)\Bigr)\\
+\sum_{\ell=1}^d\int \zz_{k\ell,t}\Bigl(X_{0,t}^k(\tilde E), y\Bigr)\,dM^N_{X_{0,t}^\ell}(y)
+\frac1N\sum_{\ell=1}^d\sum_{j=1}^N \partial_2\zz_{k\ell,t}\Bigl(X_{0,t}^k(\tilde E),X_{0,t}^\ell(\lambda_j^\ell)\Bigr)\cdot (X_{1,t})_j^\ell(\hat\lambda),
\end{multline*}
with $X_{0,\hat\lambda}^k(\tilde E)=\tilde E$, and
we note the following fact: whenever $\hat\lambda \in G_{\vartheta}$ 
we know that $\{\lambda_i^\ell\}_{1\leq i \leq N}$
are close, up to an error $N^\vartheta$, to the quantiles of the stationary measure $\mu_\ell^0=\mu_{\ell,0}^*$. Hence, arguing as we did for \eqref{eq:X1theta}
we get
\begin{equation}
\label{eq:DXE}
\left|\partial_{\tilde E}X_{1,\hat\lambda}^k(\tilde E)\right|\leq C\,N^\vartheta,\qquad |(X_{1,1}^N)_{i}^k(\hat \lambda)-X_{1,\hat\lambda}^k(\tilde E)|\leq C\,N^\vartheta\,|\lambda_i^k-\tilde E|
\qquad \forall\,\hat\lambda \in G_{\vartheta}.
\end{equation}
In addition, by the same reasoning,
$$
\max_{i,k}\int \partial_1 \zz_{k\ell,t}\Bigl(X_{0,t}^k(\lambda_i^k), y\Bigr)\,dM^N_{X_{0,t}^\ell}(y)=O\bigl(N^{\vartheta}\bigr)
\qquad \forall\,\hat\lambda \in G_{\vartheta},
$$
and the argument used to prove 
\eqref{eq:bound infty X12} (see in particular \eqref{eq:ODEX2 2}) yields
$$
\max_{i,k} |(X_{2,1}^N)_{i}^k| \leq C\,N^{2\vartheta}\qquad \forall\,\hat\lambda \in G_{\vartheta}.
$$
Hence, since $\# J_{\hat\lambda,\tilde E} \leq CN^{m\vartheta}$
we immediately deduce that 
\begin{equation}
\label{eq:X2infty}
O\biggl(\frac{1}{N} \,\int_{G_\vartheta} \sum_{(i_1, \ldots, i_m) \in J_{\hat\lambda,\tilde E}}|(X_{2,1}^N)_{i_j}^k|\,dP_{\beta}^{N,0}\biggr)
=O\bigl(N^{(m+2)\vartheta-1} \bigr).
\end{equation}
Now, to get rid of the term $X_{1,\hat\lambda}^k(\tilde E)$ inside $h_{\tilde E}$ we take advantage of \eqref{eq:DXE} and the average with respect to $\tilde E$:
more precisely, we consider the change of variable
$$
\tilde E\mapsto \Phi_{\hat\lambda}(\tilde E):=(T_0^k)^{-1}\Bigl[T_0^k(\tilde E)+\frac{1}{N}X_{1,\hat\lambda}^k(\tilde E)\Bigr]
$$
so that
\begin{multline*}
\negint_{S_0^k(E)-N^{-\zeta}(S_0^k)'(E)}^{S_0^k(E)-N^{-\zeta}(S_0^k)'(E)}
h_{\tilde E}\,d\tilde E=\\
\negint_{S_0^k(E)-N^{-\zeta}(S_0^k)'(E)}^{S_0^k(E)-N^{-\zeta}(S_0^k)'(E)}
\sum_{(i_1, \ldots, i_m) \in J_{\hat\lambda,\tilde E}} f\Bigl(N\bigl(
T_0^k(\lambda_{i_1}^k)-T_0^k(\tilde E)\bigr) +\bigl[(X_{1,1}^N)_{i_1}^k(\hat \lambda)-X_{1,\hat\lambda}^k(\tilde E)\bigr],\\
\ldots,N\bigl(T_0^k(\lambda_{i_m}^k)-T_0^k(\tilde E)\bigr)
+\bigl[(X_{1,1}^N)_{i_m}^k(\hat \lambda)-X_{1,\hat\lambda}^k(\tilde E)\bigr]
\Bigr)\,\partial_{\tilde E}\Phi_{\hat\lambda}(\tilde E)\,d\tilde E.
\end{multline*}
Therefore, since $\partial_{\tilde E}\Phi_{\hat\lambda}(\tilde E)=1+O\bigl(N^{\vartheta-1}\bigr)$ (thanks to \eqref{eq:DXE}),
$|h_{\tilde E}| \leq CN^{m\vartheta}$, and the interval $[S_0^k(E)-N^{-\zeta}(S_0^k)'(E),S_0^k(E)-N^{-\zeta}(S_0^k)'(E)]$
has length of order $N^{-\zeta}$,
we deduce that 
\begin{multline}
\label{eq:h tilde}
\negint_{S_0^k(E)-N^{-\zeta}(S_0^k)'(E)}^{S_0^k(E)-N^{-\zeta}(S_0^k)'(E)}
h_{\tilde E}\,d\tilde E=\\
\negint_{S_0^k(E)-N^{-\zeta}(S_0^k)'(E)}^{S_0^k(E)-N^{-\zeta}(S_0^k)'(E)}
\sum_{(i_1, \ldots, i_m) \in J_{\hat\lambda,\tilde E}} f\Bigl(N\bigl(
T_0^k(\lambda_{i_1}^k)-T_0^k(\tilde E)\bigr) +\bigl[(X_{1,1}^N)_{i_1}^k(\hat \lambda)-X_{1,\hat\lambda}^k(\tilde E)\bigr],\\
\ldots,N\bigl(T_0^k(\lambda_{i_m}^k)-T_0^k(\tilde E)\bigr)
+\bigl[(X_{1,1}^N)_{i_m}^k(\hat \lambda)-X_{1,\hat\lambda}^k(\tilde E)\bigr]
\Bigr)\,d\tilde E+O\bigl(N^{\zeta}N^{m\vartheta}N^{\vartheta-1}\bigr).
\end{multline}
We now observe that,
since $T_0^k:\R\to \R$ is a diffeomorphism with  $(T_0^k)' \geq e^{-L}>0$ (see \eqref{eq:T0k}), it follows by \eqref{eq:DXE} that 
$$
|(X_{1,1}^N)_{i_1}^k(\hat \lambda)-X_{1,\hat\lambda}^k(\tilde E)|\leq C\,N^\vartheta\,|T_0^k(\lambda_{i}^k)-T_0^k(\tilde E)|.
$$
Therefore, since $f$ is compactly supported, we see that the expression
\begin{multline*}
f\Bigl(N\bigl(
T_0^k(\lambda_{i_1}^k)-T_0^k(\tilde E)\bigr) +\bigl[(X_{1,1}^N)_{i_1}^k(\hat \lambda)-X_{1,\hat\lambda}^k(\tilde E)\bigr],\\
\ldots,N\bigl(T_0^k(\lambda_{i_m}^k)-T_0^k(\tilde E)\bigr)
+\bigl[(X_{1,1}^N)_{i_m}^k(\hat \lambda)-X_{1,\hat\lambda}^k(\tilde E)\bigr]
\Bigr)
\end{multline*}
is nonzero only if 
$$
|T_0^k(\lambda_{i_j}^k)-T_0^k(\tilde E)| \leq \frac{C_1}{N}
\qquad \forall\,j=1,\ldots,m.$$
In particular, using again that $(T_0^k)' \geq e^{-L}>0$, this implies that $|\lambda_{i_j}^k-\tilde E|\leq C/N$.
Thus
$$
\left|T_0^k(\lambda_{i_j}^k)-T_0^k(\tilde E) - (T_0^k)'(E)\,[\lambda_{i_j}^k-\tilde E]\right| =O\biggl(\frac{1}{N^2}\biggr)
$$
and
$$
N^{\vartheta}|T_0^k(\lambda_{i_j}^k)-T_0^k(\tilde E)|=O\bigl(N^{\vartheta-1}\bigr),$$
and we get
\begin{multline*}
f\Bigl(N\bigl(
T_0^k(\lambda_{i_1}^k)-T_0^k(\tilde E)\bigr) +\bigl[(X_{1,1}^N)_{i_1}^k(\hat \lambda)-X_{1,\hat\lambda}^k(\tilde E)\bigr],\\
\ldots,N\bigl(T_0^k(\lambda_{i_m}^k)-T_0^k(\tilde E)\bigr)
+\bigl[(X_{1,1}^N)_{i_m}^k(\hat \lambda)-X_{1,\hat\lambda}^k(\tilde E)\bigr]
\Bigr)\\
= f\Bigl((T_0^k)'(E)\,N\bigl(\lambda_{i_j}^k-\tilde E\bigr) ,
\ldots,(T_0^k)'(E)\,N\bigl(\lambda_{i_j}^k-\tilde E\bigr)\Bigr)+O\left(\|\nabla f\|_\infty N^{\vartheta-1}\right).
\end{multline*}
Combining this estimate with \eqref{eq:h tilde} and the fact that $\# J_{\hat\lambda,\tilde E} \leq CN^{m\vartheta}$
we conclude that
$$
\negint_{S_0^k(E)-N^{-\zeta}(S_0^k)'(E)}^{S_0^k(E)-N^{-\zeta}(S_0^k)'(E)}
h_{\tilde E}\,d\tilde E=\bar g_{E}+O\bigl(N^{(m+1)\vartheta+\zeta-1}\bigr),
$$
where
\begin{multline*}
\bar g_{E}(\hat\lambda):=\negint_{S_0^k(E)-N^{-\zeta}(S_0^k)'(E)}^{S_0^k(E)-N^{-\zeta}(S_0^k)'(E)}
\sum_{(i_1, \ldots, i_m) \in J_{\hat\lambda,\tilde E}} f\Bigl((T_0^k)'(E)\,N\bigl(\lambda_{i_j}^k-\tilde E\bigr) ,
\ldots,(T_0^k)'(E)\,N\bigl(\lambda_{i_j}^k-\tilde E\bigr)
\Bigr)\,d\tilde E.
\end{multline*}
Also, by the argument above it follows that we can add back into the sum all the indices
outside $J_{\hat\lambda,\tilde E}$ (since, up to infinitesimal errors, the function above vanishes on such indices),
therefore
$$
\negint_{S_0^k(E)-N^{-\zeta}(S_0^k)'(E)}^{S_0^k(E)-N^{-\zeta}(S_0^k)'(E)}
h_{\tilde E}\,d\tilde E=\bar{\bar g}_{E}+O\bigl(N^{(m+1)\vartheta+\zeta-1}\bigr),
$$
with
\begin{multline*}
\bar{\bar g}_{E}(\hat\lambda):=\negint_{S_0^k(E)-N^{-\zeta}(S_0^k)'(E)}^{S_0^k(E)-N^{-\zeta}(S_0^k)'(E)}
\sum_{i_1\neq \ldots\neq i_m} f\Bigl((T_0^k)'(E)\,N\bigl(\lambda_{i_j}^k-\tilde E\bigr) ,
\ldots,(T_0^k)'(E)\,N\bigl(\lambda_{i_j}^k-\tilde E\bigr)
\Bigr)\,d\tilde E.
\end{multline*}
Combining this bound with \eqref{eq:log 1}, \eqref{eq:log 2}, \eqref{eq:log 3}, \eqref{eq:log 4},
and \eqref{eq:X2infty},
we conclude that 
\begin{equation}
\label{eq:bound T1}
|\log(1+A_k) - \log(1+\bar{\bar A}_k)| \leq C\,\Bigl(
N^{m\vartheta-\zeta} +N^{(m+2)\vartheta-1}+N^{(m+1)\vartheta+\zeta-1}\Bigr),
\end{equation}
where $\bar{\bar A}_{k}:=\int\bar{\bar g}_E \,d P_\beta^{N,0}$.

We now repeat this very same argument replacing $P_\beta^{N,aV}$, $P_\beta^{N,0}$, and $T^N$,
with $P_\beta^{N,0}$, $(P_{{\rm GVE},\beta}^N)^{\otimes d}$, and $S^N=(S_1^N,\ldots,S_d^N)$, respectively
(see the discussion before Corollary \ref{thm:univ2}), and
we deduce that
$$
|\log(1+\bar{\bar A}_k) - \log(1+\hat A_k)| \leq C\,\Bigl(N^{m\vartheta-\zeta}+N^{(m+2)\vartheta-1}+
N^{(m+1)\vartheta+\zeta-1} \Bigr),
$$
where
$$
\hat A_{k}:=\int\biggl[ \negint_{E-N^{-\zeta}}^{E+N^{-\zeta}} \sum_{i_1\neq \ldots\neq i_m} f\bigl (R_k'(E)\,N(\lambda_{i_1}-\tilde E),\ldots,R_k'(E) \,N(\lambda_{i_m}-\tilde E)\bigr)\,d\tilde E\, \biggr] \,d P_{{\rm GVE},\beta}^N.
$$
Combining this estimate with \eqref{eq:bound T1} we get
 $$
 |\log(1+A_k) - \log(1+\hat A_k)| \leq C\,\Bigl(
N^{m\vartheta-\zeta} +N^{(m+2)\vartheta-1} +N^{(m+1)\vartheta+\zeta-1}\Bigr).
 $$
 Choosing $\vartheta$ small enough so that $(m+2)\vartheta<\theta$,
this gives
$$
|\log(1+A_k) - \log(1+\hat A_k)| \leq C\,\Bigl(N^{\theta+\zeta-1}+N^{\theta-1/2}+
N^{\theta-\zeta} \Bigr) \leq C\,\Bigl(N^{\theta+\zeta-1}+
N^{\theta-\zeta} \Bigr),
$$
and since  $\hat A_k$ is uniformly bounded in $N$ (see for instance \cite{VV})
and the right hand side is infinitesimal (recall that $\theta <\min\{\zeta,1-\zeta\}$),
we conclude that
$$
|A_k-\hat A_k|\leq C\,\Bigl(N^{\theta+\zeta-1}+
N^{\theta-\zeta} \Bigr).
$$
Recalling the definition of $A_k$ and $\hat A_k$, this proves that 
\begin{multline*}
\bigg|\int\biggl[ \negint_{R_k(E)-N^{-\zeta}\,R_k'(E)}^{R_k(E)+N^{-\zeta}\,R_k'(E)}\sum_{i_1\neq \ldots\neq i_m} f\bigl(N(\lambda_{i_1}^k-\tilde E),\ldots,N(\lambda_{i_m}^k-\tilde E)\bigr)\,d\tilde E\, \biggr]\,dP^{N,aV}_{\beta}\\
\qquad -\int\biggl[ \negint_{E-N^{-\zeta}}^{E+N^{-\zeta}} \sum_{i_1\neq \ldots\neq i_m} f\bigl (R_k'(E)\,N(\lambda_{i_1}-\tilde E),\ldots,R_k'(E) \,N(\lambda_{i_m}-\tilde E)\bigr)\,d\tilde E\, \biggr] \,d P^N_{{\rm GVE}}\bigg|\\
 \le  \hat C\,\Bigl(N^{\theta+\zeta-1}+
N^{\theta-\zeta} \Bigr),
 \end{multline*}
 which corresponds to our statement when $f$ depends only on the eigenvalues of one matrix.
 As explained at the beginning of the proof, the very same argument presented above extends also to the general case.
\end{proof}

\begin{proof}[Proof of Corollary  \ref{univmax}]
We begin by noticing that the proof of Theorem \ref{thm:transport} could be repeated verbatim in the context of \cite{BFG}
to show that \cite[Theorem 1.4]{BFG} holds with the same estimates as we obtained here.

To prove the  gaps estimates, it is enough to show that the approximate transport maps do not change gaps
in the bulk uniformly (away from the edges). 
Thanks to Theorem \ref{thm:transport} and \cite[Theorem 1.4]{BFG},
we have the expansions
$$
(T^N)^k_{i}(\hat\lambda)=T_0^k(\lambda_i^k)+\frac{1}{N} (X^N_{1,1})^k_{i}(\hat \lambda)+\frac{1}{N^2} (X^N_{2,1})^k_{i}(\hat \lambda),
$$
$$
(S^N_k)_i(\lambda^k)=S_0^k(\lambda_i^k)+\frac{1}{N} (S_{k,1})_{i}(\lambda^k)+\frac{1}{N^2} (S_{k,2})_{i}(\lambda^k),
$$
where $(S_{k,1})_{i}$ and $(S_{k,2})_{i}$ satisfy the same estimates as $(X^N_1)^k_{i}$ and $(X^N_2)^k_{i}$.
Hence, by the formulas above we deduce that
\begin{multline}
\label{eq:expTS}
(T^N)^k_{i}\bigl(S^N_1(\lambda^1),\ldots,S^N_d(\lambda^d)\bigr)
=T_0^k \circ S_0^k(\lambda_i^k)
+\frac{1}{N} \bigl[(T_0^k)' \circ S_0^k(\lambda_i^k) \bigr](S_{k,1})_{i}(\lambda^k) \\+ \frac{1}{N} (X^N_{1,1})^k_{i}\biggl(S_0^1(\lambda_1^1)+\frac{1}{N} (S_{1,1})_{1}(\lambda^1),\ldots,S_0^d(\lambda_N^d)
+\frac{1}{N} (S_{N,d})_{N}(\lambda^d)\biggr)+\mathcal E_i
\end{multline}
where the error $\mathcal E_i$ satisfies (thanks to the bounds from Theorem \ref{thm:transport} and \cite[Theorem 1.4]{BFG})
\begin{equation}
\label{eq:error i}
\sqrt{\sum_{i}\|\mathcal E_i\|^2_{L^2(P^N_{\rm GVE,\beta})} }=O\biggl(\frac{(\log N)^2}{N^{3/2}}\biggr)
\end{equation}
Also, using again Theorem \ref{thm:transport} and \cite[Theorem 1.4]{BFG},
with probability greater than $1-e^{-c(\log N)^2}$ and uniformly with respect to $i\in\{1,\ldots,N\}$, it holds
$$
\bigl|\bigl[(T_0^k)' \circ S_0^k(\lambda_{i+1}^k) \bigr](S_{k,1})_{i+1}(\lambda^k) 
-\bigl[(T_0^k)' \circ S_0^k(\lambda_i^k) \bigr](S_{k,1})_{i}(\lambda^k)  \bigr|
\leq C\,\log N \,N^{1/(\sigma-15)}|\lambda^k_{i+1}-\lambda^k_i|,
$$
\begin{multline*}
\bigl|(X^N_{1,1})^k_{i+1}-(X^N_{1,1})^k_{i}\bigr|\circ \biggl((S_0^1)^{\otimes N}+\frac{1}{N}S_{1,1},\ldots,
(S_0^d)^{\otimes N}+\frac{1}{N}S_{d,1}\biggr)(\hat\lambda)\\
\leq C\,\log N \,N^{1/(\sigma-15)} \biggl(|S_0^k(\lambda^k_{i+1})-S_0^k(\lambda_i^k)|
+\frac1N |(S_{k,1})_{i+1}(\lambda^k)-(S_{k,1})_{i}(\lambda^k)| \biggr)
\\
\leq C\,\log N \,N^{1/(\sigma-15)} |\lambda^k_{i+1}-\lambda^k_i|
\end{multline*}
while
$$
T_0^k \circ S_0^k(\lambda_{i+1}^k)
-T_0^k \circ S_0^k(\lambda_i^k)=(T_0^k \circ S_0^k)'(\lambda_i^k)[\lambda_{i+1}^k-\lambda_i^k]
+O(|\lambda_{i+1}^k-\lambda_i^k|^2).
$$
Recalling that, with probability greater than $1-e^{-N^{\bar c}}$,
$|\lambda_{i+1}^k-\lambda_i^k|\leq CN^{\theta-1}$  when the $\{\lambda_i^k\}_{1\leq i \leq N}$ are ordered and $i\in [\epsilon N, (1-\epsilon)N]$
(see \eqref{rigidity} and \eqref{eq:spacing}), 
we conclude that, with probability greater than $1-e^{-c(\log N)^2}$, uniformly with respect to $i\in [\epsilon N, (1-\epsilon)N]$, we have 
$$
\bigl[(T^N)^k_{i+1}-(T^N)^k_{i}\bigr]\bigl(S^N_1(\lambda^1),\ldots,S^N_d(\lambda^d)\bigr)
=(T_0^k \circ S_0^k)'(\lambda_i^k)[\lambda_{i+1}^k-\lambda_i^k]
+O\biggl(\frac{\log N\,N^{1/(\sigma-15)}}{N^{2-\theta}} \biggr).
$$
Combining this estimate with \eqref{eq:error i}
and noticing that
$$
N^{4/3}\biggl(\frac{\log N\,N^{2/(\sigma-15)}}{N^{2-\theta}} +\frac{(\log N)^2}{N^{3/2}}\biggr)\to 0\qquad \text{as }N\to \infty
$$
provided $\theta<1/6$
(recall that by assumption $\sigma \geq 36$, see Hypothesis \ref{hypo}),
the two statements follow from the fact that $T^N \circ( S^N_1,\ldots S^N_d):\R^{dN}\to \R^{dN}$ is an approximate transport map from 
$(P^N_{{\rm GVE},\beta})^{\ot d}$ to $P^{N,aV}_\beta$
and that the results are true under $P^N_{{\rm GVE},\beta}$ thanks to  \cite[Theorem 1.3 and Corollary 1.5]{BAB}. 
\end{proof}

\section{Matrix integrals}\label{matrixintegrals}
In this section, we consider the integral
$$I^{N,V}_\beta(A_1,\ldots,A_d, B_1,\ldots, B_m):=\int e^{  N^{2-r} \Tr ^{\otimes r} V(U_1A_1U_1^*,\ldots,U_dA_dU_d^*, B_{1},\ldots, B_m)} \,dU_1\ldots dU_d$$
where $\beta=2$ (resp. $\beta=1$) corresponds to integration over the unitary (resp. the orthogonal) group $U(N)$ (resp. $O(N)$).  Here $A_1,\ldots,A_d, B_1,\ldots, B_m$ are $m+d$ Hermitian (resp. symmetric) matrices 
such that
\begin{equation}\label{born}
\max_{1\le i\le d}\|A_i\|_\infty\le 1,\quad \max_{1\le i\le m}\|B_i\|_\infty\le 1,
\end{equation}
and
$V$ belongs to the tensor product $\mathbb C\langle x_1,\ldots,x_d;b_1,\ldots,b_m\rangle^{\otimes r}$ 
(or more generally to its closure for the norm defined below),
where 
$\mathbb C\langle x_1,\ldots,x_d;b_1,\ldots,b_m\rangle$ denotes the set of polynomial in $d+m$ self-adjoint variables.

We shall see $V$ as a Laurent polynomial in $\{u_i,u_i^*,a_i\}_{1\le i\le d}$ and
$\{b_i\}_{1\leq i \leq m}$,
where $x_i=u_ia_iu_i^{-1}$. The set $\LL$ of Laurent polynomials is equipped with 
 the involution $*$ given by  $u_i^*=u_i^{-1}$, $a_i^*=a_i$, $b_i^*=b_i$, and for any Laurent polynomials $p$
 and $q$ one has $(zpq)^*=\bar z q^* p^*$.
We
denote by $p=\sum\langle p,q_1\otimes \cdots\otimes q_r\rangle q_1\otimes \cdots\otimes q_r$ the decomposition of a polynomial 
$p$ in the set $${\LL}^{\ot r}:=\mathbb C\langle u_1,u_1^*,\ldots,u_d,u_d^*;a_1,\ldots,a_d;b_1,\ldots,b_m\rangle^{\otimes r}$$ in the basis of tensor of monomials, and for $\xi,\zeta\ge 1$ we set
$$\|p\|_{\xi,\zeta}:=\sum|\langle p,q_1\otimes\cdots\otimes q_r \rangle| \xi^{\sum_{i=1}^r {\rm deg}_U(q_i)}\zeta^{\sum_{i=1}^r \deg_{A,B}(q_i)}\,,$$
where $\deg_U(q)$ (resp. $\deg_{A,B}(q)$) is the number of letters in $\{u_i,u_i^*\}_{1\le i\le d}$  (resp. $\{a_i\}_{1\le i\le d}$
and $\{b_i\}_{1\le i\le m}$) in the word $q$. 
We let ${\LL}_{\xi,\zeta}^r:=\overline{{\LL}^{\otimes r}}^{\|\cdot\|_{\xi,\zeta}}$ be the closure of ${\LL}^{\otimes r}$
for the norm $\|\cdot\|_{\xi,\zeta}$. We endow
the space of linear forms  $\La_{\xi,\zeta}^r$ 
on ${\LL}_{\xi,\zeta}^r$ with the weak topology, that can be recast in terms of the norm
$$\|\tau\|_{\xi,\zeta}:=\sup_{\|p\|_{\xi,\zeta}\le 1}|\tau(p)|.$$
Notice that, by abuse of notation, we use $\|\cdot\|_{\xi,\zeta}$ to denote both the norm and the dual norm. It will always be clear from the context which one we are referring to. For later purpose, observe that $\xi,\zeta\mapsto\|p\|_{\xi,\zeta}$
is increasing for any $p\in\LL_{\xi,\zeta}^r$, whereas $\xi,\zeta\mapsto\|\tau\|_{\xi,\zeta}$ is decreasing for any $\tau\in\La_{\xi,\zeta}^r$. 
In the case where $r=1$, we denote in short ${\LL}_{\xi,\zeta},\La_{\xi,\zeta},\ldots$

We denote by $\La(\mathscr S)$  the set of linear forms  on a vector subspace $\mathscr S$ of ${\LL}$, and endow it with the weak norm $\|\cdot\|_{\xi,\zeta}$.
In particular if $\AA\BB$ is the algebra generated by $\{a_1,\ldots,a_d,b_1,\ldots,b_m\}$, the parameter $\xi$ does not appear
and we write in short $\|\cdot\|_\zeta$.
In case of a linear form on the algebra generated by a single self-adjoint variable, that corresponds simply to measure on the real line, this is
$$\|\nu\|_{\zeta}:=\sup_k\zeta^{-k}|\nu(x^k)|.$$
We denote by $\Ma(K)$ (resp. $\Pa(K)$) the set of Borel measures (resp. probability measure) on the set $K\subset\mathbb R$
and by $\BB$ the algebra generated by $\{b_1,\ldots,b_m\}$, and we
 write  $\|\nu\|_\zeta:=\sum_{i=1}^d\|\nu_i\|_{\zeta}+\|\tau\|_\zeta$
for $d+1$ tuples consisting of $d$ probability measures on $[-1,1]$ and
one linear form in $\La(\BB)$. Notice that, for $\tau\in \La(\BB)$, $$\|\tau\|_\zeta:=\sup_{k,i_j\in\{1,\ldots,m\}}\zeta^{-k} |\tau(b_{i_1}\cdots b_{i_k})|$$
as in this case the degree $\deg_{A,B}$ is simply the degree in $\{b_i\}_{1\le i\le m}$.
 We  assume without loss of generality that $V$ is symmetric, in the sense that for any permutation $\sigma$ on $\{1,\ldots,r\}$

$$\sum\langle V,q_1\otimes\cdots\otimes q_r\rangle q_1\otimes\cdots\otimes q_r =\sum\langle V,q_{\sigma(1)}\otimes\cdots\otimes q_{\sigma(r)}\rangle q_{\sigma(1)}\otimes\cdots\otimes q_{\sigma(r)}\,.$$
Compared to the notation used in \eqref{tyu}, we have rescaled $V$ so that the $A_i$ are bounded by $1$ instead of $M$, but otherwise
we can compare the norms as the diverse degrees are related by $\deg_U(q)\le \frac{1}{2}  \deg_X(q)$ and $\deg_{A,B}(q)= \deg_X(q)+\deg_B(q)$. In particular, the norm
$\|V\|_{\xi,\zeta}$ used in this section can be compared to the norm $\|V\|_{M\xi^{1/2}\zeta,\zeta}$ used in \eqref{tyu}. Once this is said, the
two notions are sufficiently close that we keep the same notation.

The main result of this section is the following.
\begin{thm}\label{topexpthm} Let $\beta=2$ (resp. $\beta=1$). Let $\{\a_j^i\}_{1\le i\le d, 1\le j\le N}\subset [-1,+1]^{dN}$ and set  $L^{N}_i:=\frac{1}{N}\sum_{j=1}^N\delta_{\a^i_j}$. Let $A_1,\ldots,A_d$ be Hermitian (resp. symmetric) matrices  with eigenvalues
$(\a_1^i,\ldots,\a^i_N)$, let $B_1,\ldots, B_m$ be Hermitian (resp. symmetric)  matrices, and
let $$p\mapsto \tau_B^N(p):=\frac{1}{N}\Tr\bigl(p(B_1\ldots,B_k)\bigr)$$ be the non-commutative distribution of $B_1,\ldots, B_m$.
 
 \noindent
Let $V\in \LL^r_{\|\cdot\|_{\xi,\zeta}}$ be self-adjoint. Then, if $\|V\|_{\xi,\zeta}$ is finite for some $\xi$ large enough and $\zeta\ge 1$,
there exists $a_0>0$ such that, for all $a\in [-a_0,a_0]$,
$$I^{N,aV}_\beta(A_1,\ldots,A_d, B_1,\ldots, B_m)=e^{\sum_{l=0}^2 N^{2-l} F^{aV}_{l,\beta}(L^{N}_1,\ldots,L^{N}_d,\tau_B^N)}\biggl(1+O\biggl(\frac{1}{N}\biggr)\biggr),$$
where the error is uniform on the set of matrices satisfying \eqref{born} and $F^{aV}_l$ are smooth functions on  $\mathcal P([-1,1])^d\times \La(\BB)$: more precisely, for any $\ell \geq 0$, the $\ell$-{th} derivative of $F^{aV}_{l,\beta}$ 
at $\mu\in \Pa([-1,1])^d\times \La(\BB)$ in the direction $\nu$ is such that
$$\bigl|D^\ell F^{aV}_{l,\beta}[\mu](\nu)^{\ot \ell}\bigr|\le C_\ell\,|a|\, \|\nu\|_\zeta^\ell\,, $$ 
where $C_\ell$ is a finite constant, uniform with respect to $\mu$.
\end{thm}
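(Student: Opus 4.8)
The plan is to prove Theorem \ref{topexpthm} by a perturbative expansion around $a=0$, treating the matrix integral $I^{N,aV}_\beta$ via its \emph{loop (Schwinger--Dyson) equations} for the Haar measure on $U(N)$ or $O(N)$. First I would reduce to the case $r=1$ in spirit: expanding $\Tr^{\otimes r}V$ in the monomial basis, each term is a product of traces, and writing $I^{N,aV}_\beta = \int e^{N^{2-r}\Tr^{\otimes r}(aV)}dU$ one differentiates in $a$ to express all moments of the form $\frac1N\Tr(w(U_1A_1U_1^*,\dots,B_m))$ in terms of an expectation with respect to the tilted measure $d\hat P^{N,aV}:= \frac{1}{I^{N,aV}_\beta}e^{N^{2-r}\Tr^{\otimes r}(aV)}dU$. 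The key input is that, for $|a|$ small, this tilted measure concentrates: the empirical non-commutative distribution of $(U_1A_1U_1^*,\dots,U_dA_dU_d^*,B_1,\dots,B_m)$ converges to a limiting tracial state $\tau^{aV}$ which is the unique solution of a fixed-point Schwinger--Dyson equation, as in \cite{GMS1,GMS2,CGM,GN}. The free energy $\frac1{N^2}\log I^{N,aV}_\beta$ then converges to $F_0^{aV}$, and the correction terms $F_1^{aV}$ (vanishing when $\beta=2$ by symmetry) and $F_2^{aV}$ arise from the subleading terms in the large-$N$ expansion of these equations.

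The main technical steps, in order, would be: (1) Establish a priori estimates showing that, for $\xi$ large enough and $|a|\le a_0$, the free energy and all correlators admit a $1/N$-expansion to the needed order; this uses the interpolation $a\mapsto aV$ and the fact that $\|aV\|_{\xi,\zeta}$ small makes the relevant linear operators (the analogue of $\Xi$ in the one-matrix case, here the differential of the Schwinger--Dyson map) invertible with uniformly bounded inverse on the weighted spaces $\LL_{\xi,\zeta}$. (2) Perform the Haar integration by parts (Weingarten calculus or, more robustly, the differential-operator form of the loop equations for $U(N)$ and $O(N)$) to derive a closed hierarchy whose leading order determines $\tau^{aV}$, whose $1/N$ correction determines the first-order fluctuation, and whose $1/N^2$ correction gives $F_2^{aV}$; the $\beta=1$ case carries an extra $O(N)$ term reflecting the real structure, which produces $F_1^{aV}$. (3) Integrate in $a$: writing $\partial_a \log I^{N,aV}_\beta = N^{2-r}\,\hat P^{N,aV}[\Tr^{\otimes r}V]$ and inserting the expansion of the right side, one gets an ODE in $a$ for each $F_l^{aV}$, with $F_l^{0}=0$ (since at $a=0$ the integral is trivially $1$), whose solution is the claimed smooth functional. (4) Check that the coefficients depend on the matrices $A_i$ only through their spectral measures $L^N_i$ and on the $B_j$ only through $\tau^N_B$: this is because the Haar average over $U_i$ depends on $A_i$ only through its spectrum, and the dependence on $\tau^N_B$ enters only through traces of words in the $B_j$'s, which are exactly captured by $\tau^N_B\in\La(\BB)$.

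For the regularity statement $|D^\ell F^{aV}_{l,\beta}[\mu](\nu)^{\otimes\ell}|\le C_\ell|a|\,\|\nu\|_\zeta^\ell$, I would argue as follows: the limiting tracial state $\tau^{aV}$ depends analytically on $(\mu_1,\dots,\mu_d,\tau_B)$ through the fixed-point equation, by the implicit function theorem on the Banach spaces $\La_{\xi,\zeta}$ — here the invertibility of the linearized operator uniformly in a neighborhood (coming from $\|aV\|$ small) gives, by the analytic implicit function theorem, bounds on all derivatives. Since $F_l^{aV}$ is built from $\tau^{aV}$ and finitely many explicit universal quantities by algebraic operations and integration in $a$, the derivative bounds propagate; the factor $|a|$ appears because every term in $F_l^{aV}$ has at least one factor of $aV$ (this is where one uses $F_l^0=0$ together with analyticity in $a$, so $F_l^{aV}=a\int_0^1 \partial_a F_l^{sV}\,ds$ and each $\partial_a F_l^{sV}$ is bounded uniformly for $|s|\le 1$).

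The hard part will be step (2): deriving and controlling the loop equations to \emph{two} orders in $1/N$ for \emph{quadratic} statistics (the case $r=2$, needed for polynomials in several matrices), and — crucially — proving that the resulting functionals are \emph{smooth in the spectral measures of each $A_i$ separately}, rather than merely in the joint non-commutative distribution of the $D(\lambda^k)$. The existing results \cite{GN,CGM} only handle $r=1$ and the weaker joint-distribution statement, so the new content is (a) handling the tensor-power interaction, which couples several traces and requires iterating the integration by parts while tracking error terms, and (b) a careful argument — presumably via the explicit solution of the linearized Schwinger--Dyson equation combined with concentration estimates for $L^N_i$ around $\mu_i$ — showing the dependence factors through $L^N_i$ with quantitative smoothness. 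I expect the bookkeeping of error terms in the weighted norms $\|\cdot\|_{\xi,\zeta}$, and the choice of $\xi$ large enough to absorb the combinatorial growth coming from repeated differentiation of $V$, to be the most delicate point; this is presumably the content of the several lemmas (Lemma \ref{lem:smooth}, etc.) referenced but not yet stated in the excerpt.
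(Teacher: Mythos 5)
Your proposal follows essentially the same strategy as the paper: derive and iterate the Schwinger--Dyson (loop) equations for the tilted Haar measure, use a priori concentration estimates plus invertibility of the linearized master operator on the weighted spaces $\LL_{\xi,\zeta}$ to get the $1/N$-expansion of the correlators to two subleading orders, integrate the derivative of $\log I^{N,aV}_\beta$ in $a$ to produce $F_0,F_1,F_2$, and establish the factorization through $(L^N_1,\dots,L^N_d,\tau^N_B)$ together with the Fr\'echet-differentiability bounds via the implicit-function-theorem structure of the fixed-point equation (with the $|a|$ factor coming from $F_l^0=0$ and integration in $a$). This matches the paper's route through Proposition \ref{topexpprop} (via Proposition \ref{corexpprop}, Lemmas \ref{conv}--\ref{thirdcor}) and Lemma \ref{lem:FreeEnergy}, including the correct observation that the $O(N)$ correction $F_1$ vanishes for $\beta=2$ and the identification of the two genuinely new difficulties (the tensor-power $r\ge2$ coupling, and proving that the coefficients smoothly factor through the individual spectral measures rather than the joint distribution).
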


The proof of this theorem is split over the next sections.
For notational convenience, instead of adding a small parameter $a$ in front of $V$ we rather write down our hypotheses in terms of the smallness of the  norms of $V$.

\subsection{Integrals over the unitary or orthogonal group}
The goal of this section is to prove Theorem \ref{topexpthm}. 
Recall that
${\LL}_{\xi,\zeta},{\LL}_{\xi,\zeta}^{r}$ denote the completion of ${\LL},{\LL}^{\ot r}$ for the norm
$\|\cdot\|_{\xi,\zeta}$. 

We shall prove Theorem \ref{topexpthm} in two steps. First we extend the results of \cite{GN} to the case $\beta=1$ and $r\ge 1$:
\begin{prop}\label{topexpprop} Let $\beta \in \{1,2\}$.
Let $\tau^N_{AB}$ 
be the non-commutative distribution of $(A_1,\ldots,A_d, B_1,\ldots,B_m)$, that is, the linear form on $\AA\BB$ given by
$$\tau^N_{AB}(p):=\frac{1}{N}\Tr\left(p(A_1,\ldots,A_d,B_1,\ldots,B_m)\right)\qquad \forall\,p \in \LL.$$
There exist $\xi_0>1,\zeta\ge 1$, and $\epsilon_0>0$ such that if $\|V\|_{\xi_0,\zeta}\le\epsilon_0$ then,  uniformly on the set of matrices $ A_1,\ldots,A_d,B_1,\ldots,B_m$ satisfying \eqref{born} and with respect to the dimension $N$,
we have
$$I^{N,V}_\beta(A_1,\ldots,A_d, B_1,\ldots, B_m)=e^{N^2G_{0,\beta}^V (\tau^N_{AB})+NG_{1,\beta}^V (\tau^N_{AB})+G_{2,\beta}^V(\tau^N_{AB})}\biggl(1+O\biggl(\frac{1}{N}\biggr)\biggr),$$
where $G_{l,\beta}^V$ are real valued functions on $\LL(\AA\BB)$ and the error is uniform for the norm $\|\cdot\|_\zeta$.
\end{prop}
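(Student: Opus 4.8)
\textbf{Plan of proof for Proposition \ref{topexpprop}.}
The idea is to obtain a large $N$ expansion of $I^{N,V}_\beta$ by a Schwinger--Dyson (loop equation) argument run at the level of the joint non-commutative distribution of $(U_1A_1U_1^*,\ldots,U_dA_dU_d^*,B_1,\ldots,B_m)$. Concretely, set $\Phi^{N,V}(p):=\mathbb E_{I^{N,V}_\beta}\bigl[\frac1N\Tr\,p(U_1A_1U_1^*,\ldots,B_m)\bigr]$, the normalized trace of a Laurent polynomial $p\in\LL$ integrated against the (tilted) Haar measure with density proportional to $e^{N^{2-r}\Tr^{\otimes r}V}$. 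First I would write down the Schwinger--Dyson equations obtained by differentiating along the one-parameter subgroups $U_i\mapsto e^{s E_i}U_i$, $E_i$ in the Lie algebra: integration by parts on $U(N)$ (resp.\ $O(N)$) produces the usual quadratic term (a sum over pairs of derivations, giving products of traces), a term coming from the Haar measure's defining property, plus the term coming from the derivative of $e^{N^{2-r}\Tr^{\otimes r}V}$, which is linear in $\|V\|$. The $\beta=1$ case differs from $\beta=2$ only by the extra "transpose" terms in the integration-by-parts formula on $O(N)$, which contribute at order $1/N$; these must be tracked carefully but do not change the structure of the argument. The outcome is a closed system of the form $\Phi^{N,V}=\Psi(\Phi^{N,V})+\text{(lower order in }1/N)$, where $\Psi$ is an explicit analytic map on the space $\La_{\xi,\zeta}$ of linear forms whose nonlinear part is controlled by $\|V\|_{\xi,\zeta}$.

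Next, I would solve this fixed point equation perturbatively. The strategy, following \cite{GN} (and \cite{GMS1,GMS2} for the analogous matrix-model case), is to show that for $\xi_0$ large enough and $\|V\|_{\xi_0,\zeta}\le\epsilon_0$ small enough the map $\Psi$ is a contraction on a ball of $\La_{\xi_0,\zeta}$ around the "free" solution $\tau^N_{AB}\mapsto$ (law of $A_i$'s rotated by free Haar unitaries, freely independent from the $B_j$'s); the largeness of $\xi_0$ is what absorbs the combinatorial growth of the number of words, and the smallness of $\epsilon_0$ makes the nonlinear term a genuine perturbation. This produces a limit $\Phi^V_\infty$ depending only on $\tau^N_{AB}$, analytic in the entries, together with an expansion $\Phi^{N,V}=\Phi^V_\infty+\frac1N\Phi^{V,1}+\frac1{N^2}\Phi^{V,2}+O(N^{-3})$ where each $\Phi^{V,j}$ is again an analytic functional of $\tau^N_{AB}$ obtained by solving the linearized equation (invertibility of $\mathrm{Id}-D\Psi$ at $\Phi^V_\infty$ follows from the contraction estimate). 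Concentration of $\frac1N\Tr\,p$ around its mean — which closes the loop equations, i.e.\ lets one factorize $\mathbb E[\frac1N\Tr\, p\cdot\frac1N\Tr\, q]=\mathbb E[\frac1N\Tr\, p]\,\mathbb E[\frac1N\Tr\, q]+O(N^{-2})$ — is standard for these tilted Haar measures (again the tilt is small so one stays in the log-Sobolev regime), and I would invoke it rather than reprove it.

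Having the expansion of the one-point functions, I would integrate it back to an expansion of $\log I^{N,V}_\beta$ itself. The natural device is the interpolation $t\mapsto I^{N,tV}_\beta$: one has $\frac{d}{dt}\log I^{N,tV}_\beta=N^{2-r}\,\mathbb E_{I^{N,tV}_\beta}\bigl[\Tr^{\otimes r}V\bigr]$, and $\mathbb E[\Tr^{\otimes r}V]$ factorizes, up to $O(N^{2-r}\cdot N^{-2})$ errors from concentration, into a product of $r$ normalized traces each of which has the expansion found above, times $N^r$. Integrating in $t$ from $0$ to $1$ and collecting powers of $N$ yields $\log I^{N,V}_\beta=N^2 G^V_{0,\beta}(\tau^N_{AB})+N G^V_{1,\beta}(\tau^N_{AB})+G^V_{2,\beta}(\tau^N_{AB})+O(1/N)$, with the $G^V_{l,\beta}$ analytic on $\La(\AA\BB)$ and uniformly controlled for $\|\cdot\|_\zeta$; the uniformity in the matrices satisfying \eqref{born} is automatic since all estimates were in terms of $\|V\|_{\xi_0,\zeta}$ and the uniform bound $\max\|A_i\|_\infty,\max\|B_j\|_\infty\le1$. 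I expect the main obstacle to be the bookkeeping of the $\beta=1$ integration-by-parts corrections together with uniform control of all the analytic estimates (contraction radius, invertibility of the linearized operator, concentration constants) simultaneously in $N$ and in the matrices — i.e.\ making sure every implicit constant depends only on $\xi_0,\zeta,\epsilon_0$ and not on $N$ — rather than any single conceptual step; this is precisely the part where the full details of \cite{GN} have to be extended from $r=1,\beta=2$ to general $r$ and $\beta=1$.
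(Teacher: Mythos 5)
Your plan follows the same strategy as the paper: Schwinger--Dyson equations derived by differentiating along one-parameter subgroups of the unitary/orthogonal group, concentration to give a priori bounds on the correlators and close the loop equations, a contraction/fixed-point argument (for $\xi_0$ large and $\|V\|_{\xi_0,\zeta}$ small) to obtain and expand the one-point function, and finally interpolation $t\mapsto I^{N,tV}_\beta$ to integrate back to the free energy. All of this is what the paper does (Lemmas \ref{lem:SDeq}, \ref{conc}, \ref{conv}, \ref{firstcor}, \ref{secondcor}, \ref{thirdcor}, and the computation in Section \ref{convfreeE}), and your remark about the $\beta=1$ integration-by-parts corrections entering at order $1/N$ is correct and is exactly the source of $\tau_{11}$.

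There is one genuine gap, in the last step where you integrate back. You write that
$\mathbb E\bigl[\Tr^{\otimes r}V\bigr]$ factorizes ``up to $O(N^{2-r}\cdot N^{-2})$ errors'' into a product of normalized traces times $N^r$. That error estimate is off by a factor of $N^{r}$. Your own concentration statement is that
$\mathbb E\bigl[\tfrac1N\Tr p\cdot\tfrac1N\Tr q\bigr]-\mathbb E\bigl[\tfrac1N\Tr p\bigr]\,\mathbb E\bigl[\tfrac1N\Tr q\bigr]=O(N^{-2})$,
i.e.\ the connected two-point correlator $\W_{2N}(p,q)=\mathbb E[\Tr p\,\Tr q]-\mathbb E[\Tr p]\,\mathbb E[\Tr q]$ is $O(1)$, not small. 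Un-normalizing, the discrepancy between $\mathbb E[\Tr^{\otimes r}V]$ and the factorized product is $O(N^{r-2})$, so after the $N^{2-r}$ prefactor it contributes at order $N^{0}$ to $\frac{d}{dt}\log I^{N,tV}_\beta$. That is exactly a contribution to $G^V_{2,\beta}$ and cannot be discarded. As written, your plan would compute $G^V_{2,\beta}$ using only the expansion of the one-point function, missing the connected contribution from $\W_{2N}$. The paper handles this by explicitly retaining the term
$r(r-1)\,N^{2-r}\!\int_0^a\W_{2N}^{uV}\otimes(\W_{1N}^{uV})^{\otimes r-2}(V)\,du$
in Section \ref{convfreeE}, and proves a separate convergence result for the second correlator (Lemma \ref{secondcor}) by differentiating the first Schwinger--Dyson equation in the potential. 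You need the analogous ingredient: show $\W_{2N}^V$ converges, and include its limit in $G^V_{2,\beta}$. It is only the higher cumulants (order $\geq 3$, or two factors of $\W_{2N}$) that are genuinely negligible at $O(1/N)$.
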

Next, we show   that the functions $\{G_{l,\beta}^V\}_{l=0,1, 2}$ depend only on the spectral measures of the matrices $A_i$ and on $\tau^N_B$. More precisely,
let $\Ta$ be the set of tracial states on ${\LL}$, that is, the set  of linear forms $\tau$ on ${\LL}$ satisfying
\begin{equation}\label{trace}\tau(pp^*)\ge 0, \quad \tau(pq)=\tau(qp),\quad \mbox{and}\quad\tau(1)=1\,.\end{equation}
Also, denote by $\Ta(\BB)\subset \LL(\BB)$ the set of tracial states on $\BB$.

Recall that, given $\nu=(\nu^1,\ldots,\nu^{d+1}) \in \mathcal M([-1,1])^d\times \La(\BB)$,
we have $\|\nu\|_\zeta =\sum_{i=1}^d \|\nu^i\|_\zeta+\|\nu^{d+1}\|_\zeta$ where
\begin{equation}
\label{eq:norm zeta}
\|\mu\|_\zeta=\max_{k\ge 1}\zeta^{-k}|\nu(x^k)|,\,\, \mu\in\mathcal P([-1,1]),\qquad \|\mu\|_\zeta=\max_{i_1,\cdots,i_k}\zeta^{-k}|\mu(B_{i_1}\cdots B_{i_k})|,\,\, \mu\in\Ta(\BB)\,.
\end{equation}

\begin{lem} \label{lem:FreeEnergy} The functions $\{G_l^V\}_{l=0,1, 2}$  are absolutely summable series whose coefficients depend only on $\tau_B^N$ and  the moments 

			\begin{equation*}
				L^{N}_i(x^k)=\frac1N \Tr\bigl[  (A_i)^k\bigr],  \qquad 1 \le i\le m,\ \ k\in\mathbb N.
			\end{equation*}
			In other words, there exists a function $F_{l,\beta}^V:\mathcal P([-1,1])^d\times \Ta(\BB)\to \R$ such that
			$$G_{l,\beta}^V(\tau^N_{AB})=F_{l,\beta}^V(L^N_1,\ldots,L^N_d,\tau_B^N)\,.$$
			Moreover, $F_{l,\beta}^V$ is Fr\'echet differentiable and its  derivatives are bounded by
			$$\left|D^\ell F_{l,\beta}^V[\mu](\nu_1,\ldots,\nu_\ell)\right|\le C_\ell\|\nu_1\|_\zeta \cdots\|\nu_\ell\|_\zeta.
			$$
	\end{lem}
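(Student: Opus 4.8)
The strategy is to make explicit the combinatorial expansion underlying Proposition \ref{topexpprop} and read off from it that each coefficient of the series $G_{l,\beta}^V$ is a polynomial in the moments of the $A_i$ and the $b_j$. Recall that Proposition \ref{topexpprop} is obtained by expanding $I^{N,V}_\beta$ as a sum over graphs (or maps) built from the monomials appearing in $V$: each such graph contributes a quantity obtained by pairing, via Weingarten calculus, the $u_i,u_i^*$ letters attached to the vertices, and the $a_i,b_j$ letters survive as traces of products over the faces of the resulting surface. The key structural fact is that after applying the Weingarten formula, every factor of $a_i$ ends up inside a trace of the form $\frac1N\Tr(A_i^{k})$ for some power $k$ --- this is because the unitary integration only connects indices on $A_i$-blocks among themselves (the Weingarten function couples $u_i$ with $u_i^*$ of the \emph{same} index $i$), so the $A_i$'s are never multiplied by $A_j$ with $j\ne i$ nor by the $B$'s under the expectation. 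Similarly, the $B$-letters only ever appear grouped into words $\tau_B^N(b_{i_1}\cdots b_{i_k})$. Hence each graph's contribution is a monomial in the variables $\{L^N_i(x^k)\}_{i,k}$ and $\{\tau_B^N(p)\}_{p}$ with a coefficient depending only on the graph (i.e. on $V$, $\beta$, $l$, and $N$ through a rational function that contributes to the subleading orders). I would make this precise by revisiting the proof of Proposition \ref{topexpprop}: track where the $A_i$ enter, and observe that the only quantities of the matrices $A_1,\ldots,A_d$ that appear are the normalized traces of their powers.

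First I would set up notation for the expansion: write $V=\sum \langle V, q_1\otimes\cdots\otimes q_r\rangle\, q_1\otimes\cdots\otimes q_r$, substitute $x_i = u_i a_i u_i^{-1}$, and expand $e^{N^{2-r}\Tr^{\otimes r}V}$ in power series; each term is a product of traces of words in $u_i,u_i^*,a_i,b_j$. Then, applying the Weingarten formula for $\int \prod dU_i$ over $U(N)$ (for $\beta=2$) or $O(N)$ (for $\beta=1$), one groups indices into cycles; the cycles carrying $A_i$-letters produce $\Tr(A_i^{k_1})\cdots$ and the cycles carrying $B$-letters produce $\Tr(p(B_1,\ldots,B_m))$, while the Weingarten weights give the powers of $N$ organizing the $N^{2-l}$ expansion. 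The conclusion is that each coefficient in the series defining $G_{l,\beta}^V(\tau^N_{AB})$ depends on $\tau^N_{AB}$ only through $L^N_1(x^{\bullet}),\ldots,L^N_d(x^\bullet)$ and $\tau_B^N$, which is exactly the claim $G_{l,\beta}^V(\tau^N_{AB})=F_{l,\beta}^V(L^N_1,\ldots,L^N_d,\tau_B^N)$. The absolute summability of the series, and the fact that it converges for $\|V\|_{\xi,\zeta}$ small, is inherited from Proposition \ref{topexpprop}; one only needs to check that the reorganization of the sum according to the new variables preserves absolute convergence, which follows since we are only grouping terms of a series that was already absolutely convergent.

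For the Fr\'echet differentiability and the bound $|D^\ell F_{l,\beta}^V[\mu](\nu_1,\ldots,\nu_\ell)|\le C_\ell\|\nu_1\|_\zeta\cdots\|\nu_\ell\|_\zeta$, I would argue directly from the series representation. Since each coefficient is a monomial in the moments $L^N_i(x^k)$ and $\tau_B^N(b_{i_1}\cdots b_{i_k})$, differentiating $\ell$ times in the directions $\nu_1,\ldots,\nu_\ell$ amounts to applying a Leibniz rule, replacing $\ell$ of the moment-factors by the corresponding moments of the $\nu_j$'s. Each monomial of total degree $k$ (degree in the $A$-moments plus $B$-moments) is controlled by $\zeta^k$ times the product of $\|\cdot\|_\zeta$-norms; summing over all monomials with the weights coming from $\|V\|_{\xi,\zeta}$ being small, one gets a geometric-type bound, hence the $C_\ell$ and the factor $|a|$ (coming from the fact that $\|aV\|_{\xi,\zeta}=|a|\,\|V\|_{\xi,\zeta}$ so that every nonconstant term in the expansion carries at least one power of $a$). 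Uniformity in $\mu$ is automatic because the bound only uses $\|\mu\|_\zeta\le 1$ (all measures are probability measures on $[-1,1]$ and $\tau_B$ is a state, so their moments are bounded by $1$).

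\textbf{Main obstacle.} The delicate point is not any single estimate but rather making the ``the $A_i$'s only enter through traces of their powers'' claim genuinely rigorous at the level of the graphical expansion: one must carefully inspect the structure produced by the Weingarten calculus and verify that no cross-terms $\Tr(A_i \cdots A_j\cdots)$ or $\Tr(A_i\cdots B_j\cdots)$ can survive, and that the reorganization of the (infinite, but absolutely convergent) sum into a power series in the new variables is legitimate and still absolutely convergent with the stated derivative bounds. Once the bookkeeping in the proof of Proposition \ref{topexpprop} is done with this in mind, the differentiability and the bounds are a fairly mechanical consequence of term-by-term differentiation of an absolutely convergent series with geometrically decaying terms.
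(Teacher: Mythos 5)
Your proposal takes a genuinely different route from the paper, but also contains a misreading and an acknowledged gap that should be flagged.

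The paper does \emph{not} prove Proposition \ref{topexpprop} by expanding $I^{N,V}_\beta$ as a sum over Weingarten graphs. It works entirely within the Schwinger--Dyson (loop equation) formalism: the leading and subleading correlators $\tau_{10}^{uV},\tau_{11}^{uV},\tau_{20}^{uV},\tau_{12}^{uV}$ are characterized as fixed points of operators built from $\overline{\operator{T}}$, $\overline{\operator{P}}^{V_\beta}$, $\overline{\operator{Q}}^{V_\beta}$, $\overline\Delta$, $\tilde\Delta$, and the free energy is recovered by integrating $\frac{d}{du}\log I^{N,uV}_\beta$ over $u\in[0,a]$. The paper's proof of Lemma \ref{lem:FreeEnergy} then has two distinct ingredients: first, the absolutely convergent power series in $a$ for $\tau_{10}^{aV}$ with Catalan-type bounds on the coefficients (Lemma \ref{expina}); second, the purely algebraic observation that the operators $\operator{D}^{-1}$, $\mathcal{D}_i\cdot\mathcal{D}_i$, $\overline\Delta_i$, $\tilde\Delta_i$ all map the subalgebra $\mathcal{P}$ (words in $u_ia_iu_i^{-1}$ and $b_j$) into $\mathcal{P}$ or $\mathcal{P}\otimes\mathcal{P}$ up to cyclic symmetry. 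From this the induction \eqref{tyr} forces each coefficient $\tau_n^V|_{\mathcal P}$ to depend only on $\tau_0^V|_{\mathcal P}$, i.e., on $(L_1^N,\ldots,L_d^N,\tau_B^N)$. The Fr\'echet differentiability and the bounds on $D^\ell F^V_{l,\beta}$ are then read off from the explicit implicit-function formula \eqref{lkj}--\eqref{toto}, not from term-by-term differentiation of a graphical series.

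Your Weingarten-based heuristic --- that the pairing of $u_i$ with $u_i^*$ (or, for $\beta=1$, the Brauer-algebra pairings) closes off each $A_i$-block into a pure trace $\frac1N\Tr(A_i^k)$, while the $B$'s group into words $\tau_B^N(b_{i_1}\cdots b_{i_k})$ --- is correct in spirit and, if carried out, would give a valid alternative proof. But there are two concrete gaps. First, the existence of an absolutely convergent $N^{2-l}$-graded graphical expansion for $I^{N,V}_\beta$ uniformly on the set of matrices satisfying \eqref{born}, when $V$ is a convergent series in $\LL^r_{\xi,\zeta}$ rather than a polynomial, and for $\beta=1$ as well as $\beta=2$, is not free: it is essentially a full topological expansion that would have to be established from scratch (the paper deliberately avoids this by using a priori concentration bounds plus the loop equations). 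Second, the ``no cross-terms'' factorization claim, which you correctly single out as the obstacle, is precisely the content you would need to prove carefully, and you have not done so; the paper's $\mathcal{P}$-stability argument is the rigorous replacement for it, yet your sketch does not engage with it at all. As it stands, your proposal identifies the right intuition but neither carries out the Weingarten route nor follows the paper's actual algebraic route, so it cannot be considered a complete proof.
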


As in \cite{GMS1,GMS2, CGM, BG1, GN}, the derivation of the expansion for large $N$ of the free energy 
$$F^{N,V}_\beta(A_1,\ldots,A_d,B_1,\ldots, B_m):=\frac{1}{N^2} \log I^{N,V}_\beta(A_1,\ldots,A_d,B_1,\ldots, B_m)$$
is based on the  expansion of the function given, for any polynomial $p\in \LL$,
 by 
\begin{equation}
\label{eq:W1N}
\W_{1N}^{V,\beta}(p):=\int \Tr\Bigl(p(U_1,\ldots,U_d,U_1^{*},\ldots,U_d^{*},
A_1,\ldots,A_d,B_1,\ldots,B_m)\Bigr)\, d\mathbb Q^{N,V}_\beta(U_1,\ldots,U_d),
\end{equation}
where $d\mathbb Q^{N,V}_\beta$ is the measure on $U(N)^d$ defined as
\begin{equation}
\label{eq:QNV}
d\mathbb Q^{N,V}_\beta(U_1,\ldots,U_d):=\frac{1}{I^{N,V}_\beta} e^{N^{2-r}\Tr^{\otimes r} V(U_1A_1U_1^*,\ldots,U_dA_dU_d^*,B_1,\ldots, B_m)} \,dU_1\ldots dU_d\,.
\end{equation}

The main step to prove Proposition \ref{topexpprop} is the following large dimension expansion:

\begin{prop}\label{corexpprop} Let $\beta= 1$ (resp. $\beta=2$). Let $A_1,\ldots, A_d$ be symmetric (resp. Hermitian) matrices with real eigenvalues $(\a_i^1,\ldots,\a^N_i)_{1\le i\le d}$ and satisfying \eqref{born}.
Let $V$ be a self-adjoint polynomial in $\LL_{\xi,\zeta}^r$
for some $\xi>1,\zeta\ge 1$. There exist  $\xi_0>1$, and $\epsilon_0>0$ so that if $\xi\ge \xi_0$ and  $\|V\|_{\xi,\zeta}\le\epsilon_0$ then$$\W_{1N}^{V,\beta}(p)=N\tau_{10}^\beta(p)+ \tau_{11}^\beta(p) +\frac{1}{N}\tau_{12}^\beta(p)+O\biggl(\frac{1}{N^2}\biggr)
\qquad\forall\,p \in \LL,$$
for some $\tau_{10}^\beta,\tau_{11}^\beta,\tau_{12}^\beta\in {\LL}_{\xi,\zeta}$.
Moreover, the error is uniform in $\|\cdot\|_{\xi,\zeta}$.
\end{prop}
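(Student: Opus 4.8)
The argument follows the Schwinger--Dyson (loop) equation scheme of \cite{GN}, the two new points being the treatment of the orthogonal group ($\beta=1$) and the need to push the expansion one order further, namely to order $N^{-1}$ inside $\W_{1N}^{V,\beta}$ (which corresponds to order $N^{-2}$ in the free energy, and hence to the third term $G_{2,\beta}^V$ in Proposition \ref{topexpprop}). All the fixed point and inversion arguments below take place in the Banach algebra $\LL_{\xi,\zeta}$: the point of taking $\xi$ large is that the non-commutative derivatives $\partial_{u_i}, D_{u_i}$ and the operators ``multiply by a derivative of $V$'' then have small operator norm, so that the relevant maps become genuine contractions on small balls of $\LL_{\xi,\zeta}$ once $\|V\|_{\xi,\zeta}\le\epsilon_0$.

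The first step is to derive the exact loop equations. For each $i\in\{1,\ldots,d\}$ and each word $p\in\LL$, the left invariance of the Haar measure under $U_i\mapsto e^{\epsilon E}U_i$, $E$ in the Lie algebra of $U(N)$ (resp. $O(N)$), gives, after differentiating at $\epsilon=0$ and summing over a basis of the Lie algebra, an identity which expresses $\W_{1N}^{V,\beta}$ of a cyclic derivative of $p$ in terms of: (i) $\W_{1N}^{V,\beta}\otimes\W_{1N}^{V,\beta}$ applied to a ``splitting'' of $p$; (ii) a covariance term $\mathrm{Cov}_{\mathbb Q^{N,V}_\beta}\bigl(\Tr(\cdot),\Tr(\cdot)\bigr)$ of relative size $N^{-2}$; (iii) terms of the form $\W_{1N}^{V,\beta}\bigl((D_{u_i}V)\,p\bigr)$ coming from differentiating the density $e^{N^{2-r}\Tr^{\ot r}V}$; and, only when $\beta=1$, an extra term of relative size $N^{-1}$ produced by the antisymmetry of the orthogonal Lie algebra. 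This is exactly the analogue of the loop equations of \cite{GN} with the orthogonal correction added.

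The second step provides the a priori estimates needed to close the hierarchy. Since $\|V\|_{\xi,\zeta}$ is small, the density of $\mathbb Q^{N,V}_\beta$ with respect to the Haar measure is a uniformly log-Lipschitz perturbation, so the concentration of measure on $U(N)^d$ (resp. $O(N)^d$) for the bi-invariant metric — whose Ricci curvature is bounded below independently of $N$ — survives: for any Lipschitz word $p$, $\Tr(p(U))$ concentrates around its $\mathbb Q^{N,V}_\beta$-mean with subgaussian tails of variance $O(1)$, uniformly in $N$ and in the matrices satisfying \eqref{born}. In particular the covariance terms above are $O(1)$; a refinement of the same argument bounds the third cumulant of traces by $O(N^{-1})$, which is what is needed to reach order $N^{-2}$. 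Combined with the explicit Weingarten-type bounds available when $V=0$, one also obtains that $\frac1N\W_{1N}^{V,\beta}$ lies, for every $N$, in a fixed small ball of $\LL_{\xi,\zeta}$ around the Haar value.

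With these ingredients the expansion follows by a bootstrap. Dividing the loop equations by $N$ and letting $N\to\infty$ turns them into a fixed point equation $\tau_{10}^\beta=\Psi_V(\tau_{10}^\beta)$ in $\LL_{\xi,\zeta}$; for $V=0$ the loop equations are triangular in the word length and hence solvable order by order, so the linearization of $\Psi_V$ at the Haar state is invertible, and for $\|V\|_{\xi,\zeta}\le\epsilon_0$ the Banach fixed point theorem yields a unique $\tau_{10}^\beta\in\LL_{\xi,\zeta}$. Setting $r_N^{(1)}:=\W_{1N}^{V,\beta}-N\tau_{10}^\beta$ and substituting back, one finds that $r_N^{(1)}$ solves a \emph{linear} Schwinger--Dyson equation governed by the \emph{invertible} operator $\mathrm{Id}-D\Psi_V[\tau_{10}^\beta]$, with an $O(1)$ (in $\|\cdot\|_{\xi,\zeta}$) source assembled from the covariance term, the $\beta=1$ correction, and the quadratic Taylor remainder of $\Psi_V$; inverting gives $r_N^{(1)}=\tau_{11}^\beta+O(N^{-1})$ with $\tau_{11}^\beta\in\LL_{\xi,\zeta}$ solving a fixed linear equation. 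One further iteration, now using the bound on the third cumulant and on $\|r_N^{(1)}-\tau_{11}^\beta\|_{\xi,\zeta}$, produces $\tau_{12}^\beta\in\LL_{\xi,\zeta}$ and an $O(N^{-2})$ remainder, with all constants depending only on $\xi,\zeta,\|V\|_{\xi,\zeta}$, which is the claim. The main obstacle is the \emph{quantitative, $N$-uniform} inversion of the linearized Schwinger--Dyson operator on $\LL_{\xi,\zeta}$ and the propagation of the remainder estimates in the correct norm so that each successive error is genuinely a factor $N^{-1}$ smaller; this is precisely what forces one to track the third cumulant of traces and to handle carefully the extra orthogonal terms, which are the reason the expansion has nonvanishing odd orders in $1/N$ when $\beta=1$.
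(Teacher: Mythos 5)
Your general scheme — loop equations, a priori concentration bounds, fixed-point inversion for the leading order, then iterating the linearized equation — matches the paper's proof (Corollary \ref{SD1}, Lemmas \ref{conc}, \ref{conv}, \ref{firstcor}, \ref{secondcor}, \ref{thirdcor}). However there are two concrete gaps.

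First, you assert that ``a refinement of the concentration argument bounds the third cumulant of traces by $O(N^{-1})$, which is what is needed to reach order $N^{-2}$.'' This is both unproven and unnecessary. Concentration of measure (Gromov curvature bound plus Bakry--\'Emery) yields only $\W^V_{kN}(p_1,\ldots,p_k)=O(1)$ for $k\ge 2$, uniformly in $N$; it does not give decay in $N$ for the third cumulant, and no amount of ``refining the same argument'' will produce one without going through a third loop equation. The point the paper exploits is different: the higher cumulants appear in the Schwinger--Dyson hierarchy with explicit \emph{prefactors} of $N^{-k+1}$ or smaller (count the $\W_{1N}\sim N$ factors versus the $N^{-r}$ normalization in the $\frac{1}{N^{r}}R$ term of Corollary \ref{SD1}), so the crude $O(1)$ bound of Lemma \ref{conc} already forces all terms involving $\W_{kN}^V$ with $k\ge 3$ to be $O(N^{-2})$ in the equation for $\delta_N$, hence $O(N^{-1})$ in the equation for $\delta^2_N=N(\delta_N-\tau_{11})$, which is all that Lemma \ref{thirdcor} needs. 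Your version would send the reader hunting for a cumulant bound that concentration cannot supply.

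Second, and more seriously, your ``one further iteration'' producing $\tau_{12}^\beta$ glosses over the fact that the equation for $\delta^2_N$ contains the term $\W^{V}_{2N}(\bar{\operator{S}}^{V_\beta}p+\overline\Delta p)$, which is $O(1)$ and contributes to $\tau_{12}^\beta$ at leading order. To pass to the limit you must first identify $\lim_N\W^V_{2N}=\tau_{20}$ with a quantitative $O(N^{-1})$ rate, and this requires writing down and solving a \emph{second} Schwinger--Dyson equation (obtained by differentiating the first with respect to a linear perturbation of $V$, as the paper does just before Lemma \ref{secondcor}). Iterating the first loop equation alone does not determine $\tau_{20}$: the first loop equation treats $\W^V_{2N}$ as an unknown on the right-hand side, so without the second equation you have no way to replace it by a fixed limit when assembling $\tau_{12}^\beta$. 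Relatedly, your description of the $O(1)$ source for $\tau_{11}^\beta$ as ``assembled from the covariance term'' is slightly off: in the loop equation for $\delta_N$ the covariance $\W_{2N}^V$ carries an extra $N^{-1}$, so it contributes to $\tau_{12}^\beta$ but not to $\tau_{11}^\beta$; the only nontrivial source at order $N^{-1}$ is the orthogonal term $\one_{\beta=1}\W_{1N}^V(\tilde m\circ\partial_i p)$.
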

Notice that this result
 implies Proposition \ref{topexpprop} provided we prove also  the convergence of the second correlator $\W_{2N}^{V,\beta}$, see \eqref{eq:WkN} and Section \ref{convfreeE}.

Hereafter we will drop the index $\beta$, but all our results will remain true both for $\beta=1$ and $\beta=2$.

The proof of Proposition \ref{corexpprop} is based on Schwinger-Dyson's equation and a priori concentration of measures' properties, which
depend
on differentials acting on the space ${\LL}$
of Laurent polynomial in letters $\{u_1,\ldots,u_d,u_1^{-1},\ldots,u_d^{-1}, a_1,\ldots,a_d,
b_1,\ldots,b_m\}$.  Recall that $\AA\BB$ denotes the
Laurent polynomial with degree zero, that is the linear span of words in $\{a_1,\ldots, a_d,
b_1\ldots,b_m\}$.
We now introduce some notation.

\begin{itemize}
\item The non-commutative derivative with respect to the $i$-th variable $u_i$ is defined by its action on monomials of ${\LL}$:

\begin{equation}\label{ncder}
\partial_i p:= \sum_{p=p_1u_ip_2} p_1u_i \otimes p_2
			- \sum_{p=p_1u_i^{-1}p_2} p_1 \otimes u_i^{-1}p_2.
			\end{equation}

\item The cyclic derivative with respect to $u_i$ is defined as  the endomorphism of
	${\LL}$ which acts on monomials according to 

		\begin{equation*}
			\D_i p := \sum_{p=p_1u_ip_2} p_2p_1u_i - \sum_{p=p_1u_i^{-1}p_2} u_i^{-1}p_2p_1.
		\end{equation*}
We can think about $\D_i$ as $\D_i=m\circ \partial_i$ with $m(p\otimes q):=qp$ for all $p,q\in {\LL}$.  We will denote $\tilde m(p\otimes q):= q^* p$. 

Note that $\D_i$ appears naturally when differentiating the trace of a polynomial. More precisely, if we let $u_j(t)=u_j$ for $j\neq i$ and $u_i(t) =u_i e^{tB}$ then, for any
Laurent polynomial $p$ and any tracial state $\tau$, we have 
$$\frac{d}{dt}|_{t=0} \tau\bigl( p(u(t))\bigr)=\tau\bigl(\D_i p(u(0) )B\bigr).$$
As we shall apply it to differentiate quantities of the form $\Tr^{\otimes r}V(U(t))$,  let us introduce the
following notation: for $p\in {\LL}^{\ot r}$ with $p=p_1\otimes p_2\otimes\cdots\otimes p_r$ and a tracial state $\tau$, we set
$$\D_{i,\tau} p:=\sum_{k=1}^r\biggl( \prod_{j=1}^{k-1} \tau(p_j) \biggr)\D_ip_k\biggl(\prod_{j=k+1}^r \tau(p_j)\biggr)\,.$$
Hence, if $B$ is a anti-symmetric matrix (that is $B=-B^*$) and  $U_j(t)=U_j e^{t\one_{j=i} B}$, 
$$\frac{d}{dt}|_{t=0} \frac{1}{N^r}\Tr^{\otimes r}V\bigl(U(t)\bigr)=\frac{1}{N}\Tr\bigl(B \D_{i,\frac{1}{N}\Tr } V\bigr)\,.$$

		\item 
		We will consider linear transformations

		\begin{equation*}
			\operator{T}:({\LL}^{\otimes k_1},\|\cdot\|_{\xi_1,\zeta}) \rightarrow ({\LL}^{\otimes k_2},\|\cdot\|_{\xi_2,\zeta})
		\end{equation*}

	\noindent
	mapping between the various tensor powers of ${\LL}$.
	A linear transformation $\operator{T} :{\LL}^{\otimes k_1}\ra {\LL}^{\otimes k_2}$
	is $(\xi_1,\xi_2;\zeta)$-continuous if and only if there exists a constant $C$ such that

		\begin{equation*}
			\|\operator T (p_1 \otimes \dots \otimes p_{k_1} )\|_{\xi_2,\zeta} \leq C \|p_1 \otimes \dots \otimes p_{k_1}\|_{\xi_1,\zeta}
		\end{equation*}

	\noindent
	for all monomials $p_1 \otimes \dots \otimes p_{k_1} \in {\LL}^{\otimes k_1}$.
	The operator norm of $\operator{T}$, denoted $\|\operator{T}\|_{\xi_1,\xi_2,\zeta}$,
	can be calculated by considering the smallest constant $C$ for which the above inequality holds.

	Allowing different instances of the $\xi$-norm on the source and target of our
	linear maps is useful for the following reason: certain linear transformations that
	we will need to deal with are not $(\xi,\xi;\zeta)$-continuous for any $\xi \geq 1$,
	but are $(\xi_1,\xi_2;\zeta)$-continuous, and even contractive, 
	if the ratio $\xi_1/\xi_2$ is large enough. When $\xi_1=\xi_2$ we simplify the notation by putting only one index $\xi$.

				\item
		Recall that for $\nu$ a multilinear form on ${\LL}^{\otimes k}$, we set
		$$\|\nu\|_{\xi,\zeta}=\max_{\|p\|_{\xi,\zeta} \le 1}|\nu(p)|,$$
		and denote by $\La_{\xi,\zeta}^{k,k'}$ the set of linear maps from $({\LL}^{\otimes k},\|\cdot\|_{\xi,\zeta})$ into $({\LL}^{\otimes k'},\|\cdot\|_{{\xi,\zeta}})$, 
		and $\La_{\xi,\zeta}^{k}$ denotes the set of linear maps from $({\LL}^{\otimes k},\|\cdot\|_{\xi,\zeta})$ into $\mathbb C$. Also, 
			if $\mathscr S$ is a vector subspace  of $({\LL}^{\otimes k},\|\cdot\|_{\xi,\zeta})$, then $\La(\mathscr S)$ is the set of linear forms on $\mathscr S$ (if $\mathscr S=\LL$, we simply denote it by $\La$). One can check that $\La_{\xi,\zeta}^{k,k'}$, $\La_{\xi,\zeta}^{k}$, and $\La(\mathscr S)$ are Banach spaces (see for instance \cite[Proposition 7]{GN} to see that $\|\cdot\|_{\xi,\zeta}$ is
			a vector space norm on ${\LL}^{\otimes k}$, and in fact an algebra norm). 
We denote by $\Ta_{\xi,\zeta}^k$ the subset of  tracial states on $({\LL}^{\otimes k},\|\cdot\|_{\xi,\zeta})$.  

\end{itemize}
The basis of the Schwinger-Dyson equation is the following equation: 
\begin{lem}\label{lem:SDeq} Let $V$ be a self-adjoint polynomial, $p\in{\LL}$, and $i\in\{1,\ldots, d\}$. Then
\begin{equation}\label{SD}
\mathbb E \biggl[\frac{1}{N}\Tr\otimes\frac{1}{N}\Tr(\partial_i p)+\frac{1+\one_{\beta=1}}{N} \Tr(\D_{i,\frac{1}{N}\Tr} V \,
p)\biggr]=\one_{\beta=1} \frac{1}{N} \mathbb E\biggl[ \frac{1}{N}\Tr\bigl(\tilde m\circ \partial_i p \bigr)\biggr],\end{equation}
where $\mathbb E$ denotes the expectation under $\mathbb Q_{\beta,N}^{V}$ (see \eqref{eq:QNV}).
\end{lem}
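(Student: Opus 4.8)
The final statement to prove is Lemma \ref{lem:SDeq}, the Schwinger--Dyson (or loop) equation for integrals over the unitary/orthogonal group. The plan is to derive this by the standard ``integration by parts along one-parameter subgroups'' argument, exploiting the invariance of the Haar measure under left translation.

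\textbf{Setup.} First I would fix $i \in \{1,\ldots,d\}$ and a polynomial $p \in \LL$, and let $B$ be an anti-Hermitian matrix (anti-symmetric when $\beta=1$). Consider the perturbation $U_j(t) = U_j$ for $j \ne i$ and $U_i(t) = e^{tB}U_i$ (left multiplication, so that the law of $U_i(t)$ under Haar measure is again Haar). Then $\frac{d}{dt}\big|_{t=0}$ of the normalized integrand $e^{N^{2-r}\Tr^{\otimes r}V(U_1A_1U_1^*,\ldots)}$ against the Haar measure vanishes upon integration, because the total mass $1$ of $\mathbb Q^{N,V}_\beta$ (after dividing by $I^{N,V}_\beta$) is invariant. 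Writing this out, one picks up two contributions: one from differentiating $\Tr(p(U(t),\ldots))$ inside the observable and one from differentiating the density $e^{N^{2-r}\Tr^{\otimes r}V}$. The key computational identities are the ones already recorded in the excerpt: $\frac{d}{dt}\big|_{t=0}\tau(q(U(t))) = \tau(\D_i q\, B)$ for a tracial state $\tau$, and $\frac{d}{dt}\big|_{t=0}\frac{1}{N^r}\Tr^{\otimes r}V(U(t)) = \frac1N\Tr(B\,\D_{i,\frac1N\Tr}V)$. (Strictly, since here the translation is on the left rather than the right, I should re-derive these with the non-commutative derivative $\partial_i$ as in \eqref{ncder}; the difference between left and right translation is exactly what produces the two terms in the definition \eqref{ncder} of $\partial_i$, with a sign, and this is where the $\tilde m$-term will enter.)

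\textbf{The two cases $\beta=2$ and $\beta=1$.} For $\beta=2$ one integrates $B$ over the full Lie algebra $\mathfrak{u}(N)$ of anti-Hermitian matrices; choosing $B$ to run over the standard basis $\{E_{ab}-E_{ba}, \mathrm{i}(E_{ab}+E_{ba})\}$ and summing, the ``double-trace'' combination $\frac1N\Tr\otimes\frac1N\Tr(\partial_i p)$ emerges from the observable term (this is the matrix identity $\sum_{a,b}(E_{ab})_{\alpha\beta}(E_{ab})_{\gamma\delta}$-type contraction that turns a single trace of a derivative into a product of two traces), while the density term yields $\frac1N\Tr(\D_{i,\frac1N\Tr}V\,p)$. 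For $\beta=1$ the Lie algebra is $\mathfrak{o}(N)$ of anti-symmetric real matrices, which is smaller; the analogous contraction identity $\sum_{a<b}(E_{ab}-E_{ba})_{\alpha\beta}(E_{ab}-E_{ba})_{\gamma\delta}$ produces the main term with a factor related to $\beta$ but also a correction term coming from the ``transposed'' pairing $\delta_{\alpha\delta}\delta_{\beta\gamma}$, which is precisely $\frac1N\Tr(\tilde m\circ\partial_i p)$ with $\tilde m(p\otimes q)=q^*p$; this also explains the factor $1+\one_{\beta=1}$ in front of the potential term (the $\mathfrak o(N)$ Casimir normalization differs from $\mathfrak u(N)$). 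I would present the $\beta=2$ computation in detail and then indicate the modifications for $\beta=1$.

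\textbf{Assembling.} Putting the pieces together: differentiating $0 = \frac{d}{dt}\big|_{t=0}\int \frac{1}{N}\Tr(p(U(t),\ldots))\,d\mathbb Q^{N,V}_\beta(U(t))$, using the invariance of $\mathbb Q^{N,V}_\beta$ under left translation by $e^{tB}$ and summing over the chosen basis of the relevant Lie algebra, yields exactly \eqref{SD} after dividing through by the appropriate power of $N$ and reorganizing. The main obstacle I anticipate is bookkeeping: keeping track of the correct combinatorial/normalization constants in the Lie-algebra contraction identities (the factors $\frac{1+\one_{\beta=1}}{N}$ and the extra $\frac{\one_{\beta=1}}{N}\tilde m\circ\partial_i p$ term), and correctly matching the left-translation version of the cyclic/non-commutative derivatives with the right-translation identities quoted in the text. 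Everything else is a routine application of invariance of Haar measure plus the chain rule. Once \eqref{SD} is established for $p$ a monomial, it extends to all of $\LL$ by linearity, and (with the continuity estimates on $\partial_i$, $\D_i$ developed in the subsequent sections) to the closure $\LL_{\xi,\zeta}$.
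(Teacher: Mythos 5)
Your plan is essentially the same as the paper's: perturb the integration variables along a one-parameter subgroup, exploit invariance of the Haar measure (equivalently, differentiate the total mass and get zero), and sum over a basis of the relevant Lie algebra ($\mathfrak{u}(N)$ for $\beta=2$, $\mathfrak{o}(N)$ for $\beta=1$). One cosmetic difference: the paper uses right translation $U_i\mapsto U_i e^{tD_i}$, which is the natural choice because the definition \eqref{ncder} of $\partial_i$ produces exactly the right-hand derivative; your left translation $U_i\mapsto e^{tB}U_i$ gives a ``left'' non-commutative derivative, so you would need an extra conjugation to recover $\partial_i$. You already flag this, but it is cleaner to simply translate on the right.

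There is a genuine imprecision, though, in your account of the factor $1+\one_{\beta=1}$ in front of the potential term. Attributing it to a ``Casimir normalization'' difference between $\mathfrak{o}(N)$ and $\mathfrak{u}(N)$ is misleading. The transposed pairing $\delta_{\alpha\delta}\delta_{\beta\gamma}$ that you correctly identify as the source of the $\tilde m\circ\partial_i p$ term also acts on the potential piece, and what the $\mathfrak{o}(N)$ perturbation $D_i=\Delta(k,\ell)-\Delta(\ell,k)$ actually produces there, after summing over $k,\ell$, is
$\Tr\bigl((\D_{i,\frac1N\Tr}V - (\D_{i,\frac1N\Tr}V)^*)\,p\bigr)$,
not $2\Tr(\D_{i,\frac1N\Tr}V\, p)$. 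The step that turns the former into the latter is the algebraic identity $(\D_{i,\frac1N\Tr}V)^* = -\D_{i,\frac1N\Tr}V$, which in turn rests on $(\D_i p)^*=-\D_i(p^*)$ (verified on monomials) combined with the hypothesis $V^*=V$. The paper devotes the last half of its proof precisely to this verification, and it is not a bookkeeping detail: without using that $V$ is self-adjoint, the $\beta=1$ Schwinger--Dyson equation would not close in the clean form \eqref{SD}. Your roadmap should call this out explicitly as a required step rather than folding it into a normalization remark.
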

\begin{proof} We focus on the case $\beta=1$, the proof for $\beta=2$ is similar and detailed in \cite{GN} in the case $r=1$. 
This equation is derived by performing an infinitesimal change of variable 
 $U_i\mapsto U_i(t):=U_i e^{tD_i}$, where $D_i$ is a $N\times N$ matrix with real entries  such that $D_i^*=-D_i$, and writing
 that for any polynomial function $p\in {\LL}$ and any $k,\ell\in \{1,\ldots, N\}$
 $$\frac{d}{dt}|_{t=0}\int p\bigl(
 U_1(t),\ldots,U_d(t),U_1^{*}(t),\ldots,U_d^{*}(t),
A_1,\ldots,A_d,B_1,\ldots,B_m\bigr)_{k\ell} \,d\mathbb Q_{1,N}^{ V} (U_1(t),\ldots, U_d(t)) =0.$$
 Taking $D_j:=\one_{j=i}(\Delta(k,\ell)-\Delta(\ell,k))$, with $\Delta(k,\ell)$ the matrix with zero entries except at $(k,\ell)$ where the entry equals one,
 and summing over $k,\ell\in\{1,\ldots, N\}$, yields 
$$
\mathbb E \biggl[\frac{1}{N}\Tr\otimes\frac{1}{N}\Tr(\partial_i p)+\frac{1}{N} \Tr\Bigl(\bigl(\D_{i,\frac{1}{N}\Tr} V-(\D_{i,\frac{1}{N}\Tr} V)^*\bigr)  p\Bigr)\biggr]=\frac{1}{N} \mathbb E\biggl[ \frac{1}{N}\Tr\bigl( \tilde m\circ \partial_i p\bigr)\biggr].
$$
The last thing to check is that $(\D_{i,\frac{1}{N}\Tr}V)^*=-\D_{i,\frac{1}{N}\Tr} V$. Indeed,  it is enough to check it
for $r=1$. Then, for all $i$ and $p\in {\LL}$ we have
$$
{\mathcal D}_ip=\sum\langle p, q\rangle\, {\mathcal D}_i q
= \sum\langle p, q\rangle\,\biggl[\sum_{q=q_1u_iq_2} q_2q_1 u_i-\sum_{q=q_1 u_i^*q_2}u_i^*q_2 q_1\biggr],
$$
$$
{\mathcal D}_i(p^*)= \sum {\langle p, q\rangle}\,\biggl[-\sum_{q=q_1u_iq_2} u_i^* q_1^* q_2^* +\sum_{q=q_1 u_i^* q_2}q_1^* q_2^*u_i\biggr]=-({\mathcal D}_ip)^*.
$$
Since $V$ is self-adjoint, the proof is complete.
\end{proof}
Equation \eqref{SD} can be reinterpreted as a relation between the ``correlators'' $\W_{kN}^V$ defined as (see also \eqref{eq:W1N})
\begin{equation}
\label{eq:WkN}
\begin{split}
\W_{kN}^V(p_1,\ldots, p_k)&:=\frac{d}{dt_1}\cdots\frac{d}{dt_k} |_{t_1=0,\ldots,t_k=0} \log I_{\beta,N}^{ V+\frac{t_1 }{N} p_1+\ldots+\frac{t_k}{N} p_k}\\
&= \frac{d}{dt_2}\cdots\frac{d}{dt_k} |_{t_2=0,\ldots,t_k=0} \W_{1,N}^{ V+\frac{t_2}{N} p_2+\ldots+\frac{t_k}{N} p_k} (p_1).
\end{split}
\end{equation}
Notice that here the $p_i$'s belong to ${\LL}$,
but we can identify them with $p_i\otimes \one^{\ot r-1}\in 
{\LL}^{\ot r}$. 
Observe that we can always write the following expansion
$$\mathbb E\biggl[ \prod_{j=1}^r \Tr(q_j)\biggr]=   \prod_{j=1}^r \W_{1N}^V(q_j)+ \sum_{j\neq k}\W_{2N}^V(q_j,q_k)\prod_{\ell\neq j,k} \W_{1N}^V(q_\ell)+R_N(q_1,\ldots,q_r)$$
where $R_N(q_1,\ldots,q_r)$ is a sum of  product of correlators, each of which 
contains either a correlator of order at least $3$, or two correlators of order $2$.  We define
\begin{equation}\label{defS}\operator{S}^i_{V,\tau} p:=\sum_{j=1}^r \sum \langle V,q_1\otimes\cdots\otimes q_r\rangle \sum_{k\neq j}\biggl[\biggl(\prod_{\ell\neq k,j} \tau(q_\ell) \biggr)\, \D_i q_j \, p\otimes q_k 
+\sum_{m\neq j\neq k} \biggl(\prod_{\ell\neq k,j,m} \tau(q_\ell) \biggr)\,\tau( \D_i q_j \, p)\, q_m\otimes q_k\biggr]\,.\end{equation}
Using this expansion, we can rewrite \eqref{SD} as follows.
\begin{cor}\label{SD1} Let $V$ be a self-adjoint polynomial, $p\in{\LL}$, and $i\in\{1,\ldots, d\}$.
Then the first Schwinger-Dyson equation reads
\begin{multline*}
\frac{1}{N}\W_{1N}^V\otimes \frac{1}{N}\W_{1N}^V(\partial_i p) +\frac{1+\one_{\beta=1}}{N}\W_{1N}^V
(\D_{i,\frac{1}{N}\W_{1N}} V \, p)\\
= \frac{\one_{\beta=1} }{N^2} \W_{1N}^V(\tilde m\circ \partial_i p)-\frac{1}{N^2}  \W_{2N}^V(\partial_i p) -\frac{\one_{r\ge 2} }{N^2}\W_{2N}^V(\operator{S}^i_{V,\frac{1}{N}\W_{1N}^V} p
)+\frac{1}{N^r}R(\W_{1N}^V,\ldots, \W_{rN}^V: p),
\end{multline*}
where
$R$  is a sum (independent of $N$) of  product of correlators of polynomials extracted from $p$ and
$V$, each of which 
contains either a correlator of order at least $3$, or two  correlators of order $2$.

		\end{cor}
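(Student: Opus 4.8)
The plan is to derive Corollary~\ref{SD1} directly from the identity \eqref{SD} in Lemma~\ref{lem:SDeq} by expressing every expectation appearing there in terms of the correlators $\W_{kN}^V$. The starting point is to observe that \eqref{SD} is an exact identity under $\mathbb Q^{N,V}_\beta$, so all that remains is a bookkeeping of how products of traces decompose into cumulants. The three terms on the left of \eqref{SD} are handled as follows. First, $\mathbb E[\frac1N\Tr\otimes\frac1N\Tr(\partial_i p)]$: writing $\partial_i p=\sum p_1\otimes p_2$, each summand contributes $\frac1{N^2}\mathbb E[\Tr(p_1)\Tr(p_2)]=\frac1{N^2}\big(\W_{1N}^V(p_1)\W_{1N}^V(p_2)+\W_{2N}^V(p_1,p_2)\big)$ by the very definition \eqref{eq:WkN} of $\W_{2N}^V$ as the connected $2$-point function. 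Summing gives $\frac1N\W_{1N}^V\otimes\frac1N\W_{1N}^V(\partial_i p)+\frac1{N^2}\W_{2N}^V(\partial_i p)$, which produces the first term on the left and the $-\frac1{N^2}\W_{2N}^V(\partial_i p)$ term after moving it to the right-hand side. The term $\frac1N\mathbb E[\Tr(\tilde m\circ\partial_i p)]=\frac1N\W_{1N}^V(\tilde m\circ\partial_i p)$ on the right of \eqref{SD}, when divided by the extra $1/N$ in Lemma~\ref{lem:SDeq}, gives $\frac{\one_{\beta=1}}{N^2}\W_{1N}^V(\tilde m\circ\partial_i p)$ as stated.

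The only genuinely delicate term is $\mathbb E\big[\frac1N\Tr(\D_{i,\frac1N\Tr}V\,p)\big]$, because here the argument $\D_{i,\frac1N\Tr}V$ itself contains traces $\frac1N\Tr(q_\ell)$ of the other tensor factors of $V$, so we are really facing $\mathbb E\big[\prod_{\ell}\Tr(q_\ell)\cdot\Tr(\D_iq_j\,p)\big]$-type expressions. The plan is to expand this multilinear expectation into its cumulant hierarchy: the fully disconnected piece reproduces $\frac1N\W_{1N}^V(\D_{i,\frac1N\W_{1N}}V\,p)$ (that is, one simply replaces each $\frac1N\Tr$ by $\frac1N\W_{1N}^V$); the pieces where exactly one pair of factors is joined by a $\W_{2N}^V$ and everything else is a $\W_{1N}^V$ assemble precisely into the operator $\operator S^i_{V,\frac1N\W_{1N}^V}p$ defined in \eqref{defS} — the two terms inside the bracket of \eqref{defS} correspond respectively to the $2$-point function joining $q_j$ (inside $\D_iq_j\,p$) with some $q_k$, and to the $2$-point function joining two of the ``spectator'' factors $q_m,q_k$ while $\D_iq_j\,p$ stays in a $1$-point function; all of these come with a prefactor $\frac1{N^2}$ after accounting for the normalizations, which is why they appear as $-\frac{\one_{r\ge2}}{N^2}\W_{2N}^V(\operator S^i_{V,\frac1N\W_{1N}^V}p)$; and everything else — products containing a correlator of order $\ge3$ or two order-$2$ correlators — is collected into the remainder $\frac1{N^r}R(\W_{1N}^V,\ldots,\W_{rN}^V:p)$, which by construction is an $N$-independent polynomial combination of correlators of polynomials extracted from $p$ and $V$. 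The factor $1+\one_{\beta=1}$ simply carries over from Lemma~\ref{lem:SDeq}.

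Thus the proof is essentially a reorganization: start from \eqref{SD}, substitute the cumulant expansion $\mathbb E[\prod_j\Tr(q_j)]=\prod_j\W_{1N}^V(q_j)+\sum_{j\ne k}\W_{2N}^V(q_j,q_k)\prod_{\ell\ne j,k}\W_{1N}^V(q_\ell)+R_N$ recalled just before \eqref{defS}, track the powers of $N$ attached to each piece (using $V\in\LL^r$ so that $\D_{i,\frac1N\Tr}V$ carries $r-1$ traces, each supplying a factor $\frac1N$), and match the resulting terms against the claimed identity. The main obstacle is purely combinatorial: one must be careful that the rearrangement of the ``one $\W_{2N}$, rest $\W_{1N}$'' contributions coming from $\mathbb E[\frac1N\Tr(\D_{i,\frac1N\Tr}V\,p)]$ reproduces \emph{exactly} the operator $\operator S^i_{V,\tau}$ as written in \eqref{defS}, including the distinction between the case where the distinguished slot $\D_iq_j\,p$ is itself inside the $2$-point function and the case where it is not, and that all remaining cumulant products are genuinely subleading and independent of $N$ so that they can be absorbed into $R$. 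Once this matching is verified term by term, the corollary follows immediately. I would also note in passing that the symmetry assumption on $V$ (invariance under permutations of the $r$ tensor factors) is what allows one to treat the factors $q_\ell$ symmetrically and write the sums over $k,m,j$ in the compact form of \eqref{defS}.
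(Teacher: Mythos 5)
Your proposal is correct and is essentially the paper's own argument: start from \eqref{SD}, replace $\mathbb E[\Tr\otimes\Tr(\partial_i p)]$ by $\W_{1N}^V\otimes\W_{1N}^V+\W_{2N}^V$ applied to $\partial_i p$, insert the cumulant expansion of $\mathbb E\bigl[\prod_j\Tr(q_j)\bigr]$ recalled just before \eqref{defS} into the $\D_{i,\frac1N\Tr}V$ term (which carries $r-1$ internal traces and an overall $N^{-r}$), and identify the fully disconnected piece with $\W_{1N}^V(\D_{i,\frac1N\W_{1N}}V\,p)$, the pieces with exactly one order-two cumulant with $\W_{2N}^V(\operator{S}^i_{V,\frac1N\W_{1N}^V}p)$, and the remainder with $R$. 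Your identification of the two cases in \eqref{defS} (the $\W_{2N}$ joining $\D_i q_j\,p$ with a spectator $q_k$, versus joining two spectators $q_m,q_k$) is exactly the matching the paper intends, and the $\one_{r\ge 2}$ prefactor is correctly explained by the absence of spectator traces when $r=1$.

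<br/>

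One small point worth flagging for your own bookkeeping (though it is consistent with how the paper states the corollary): the factor $1+\one_{\beta=1}$ multiplying the whole $\D_{i,\frac1N\Tr}V$ term in \eqref{SD} should, by your own expansion, also multiply the $\W_{2N}^V(\operator{S}^i_{\ldots})$ contribution. The corollary as written carries this factor only on the disconnected piece; the paper reinstates it later by passing to $V_\beta=(1+\one_{\beta=1})V$ when it actually uses the operator (see the manipulations around \eqref{coc}). When you ``verify the matching term by term'' this is exactly the place to be careful, but it does not represent a gap in your strategy.
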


		To derive asymptotics from the Schwinger-Dyson equations we shall use a priori upper bounds on the correlators $\W_{kN}^V$.
		The next result (proved in  Appendix \ref{secconc})  is a direct consequence of concentration of measures and states as follows:
		\begin{lem}\label{conc} Let $p_1,\ldots,p_k$ be monomials in ${\LL}$. Then there exists a finite constant $C_k$, independent of  $N$ and the $p_i$'s, such that for $k\ge 2$
		$$|\W^V_{kN}(p_1,\ldots,p_k)|\le C_k \prod_{i=1}^k  \deg_U(p_i),\qquad | \W_{1N}^V(p)|\le N. $$
		In particular
		$\|\W_{kN}^V\|_{\xi,\zeta}\le C_k(\max_{\ell\ge 1}\xi^{-\ell}\ell)^k$ is finite for all $\xi>1,\zeta\ge 1$, and $k\ge 2$, whereas $ \|\W_{1N}^V(p)\|_{\xi,\zeta}\le N$  for any
		$\xi,\zeta\ge 1$.
		\end{lem}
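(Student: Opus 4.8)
The goal is to establish Lemma \ref{conc}: a priori bounds $|\W^V_{kN}(p_1,\ldots,p_k)| \le C_k\prod_{i=1}^k \deg_U(p_i)$ for $k \ge 2$, and the trivial bound $|\W_{1N}^V(p)| \le N$. The plan is to exploit the definition \eqref{eq:WkN} of the correlators as iterated derivatives of $\log I^{N,V}_\beta$ with respect to perturbations of the potential, and to interpret these derivatives probabilistically as (joint) cumulants of traces of polynomials evaluated in the Haar-distributed unitaries/orthogonals weighted by the Gibbs measure $\mathbb Q^{N,V}_\beta$ of \eqref{eq:QNV}. The bound on $\W_{1N}^V$ is immediate since $\W_{1N}^V(p) = \mathbb E[\Tr(p(\cdots))]$ and every argument of $p$ (the $U_i$, $U_i^*$, $A_i$ with $\|A_i\|_\infty\le 1$, and $B_j$ with $\|B_j\|_\infty\le 1$) is a contraction, so $\|p(\cdots)\|_\infty \le 1$ and hence $|\Tr(p(\cdots))| \le N$.

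For the higher correlators, first I would write $\W^V_{kN}(p_1,\ldots,p_k)$ as the $k$-th mixed cumulant of the random variables $\Tr(p_j(U_1,\ldots,U_d,U_1^*,\ldots,U_d^*,A,B))$, $j=1,\ldots,k$, under $\mathbb Q^{N,V}_\beta$; this identity follows by differentiating \eqref{eq:WkN} and using that $N^{-1}\partial_{t_j}$ brings down a factor $\Tr(p_j)$ in the exponential weight. The key tool is then concentration of measure: the measure $\mathbb Q^{N,V}_\beta$ has a density on $U(N)^d$ (resp. $O(N)^d$) with respect to Haar measure which is $e^{N^{2-r}\Tr^{\otimes r}V(\cdots)}/I^{N,V}_\beta$, and since $\|V\|_{\xi,\zeta}$ is small, this log-density is a Lipschitz perturbation (with small Lipschitz norm, in the appropriate sense on the product of compact Lie groups, which have positive Ricci curvature) of the uniform measure. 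By the Bakry–Émery criterion / concentration on $U(N)^d$ (as in \cite[Section 4.4]{AGZ}), $\mathbb Q^{N,V}_\beta$ satisfies a logarithmic Sobolev inequality with constant of order $N^{-2}$ uniformly in $N$. Since $U\mapsto \Tr(p_j(U,\ldots))$ is Lipschitz with constant of order $\deg_U(p_j)$ with respect to the Hilbert–Schmidt metric — each occurrence of a $u_i$ or $u_i^*$ contributes a bounded amount because all other letters are contractions — the variance of $\Tr(p_j)$ is $O(\deg_U(p_j)^2)$, and more generally all cumulants are controlled: the $k$-th cumulant of a sum/product bounded in terms of Lipschitz norms is $O(\prod_j \deg_U(p_j))$, with a combinatorial constant $C_k$ depending only on $k$. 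I would appeal to the standard fact (see the concentration-of-measure literature, and as will be detailed in Appendix \ref{secconc}) that under a log-Sobolev inequality with constant $c$, the joint cumulants of Lipschitz functions $f_1,\ldots,f_k$ are bounded by $C_k\, c^{k}\prod_j \|f_j\|_{\mathrm{Lip}}^{?}$ — here, with $c \sim N^{-2}$, $k\ge 2$ derivatives, and the normalization $\W^V_{kN}$ carrying no extra power of $N$, the powers of $N$ cancel exactly and one is left with $C_k\prod_j\deg_U(p_j)$.

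Once the monomial bound is in hand, the statement about $\|\W^V_{kN}\|_{\xi,\zeta}$ is a purely formal consequence: by definition $\|\W^V_{kN}\|_{\xi,\zeta} = \sup_{\|p\|_{\xi,\zeta}\le 1}|\W^V_{kN}(p)|$, and expanding $p = \sum \langle p, q_1\otimes\cdots\otimes q_k\rangle\, q_1\otimes\cdots\otimes q_k$ on the monomial basis, multilinearity gives $|\W^V_{kN}(p)| \le \sum |\langle p, q_1\otimes\cdots\otimes q_k\rangle|\, C_k\prod_i \deg_U(q_i) \le C_k \sum |\langle p, q_1\otimes\cdots\otimes q_k\rangle|\prod_i (\xi^{\deg_U(q_i)}\deg_U(q_i)\,\xi^{-\deg_U(q_i)})\zeta^{\deg_{A,B}(q_i)}$; bounding $\deg_U(q_i)\,\xi^{-\deg_U(q_i)} \le \max_{\ell\ge 0}\ell\,\xi^{-\ell} =: m_\xi < \infty$ for $\xi > 1$ and factoring out $\prod_i \xi^{\deg_U(q_i)}\zeta^{\deg_{A,B}(q_i)}$, which reassembles into $\|p\|_{\xi,\zeta}\le 1$, yields $\|\W^V_{kN}\|_{\xi,\zeta}\le C_k\, m_\xi^k$. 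The trivial bound $\|\W^V_{1N}\|_{\xi,\zeta}\le N$ follows the same way using $|\W^V_{1N}(q)|\le N$ for each monomial and $\xi,\zeta\ge 1$. The main obstacle is to make the concentration argument for joint cumulants of order $\ge 3$ fully rigorous with the correct power counting in $N$ — controlling higher cumulants from a log-Sobolev inequality requires either a careful induction (Herbst-type argument producing Gaussian tails, then reading off cumulant bounds) or a direct martingale/interpolation estimate — and to verify the Lipschitz constants of $U\mapsto\Tr(p(U,\ldots))$ on the compact group with respect to the relevant metric scale uniformly in $N$; this is the content deferred to Appendix \ref{secconc}.
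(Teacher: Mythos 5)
Your overall strategy matches the paper's: interpret $\W^V_{kN}$ as joint cumulants of $\Tr(p_j)$ under $\mathbb Q^{N,V}_\beta$, invoke Gromov--Bakry--\'Emery concentration on the compact groups, bound the Lipschitz constant of $\Tr(p)$ by $\deg_U(p)$, deduce Gaussian tails, and then read off cumulant bounds via centered mixed moments (this is exactly \eqref{conc3} in the appendix). The formal step passing from monomial bounds to $\|\W^V_{kN}\|_{\xi,\zeta}$, and the trivial bound $|\W^V_{1N}(p)|\le N$, are both correct.

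However, there is a genuine gap in your concentration argument. You write that $U(N)^d$ and $O(N)^d$ are ``compact Lie groups, which have positive Ricci curvature,'' but this is false: the Ricci lower bound $\beta(N+2)/4-1$ quoted from Gromov holds for $SU(N)$ and $SO(N)$, not for $U(N)$ or $O(N)$. The full unitary group contains a flat $U(1)$ factor (and $O(N)$ is disconnected), so Bakry--\'Emery cannot be applied directly. The paper handles this by exploiting the balancedness of $V$: it observes that $\mathbb Q^{N,V}_\beta$ is invariant under $U_j\mapsto e^{i\theta_j}U_j$, so that for a \emph{balanced} monomial $P$ the law of $\Tr(P)$ is unchanged when the integration is restricted to the special group, whereas for an \emph{unbalanced} monomial $\Tr(P)$ has expectation zero and its law factors through a rotation by the flat angle. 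Only after this reduction to $SU(N)$ or $SO(N)$ does the curvature argument apply. Without this step your proof would not go through. Separately, your power counting in $N$ is off: the Ricci lower bound on $SU(N)$ is of order $N$, so the log-Sobolev constant in the Hilbert--Schmidt metric is of order $N^{-1}$ (not $N^{-2}$), and the HS-Lipschitz constant of $\Tr(p)$ is of order $\sqrt{N}\,\deg_U(p)$; the two powers of $N$ then cancel, which is precisely what the paper encodes by taking $\|P\|_{\La}$ as the ``normalized'' Lipschitz constant $\sup_\tau\bigl(\sum_i\tau(|\D_iP|^2)\bigr)^{1/2}\le\deg_U(P)$ so that \eqref{conc2} carries no explicit $N$. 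This normalization needs to be made explicit for the cumulant bound $C_k\prod_i\deg_U(p_i)$, with $C_k$ independent of $N$, to come out correctly.
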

				
		We now deduce the expansion of $\W^{ V}_{1N}$ up to order $O(N^{-2})$, and of $\W_{2N}^{V}$ up to $O(N^{-1})$. 
		
		As $N^{-1}\W_{1N}^V(p)$ is bounded by $1$ for all $p\in {\LL}$, we deduce that $N^{-1}\W_{1N}^V$ has limit 
		points. Let $\tau$ be such a limit point. As $N^{-1}\W_{2N}^V(\partial_i p)$ goes to zero for any polynomial $p\in {\LL}$  (see Lemma \ref{conc}), we deduce from
		the Schwinger-Dyson equation (see Corollary \ref{SD1}) that the limit point $\tau$ satisfies the limiting Schwinger-Dyson equation 
	
\begin{equation}\label{SDlim}		\tau\otimes \tau(\partial_i p) +(1+\one_{\beta=1})\tau(\D_{i,\tau} V\, p)= 0\qquad
		\forall\,p \in {\LL}.
		\end{equation}
		
		Hereafter we denote 
		$$V_\beta:=(1+\one_{\beta=1})V,$$ and we 
		show uniqueness of the solutions to such an equation whenever $\tau$ restricted to
		$\AA\BB$ is prescribed, $\|\tau\|_{1,1}\le 1$,  and $\|V\|_{\xi,\zeta}$ is small enough. 
		In our application $\tau_1:=\tau|_{\AA\BB}$ will simply be given by $\tau^N_{AB}$, the non-commutative distribution of $(A_1,\ldots,A_d,B_1,\ldots,B_m)$. It could also be given by
		its limit, if any, but we prefer to take it dependent on the dimension $N$.
		
		To show uniqueness, we apply the above equation to $p_i=\D_i q$ and sum over $i\in\{1,\ldots,d\}$. We will use that  (see \cite[Proposition 10]{GN})
			\begin{equation}\label{eq1}
\tau\otimes \tau\biggl(\sum_{i=1}^d \partial_i \mathcal D_i q\biggr)=\tau\bigl(\operator{D}q\bigr)+ \tau \otimes\tau\biggl(\sum_{i=1}^d \Delta_iq\biggr),
\end{equation}
		where:
		\begin{itemize}
		\item $\operator{D}$ is the degree operator: $\operator{D}p:={\rm deg}_U(p) \,p$.
		\item  $\Delta_i$ that  acts on monomials
		according to 
		$$\Delta_i p:= \partial_i \D_i p-\sum_{p=p_1u_ip_2} p_2p_1u_i\otimes \one-\sum_{p=p_1u_i^{-1} p_2} \one\otimes u_i^{-1} p_2p_1,$$
		that is,

			\begin{multline}
				\label{eqn:ReducedLaplacian}
				\Delta_ip = \sum_{p=p_1u_ip_2} \bigg{(} \sum_{p_2p_1u_i=q_1u_iq_2u_i} q_1\,u_i \otimes q_2\,u_i
				- \sum_{p_2p_1u_i=q_1u_i^{-1}q_2u_i} q_1 \otimes \,q_2\bigg{)} \\
				-\sum_{p=p_1u_i^{-1}p_2} \bigg{(} \sum_{u_i^{-1}p_2p_1=u_i^{-1}q_1u_iq_2} q_1\otimes q_2
				- \sum_{u_i^{-1}p_2p_1=u_i^{-1}q_1u_i^{-1}q_2} u_i^{-1}\,q_1 \otimes u_i^{-1}\,q_2 \bigg{)} ,
			\end{multline}
			where the sum is over all possible decompositions as specified. 			\end{itemize}
			
			We write in short
			$\Delta:=\sum_{i=1}^d \Delta_i\,,$
and we rewrite equation \eqref{SDlim}
		 as 
		\begin{equation}\label{eq1b}
		\tau\left( \Bigl(\operator{D}+\frac{1}{2}\operator{T}_\tau +\operator{P}^{V_\beta}_\tau
		\Bigr)q\right)=0\end{equation}
		where $\operator{T}_\tau$ and $\operator{P}^{V_\beta}_\tau$ are the following operators:
		\begin{itemize}
				\item $\operator{T}_\tau$ arises as the analogue of the Laplacian:
	\begin{equation*}
		\quad \operator{T}_\tau := (\operator{Id} \otimes \tau + \tau \otimes \operator{Id})\Delta.	\end{equation*}
			\item The operator $\operator{P}^{{V_\beta}}_\tau$ is the dot product  of the cyclic gradient of ${V_\beta}$ with 
	the cyclic gradient of $p$:
	 $$\operator{P}^{{V_\beta}}_\tau p:= \D_{\tau} {V_\beta} \cdot \D p=\sum_{i=1}^d \D_{i,\tau} {V_\beta}\cdot \D_ip.$$
\end{itemize}
	More generally, for linear forms $\tau_1,\ldots,\tau_{r-1}$ on ${\LL}$,  we define
	$$
	\operator{P}^{{V_\beta}}_{\tau_1,\ldots,\tau_{r-1}} p:=\sum_{i=1}^d \sum_{j=1}^ r\sum\langle {V_\beta}, q_1\otimes\cdots\otimes q_r \rangle
	\biggl(\prod_{k=1}^{j-1} \tau_k(q_k)\biggr)\,\D_i q_j\cdot\D_i p\,\biggl( \prod_{k=j+1}^r\tau_{k-1} (q_k)\biggr).
	$$
When $r\ge 2$, we also define a companion operator $\operator{Q}^{{V_\beta}}_{\tau_1,\ldots,\tau_{r-1}}$ to $\operator{P}^{{V_\beta}}_{\tau_1,\ldots,\tau_{r-1}}$:
	$$\operator{Q}^{{V_\beta}}_{\tau_1,\ldots,\tau_{r-1}}p:=\sum_{i=1}^d \sum_{1\le j< \ell\le r}\sum\langle {V_\beta}, q_1\otimes\cdots\otimes q_r \rangle\,
	\biggl(\prod_{k\in \{j,\ell\}^c } \tau_{k-\one_{k>\ell}}(q_k)\biggr)\,\tau_{j-\one_{j=r}}(\D_i q_j\cdot\D_i p) \,q_\ell\,.$$
	
We set $\Pi'$  (resp. $\Pi$) to be the orthogonal projection onto (resp. onto the complement of) the algebra $\AA\BB$ generated by $\{a_1,\ldots,a_d, b_1,\ldots,b_m\}$. 	For any linear transformation $\operator{T}$
			with domain ${\LL}$, we define its \emph{degree
			regularization} by $$\overline{\operator{T}}:=
			\operator{T}\operator{D}^{-1},$$
			where $\operator{D}$ is the degree operator defined above.
			It is understood that the domain of 
			the regularized operator $\overline{\operator{T}}$
			is restricted to $(\AA\BB)^\perp$. We recall that, for our applications, we assume that the restriction of $\tau$ to $\AA\BB$ is given and equal to $\tau_1$, therefore
						$$\tau=\tau\Pi+\tau_1\Pi'\,.$$
			Hence, we can see  \eqref{eq1b} as a fixed point equation for $\tau\in \La_{\xi,\zeta}$ given by
			\begin{equation}\label{deffix}F[\tau;\tau_1,V_\beta]=0,\qquad \tau|_{\AA\BB} =\tau_1,\end{equation}
			where $$F: \La_{\xi,\zeta}\times \left(  \Ta(\AA\BB),\|\cdot\|_\zeta) \times ({\LL}^{\ot r},\|\cdot\|_{\xi,\zeta}\right)\ra 
			\La_{\xi,\zeta} $$ is given by 
			$F[\tau; \tau_1,V_\beta]:=G[\tau\Pi+\tau_1\Pi';V_\beta]$ with 			\begin{equation}\label{defG}G[\tau;V_\beta](q):=
			\tau\left(\Bigl(\operator{Id}+ \frac{1}{2}  \overline{\operator{T}}_\tau +\overline{\operator{P}}^{V_\beta}_\tau\Bigr)\Pi q\right)\qquad
			\forall\, q\in \LL_{\xi,\zeta},\, \tau\in  \La_{\xi,\zeta}.
			\end{equation}
			When $V=0$ and $\tau_1\in\Ta(\AA\BB)$, the equation $F[\tau; \tau_1,0]=0$ has a unique solution  $\tau_{10}^{0,\tau_1}$  since the moments of $\tau$ are defined recursively from those of $\tau_1$. 
						In this case,  			$\tau$ is the non-commutative distribution of $(\{a_i,u_i,u_i^*\}_{1\le i\le d},\{b_j\}_{1\le j\le m})$  so that $(a_1,\ldots,a_d,b_1,\ldots,b_m)$
						has  law $\tau_1$, and is  free from the $d$ free unitary variables $(\{u_i,u_i^*\}_{1\le i\le d})$, 
			see \cite{Voi91} and \cite[Theorem 5.4.10]{AGZ}.

Observe that
			we know that solutions exist   in $\Ta(\AA\BB)$ as limit points of  $N^{-1}\W_{N1}^V$ (which is tight in any $\La_{\xi,\zeta}$ by Lemma 
			\ref{conc}); we shall  prove uniqueness of such solutions for $V$ small by applying ideas similar to those of the  implicit function theorem.
			
			 To state our result precisely, for $\xi>1$ and $\zeta\ge 1$ we define 			\begin{equation}\label{defdel}
			\delta_{\xi,\zeta}(V):= \frac{8}{(\xi-1)}+  \sum |\langle {V_\beta},q_1\otimes \cdots\otimes q_r\rangle| \Bigl(\sum_{j=1}^r {\rm deg}_U(q_j)\Bigr)\Bigl[\sum_{\ell=1 }^r 
\xi^{{\rm deg}_U(q_{\ell})}\zeta^{\deg_{A,B}(q_\ell)}\Bigr].\end{equation}
Observe that for $\xi\geq \xi_0$ with $\xi_0$ sufficiently large so that $\frac{8}{(\xi_0-1)}\leq\frac{1}{2(1+\max\{2,r\})}$, if $\|V\|_{\xi,\zeta}$ is finite one can choose $a_0$ small enough so that $\delta_{\xi,\zeta}(aV)<\frac{1}{1+\max\{2,r\}}$
for all $a\in[-a_0,a_0]$.
			\begin{lem}\label{conv}
			 Assume that there exist $\zeta\ge 1$ and $\xi> 1$ such that
\begin{equation}\label{thecond}
 \delta_{\xi,\zeta}(V)<\frac{1}{1+ \max\{2,r\} }\,.\end{equation}
Then, for any law $\tau_1\in \Ta(\AA\BB)$, there exists a unique solution $\tau_{10}^{V,\tau_1}\in \Ta \cap\La_{\xi,\zeta}$ to $$F[\,\cdot\, ; \tau_1,V_\beta]=0$$
such that $\tau|_{\AA\BB}=\tau_1$ and  $\|\tau\|_{1,1}\le 1$.   Moreover the map $\Ta(\AA\BB)\ni \tau_1\mapsto \tau_{10}^{V,\tau_1}\in\Ta_{\xi,\zeta}$ is Fr\'echet differentiable at all orders,
and its derivatives $D^\ell\tau_{10}^{V,\tau_1}$ satisfy, for any $\nu_1,\ldots,\nu_\ell\in \La_\zeta(\AA\BB)$,
$$\bigl\|D^\ell\tau_{10}^{V,\tau_1}[\nu_1,\ldots,\nu_\ell]\bigr\|_{\xi,\zeta}\le C_{\xi,\zeta,\ell}\|\nu_1\|_\zeta\cdots\|\nu_\ell\|_\zeta$$
for some finite constant $C_{\xi,\zeta,\ell}$.
 Finally, $$\lim_{N\ra\infty} \bigl\|N^{-1}\W_{1N}-\tau_{10}^{V,\tau^N_{AB}}\bigr\|_{\xi,\zeta}=0\,.$$
\end{lem}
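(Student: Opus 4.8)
The plan is to prove Lemma \ref{conv} via a fixed-point / implicit function theorem argument applied to the map $F$ defined in \eqref{deffix}--\eqref{defG}. First I would record the basic continuity estimates for the operators appearing in $G$: by the definition \eqref{defdel} of $\delta_{\xi,\zeta}(V)$ and standard bounds on $\partial_i$, $\D_i$, $\Delta_i$ (as in \cite[Propositions 8--10]{GN}), one checks that for $\xi>1$ the degree-regularized operators $\frac12\overline{\operator T}_\tau$ and $\overline{\operator P}^{V_\beta}_\tau$ are $(\xi,\xi;\zeta)$-continuous with operator norms controlled by $\frac{8}{\xi-1}$ and by the $V$-dependent part of $\delta_{\xi,\zeta}(V)$ respectively, provided $\|\tau\|_{\xi,\zeta}\le 1$. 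The key point is that under hypothesis \eqref{thecond} the map $\tau\mapsto G[\tau\Pi+\tau_1\Pi';V_\beta]$, viewed as an equation $\tau\Pi = -\tau(\tfrac12\overline{\operator T}_\tau + \overline{\operator P}^{V_\beta}_\tau)\Pi - \text{(lower order)}$, is a contraction on the ball $\{\tau\in\La_{\xi,\zeta}:\|\tau\|_{\xi,\zeta}\le 1,\ \tau|_{\AA\BB}=\tau_1\}$, because the degree operator $\operator D$ is bounded below by $1$ on $(\AA\BB)^\perp$ and $\delta_{\xi,\zeta}(V)<\frac{1}{1+\max\{2,r\}}$. This gives existence and uniqueness of $\tau_{10}^{V,\tau_1}$ in that ball; tracial states satisfying $\|\tau\|_{1,1}\le 1$ automatically have $\|\tau\|_{\xi,\zeta}\le 1$ for the relevant $\xi,\zeta$ after possibly shrinking, so the solution lies in $\Ta\cap\La_{\xi,\zeta}$. (That the limit point is a tracial state follows since each $N^{-1}\W_{1N}$ is, and the property is closed.)

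Next I would establish smoothness in $\tau_1$. Since $F[\tau;\tau_1,V_\beta]$ is multilinear-polynomial in $\tau$ and affine in $\tau_1$, it is $C^\infty$ as a map between the Banach spaces $\La_{\xi,\zeta}\times\La_\zeta(\AA\BB)$; the partial differential $D_\tau F$ at the solution is of the form $\operator{Id} + (\text{operator of norm}\le \delta_{\xi,\zeta}(V)(1+\max\{2,r\}) < 1)$ restricted to $(\AA\BB)^\perp$, hence invertible with uniformly bounded inverse by Neumann series. The implicit function theorem in Banach spaces then yields that $\tau_1\mapsto\tau_{10}^{V,\tau_1}$ is Fr\'echet differentiable at all orders, and differentiating the relation $F[\tau_{10}^{V,\tau_1};\tau_1,V_\beta]=0$ repeatedly and inverting $D_\tau F$ gives the quantitative bounds
$$\bigl\|D^\ell\tau_{10}^{V,\tau_1}[\nu_1,\ldots,\nu_\ell]\bigr\|_{\xi,\zeta}\le C_{\xi,\zeta,\ell}\|\nu_1\|_\zeta\cdots\|\nu_\ell\|_\zeta,$$
where $C_{\xi,\zeta,\ell}$ depends only on $\xi,\zeta,\ell$ (and the fixed $V$) because all the operator norms involved are uniform in $\tau_1$.

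Finally, for the convergence statement, recall from Lemma \ref{conc} that $\{N^{-1}\W_{1N}\}_N$ is bounded in every $\La_{\xi,\zeta}$, hence relatively compact in the weak topology, and that every limit point $\tau$ satisfies the limiting Schwinger--Dyson equation \eqref{SDlim}, equivalently \eqref{eq1b}, i.e. $F[\tau;\tau^N_{AB}\text{-limit},V_\beta]=0$ with $\|\tau\|_{1,1}\le 1$. More precisely, along any subsequence one uses that $N^{-1}\W_{1N}|_{\AA\BB}=\tau^N_{AB}$ and that, by Corollary \ref{SD1} and the bounds of Lemma \ref{conc}, the defect $F[N^{-1}\W_{1N};\tau^N_{AB},V_\beta]$ tends to $0$ in $\|\cdot\|_{\xi,\zeta}$ (the error terms are $O(1/N)$ times bounded correlators). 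By uniqueness from the first part — now applied with $\tau_1=\tau^N_{AB}$, and noting that the solution depends continuously on $\tau_1$ by the differentiability just proved — the only possible limit point is $\tau_{10}^{V,\tau^N_{AB}}$, and since $\tau^N_{AB}$ varies in a bounded set one gets the uniform conclusion $\|N^{-1}\W_{1N}-\tau_{10}^{V,\tau^N_{AB}}\|_{\xi,\zeta}\to 0$. The main obstacle I expect is bookkeeping: making the contraction estimate for $G$ precise with the correct constant $\frac{1}{1+\max\{2,r\}}$ requires carefully tracking how the operators $\overline{\operator T}_\tau$, $\overline{\operator P}^{V_\beta}_\tau$ and the companion operator $\operator Q^{V_\beta}$ act on the norm \eqref{eq:norm zeta}, and in controlling the $N\to\infty$ defect one must handle the quadratic correlator term $\W_{2N}^V(\operator S^i_{V,\cdot}\,p)$ and the remainder $R$ in Corollary \ref{SD1} — these are where the smallness of $\|V\|_{\xi,\zeta}$ and the $(\xi_1,\xi_2;\zeta)$-contractivity of the relevant maps are genuinely used.
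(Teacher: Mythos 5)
Your overall strategy matches the paper's: show the linearization $\operator{Id}+\Xi^V_{\tau,\tau_1}$ is invertible on $(\AA\BB)^\perp$ using the bound $\delta_{\xi,\zeta}(V)<\tfrac{1}{1+\max\{2,r\}}$, invoke the implicit function theorem for smoothness, and couple compactness of $N^{-1}\W_{1N}$ with uniqueness of solutions of the limiting Schwinger--Dyson equation for the convergence. The estimates \eqref{boundT}--\eqref{boundS} play exactly the role you indicate, and your observation about the companion operator $\operator Q$ and the two-parameter $(\xi_1,\xi_2)$ contractivity is where the real work lies.

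One genuine gap: you propose to obtain existence by showing the map $\tau\Pi\mapsto -\tau\bigl(\tfrac12\overline{\operator T}_\tau+\overline{\operator P}^{V_\beta}_\tau\bigr)\Pi$ is a contraction on a ball and then appealing to Banach's fixed point theorem. For the contraction constant the estimates require the a priori bound $\|\tau\|_{1,1}\le 1$, which holds for tracial states but is not propagated by the iteration: there is no reason the image of a tracial state under this map is again a tracial state, nor that it keeps $\|\cdot\|_{1,1}\le 1$. The paper sidesteps this entirely by never constructing the solution via iteration. Existence is obtained for free because $N^{-1}\W_{1N}^V$ is a bounded sequence of tracial states whose limit points are tracial and solve the limiting Schwinger--Dyson equation; uniqueness is then shown by taking two putative solutions $\tau,\tau'$ (both with $\|\cdot\|_{1,1}\le 1$), writing $\delta=\tau-\tau'$, deriving the identity
$\delta\bigl((\operator{Id}+\Xi^V_{\tau,\tau_1})p\bigr)=\delta\otimes\delta\bigl(\overline{\Delta}p+\operator R^V_{\tau,\delta}p\bigr)$,
and closing a self-improving estimate $\|\delta\|_{\xi,\zeta}\le \tfrac{\max\{2,r\}}{1-\delta_{\xi,\zeta}(V)}\delta_{\xi,\zeta}(V)\|\delta\|_{\xi,\zeta}$. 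If you keep your contraction phrasing you must either carry out the iteration in the larger space and verify a posteriori that the fixed point inherits traciality and the $\|\cdot\|_{1,1}$ bound from the initial datum, or switch, as the paper does, to the subtraction argument for uniqueness and to compactness for existence. The rest of your proposal -- differentiating the relation $F=0$ and inverting $\operator{Id}+\Xi^V$ for the $D^\ell$ bounds, and bounding the Schwinger--Dyson defect of $N^{-1}\W_{1N}$ for the convergence -- is in line with the paper's argument (in fact the formulas \eqref{lkj} and \eqref{toto} make your derivative bounds explicit), so once the existence step is repaired the proof goes through.
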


Before proving Lemma \ref{conv},
we need the following technical result.

\begin{lem} Let $\xi>1,\tilde\xi\ge 1$ and $\zeta,\tilde \zeta\ge 1$. Then the following hold:
\begin{itemize}
\item Let $\operator{f}\in \La_{\tilde \xi,\tilde \zeta}$ and  $\xi> \tilde\xi$ and $\zeta\ge \tilde\zeta$. Then 

				\begin{equation}\label{boundT}
				 	\|\overline{\operator{T}}_{\operator{f}}\|_{\xi,\zeta} < 8\|\operator{f}\|_{\tilde \xi,\tilde\zeta}\frac{\tilde \xi}{(\xi-\tilde\xi)}.
				\end{equation}
				
		\item
		Let $\operator{f_1},\ldots,\operator {f}_{r-1} \in \La$. Then, for any $V \in \LL^r_{\xi,\zeta}$ self-adjoint and any $\tilde\xi,\tilde\zeta
		 \geq 1$, we have

			\begin{equation} \label{boundP}
				\left\|\overline{\operator{P}}^{{V_\beta}}_{\operator{f}_1,\ldots,\operator{f}_{r-1}}\right\|_{\xi,\zeta}
				 \leq \prod_{j=1}^{r-1} \|\operator{f}_j\|_{\tilde\xi,\tilde \zeta} \bigl\||\Pi {V_\beta}|\bigr\|_{\xi,\zeta,\tilde\xi,\tilde\zeta} \,,
			\end{equation}
			with
			\begin{multline*} \bigl\||\Pi {V_\beta}|\bigr\|_{\xi,\zeta,\tilde\xi,\tilde\zeta
			} := \sum |\langle {V_\beta},q_1\otimes \cdots\otimes q_r\rangle| \\
			\sum_{j=1}^r {\rm deg}_U(q_j)\xi^{{\rm deg}_U(q_j)}
			\zeta^{\deg_{A,B}(q_j)} \tilde\xi^{
			\sum_{i\neq j}{\rm deg}_U(q_i)}\tilde\zeta^{ \sum_{i\neq j}{\rm deg}_B(q_i)}\end{multline*}
			\item
		Let $\operator{f_1},\ldots,\operator {f}_{r-1} \in \La$. Then, for any $V \in \LL_{\xi,\zeta}^r$ self-adjoint and any $\tilde\xi,\tilde\zeta \geq 1$ with
		$\tilde\xi\le\xi$ and $\tilde\zeta\le\zeta$, we have

			\begin{equation} \label{boundQ}
				\|\overline{\operator{Q}}^{{V_\beta}}_{\operator{f}_1,\ldots,\operator{f}_{r-1}}\|_{\xi,\zeta} \leq \prod_{j=1}^{r-1} \|\operator{f}_j\|_{\tilde\xi,\tilde\zeta}\bigl\||\Pi {V_\beta}|\bigr\|_{\xi,\zeta,\tilde\xi,\tilde\zeta; 2} \,,
			\end{equation}
			with
			$$\bigl\||\Pi {V_\beta}|\bigr\|_{\xi,\zeta,\tilde\xi,\tilde\zeta;2} := \sum |\langle {V_\beta},q_1\otimes \cdots\otimes q_r\rangle| \sum_{j\neq \ell} \tilde \xi^{\sum_{i\neq \ell }{\rm deg}_U(q_i)}\tilde \zeta^{\sum_{i\neq \ell }{\rm deg}_{A,B}(q_i)}
 {\rm deg}_U(q_j)\xi^{{\rm deg}_U(q_{\ell})}\zeta^{{\rm deg}_{A,B}(q_\ell)} 
 \,.$$
 \item Let $\operator{f_1},\ldots,\operator {f}_{r} \in \La$, and for $V \in \LL^r_{\xi,\zeta}$ self-adjoint set
 \begin{multline}\label{defS2}\operator{S}^V_{\operator f_1,\ldots,\operator f_{r-2}} p:= \sum \langle V,q_1\otimes\cdots\otimes q_r\rangle\\
  \sum_{i=1}^d \sum_{j,k} \biggl[\biggl(\prod_{\ell \neq
 k,j} \operator f_{\ell -\one_{k\le \ell}-\one_{j\le \ell}} (q_\ell) \biggr)\, (1_{j<k} \D_i q_j \cdot \D_i p\otimes q_k+ 1_{k<j}
 q_k \otimes \D_i q_j \cdot \D_i p)\\
+\sum_{s\neq j,k} \biggl(\prod_{\ell\neq k,j,s} \operator f_{\ell -\one_{k\le \ell}-\one_{s\le \ell}-\one_{m\le \ell}}(q_\ell) \biggr)\,f_{r-2}( \D_i q_j \cdot \D_i p) \,q_s\otimes q_k\biggr].
\end{multline}
 Then, we have
	
			\begin{equation} \label{boundS}
				\left|\operator{f}_{r-1}\otimes \operator{f}_{r} ( \overline{\operator{S}}^V_{\operator{f}_1,\ldots,\operator{f}_{r-2}}(p  ))\right|
				 \leq \prod_{j=1}^{r} \|\operator{f}_j\|_{\tilde\xi,\tilde \zeta} \sum_{k=r-2}^r \frac{\|\operator{f}_k\|_{\xi,\zeta}}{\|\operator{f}_k\|_{\tilde\xi,\tilde\zeta}}
				  \bigl\||\Pi {V_\beta}|\bigr\|_{\xi,\zeta,\tilde\xi,\tilde \zeta;3}
				  \|p\|_{\xi,\zeta}, 
			\end{equation}
			
			where $$
			\||\Pi {V_\beta}|\bigr\|_{\xi,\zeta,\tilde\xi,\tilde \zeta;3}= r\bigl\||\Pi {V_\beta}|\bigr\|_{\xi,\zeta,\tilde\xi,\tilde\zeta }+r\bigl\||\Pi {V_\beta}|\bigr\|_{\xi,\zeta,\tilde\xi,\tilde\zeta;2}\,.
			$$			
\end{itemize}
\end{lem}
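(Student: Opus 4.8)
The four inequalities are weighted counting estimates for linear maps that act on a monomial by summing, over all ways of cutting $p$ (and the monomials of $V$) into pieces, tensor monomials whose $u$-degree $\deg_U$ and $(a,b)$-degree $\deg_{A,B}$ are controlled by those of the inputs; the plan is to reduce each of them to such a weighted count. The first reduction is that, since $\|\cdot\|_{\xi,\zeta}$ is the weighted $\ell^1$-norm on the monomial basis of each $\LL^{\otimes\bullet}$, for any linear map $\operator{A}$ one has $\|\operator{A}\|_{\xi,\zeta}=\sup_{w}\|\operator{A}w\|_{\xi,\zeta}/\|w\|_{\xi,\zeta}$ over monomials $w$, so it suffices to test each bound on a single monomial $p$ of $u$-degree $n:=\deg_U(p)\ge1$. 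Two mechanisms will recur. The degree regularisation $\overline{\operator{A}}=\operator{A}\operator{D}^{-1}$ divides the output by $n$, and this is exactly the budget needed to pay for the choice of an outer cut $p=p_1u_i^{\pm}p_2$: there are at most $n$ of them, and fixing one also fixes the index $i$, so the sums over $i$ in $\Delta=\sum_i\Delta_i$ and in $\sum_i\D_i(\cdot)\cdot\D_i(\cdot)$ cost nothing extra. And whenever a piece $w$ is evaluated against one of the forms $\operator{f}_j$, I would use $|\operator{f}_j(w)|\le\|\operator{f}_j\|_{\sigma,\eta}\,\sigma^{\deg_U(w)}\eta^{\deg_{A,B}(w)}$ for a monomial $w$, taking $(\sigma,\eta)=(\tilde\xi,\tilde\zeta)$ on the bounded-degree pieces coming from $V$ and $(\sigma,\eta)=(\xi,\zeta)$ on any piece that still carries all of $p$, so that the weight $\xi^{n}\zeta^{\deg_{A,B}(p)}$ produced there cancels against $\|p\|_{\xi,\zeta}$.

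For \eqref{boundT} I would start from the explicit form \eqref{eqn:ReducedLaplacian} of $\Delta_ip$: each of its tensor monomials is either $q_1u_i\otimes q_2u_i$ with $\deg_U(q_1)+\deg_U(q_2)=n-2$ and total $u$-degree $n$, or $q_1\otimes q_2$ with $\deg_U(q_1)+\deg_U(q_2)=n-2$ and total $u$-degree $n-2$, and in both cases $\deg_{A,B}(q_1)+\deg_{A,B}(q_2)=\deg_{A,B}(p)$. Applying $\operator{T}_{\operator{f}}=(\operator{Id}\otimes\operator{f}+\operator{f}\otimes\operator{Id})\Delta$, contracting one factor against $\operator{f}$ in the $(\tilde\xi,\tilde\zeta)$-norm and keeping the other, then dividing by $\|p\|_{\xi,\zeta}$ and using $\zeta\ge\tilde\zeta$ and $\xi\ge1$ to discard the $\zeta$- and $\tilde\zeta$-weights and the non-positive powers of $\xi$, the surviving weight of a cut is at most a constant times $(\tilde\xi/\xi)^{m}$, where $m\ge0$ is the $u$-degree of the factor contracted against $\operator{f}$. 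For each value of $m$ there are $O(n)$ such cuts; this $O(n)$ is cancelled by $\operator{D}^{-1}$, and summing the geometric series $\sum_{m\ge0}(\tilde\xi/\xi)^m$, convergent because $\xi>\tilde\xi$, yields a bound $c\,\|\operator{f}\|_{\tilde\xi,\tilde\zeta}\,\tilde\xi/(\xi-\tilde\xi)$; keeping track of the elementary constants (a factor $2$ for the two types of terms in $\Delta_i$, a factor $2$ for $\operator{Id}\otimes\operator{f}+\operator{f}\otimes\operator{Id}$, and the crude count of cuts) gives $c\le8$, i.e.\ \eqref{boundT}.

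The bounds \eqref{boundP} and \eqref{boundQ} involve no iteration, hence no geometric sum. For a monomial $p$, $\D_ip=m\circ\partial_ip$ is a sum of at most $\deg_U(p)$ monomials having the same $\deg_U$ and $\deg_{A,B}$ as $p$, and likewise $\D_iq_j$ is a sum of at most $\deg_U(q_j)$ monomials with the degrees of $q_j$; therefore $\D_iq_j\cdot\D_ip$ is a sum of at most $\deg_U(q_j)\deg_U(p)$ monomials of $u$-degree $\deg_U(q_j)+\deg_U(p)$ and $(a,b)$-degree $\deg_{A,B}(q_j)+\deg_{A,B}(p)$. For \eqref{boundP} one keeps $\D_iq_j\cdot\D_ip$ in the $(\xi,\zeta)$-slot and contracts the remaining $r-1$ factors $q_\ell$ against $\operator{f}_1,\dots,\operator{f}_{r-1}$ in the $(\tilde\xi,\tilde\zeta)$-norm; dividing by $\operator{D}^{-1}$ (which absorbs the factor $\deg_U(p)$) and by $\|p\|_{\xi,\zeta}$, then summing over the monomials $q_1\otimes\cdots\otimes q_r$ of $V_\beta$ and over $j$, reproduces exactly $\prod_{j=1}^{r-1}\|\operator{f}_j\|_{\tilde\xi,\tilde\zeta}\bigl\||\Pi V_\beta|\bigr\|_{\xi,\zeta,\tilde\xi,\tilde\zeta}$, the factor $\deg_U(q_j)$ in that norm coming from the number of terms of $\D_iq_j$. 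For \eqref{boundQ} the factor $\D_iq_j\cdot\D_ip$ is instead contracted against one of the forms in the $(\tilde\xi,\tilde\zeta)$-norm, which creates a $\tilde\xi^{\deg_U(q_j)+\deg_U(p)}\tilde\zeta^{\deg_{A,B}(q_j)+\deg_{A,B}(p)}$ weight; dividing by $\|p\|_{\xi,\zeta}$ this leaves $(\tilde\xi/\xi)^{\deg_U(p)}(\tilde\zeta/\zeta)^{\deg_{A,B}(p)}\le1$ thanks to the hypotheses $\tilde\xi\le\xi$, $\tilde\zeta\le\zeta$, while the surviving tensor factor is the single monomial $q_\ell$ in the $(\xi,\zeta)$-slot; summing over $V_\beta$ and over the pairs $j\ne\ell$ gives $\prod_{j=1}^{r-1}\|\operator{f}_j\|_{\tilde\xi,\tilde\zeta}\bigl\||\Pi V_\beta|\bigr\|_{\xi,\zeta,\tilde\xi,\tilde\zeta;2}$.

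Finally, \eqref{boundS} combines the two previous mechanisms, and I expect its index bookkeeping to be the one genuinely delicate point. The operator $\operator{S}^V_{\operator{f}_1,\dots,\operator{f}_{r-2}}$ of \eqref{defS2} has terms of two kinds: those in which $\D_iq_j\cdot\D_ip$ occupies one of the two output tensor slots, to be handled as in \eqref{boundP} (the sum over $j$ producing the prefactor $r$, hence the summand $r\bigl\||\Pi V_\beta|\bigr\|_{\xi,\zeta,\tilde\xi,\tilde\zeta}$ of $\bigl\||\Pi V_\beta|\bigr\|_{\xi,\zeta,\tilde\xi,\tilde\zeta;3}$), and those in which $\D_iq_j\cdot\D_ip$ is consumed by the scalar $\operator{f}_{r-2}(\D_iq_j\cdot\D_ip)$, the surviving slots being two monomials $q_s,q_k$ of $V_\beta$, to be handled as in \eqref{boundQ} (producing $r\bigl\||\Pi V_\beta|\bigr\|_{\xi,\zeta,\tilde\xi,\tilde\zeta;2}$). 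In either kind, the piece carrying $p$, namely $\D_iq_j\cdot\D_ip$ of $u$-degree $\deg_U(q_j)+\deg_U(p)$, is ultimately contracted against exactly one of the three forms $\operator{f}_{r-2},\operator{f}_{r-1},\operator{f}_r$, and for that contraction one must use the $(\xi,\zeta)$-norm so that $\xi^{\deg_U(p)}\zeta^{\deg_{A,B}(p)}$ cancels against $\|p\|_{\xi,\zeta}$ with no residual power (this is why no relation between $\tilde\xi$ and $\xi$ is needed here: the $(\tilde\xi,\tilde\zeta)$-norm is used only on bounded-degree pieces of $V_\beta$); summing over which of the three forms is thus ``upgraded'' yields the factor $\sum_{k=r-2}^{r}\|\operator{f}_k\|_{\xi,\zeta}/\|\operator{f}_k\|_{\tilde\xi,\tilde\zeta}$ multiplying $\prod_{j=1}^r\|\operator{f}_j\|_{\tilde\xi,\tilde\zeta}$. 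The shifted indicators $\one_{k\le\ell}$, $\one_{j\le\ell}$, etc.\ in \eqref{defS2} only record which of the $\operator{f}$'s is paired with which factor $q_\ell$ of $V_\beta$, and since these shifts change no degree they do not affect any weight; collecting all the contributions gives \eqref{boundS}. Throughout, these are the $r$-fold analogues of the one-polynomial ($r=1$) estimates of \cite{GN}, and the main work is precisely the (routine but lengthy) verification that the index shifts in \eqref{defS} and \eqref{defS2} are compatible with the degree bookkeeping above.
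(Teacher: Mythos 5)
Your proposal follows essentially the same strategy as the paper's own proof: reduce each operator-norm bound to a weighted count over monomial cuts, use the degree regularisation $\operator{D}^{-1}$ to absorb the number of outer cuts of $p$ (and the implicit sum over $i$), pair the piece carrying $p$ against either $\|p\|_{\xi,\zeta}$ (for \eqref{boundP}) or one form $\operator{f}_k$ measured in the $(\xi,\zeta)$-norm (for \eqref{boundQ} and \eqref{boundS}) so the $\xi^{\deg_U p}\zeta^{\deg_{A,B}p}$ factor cancels, and for \eqref{boundT} close with a geometric series in $\tilde\xi/\xi$. One caution on the last step: the series you write, $\sum_{m\ge 0}(\tilde\xi/\xi)^m$, equals $\xi/(\xi-\tilde\xi)$, not $\tilde\xi/(\xi-\tilde\xi)$; what actually happens is that for the term of the form $q_1u_i\otimes q_2u_i$ the contracted factor $q_2u_i$ always carries a $u$-letter, so the sum starts at $m=1$, while for the $q_1\otimes q_2$ term the output has $u$-degree only $n-2$, contributing an explicit $\xi^{-2}$ prefactor that compensates for the $m=0$ term. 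This distinction is not cosmetic: the verification of \eqref{thecond} relies on the right-hand side of \eqref{boundT} tending to zero as $\xi\to\infty$, which $\xi/(\xi-\tilde\xi)$ does not do.
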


\begin{proof}
The proof of \eqref{boundT} is done by considering term by term the norm of $1\otimes \operator{f}\Delta_i p$. For instance, if $p$ has degree $d_i$ in $u_i$ and $u_i^*$,
and $d=\deg_U(p)$, we have
\begin{multline*}
\biggl\|\sum_{p=p_1u_ip_2}\sum_{p_2p_1 u_i=q_1u_iq_2 u_1} q_1u_i \operator{f}( q_2u_i)\biggr\|_{\xi,\zeta}
\le   \|f\|_{\tilde\xi,\tilde\zeta} \sum_{p=p_1u_ip_2}\sum_{p_2p_1 u_i=q_1u_iq_2 u_1}\|q_1 u_i\|_{\xi,\zeta}\|q_2u_i\|_{\tilde\xi,\tilde\zeta}\\
\le d_i  \|f\|_{\tilde\xi,\tilde\zeta} \sum_{p=0}^{d-1} \xi^{p}\,\tilde\xi^{d-p} \zeta^{\deg_{AB}(p)}
\le d_i  \|f\|_{\tilde\xi,\tilde\zeta} \|p\|_{\xi,\zeta} \frac{\tilde\xi}{\xi-\tilde\xi}
\end{multline*}
where we used that $\zeta\ge\tilde\zeta$ and $q_1$,$q_2$ have degree smaller than $d-1$. Proceeding for each term similarly (and noting a degree reduction
of each terms) yields the claim, after summing over $i$ and dividing by $d$.  More details are given  in \cite[Proposition 17]{GN} in  the case $\zeta=1$.

We prove next \eqref{boundP}.  Take $p$ a monomial in $(\AA\BB)^\perp$. Then, with $\epsilon,\epsilon_j=\pm 1$,
\begin{align*}
\|\overline{\operator{P}}^{{V_\beta}}_{\operator{f}_1,\ldots,\operator{f}_{r-1}}p\|_{\xi,\zeta}
&  =\biggl\| \frac{1}{\deg_U(p)}\sum_i \sum\langle {V_\beta},q_1\otimes \cdots\otimes q_r\rangle \sum_{j=1}^r\biggl( \prod_{k=1}^{j-1} \operator{f}_k(q_k)\biggr)\, \D_i q_j\cdot\D_i p\biggl(\prod_{k=j+1}^{r} \operator{f}_{k-1}(q_k)\biggr) \biggr\|_{\xi,\zeta}\\
&\le\frac{1}{\deg_U(p)}\sum_i \sum |\langle {V_\beta},q_1\otimes \cdots\otimes q_r\rangle| \sum_{j=1}^r \biggl(\prod_{k=1}^{j-1} |\operator{f}_k(q_k)|\biggr)\,\biggl(\prod_{k=j+1}^{r} |\operator{f}_{k+1}(q_k)| \biggr)\\
&\qquad\qquad\qquad\qquad\times \sum_{q_j=q_j^1u_i^{\epsilon^j} q_j^2}  \sum_{p=p^1u_i^{\epsilon} p^2}  \Bigl\|u_i^{-\one_{\epsilon^j=-1}} q_j^2q_j^1 u_i^{\one_{\epsilon^j=1}}
u_i^{-\one_{\epsilon=-1}} p^2p^1 u_i^{\one_{\epsilon=1}}\Bigr\|_{\xi,\zeta}\\
&\le \sum |\langle {V_\beta},q_1\otimes \cdots\otimes q_r\rangle| \sum_{j=1}^r\biggl( \prod_{k=1}^{r-1} \|\operator{f}_k\|_{\tilde\xi,\tilde \zeta}  \biggr)\,\tilde\xi^{\sum_{i\neq j}{\rm deg}_U(q_i)}\tilde\zeta^{\sum_{i\neq j}{\rm deg}_B(q_i)}\\
&\qquad\qquad\qquad\qquad\times
 {\rm deg}_U(q_j)\,\xi^{{\rm deg}_U(p)+{\rm deg}_U(q_j)} \zeta^{{\rm deg}_{A,B}(p)+{\rm deg}_{A,B}(q_j)} \\
\end{align*}
where we have used that $\xi,\zeta\ge 1$, that the degree of $u_i^{-\one_{\epsilon^j=-1}} q_j^2q_j^1 u_i^{\one_{\epsilon^j=1}}
u_i^{-\one_{\epsilon=-1}} p^2p^1 u_i^{\one_{\epsilon=1}}$ is at most ${\rm deg}_U(p)+{\rm deg}_U(q_j)$ in the $u_i$'s (and similarly in the $a_i$'s and $b_i$'s), and that the sum contained at most $\deg_U(p)\times {\rm deg}(q_j)$ terms.
We thus obtain \eqref{boundP}. 

To prove \eqref{boundQ} we note that  $\|\overline{\operator{Q}}^{{V_\beta}}_{\operator{f}_1,\ldots,\operator{f}_{r-1}}p\|_{\xi,\zeta}$
is equal to
\begin{align*}
&=\biggl\| \frac{1}{{\rm deg}_U(p)}\sum_i \sum\langle {V_\beta},q_1\otimes \cdots\otimes q_r\rangle \sum_{\ell\neq j}\biggl( \prod_{1\le k\le j-1\atop k\neq \ell
} \operator{f}_k(q_k)\biggr) \operator f_j(\D_i q_j\cdot\D_i p) q_\ell\biggl( \prod_{j+1\le k\le r\atop k\neq \ell } \operator{f}_{k-1}(q_k)\biggr) \biggr\|_{\xi,\zeta}\\
&\le
  \sum |\langle {V_\beta},q_1\otimes \cdots\otimes q_r\rangle| \sum_{j\neq \ell}\biggl( \prod_{k=1}^{r-1} \|\operator{f}_k\|_{\tilde\xi,\tilde\zeta}\biggr) \,\tilde \xi^{\sum_{i\neq \ell }{\rm deg}_U(q_i)+\deg_U(p)}\\
  &\qquad\qquad\qquad\qquad\times \tilde \zeta^{\sum_{i\neq \ell }{\rm deg}_B(q_i)+\deg_{A,B}(p)}
 {\rm deg}_U(q_j)\,\xi^{{\rm deg}_U(q_{\ell})}\zeta^{{\rm deg}_B(q_\ell)} \\
 &\le \bigl\||\Pi V_\beta|\bigr\|_{\xi,\zeta,\tilde\xi,\tilde\zeta;2}\|p\|_{\xi,\zeta},
\end{align*}
where we used in the last line that $\tilde\xi\le\xi$ and $\tilde\zeta\le \zeta$. The bound \eqref{boundS} is analogous and left to the reader.
\end{proof}

\begin{proof}[Proof of Lemma \ref{conv}] Following  the implicit function theorem, let us consider $F$ as a function from $X\times Y$ to $Y$, with
$X:=\La(\AA\BB)_{\zeta}\times \LL_{\xi,\zeta}^r$ 
and $Y:=\La(\AA\BB^\perp)_{\xi,\zeta}$.
(Here $\La(\AA\BB^\perp)$ is the set of linear functionals over $\AA\BB^\perp$. Even though $\AA\BB^\perp$ is not an algebra, this
is a well defined  Banach space once equipped with $\|\cdot\|_{\xi,\zeta}$.) 

Recall that $F$  has a unique solution $\tau^{0,\tau_1}_{10}$ on the subset of $X$ 
given by $\Ta(\AA\BB)\times\{0\}$, given by the law of free variables, as discussed above. To show that this unique solution extends to a neighborhood of
$\Ta(\AA\BB)\times\{0\}$, it is enough to check that $F$ is  differentiable along the variable $\tau\in Y$, and its derivative  is a Banach space isomorphism from  $\La(\AA\BB^\perp)_{\xi,\zeta}$ into $\La(\AA\BB^\perp)_{\xi,\zeta}$ at $(\tau_1,0)$.
But this is clear as for any $q\in \AA\BB^\perp$,
\begin{align*}
DF[\tau;\tau_1,V_\beta](\mu; 0)(q)&:=\lim_{\varepsilon\ra 0}\frac{1}{\varepsilon}\Bigl( F[\tau+\varepsilon\mu; \tau_1,V_\beta]-F[\tau;\tau_1,V_\beta]\Bigr)(q)\\
&= \mu\left( \Bigl(\operator{Id}+\Pi\Bigl[\overline{\operator{T}}_{\tau_{10}^{0,\tau_1} } \Bigr]\Bigr)q\right)\end{align*}
where $\operator{Id}+\Pi\overline{\operator{T}}_{\tau_{10}^{0,\tau_1} } $ is invertible, as a triangular operator. Hence, by the implicit function theorem
there exists a unique solution of $F(\tau;\tau_1,V_\beta)$ for $\|V_\beta\|_{\xi,\zeta}$ small enough and $\tau_1\in\Ta(\AA\BB)$.
However, for further use we shall prove again this result ``by hand''. 
For this, if $\tau$ and $\tau'$ are two solutions of
\eqref{deffix}
 we see that $\delta:=\tau-\tau'$ satisfies
\begin{equation}\label{eqdelta}
\delta\bigl((\operator{Id}+\Xi^V_{\tau,\tau_1})p\bigr)= \delta\otimes \delta\left(\overline{\Delta} p + \operator{R}^V_{\tau,\delta}p\right),\end{equation}
where
$$
			\Xi^V_{\tau,\tau_1}:=\Pi\Bigl[\overline{\operator{T}}_{\tau \Pi+\tau_1\Pi'} + \overline{\operator{P}}^{V_\beta}_{\tau \Pi+\tau_1\Pi'}+ \overline{\operator{Q}}^{V_\beta}_{\tau \Pi+\tau_1\Pi'}\Bigr]\,,$$
			and
$$\operator{R}^V_{\tau,\delta}:= - \int_0^1 \overline{\operator{S}}_{V,\tau'+s\delta}\,s \,ds,
\qquad \operator S_{V,\tau}(p):=\sum_{i=1}^d \operator{S}^i_{V,\tau}({\mathcal D}_ip)
$$
where $\operator{S}^i_{V,\tau}$ is defined in \eqref{defS}. 
Indeed, this follows by
the identity $\tau\otimes\tau-\tau'\otimes\tau'= \delta\otimes\tau+\tau\otimes\delta-\delta\otimes\delta$ and the expansion
\begin{align*}
\tau ( \overline{\operator{P}}^{V_\beta}_{\tau \Pi+\tau_1\Pi'}p)-\tau'( \overline{\operator{P}}^{V_\beta}_{\tau' \Pi+\tau_1\Pi'}p)&=\int_0^1 ds \,\frac{d}{ds} \left((\tau'+s\delta)(\overline{\operator{P}}^{V_\beta}_{(\tau'+s\delta) \Pi+\tau_1\Pi'}p)\right) \\
&=\delta \left(
\int_0^1 ds \left(\Pi\Bigl[ \overline{\operator{P}}^{V_\beta}_{(\tau'+s\delta) \Pi+\tau_1\Pi'}+ \overline{\operator{Q}}^{V_\beta}_{(\tau'+s\delta) \Pi+\tau_1\Pi'}\Bigr]p\right)\right)\\
&=\delta \left(\Pi\Bigl[ \overline{\operator{P}}^{V_\beta}_{\tau \Pi+\tau_1\Pi'}+ \overline{\operator{Q}}^{V_\beta}_{\tau \Pi+\tau_1\Pi'}\Bigr]p\right)+\delta\otimes \delta \left(
\int_0^1 ds\int_s^1 \,d\sigma\, \Pi\Bigl(\overline{\operator{S}}_{V,\tau'+\sigma\delta} p\Bigr)\right)\\
&=\delta \left(\Pi\Bigl[ \overline{\operator{P}}^{V_\beta}_{\tau \Pi+\tau_1\Pi'}+ \overline{\operator{Q}}^{V_\beta}_{\tau \Pi+\tau_1\Pi'}\Bigr]p\right)+\delta\otimes \delta \left(
\int_0^1 \,d\sigma \,\sigma\, \Pi\Bigl(\overline{\operator{S}}_{V,\tau'+\sigma\delta} p\Bigr)\right),
\end{align*}
which proves the desired formula noticing that $\delta=\delta\circ \Pi$.

We next claim that $\operator{Id}+\Xi^V_{\tau,\tau_1} $ is invertible and with bounded inverse in $((\AA\BB)^\perp,\|\cdot\|_{\xi,\zeta})$. 
We begin by noticing that  \eqref{boundT}, \eqref{boundP}, and \eqref{boundQ} imply the following: if $ \tau,\tau_1\in\Ta$, as $\tau\Pi+\tau_1\Pi'$ is a tracial state
which has $\|\cdot\|_{1,1}$ norm bounded by $1$, we have (by taking $\tilde\xi=\tilde\zeta=1$)
\begin{equation}
\label{eq:norm delta}\|\Xi^V_{\tau,\tau_1}\|_{\xi,\zeta}
\le    \frac{8}{(\xi-1)}+ \bigl\||\Pi {V_\beta}|\bigr\|_{\xi,\zeta,1,1} + \bigl\||\Pi {V_\beta}|\bigr\|_{\xi,\zeta,1,1; 2} =\delta_{\xi,\zeta}(V)
\end{equation}
(see \eqref{defdel}).
Therefore, since $\delta_{\xi,\zeta}(V)<1$ (by  \eqref{thecond}), it follows that $\operator{Id}+\Xi^V_{\tau,\tau_1}$ is invertible on $(\La\bigl((\AA\BB)^\perp\bigr),\|\cdot\|_{\xi,\zeta})$, with inverse bounded by $\left(1-\delta_{\xi,\zeta}(V)\right)^{-1}$.

By \eqref{boundT} and because $\|\delta\|_{1,1}\le \|\tau\|_{1,1}+\|\tau'\|_{1,1}\le 2$,   as well as $\|\tau'+s\delta\|_{1,1}\le 1$, 
$$\left| \delta\otimes \delta\left(\overline{\Delta} p\right)\right|=|\delta(\operator T_\delta p)|\le \frac{16}{(\xi-1)} \|\delta\|_{\xi,\zeta}\|p\|_{\xi,\zeta}$$
and we find similarly by \eqref{boundS} that for $\xi,\zeta\ge 1$, since $\|p\|_{1,1}\le \|p\|_{\xi,\zeta}$, 
$$|\delta \otimes\delta( \operator{R}^V_{\tau,\delta} (p))|\le \bigl\||\Pi {V_\beta}|\bigr\|_{\xi,\zeta,1,1;3} \|\delta\|_{\xi,\zeta} \|p\|_{\xi,\zeta}\,.$$
It follows from \eqref{eq:norm delta} and \eqref{eqdelta} that
$$\|\delta\|_{\xi,\zeta} \le \frac{\max\{2,r\} }{ 1-\delta_{\xi,\zeta}(V)} \delta_{\xi,\zeta}(V) \|\delta\|_{\xi,\zeta},$$
and recalling \eqref{thecond} we conclude that $\|\delta\|_{\xi,\zeta}=0$, that is $\tau=\tau'$ as desired.

We denote $\tau_{10}^{V,\tau_1}$ our unique solution. Notice that if $\tau_1$ is not necessarily a tracial state, but an
element of ${\LL}_{\xi,\zeta}$ which still satisfies $\|\tau_1\|_1\le 1$ 
and such that $\|\tau_1-\tau_1^0\|_{\zeta}\le \epsilon$ for some $\tau_1^0\in\Ta(\AA\BB)$ with $\epsilon$ small enough, then the very same argument as before shows that there exists a unique $\tau_{10}^{V,\tau_1}$ in a small neighborhood of $\tau_{10}^{V,\tau_1^0}$
solving \eqref{SD}.

By the implicit function theorem, since the function $F$ 
is smooth, the solution $ \tau_{10}^{V,\tau_1}$ is smooth both in $V$ and $\tau_1$.  For $\nu_1,\ldots,\nu_\ell\in \La_{\xi,\zeta}$, we denote by $D^\ell \tau_{0,1}^{V,\tau_1}$ the $\ell$-th derivative
of $\tau_{0,1}^{V,\tau_1}$ with respect to $\tau_1$, which is given by 
$$D^\ell\tau_{0,1}^{V,\tau_1}[\nu_1,\ldots,\nu_\ell]=\frac{d}{d\varepsilon_1}\ldots\frac{d}{d\varepsilon_\ell} |_{\varepsilon_1=0,\ldots,\varepsilon_\ell=0} \Bigl[\tau_{0,1}^{V,\tau_1+\sum_i\varepsilon_i \nu_i}\Bigr]\,,$$
and is defined inductively by the formula, valid for all $q\in (\AA\BB)^\perp$,
\begin{equation}\label{lkj}
D^1 \tau_{0,1}^{V,\tau_1}[\nu]\Bigl( (\operator{Id}+\Xi^V_{\tau_{01}^{V,\tau_1}} )q\Bigr)
=-\nu\left( \Pi'\Bigl[\overline{\operator{T}}_{\tau_{01}^{V,\tau_1} \Pi+\tau_1\Pi'} + \overline{\operator{P}}^{V_\beta}_{\tau_{01}^{V,\tau_1} \Pi+\tau_1\Pi'}+ \overline{\operator{Q}}^{V_\beta}_{\tau_{01}^{V,\tau_1} \Pi+\tau_1\Pi'}\Bigr]q\right)\,,\end{equation}
where we use the simplified notation $\Xi^V_{\tau_{01}^{V,\tau_1}}=\Xi^V_{\tau_{01}^{V,\tau_1},\tau_1}$.
Hence, if we denote in short $D_{I}\tau:=D^{|I|} \tau_{0,1}^{V,\tau^N_{AB}}[\nu_i,i\in I]$ and $K=\{1,\ldots,\ell\}$,
\begin{multline}
D_{K}\tau\left( \Bigl(\operator{Id}+\Xi^V_{\tau_{01}^{V,\tau_1} }\Bigr)q\right)=-\frac{1}{2}\sum_{I\cup J=K\atop I,J\neq \emptyset } D_I\tau\otimes D_J\tau(\overline{\Delta} p)\\
-\sum_{i=1}^\ell \nu_i\otimes D_{K\backslash\{i\}} \tau (\overline{\Delta} p)-\one_{\ell=2}\nu_i\otimes \nu_{K\backslash\{i\}} (\overline{\Delta} p)
-\sum_{\theta_i \in \{D_{J_i} \tau\}
\atop
\cup_{1\le i\le r}  J_i= K,J_1\neq \emptyset,K}\theta_1(\overline{\operator P}^{V}_{\theta_2,\ldots,\theta_r}q),\label{toto}
\end{multline}
where in the last term we sum over all choices of 
$\theta_i$
in the set $D_{J_i}\tau$, 
where $D_{J_i}\tau =\nu_{J_i}$  if $|J_i|=1$, 
and $D_{J_i}\tau=\tau$ if  $J_i=\emptyset$.
From this formula and the invertibility of $ \operator{Id}+\Xi^V_{\tau_{01}^{V,\tau_1} }$, we deduce by induction  that for all  $\xi$ satisfying 
\eqref{thecond} and for 
all $\ell\in \mathbb N$, there exists a finite constant $C_{\xi,\zeta,\ell}$ such that
  $$\left\|D^\ell\tau_{10}^{V,\tau_1}[\nu_1,\ldots,\nu_\ell]\right\|_{\xi,\zeta}\le C_{\xi,\zeta,\ell}\|\nu_1\|_{\zeta}\cdots\|\nu_\ell\|_{\zeta}.$$

Finally, we apply the above uniqueness result with $\tau_1:=\tau_{AB}^N$, that is, to the 
non-commutative distribution of $(A_1,\ldots,A_d, B_1,\ldots,B_m)$, see Proposition \ref{topexpprop}.
Indeed, by the discussion after Lemma \ref{conc},
any limit point of $N^{-1}\W_{1N}^V\in \La_{\xi,\zeta}$
satisfies the limiting Schwinger-Dyson equation,
so this lemma ensures that this limit is unique and that $N^{-1}\W_{1N}^V$
converge to $\tau_{10}^{V,\tau_{AB}^N}$,
which concludes the proof.
\end{proof}

In order to simplify the notation, we use $\tau_{10}$ to denote $\tau_{10}^{V,\tau_{AB}^N}$.
We next develop similar arguments to expand $\W_{1N}^V$ as a function of $N^{-1}$.
Let us first consider the first error term and rewrite  the first Schwinger-Dyson equation by taking $P=\D_ip$ in Corollary \ref{SD1}. Then, summing over $i$, we get
 $\delta_N:=\W_{1N}^V-N\tau_{10}$,
\begin{equation}\label{coc}
\delta_N\Bigl(  \bigl(\operator{Id}+\overline{\operator{T}}_{\tau_{10}} +\overline{\operator{P}}^{V_\beta}_{\tau_{10}} + \overline{\operator{Q}}^{V_\beta}_{\tau_{10}}\bigr)p\Bigr)=\frac{\one_{\beta=1}}{N}\W_{1N}^{V} (\tilde{\Delta} p)-\frac{1}{N}\W_{2N}^{V}(\overline{\Delta} p) +R_N(p),\end{equation}
where  
$$\tilde\Delta:= \sum_{i=1}^d \tilde m\circ \partial_i {\mathcal D}_i \operator{D}^{-1}$$
and
$R_N(p)$ contains the terms which are at least quadratic in $\delta_N$, or depending on cumulants of order 
greater or equal to $2$ :
\begin{align*}
&R_N(p):= -\delta_N(\overline{\operator{T}}_{N^{-1}\delta_N} p)\\
&\quad - \frac{ 1}{N^{r-1}}\sum_i  \sum_k \sum\langle {V_\beta},q_1\otimes\cdots\otimes q_r\rangle
\sum_{I\subset \{1,\ldots,r\}\backslash k \atop |I|\ge 1   } \delta_N( \D_iq_{k}\cdot\D_i \operator{D}^{-1} p)\biggl(\prod_{j\in I}\delta_N(q_j)\biggr)\biggl(\prod_{j\in (I\cup k)^c} \W_{1N}^V( q_j)\biggr)\\
&\quad -\frac{ 1}{N^{r-1}}\sum_i  \sum\langle {V_\beta},q_1\otimes\cdots\otimes q_r\rangle\\
&\qquad \times 
\sum_{I_1\cup I_2\cup\cdots I_k=\{1,\ldots,r\}, k\le r-1 }\W_{|I_1|N}^V( \D_iq_{i_1}\cdot\D_i  \operator{D}^{-1} p, q_j, j\in I_1\backslash \{i_1\})
 \prod_{\ell=2}^ k \W_{|I_\ell|N} (q_s, s\in  I_\ell),
 \end{align*}
 where in the above sum at least one set $I_j$ has at least two elements.
 
In order to control the right hand side of \eqref{coc}
we use the following estimate (compare with \cite[Proposition 18]{GN}):
\begin{lem}
\label{lem:laplacian}
For any  $\zeta\ge 1$ and $\xi_1> \xi_2$,  the
operator $\overline\Delta$
is a 
			bounded mapping from $((\AA\BB)^\perp,\|\cdot\|_{\xi_1,\zeta})$ into 
			$(\LL^{\otimes2},\|\cdot\|_{\xi_2,\zeta})$. Moreover
$\tilde \Delta $  is a 
			bounded mapping from $(\LL((\AA\BB)^\perp),\|\cdot\|_{\xi_1,\zeta})$ into 
			$(\LL,\|\cdot\|_{\xi_2,\zeta})$. 
\end{lem}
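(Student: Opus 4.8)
The plan is to estimate term by term the operators $\overline{\Delta}$ and $\tilde{\Delta}$ by bounding the image of a monomial $p \in (\AA\BB)^\perp$ of degree $d := \deg_U(p) \geq 1$ in the $u_i$'s, then summing over $i$ and dividing by $d$ (which is what the degree regularization $\operator{D}^{-1}$ does). First I would recall the explicit formula \eqref{eqn:ReducedLaplacian} for $\Delta_i p$: each summand is obtained from a decomposition $p = p_1 u_i^{\pm1} p_2$ together with a decomposition of $p_2 p_1 u_i^{\pm1}$ of a prescribed form, and in the image one gets a tensor $q_1 \otimes q_2$ (up to multiplication by $u_i^{\pm 1}$ factors) where $q_1$ and $q_2$ are words extracted from $p$ with $\deg_U(q_1) + \deg_U(q_2) \leq d - 1$ and $\deg_{A,B}(q_1) + \deg_{A,B}(q_2) \leq \deg_{A,B}(p)$ — crucially the $U$-degree strictly drops by at least one, which is the source of the convergence factor. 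The number of such decompositions is at most $\deg_U(p) \times \deg_U(p) \le d^2$ (choice of the splitting point in $p$, and choice of the splitting point in $p_2 p_1 u_i^{\pm1}$).

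The key computation is then: for a fixed $i$,
$$
\|\Delta_i p\|_{\xi_2,\zeta} \leq d_i \sum_{\substack{k_1 + k_2 \leq d-1}} \#\{\text{words of these degrees}\}\, \xi_2^{k_1 + k_2}\, \zeta^{\deg_{A,B}(p)} \leq C\, d_i\, \Bigl(\sum_{k=0}^{d-1} (k+1)\,\xi_2^{k}\Bigr)\zeta^{\deg_{A,B}(p)},
$$
where $d_i$ is the degree of $p$ in $u_i, u_i^{-1}$. Now I use $\xi_1 > \xi_2$: one has $\sum_{k=0}^{d-1}(k+1)\xi_2^{k} \leq C(\xi_1/\xi_2)\,\xi_1^{d}$ (indeed $(k+1)\xi_2^k \leq C_{\xi_1/\xi_2}\,\xi_1^k$ since $\xi_1/\xi_2 > 1$ forces geometric decay of $(k+1)(\xi_2/\xi_1)^k$, and summing the geometric series gives a constant depending only on the ratio). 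Hence $\|\Delta_i p\|_{\xi_2,\zeta} \leq C_{\xi_1,\xi_2}\, d_i\, \xi_1^{d}\,\zeta^{\deg_{A,B}(p)} = C_{\xi_1,\xi_2}\, d_i\, \|p\|_{\xi_1,\zeta}$. Summing over $i$ gives $\sum_i d_i = d = \deg_U(p)$, so dividing by $\operator{D}$ yields $\|\overline{\Delta} p\|_{\xi_2,\zeta} \leq C_{\xi_1,\xi_2}\,\|p\|_{\xi_1,\zeta}$. By linearity and the definition of $\|\cdot\|_{\xi,\zeta}$ on sums of monomials this extends to all of $(\AA\BB)^\perp$, proving the first assertion. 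This is essentially \cite[Proposition 18]{GN} with the extra $\zeta$-bookkeeping, which is harmless since the $A,B$-degree never increases in any term of $\Delta_i$.

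For $\tilde{\Delta} = \sum_i \tilde{m} \circ \partial_i \mathcal{D}_i \operator{D}^{-1}$, I would argue analogously: applying $\mathcal{D}_i$ to a monomial $p$ produces at most $d_i$ cyclic-rearranged words of the same total degree as $p$; then $\partial_i$ of such a word $w$ (with $\deg_U(w) = \deg_U(p) =: d$) produces, by \eqref{ncder}, at most $d_i$ tensors $w_1 \otimes w_2$ with $\deg_U(w_1) + \deg_U(w_2) \leq d$ and $A,B$-degrees adding up to $\deg_{A,B}(p)$; finally $\tilde{m}(w_1 \otimes w_2) = w_2^* w_1$ is a single word of $U$-degree at most $d$ and $A,B$-degree at most $\deg_{A,B}(p)$. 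So the total count is $\leq d_i^2$ words each of norm $\leq \xi_2^{d}\zeta^{\deg_{A,B}(p)}$, giving $\|\tilde{m}\circ\partial_i\mathcal{D}_i p\|_{\xi_2,\zeta} \leq d_i^2\, \xi_2^{d}\,\zeta^{\deg_{A,B}(p)} \leq C_{\xi_1,\xi_2}\, d_i^2\, \xi_1^{d-1}\zeta^{\deg_{A,B}(p)}$ once we absorb one power $d_i \leq d$ into the ratio gain $(\xi_2/\xi_1)^{d}$ (note $(d^2)(\xi_2/\xi_1)^d$ is bounded); more carefully, I keep one factor $d_i$ and use $\sum_i d_i = d = \operator{D}(p)$ to cancel against $\operator{D}^{-1}$, while the remaining $d_i$ is dominated by $d$ and absorbed via $d\,(\xi_2/\xi_1)^d \leq C_{\xi_1/\xi_2}$. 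Summing over $i$ and dividing by $\operator{D}$ yields $\|\tilde{\Delta} p\|_{\xi_2,\zeta} \leq C_{\xi_1,\xi_2}\,\|p\|_{\xi_1,\zeta}$.

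The only mild subtlety — and the step I would be most careful about — is the combinatorial bookkeeping ensuring that in \emph{every} term the $U$-degree does not increase (so that the geometric factor $(\xi_2/\xi_1)^{\deg_U}$ is available to beat the polynomial-in-degree multiplicity counts) while the $A,B$-degree is exactly preserved (so that the $\zeta$ part of the norm simply factors out with no loss, which is why one does not need $\zeta_1 > \zeta_2$). For $\tilde{\Delta}$ one must check that the composition $\tilde{m}\circ\partial_i\circ\mathcal{D}_i$ does not create a $U$-degree strictly larger than $\deg_U(p)$: $\mathcal{D}_i$ is degree-preserving, $\partial_i$ splits without increasing degree, and $\tilde{m}$ recombines into degree $\le \deg_U(p)$ — all fine. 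Everything else is the routine monomial-by-monomial summation already carried out in \cite{GN}, now with $\zeta$ carried passively along.
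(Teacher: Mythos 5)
Your argument is correct and follows the same route as the paper's own proof: expand $\Delta_i p$ via \eqref{eqn:ReducedLaplacian}, observe that each output tensor has $U$-degree at most $\deg_U(p)$ and $A,B$-degree exactly $\deg_{A,B}(p)$, count the $O(d_i^2)$ summands, and then rely on the elementary inequality $n\xi_2^n\le C_{\xi_1,\xi_2}\,\xi_1^n$ valid whenever $\xi_1>\xi_2$, with $\sum_i d_i=\deg_U(p)$ cancelling against $\operator{D}^{-1}$. The same bookkeeping handles $\tilde\Delta=\sum_i\tilde m\circ\partial_i\D_i\operator{D}^{-1}$.

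One conceptual slip is worth flagging: your claim that ``crucially the $U$-degree strictly drops by at least one, which is the source of the convergence factor'' is not accurate. In the first and fourth families of terms of \eqref{eqn:ReducedLaplacian} (e.g.\ $q_1u_i\otimes q_2u_i$), the output tensor has total $U$-degree \emph{equal} to $\deg_U(p)$; there is no strict drop. The mechanism is not a degree reduction but purely the geometric ratio $\xi_2/\xi_1<1$, which makes $n^2(\xi_2/\xi_1)^n$ bounded and is exactly what defeats the $O(d_i^2)$ multiplicity. Relatedly, once the $u_i^{\pm1}$ factors are reattached the correct $\xi_2$-power in your display is $k_1+k_2+2$ (or $k_1+k_2$), not uniformly $k_1+k_2$; this costs only the harmless constant $\xi_2^2$. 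Neither slip affects your conclusion, since the inequality you ultimately invoke, $\sum_{k\le d}(k+1)\xi_2^k\le C_{\xi_1,\xi_2}\,\xi_1^{d}$, does not require any degree drop.
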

The proof of this result simply follows using \eqref{eqn:ReducedLaplacian}: using that formula
and noticing that there exists a constant $C_{\xi_1,\xi_2}>1$ such that $n\xi_2^n\le C_{\xi_1,\xi_2}\xi_1^n$
for all $n \geq 0$, one deduces that, for any monomial $p$,
$$\|\overline\Delta p\|_{\xi_2,\zeta}\le\deg_U(p) \xi_2^{\deg_U(p)}\zeta^{\deg_{A,B}(p)}\le C_{\xi_1,\xi_2} \xi_1^{\deg_U(p)}\zeta^{\deg_{A,B}(p)}=C_{\xi_1,\xi_2}\|p\|_{\xi_1,\zeta}.$$
The proof for $\tilde\Delta$ is similar. 

Next, we prove the following convergence result for $\delta_N$:
\begin{lem}\label{firstcor}
Assume there exist  $\xi_2<\xi_1$ and $\zeta \geq 1$, both for $\xi=\xi_1$ and for $\xi=\xi_2$,
$$ \delta_{\xi,\zeta}(V)<\frac{1}{1+\max\{2,r\}}\,.$$
 Then, for any $p\in \LL_{\xi_1,\zeta}$ we have 
$$\lim_{N\ra\infty} \delta_N(p)= \one_{\beta=1} \tau_{10}\left( \tilde{\Delta} \left(\operator{Id}+\overline{\operator{T}}_{\tau_{10}} +\overline{\operator{P}}^{V_\beta}_{\tau_{10}} + \overline{\operator{Q}}^{V_\beta}_{\tau_{10}}\right)^{-1}p \right)=:\tau_{11}(p)\,,$$
and $N\|\delta_N-\tau_{11}\|_{\xi_1,\zeta}$ is uniformly bounded in $N$.
\end{lem}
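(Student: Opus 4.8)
\textbf{Proof plan for Lemma \ref{firstcor}.} The starting point is the identity \eqref{coc}, which reads
$$
\delta_N\Bigl(\bigl(\operator{Id}+\overline{\operator{T}}_{\tau_{10}} +\overline{\operator{P}}^{V_\beta}_{\tau_{10}} + \overline{\operator{Q}}^{V_\beta}_{\tau_{10}}\bigr)p\Bigr)=\frac{\one_{\beta=1}}{N}\W_{1N}^{V}(\tilde{\Delta} p)-\frac{1}{N}\W_{2N}^{V}(\overline{\Delta} p) +R_N(p).
$$
The first observation is that $\operator{Id}+\Xi^V_{\tau_{10},\tau_1}=\operator{Id}+\Pi\bigl[\overline{\operator{T}}_{\tau_{10}}+\overline{\operator{P}}^{V_\beta}_{\tau_{10}}+\overline{\operator{Q}}^{V_\beta}_{\tau_{10}}\bigr]$ is invertible on $(\La((\AA\BB)^\perp),\|\cdot\|_{\xi,\zeta})$ with inverse bounded by $(1-\delta_{\xi,\zeta}(V))^{-1}$ for $\xi=\xi_1$ and $\xi=\xi_2$, by exactly the same estimate \eqref{eq:norm delta} used in the proof of Lemma \ref{conv}; since $\delta_N$ vanishes on $\AA\BB$ (because $\W_{1N}^V$ restricted to $\AA\BB$ equals $N\tau^N_{AB}=N\tau_{10}|_{\AA\BB}$, no eigenvector integral being involved), we may apply $p\mapsto (\operator{Id}+\Xi^V_{\tau_{10},\tau_1})^{-1}p$ and rewrite the equation as
$$
\delta_N(p)=\frac{\one_{\beta=1}}{N}\W_{1N}^{V}\bigl(\tilde{\Delta}\,\widehat p\bigr)-\frac{1}{N}\W_{2N}^{V}\bigl(\overline{\Delta}\,\widehat p\bigr) +R_N(\widehat p),\qquad \widehat p:=(\operator{Id}+\Xi^V_{\tau_{10},\tau_1})^{-1}p.
$$

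Next I would estimate the three terms on the right. For the middle term, I would fix an auxiliary exponent $\xi_2<\xi_1$; by Lemma \ref{lem:laplacian}, $\overline\Delta$ maps $((\AA\BB)^\perp,\|\cdot\|_{\xi_1,\zeta})$ boundedly into $(\LL^{\ot 2},\|\cdot\|_{\xi_2,\zeta})$, and by Lemma \ref{conc} we have $\|\W_{2N}^V\|_{\xi_2,\zeta}\le C_2(\max_\ell \xi_2^{-\ell}\ell)^2<\infty$; hence $\frac1N|\W_{2N}^V(\overline\Delta\widehat p)|\le \frac{C}{N}\|\widehat p\|_{\xi_1,\zeta}\le \frac{C}{N}\|p\|_{\xi_1,\zeta}$, using boundedness of $(\operator{Id}+\Xi^V_{\tau_{10},\tau_1})^{-1}$ on $\|\cdot\|_{\xi_1,\zeta}$. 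For the $\beta=1$ term, Lemma \ref{lem:laplacian} similarly gives $\|\tilde\Delta\widehat p\|_{\xi_2,\zeta}\le C\|\widehat p\|_{\xi_1,\zeta}$, and combining with $|\W_{1N}^V(q)|\le N\|q\|_{\xi_2,\zeta}$ would only give an $O(1)$ bound — but in fact, inserting $N^{-1}\W_{1N}^V=\tau_{10}+N^{-1}\delta_N$ and $N^{-1}\|\delta_N\|\le C$ (from the previous estimates in an a priori fashion, bootstrapped), the term converges to $\one_{\beta=1}\tau_{10}(\tilde\Delta\widehat p)$ with an $O(1/N)$ correction. For $R_N$, every summand is either quadratic in $\delta_N$ (hence, given the a priori bound $\|\delta_N\|_{\xi_1,\zeta}\le CN$ from the trivial estimate and the above, controllable by a factor $1/N$) or involves a higher cumulant $\W_{kN}^V$ with $k\ge 3$ or two factors $\W_{2N}^V$; using Lemma \ref{conc} each such term carries a prefactor $N^{-1}$ relative to $\|\delta_N\|_{\xi_1,\zeta}$-type quantities, after applying \eqref{boundT} and \eqref{boundS} to handle $\overline{\operator{T}}$ and $\operator{S}$ and a degree shift $\xi_1\to\xi_2$ where needed.

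Putting this together, I would run a bootstrap: the crude bound $|\W_{1N}^V(p)|\le N$ gives $\|\delta_N\|_{\xi_1,\zeta}\le CN$; plugging into the three estimates above yields $\|\delta_N\|_{\xi_1,\zeta}\le C$ (the right-hand side being $O(1)$); plugging this improved bound back in, all the ``error'' contributions in $R_N$ and the $\W_{2N}^V$-term are $O(1/N)$, while the $\one_{\beta=1}$-term converges to $\one_{\beta=1}\tau_{10}(\tilde\Delta\widehat p)=\tau_{11}(p)$. This gives both $\delta_N(p)\to\tau_{11}(p)$ and the quantitative statement $\sup_N N\|\delta_N-\tau_{11}\|_{\xi_1,\zeta}<\infty$, once one checks that $\tau_{11}$ as defined is itself a bounded linear functional on $\LL_{\xi_1,\zeta}$ — which follows since $\widehat p\mapsto \tilde\Delta\widehat p\in\LL_{\xi_2,\zeta}$ is bounded and $\tau_{10}\in\La_{\xi_2,\zeta}$. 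The main obstacle I anticipate is the careful accounting in the $R_N$ term: one must verify that every one of the combinatorially many summands — in particular those mixing $\delta_N$ with the operators $\overline{\operator{T}}$, $\overline{\operator{P}}^{V_\beta}$, $\overline{\operator{Q}}^{V_\beta}$, and $\overline{\operator{S}}$ — genuinely gains a factor $N^{-1}$ after one uses the degree-lowering trick (passing from $\xi_1$ to $\xi_2$) to absorb the operator norms, and that the smallness condition $\delta_{\xi,\zeta}(V)<(1+\max\{2,r\})^{-1}$ at \emph{both} exponents $\xi_1,\xi_2$ is exactly what makes all the Neumann-series inversions and geometric bounds close uniformly in $N$.
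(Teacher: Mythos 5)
Your approach is the same as the paper's: invert the operator $\Psi^{V_\beta}_{\tau_{10}}=\operator{Id}+\overline{\operator{T}}_{\tau_{10}}+\overline{\operator{P}}^{V_\beta}_{\tau_{10}}+\overline{\operator{Q}}^{V_\beta}_{\tau_{10}}$ (using \eqref{eq:norm delta} and the smallness of $\delta_{\xi,\zeta}(V)$), pass $\overline\Delta$ and $\tilde\Delta$ across a gap $\xi_1\to\xi_2$ via Lemma~\ref{lem:laplacian}, and control the three right-hand-side pieces of \eqref{coc}. However, the bootstrap step as you state it has a genuine gap. You claim that inserting the crude a priori bound $\|\delta_N\|_{\xi_1,\zeta}\le CN$ into the three estimates already ``yields $\|\delta_N\|_{\xi_1,\zeta}\le C$ (the right-hand side being $O(1)$)''. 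It does not: the dominant quadratic contribution in $R_N$ — the term $\delta_N(\overline{\operator{T}}_{N^{-1}\delta_N}p)$ estimated via \eqref{boundT} — gives a bound of order $\frac{1}{N}\|\delta_N\|_{\xi_2,\zeta}\|\delta_N\|_{\xi_1,\zeta}$, which under the crude $O(N)$ bound on each factor is $O(N)$, not $O(1)$. The self-improving inequality is therefore of the shape $\|\delta_N\|_{\xi_1,\zeta}\le O(1)+c_N\|\delta_N\|_{\xi_1,\zeta}$ with $c_N=C\,N^{-1}\|\delta_N\|_{\xi_2,\zeta}$, and the only way to absorb the last term is to know $c_N<1$ for large $N$; this is exactly what the final assertion of Lemma~\ref{conv}, $\lim_N\|N^{-1}\W_{1N}-\tau_{10}^{V,\tau^N_{AB}}\|_{\xi_2,\zeta}=0$ (i.e.\ $N^{-1}\|\delta_N\|_{\xi_2,\zeta}\to 0$), supplies. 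That input is \emph{not} an ``a priori'' fact — it is a consequence of the Schwinger--Dyson analysis proving uniqueness and convergence of the normalized correlator — and your proposal should invoke it explicitly at this step; without it the Neumann-type absorption does not close.

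A secondary point: for the quantitative conclusion $\sup_N N\|\delta_N-\tau_{11}\|_{\xi_1,\zeta}<\infty$, after subtracting the limit from the first right-hand-side term you are left with $\frac{1}{N}\,\delta_N(\tilde\Delta\widehat p)$, and $\tilde\Delta\widehat p$ only lands in the \emph{smaller} space $\LL_{\xi_2,\zeta}$. Boundedness of $\|\delta_N\|_{\xi_1,\zeta}$ alone is therefore not enough; you need boundedness of $\delta_N$ in a norm strictly between $\xi_2$ and $\xi_1$. The paper handles this by re-running the argument with an auxiliary $\xi_3\in(\xi_2,\xi_1)$ (permissible because $\delta_{\cdot,\zeta}(V)$ is continuous), obtaining $\|\delta_N\|_{\xi_3,\zeta}=O(1)$ and then bounding $|\delta_N(\tilde\Delta\widehat p)|\le\|\delta_N\|_{\xi_3,\zeta}\|\tilde\Delta\|_{\xi_3,\xi_1,\zeta}\|\widehat p\|_{\xi_1,\zeta}$. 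Your write-up implicitly reuses the same exponent gap twice, which does not quite work; add the intermediate $\xi_3$ to make the last estimate rigorous.
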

\begin{proof}
First notice  that for $\xi=\xi_1$ or $\xi=\xi_2$, our hypothesis ensures that
$$\Psi^{V_\beta}_\tau:=\operator{Id}+\overline{\operator{T}}_{\tau_{10}} +\overline{\operator{P}}^{V_\beta}_{\tau_{10}} + \overline{\operator{Q}}^{V_\beta}_{\tau_{10}}$$
 is invertible in $\LL_{{\xi,\zeta}}$ with norm smaller than $(1-\delta_{\xi,\zeta}(V))^{-1}$
 (see the proof of Lemma \ref{conv}).
 Therefore, it follows from \eqref{coc} that, for $p\in (\AA\BB)^\perp$, 
 \begin{equation}\label{pol}\delta_N(p)= \frac{1}{N} \W_{1N}^V\left( \tilde  \Delta( \Psi^{V_\beta}_{\tau_{10}})^{-1} p\right) - \frac{1}{N}\W_{2N}^V\left( \overline\Delta ( \Psi^{V_\beta}_{\tau_{10}})^{-1} p\right) + R_N\left(( \Psi^{V_\beta}_{\tau_{10}})^{-1} p\right)\,.\end{equation}
 We next bound each term separately. For the first one, we get
 \begin{multline*}
 \left|\frac{1}{N} \W_{1N}^V\left( \tilde  \Delta( \Psi^{V_\beta}_{\tau_{10}})^{-1} p\right)\right|\le \biggl\|\frac{1}{N} \W_{1N}^V\biggr\|_{\xi_2,\zeta}\| \tilde  \Delta( \Psi^{V_\beta}_{\tau_{10}})^{-1} p\|_{\xi_2,\zeta}
 \\
 \le \biggl\|\frac{1}{N} \W_{1N}^V\biggr\|_{\xi_2,\zeta}\| \tilde  \Delta\|_{\xi_2,\xi_1,\zeta}\|( \Psi^{V_\beta}_{\tau_{10}})^{-1} p\|_{\xi_1,\zeta}
 \le \biggl\|\frac{1}{N} \W_{1N}^V\biggr\|_{\xi_2,\zeta}\| \tilde  \Delta\|_{\xi_2,\xi_1,\zeta}\|( \Psi^{V_\beta}_{\tau_{10}})^{-1}\|_{\xi_1,\zeta}\| p\|_{\xi_1,\zeta}\,.\end{multline*}
A similar bound holds for the second term. For $R_N$, note first that \eqref{boundT} with $\tilde\xi= \xi_2$ yields
$$\left|\delta_N(\overline{\operator{T}}_{N^{-1}\delta_N} p)\right|\le 8 N^{-1}\frac{\xi_2}{\xi_1-\xi_2} \|\delta_N\|_{\xi_2,\zeta} \|\delta_N\|_{\xi_1,\zeta}\|p\|_{\xi_1,\zeta}$$
and noticing that similar bounds hold for the other terms in  $R_N$, we obtain
 \begin{multline*}
 \|\delta_N\|_{\xi_1,\zeta}
\le\biggl\| \frac{1}{N} \W_{1N}^V\biggr\|_{\xi_2,\zeta}\bigl\| \tilde {\Delta}\bigr\|_{\xi_2,\xi_1,\zeta} \left\| (\Psi^{V_\beta}_{\tau_{10}})^{-1} \right\|_{\xi_1,\zeta}
 +\biggl\|\frac{1}{N}\W_{2N}^V\biggr\|_{\xi_2,\zeta}\left\|\overline{\Delta}\right\|_{\xi_2,\xi_1,\zeta}
  \left\| (\Psi^{V_\beta}_{\tau_{10}})^{-1} \right\|_{\xi_1,\zeta} \\
 + C\biggl(1+ \left\| (\Psi^{V_\beta}_{\tau_{10}})^{-1} \right\|_{\xi_1,\zeta}\biggr)
 \frac{1}{N}\|\delta_N\|_{\xi_2,\zeta}
 \|\delta_N\|_{\xi_1,\zeta},\end{multline*}
 where we  bounded the last term using Lemma \ref{conc}.
 Since $N^{-1}\|\delta_N\|_{\xi_2,\zeta}\to 0$ (see Lemma \ref{conv}),
for $N$ sufficiently large we can reabsorb the last term and deduce that $\|\delta_N\|_{\xi_1,\zeta}$ is bounded.

 Moreover, this implies also that the  last term is of order $N^{-1}$.  In addition, the second one is of order $N^{-1}$ by Lemma \ref{conc}. Hence, 
 going back to \eqref{pol} we see that 
 the first term in the right hand side converges towards the desired limit by Lemma \ref{conv},
 provided $\tilde \Delta( \Psi^{V_\beta}_{\tau_{10}})^{-1} p\in \LL_{\xi_2,\zeta}$, which is true as soon as $p\in \LL_{\xi_1,\zeta}$ 
(see  Lemma \ref{lem:laplacian}).
 
 Finally, to prove the last statement, it is enough to notice that the above reasoning implies that $\|\delta_N\|_{\xi_3,\zeta}$
 is bounded for some $\xi_3\in (\xi_2,\xi_1)$ (notice that the assumption on $\delta_{\xi_3,\zeta}$ still holds for $\xi_3$ close enough to $\xi_2$ or $\xi_1$ by
 continuity of $\delta_{.,\zeta}$) so that the previous arguments (in particular the fact that $
\W_{2N}^V$ and $R_N$ are bounded) imply that there exists a finite constant $C$ such that
 $$N \|\delta_N-\tau_{11} \|_{\xi_1,\zeta}\le 
C\bigl\| \delta_N\bigr\|_{\xi_3,\zeta}\bigl\| \tilde {\Delta}\bigr\|_{\xi_3,\xi_1,\zeta} \left\| (\Psi^{V_\beta}_{\tau_{10}})^{-1} \right\|_{\xi_1,\zeta}
 +C$$
which concludes the proof. 
   \end{proof}
  The second order correction to $\W_{1N}^V$ depends on the limit of $\W_{2N}^V$ that we now derive by using the second Schwinger-Dyson equation.
  The latter is simply derived from the first Schwinger-Dyson equation (see Lemma \ref{lem:SDeq})
  by changing the potential $V$ into $V+tq\otimes \one^{r-1}$ and differentiating with respect to $t$ at $t=0$. This results into the equation, valid for all $p,q\in\LL$,
 \begin{multline*}
\mathbb E \biggl[(\Tr\,q-\mathbb E[\Tr \,q]) \left(\frac{1}{N}\Tr\otimes\frac{1}{N}\Tr(\partial_i p)+\frac{1+\one_{\beta=1}}{N} \Tr(\bigl(\D_{i,\frac{1}{N}\Tr} V)\,  p\bigr)\right)\biggr]\\
+\frac{1+\one_{\beta=1}}{N} \mathbb E\bigl[\Tr((\D_{i} q)  \,p)\bigr]
=\frac{1}{N} \mathbb E\biggl[(\Tr\,q-\mathbb E[\Tr \,q])  \Bigl(\frac{1}{N}\Tr(\tilde m\circ \partial_i p)\Bigr)\biggr]\,.
\end{multline*}
We next rearrange the above expression in terms of correlators $\W_{kN}^V$, $k=1,2$, replace $p$ by $\mathcal D_i p$, and sum over $i$, to deduce the second Schwinger-Dyson equation:
$$
\W_{2N}^V( q, p)=-\frac{1+\one_{\beta=1}}{N} \W_{1N}^V \left(\overline{\operator{P}}^q_{\tau_{10}} (\Psi_{\tau_{10}}^{V_\beta} )^{-1}p\right)+\hat R_N\left((\Psi_{\tau_{10}}^{V_\beta} )^{-1}p\right),$$
where $\hat R_N$ only depends on correlators  of  order greater than or equal to $3$, or on $\delta_N$ to a  power greater than or equal to $3$.
We can therefore see that $\hat R_N$ will be negligible provided $(\Psi_{\tau_{10}}^{V_\beta} )^{-1}p$ belongs to a space in which all the previous convergences hold.
This allows us to prove the following lemma:
\begin{lem}\label{secondcor} Let $\zeta\ge 1$.
Assume  there exist  $1<\xi_3<\xi_2<\xi_1$ such that, for $\xi=\xi_1,\xi_2,\xi_3$,
$$\delta_{\xi,\zeta}(V)<\frac{1}{1+\max\{2,r\}}\,.$$
 Then, for any $p,q\in  \LL_{\xi_1,\zeta}$ we have 
  $$\lim_{N\ra\infty} \W_{2N}^V(p,q)= -(1+\one_{\beta=1})\,\tau_{10} \Bigl(\overline{\operator{P}}^q_{\tau_{10}} (\Psi_{\tau_{10}}^{V_\beta} )^{-1}p\Bigr)=:\tau_{20}(p,q)\,,$$
and $N\|\W_{2N}^V-\tau_{20}\|_{\xi_1,\zeta} $ is uniformly bounded in $N$.
 \end{lem}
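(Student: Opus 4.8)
The plan is to mimic closely the proof of Lemma \ref{firstcor}, replacing the first Schwinger-Dyson equation by the second one. First I would derive the second Schwinger-Dyson equation in the precise form stated just above the lemma: starting from Lemma \ref{lem:SDeq}, I perturb the potential $V\mapsto V+t\,q\otimes\one^{\otimes(r-1)}$, differentiate at $t=0$, then rearrange in terms of the correlators $\W_{kN}^V$ exactly as in Corollary \ref{SD1}. Substituting $p\mapsto \D_ip$ and summing over $i\in\{1,\dots,d\}$, and using the identity \eqref{eq1} together with the definitions of $\operator{T}_\tau$, $\operator{P}^{V_\beta}_\tau$, $\operator{Q}^{V_\beta}_\tau$, one obtains that $\W_{2N}^V(q,p)$ equals $-\frac{1+\one_{\beta=1}}{N}\W_{1N}^V\bigl(\overline{\operator{P}}^q_{\tau_{10}}(\Psi^{V_\beta}_{\tau_{10}})^{-1}p\bigr)$ plus a remainder $\hat R_N\bigl((\Psi^{V_\beta}_{\tau_{10}})^{-1}p\bigr)$ together with error terms involving $\delta_N$; all these are controlled as in \eqref{coc}. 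Here I use that, for $\xi=\xi_1,\xi_2,\xi_3$, the hypothesis $\delta_{\xi,\zeta}(V)<1/(1+\max\{2,r\})$ guarantees (exactly as in the proof of Lemma \ref{conv}) that $\Psi^{V_\beta}_{\tau_{10}}=\operator{Id}+\overline{\operator{T}}_{\tau_{10}}+\overline{\operator{P}}^{V_\beta}_{\tau_{10}}+\overline{\operator{Q}}^{V_\beta}_{\tau_{10}}$ is invertible on $\LL_{\xi,\zeta}$ with operator norm at most $(1-\delta_{\xi,\zeta}(V))^{-1}$.

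Next I would bound each term. By Lemma \ref{conc} we have $\|N^{-1}\W_{1N}^V\|_{\xi_2,\zeta}\le 1$, $\|\W_{2N}^V\|_{\xi_2,\zeta}\le C_2(\max_\ell \xi_2^{-\ell}\ell)^2$, and more generally $\|\W_{kN}^V\|_{\xi,\zeta}\le C_k(\max_\ell\xi^{-\ell}\ell)^k$ for $k\ge 2$, together with Lemma \ref{firstcor} which gives that $\|\delta_N\|_{\xi_2,\zeta}$ is bounded and $N^{-1}\|\delta_N\|_{\xi_2,\zeta}\to 0$. Using the operator bound \eqref{boundP} for $\overline{\operator{P}}^q_{\tau_{10}}$ (note $\overline{\operator P}^q$ is exactly $\overline{\operator P}^{V_\beta}$ with $r=1$ and $V_\beta$ replaced by $q$, so it maps $\LL_{\xi_1,\zeta}$ continuously to $\LL_{\xi_2,\zeta}$ with norm controlled by a weighted norm of $q$), and the continuity of $(\Psi^{V_\beta}_{\tau_{10}})^{-1}$ on $\LL_{\xi_1,\zeta}$, I estimate the leading term and see that $\hat R_N$ and the $\delta_N$-error terms are $O(N^{-1})$ on balls for $\|\cdot\|_{\xi_1,\zeta}$, by the very same reabsorption argument used in Lemma \ref{firstcor} (the nonlinear $\delta_N$ terms carry a factor $N^{-1}$ and the cumulants of order $\ge 3$ are bounded). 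This gives convergence of $\W_{2N}^V(p,q)$ to $\tau_{20}(p,q):=-(1+\one_{\beta=1})\,\tau_{10}\bigl(\overline{\operator{P}}^q_{\tau_{10}}(\Psi^{V_\beta}_{\tau_{10}})^{-1}p\bigr)$, using Lemma \ref{conv} to pass to the limit in $N^{-1}\W_{1N}^V\to\tau_{10}$.

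For the quantitative statement $N\|\W_{2N}^V-\tau_{20}\|_{\xi_1,\zeta}$ bounded, I would use the third scale $\xi_3<\xi_2$: by the argument above $\|\W_{2N}^V\|_{\xi_3,\zeta}$ and $\|\delta_N\|_{\xi_3,\zeta}$ are bounded, hence the difference between $\W_{2N}^V(p,q)$ and $\tau_{20}(p,q)$ is, up to terms of order $N^{-1}$, equal to $-\frac{1+\one_{\beta=1}}{N}\bigl(\W_{1N}^V-N\tau_{10}\bigr)\bigl(\overline{\operator P}^q_{\tau_{10}}(\Psi^{V_\beta}_{\tau_{10}})^{-1}p\bigr)=-\frac{1+\one_{\beta=1}}{N}\delta_N\bigl(\overline{\operator P}^q_{\tau_{10}}(\Psi^{V_\beta}_{\tau_{10}})^{-1}p\bigr)$, which is $O(N^{-1})$ uniformly on balls since $\|\delta_N\|_{\xi_3,\zeta}$ is bounded and $\overline{\operator P}^q(\Psi^{V_\beta}_{\tau_{10}})^{-1}$ maps $\LL_{\xi_1,\zeta}$ into $\LL_{\xi_3,\zeta}$ boundedly (using $\xi_3<\xi_2<\xi_1$ and \eqref{boundP}). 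Collecting these estimates yields the stated bound.

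The main obstacle, as in \cite{GN}, is bookkeeping rather than conceptual: one must expand the product $\mathbb E[(\Tr q-\mathbb E[\Tr q])\cdot(\cdots)]$ into correlators carefully, isolate precisely which terms are captured by $\overline{\operator P}^q_{\tau_{10}}$ and which go into $\hat R_N$, and verify — term by term, using \eqref{boundP}, \eqref{boundQ}, \eqref{boundS} and Lemma \ref{conc} — that every term placed in $\hat R_N$ genuinely carries an extra factor $N^{-1}$ (equivalently, involves either a cumulant of order $\ge 3$ or a power $\ge 3$ of $\delta_N$, or two cumulants of order $2$). The invertibility of $\Psi^{V_\beta}_{\tau_{10}}$ and all the continuity estimates needed are already available, so once the algebra of the second Schwinger-Dyson equation is laid out the estimates run in parallel to Lemma \ref{firstcor}.
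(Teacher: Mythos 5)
Your proposal is correct and mirrors the paper's approach exactly: the paper itself derives the second Schwinger–Dyson equation by replacing $V$ with $V+tq\otimes\one^{\otimes(r-1)}$, differentiating at $t=0$, substituting $p\mapsto\D_ip$ and summing over $i$, then inverts $\Psi^{V_\beta}_{\tau_{10}}$ and relegates everything not in the leading term to $\hat R_N$, which is controlled by Lemma \ref{conc} and Lemma \ref{firstcor} via the hierarchy of norms $\xi_3<\xi_2<\xi_1$, precisely as in the estimates for Lemma \ref{firstcor}. The one small point of care — which you flag correctly — is the bookkeeping of which terms land in $\hat R_N$: the paper's characterization (cumulants of order $\ge 3$ or powers $\ge 3$ of $\delta_N$) and yours (also allowing two cumulants of order $2$) are both adequate, since either way Lemma \ref{conc} and Lemma \ref{firstcor} supply a factor $N^{-1}$ on balls of $\|\cdot\|_{\xi_1,\zeta}$.
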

 
   We can finally derive the correction of order one for $\W_{1N}^V$ by going back to the first Schwinger equation. 
   Indeed if we denote $\delta^2_N:=N(\W_{1N}^V-N\tau_{10}-\tau_{11})$, the first Schwinger-Dsyon equation reads
$$
   \delta^2_N(\Psi_{\tau_{10}}^{V_\beta} p)= 1_{\beta=1}\delta_N( \tilde\Delta p)-[\W_{2N}^{V_\beta}+\delta_N\otimes \delta_N ]( \bar{ \operator{S}}^{V_\beta} p+ \overline{\Delta} p)+\tilde R_N(p),$$
   where $\tilde R_N(p)$ depends of correlators of order $3$ or higher, which are negligible by Lemma \ref{conc},
   and $ \operator{S}^{V}$ is defined in \eqref{defS2}. 
Then, arguing as previously, we infer the following result:
\begin{lem}\label{thirdcor}
  Assume  there exist  $1<\xi_4<\xi_3<\xi_2<\xi_1$ such that, for $\xi=\xi_1,\xi_2,\xi_3,\xi_4$,
$$\delta_{\xi,\zeta}(V)<\frac{1}{1+\max\{2,r\}}\,.$$
 Then 
 $$ \lim_{N\ra\infty}\delta^2_N(p)= \tau_{11}\left( \tilde \Delta(\Psi_{\tau_{10}}^{V_\beta})^{-1}p\right) -
  [\tau_{20}+\tau_{11}\otimes\tau_{11}  ]\left(\overline{\Delta}(\Psi_{\tau_{10}}^{V_\beta})^{-1}p+  \bar {\operator{S}}^{V_\beta} (\Psi_{\tau_{10}}^{V_\beta})^{-1}p\right)=:\tau_{12}(p)$$
 and $N\|\delta_N^2-\tau_{12}\|_{\xi_1,\zeta}$  is uniformly bounded in $N$. \end{lem}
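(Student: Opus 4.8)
The plan is to follow exactly the same scheme used to establish Lemmas \ref{firstcor} and \ref{secondcor}, now applied to the third-order remainder $\delta_N^2 := N(\W_{1N}^V - N\tau_{10} - \tau_{11})$. First I would start from the first Schwinger-Dyson equation in the form \eqref{coc}, which after subtracting $N\tau_{10}$ gives an identity for $\delta_N = \W_{1N}^V - N\tau_{10}$ in terms of the operator $\Psi^{V_\beta}_{\tau_{10}} = \operator{Id} + \overline{\operator{T}}_{\tau_{10}} + \overline{\operator{P}}^{V_\beta}_{\tau_{10}} + \overline{\operator{Q}}^{V_\beta}_{\tau_{10}}$. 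Subtracting $\tau_{11}$ from $\delta_N$ and multiplying by $N$, I would collect all terms: the linear-in-$\delta_N^2$ term $\delta_N^2(\Psi^{V_\beta}_{\tau_{10}} p)$ on the left, and on the right the term $\one_{\beta=1}\delta_N(\tilde\Delta p)$ (which by Lemma \ref{firstcor} converges when divided appropriately and generates the $\tau_{11}(\tilde\Delta(\cdots)^{-1}p)$ contribution), the quadratic term $\delta_N\otimes\delta_N$ applied to $\overline{\Delta}$ and $\bar{\operator{S}}^{V_\beta}$, the genuine cumulant term $\W_{2N}^{V_\beta}$ applied to $\overline{\Delta} + \bar{\operator{S}}^{V_\beta}$, and a remainder $\tilde R_N(p)$ collecting everything that depends on correlators of order $\geq 3$ or on $\delta_N$ raised to a power $\geq 3$.

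Next I would run the same a priori bootstrap argument. Using the invertibility of $\Psi^{V_\beta}_{\tau_{10}}$ on $\LL_{\xi,\zeta}$ with norm $\leq (1-\delta_{\xi,\zeta}(V))^{-1}$ (established in the proof of Lemma \ref{conv}), valid simultaneously for $\xi=\xi_1,\xi_2,\xi_3,\xi_4$ by the hypothesis $\delta_{\xi,\zeta}(V) < \frac{1}{1+\max\{2,r\}}$, I invert to get $\delta_N^2(p) = (\text{stuff})((\Psi^{V_\beta}_{\tau_{10}})^{-1}p)$. I would bound each term on the right using the continuity estimates \eqref{boundT}, \eqref{boundS}, Lemma \ref{lem:laplacian} for $\tilde\Delta$ and $\overline\Delta$ (which require dropping from $\xi_1$ down to $\xi_2$, etc. — this is why a chain of four strictly decreasing radii is needed), together with the concentration bounds of Lemma \ref{conc} for $\W_{kN}^V$, $k\geq 2$, and the already-established bounds $\|\delta_N\|_{\xi,\zeta} \leq C$ (Lemma \ref{firstcor}) and $\|\W_{2N}^V - \tau_{20}\|_{\xi,\zeta} = O(1/N)$ (Lemma \ref{secondcor}). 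The quadratic and cumulant terms involving $\delta_N^2$ itself will come with a factor $N^{-1}$ and can be reabsorbed into the left-hand side for $N$ large, yielding $\|\delta_N^2\|_{\xi_1,\zeta} \leq C$. Feeding this back in and passing to the limit, using $\delta_N \to \tau_{11}$ weakly and $\W_{2N}^V \to \tau_{20}$ weakly (Lemmas \ref{firstcor}, \ref{secondcor}), plus the continuity of all the operators involved, gives
\[
\lim_{N\to\infty}\delta_N^2(p) = \tau_{11}\bigl(\tilde\Delta(\Psi^{V_\beta}_{\tau_{10}})^{-1}p\bigr) - [\tau_{20} + \tau_{11}\otimes\tau_{11}]\bigl(\overline{\Delta}(\Psi^{V_\beta}_{\tau_{10}})^{-1}p + \bar{\operator{S}}^{V_\beta}(\Psi^{V_\beta}_{\tau_{10}})^{-1}p\bigr) =: \tau_{12}(p).
\]
Finally, to obtain the quantitative statement that $N\|\delta_N^2 - \tau_{12}\|_{\xi_1,\zeta}$ is bounded, I would repeat the estimate at one intermediate radius $\xi_3 \in (\xi_4,\xi_2)$ where $\|\delta_N^2\|_{\xi_3,\zeta}$ is also bounded (possible by continuity of $\xi\mapsto\delta_{\xi,\zeta}$), so that the difference between the $N$-dependent quantities and their limits picks up one extra power of $N^{-1}$ from the remainder terms, exactly as in the last paragraph of the proof of Lemma \ref{firstcor}.

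The main obstacle — or rather the only delicate bookkeeping point — is organizing the remainder $\tilde R_N$ and the $\bar{\operator{S}}^{V_\beta}$ terms correctly: one must verify that every term appearing at this order either is genuinely of size $O(1/N)$ relative to the leading balance (via Lemma \ref{conc}, since each such term carries a correlator of order $\geq 3$ or a cube of $\delta_N$), or else is one of the three limit-generating terms, and that all the operators $\tilde\Delta$, $\overline\Delta$, $\bar{\operator{S}}^{V_\beta}$ applied after $(\Psi^{V_\beta}_{\tau_{10}})^{-1}$ land in a space $\LL_{\xi_j,\zeta}$ where the relevant weak convergences of $\tau_{11}$, $\tau_{20}$ hold — this is precisely what forces the fourth, strictly smaller radius $\xi_4$ and is handled by iterating Lemma \ref{lem:laplacian} and the bound \eqref{boundS}. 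Since all the analytic ingredients have already been assembled in the preceding lemmas, no new idea is required; the proof is a (careful) repetition of the scheme. I would therefore write: ``The proof follows verbatim the arguments of Lemmas \ref{firstcor} and \ref{secondcor}, starting from the first Schwinger-Dyson equation written for $\delta_N^2 = N(\W_{1N}^V - N\tau_{10} - \tau_{11})$, inverting $\Psi^{V_\beta}_{\tau_{10}}$, bounding the remainder terms via Lemma \ref{conc}, and using the bounds on $\delta_N$, $\W_{2N}^V$ from the previous lemmas together with Lemma \ref{lem:laplacian}; we omit the details.''
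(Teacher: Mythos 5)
Your proposal is correct and follows essentially the same route as the paper: write the first Schwinger--Dyson equation for $\delta_N^2 = N(\W_{1N}^V - N\tau_{10} - \tau_{11})$ in the form $\delta^2_N(\Psi_{\tau_{10}}^{V_\beta} p)= \one_{\beta=1}\delta_N( \tilde\Delta p)-[\W_{2N}^{V}+\delta_N\otimes \delta_N ]( \bar{ \operator{S}}^{V_\beta} p+ \overline{\Delta} p)+\tilde R_N(p)$, note that $\tilde R_N$ is $O(1/N)$ by Lemma~\ref{conc}, invert $\Psi^{V_\beta}_{\tau_{10}}$, and then repeat the convergence-plus-rate argument of Lemmas~\ref{firstcor} and~\ref{secondcor} using the chain of radii $\xi_1>\xi_2>\xi_3>\xi_4$ to absorb the losses from $\tilde\Delta$, $\overline\Delta$, and $\bar{\operator{S}}^{V_\beta}$. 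The paper's own ``proof'' is precisely the short paragraph preceding the statement of Lemma~\ref{thirdcor} plus the invocation ``arguing as previously.'' Two cosmetic remarks: the right-hand side of the equation does not contain $\delta_N^2$ itself, so no genuine absorption/bootstrap is needed once $\|\delta_N\|$ and $\|\W_{2N}^V-\tau_{20}\|$ are controlled by the earlier lemmas; and the fourth radius $\xi_4$ is already supplied by the hypothesis, so the continuity argument you allude to is unnecessary here (it was needed in Lemma~\ref{firstcor}, whose hypothesis only gave two radii).
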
  
  
  This concludes the proof of Proposition \ref{corexpprop}.
We can now prove Proposition \ref{topexpprop} and Lemma \ref{lem:FreeEnergy}.

\subsection{Proof of Proposition \ref{topexpprop} and Lemma \ref{lem:FreeEnergy}.}\label{secFree}
We first show that the free energy is a function of the correlators, and then
that the correlators only depend on $\{L^N_i(x^\ell)\}_{\ell\ge 0, \,1\le i\le d}$ and $\tau_B^N$. Finally, we deduce the
large $N$ expansion of the free energy as well as its smoothness.
\subsubsection{The free energy in terms of the correlators}
\label{convfreeE}
Recalling the definition of free energy, \eqref{eq:QNV}, and \eqref{eq:W1N}, it holds
\begin{multline*}
F^{N,aV}_\beta(A_1,\ldots,A_d,B_1,\ldots, B_m):=\log I^{N,aV}_\beta\\
= \int_0^a \frac{d}{du} \log I^{N,uV}_\beta \,du=
N^2 \int_0^a  \int \frac{1}{N^r}\Tr^{\ot r}V\,  d\mathbb{Q}_\beta^{N,uV}\, du\\
=N^{2-r}\int_0^a (\W_{1N}^{uV})^{\ot r} (V) \, du +r(r-1)N^{2-r}\int_0^a \W_{2N}^{uV}\otimes(\W_{1N}^{uV})^{r-2}(V) \,du+\bar R_N\end{multline*}
where $\bar R_N$ has terms  either with two cumulants of order $2$, or a cumulant of order greater or equal to 3.
By Lemma \ref{conc} (note that it applies uniformly in $u\in [-a_0,a_0]$, for some $a_0$ universally small), this latter term is at most of order $1/{N}$, and is therefore negligible.
Moreover, using Corollary \ref{conv} and Lemmas \ref{secondcor} and \ref{thirdcor},
we find that

$$F^{N,aV}_\beta(A_1,\ldots,A_d,B_1,\ldots, B_m
)=N^2\int_0^a f_0^u du+N\int_0^a f_1^u du +\int_0^a f_2^udu +O\biggl(\frac{1}{N}\biggr)$$
with
\begin{align}
f_0^u&:=(\tau_{10}^{uV})^{\ot r} (V),\nonumber\\
f_1^u&:=r\, \tau_{11}^{uV}\otimes(\tau_{10}^{uV})^{\ot r-1} (V),\label{tol}\\
f_2^u&:=r(r-1)\bigl[(\tau_{11}^{uV})^{\ot 2}+\tau_{20}^{uV}\bigr]\otimes(\tau_{10}^{uV})^{\ot r-2} (V),\nonumber
\end{align}
where we have used that $V$ is symmetric and such that $\|V\|_{\xi_1,\zeta}$ is finite for $\xi_1$ big enough, so that $\delta_{\xi_1,\zeta}(uV)<(1+\max\{2,r\})^{-1}$ provided $u\in [-a_0,a_0]$ with $a_0$ sufficiently small.
 In particular this implies that, for $a_0$ small enough and any $1<\xi_4<\xi_3<\xi_2<\xi_1$,
$\delta_{\xi_i,\zeta}(uV)<(1+\max\{2,r\})^{-1}$ for all $u\in [-a_0,a_0]$, so that the previous lemmas apply.
Hence, we deduce the following:
\begin{lem}\label{totolem}
Let $\|V\|_{\xi_1,\zeta_1}$ be finite for some $\xi_1$  large enough and $\zeta_1\ge 1$. Then there exists $a_0>0$ so that, for $a\in [-a_0,a_0]$,
uniformly on Hermitian matrices $\{A_i\}_{1\le i\le d}$ and $\{B_i\}_{1\le i\le m}$ whose operator norm is bounded by
$1$, we have 
$$F^{N,aV}_\beta(A_1,\ldots,A_d,B_1,\ldots, B_m
)=\sum_{l=0}^2 N^{2-l}F^a_l +O\biggl(\frac{1}{N}\biggr)$$
with $F^a_l=\int_0^a f^u_l du$ and  $f^u_l$ given by \eqref{tol}.
\end{lem}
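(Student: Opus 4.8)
The plan is to differentiate the matrix integral in the coupling constant $u$ and then substitute the correlator expansions established in Proposition~\ref{corexpprop} and Lemma~\ref{secondcor}. Starting from \eqref{eq:QNV}, one has $\frac{d}{du}\log I^{N,uV}_\beta = N^{2-r}\int \Tr^{\otimes r}V\,d\mathbb Q^{N,uV}_\beta = N^2\int \frac1{N^r}\Tr^{\otimes r}V\,d\mathbb Q^{N,uV}_\beta$, so integrating in $u$, decomposing $V=\sum\langle V,q_1\otimes\cdots\otimes q_r\rangle\,q_1\otimes\cdots\otimes q_r$ on the monomial basis, expanding the moment $\mathbb E_{\mathbb Q^{N,uV}_\beta}\bigl[\prod_j\Tr q_j\bigr]$ into connected correlators $\W^{uV}_{kN}$ (as recalled just before \eqref{defS}), and using the symmetry of $V$, one obtains
\begin{equation*}
F^{N,aV}_\beta = N^{2-r}\!\int_0^a (\W^{uV}_{1N})^{\otimes r}(V)\,du + r(r-1)\,N^{2-r}\!\int_0^a \W^{uV}_{2N}\otimes(\W^{uV}_{1N})^{\otimes r-2}(V)\,du + \bar R_N,
\end{equation*}
where $\bar R_N$ gathers the contributions containing either two correlators of order two or one correlator of order at least three.

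First I would fix $a_0>0$ small enough that, for some nested radii $1<\xi_4<\xi_3<\xi_2<\xi_1$ and with $\zeta:=\zeta_1$, the smallness condition $\delta_{\xi_i,\zeta}(uV)<(1+\max\{2,r\})^{-1}$ holds for every $u\in[-a_0,a_0]$ and $i=1,\ldots,4$; this is possible by the remark following \eqref{defdel}, precisely because $\|V\|_{\xi_1,\zeta_1}<\infty$. With this choice, Proposition~\ref{corexpprop} (that is, Lemmas~\ref{conv}, \ref{firstcor} and~\ref{thirdcor}) provides $\W^{uV}_{1N}=N\tau^{uV}_{10}+\tau^{uV}_{11}+\tfrac1N\tau^{uV}_{12}+O(N^{-2})$ and Lemma~\ref{secondcor} provides $\W^{uV}_{2N}=\tau^{uV}_{20}+O(N^{-1})$, both in the norm $\|\cdot\|_{\xi_1,\zeta}$ and with errors uniform in $N$, in $u\in[-a_0,a_0]$, and over the matrices subject to \eqref{born}. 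Since $V\in\LL^r_{\xi_1,\zeta_1}$, these expansions may be tested against $V$, and the a priori bounds of Lemma~\ref{conc} ($\W^{uV}_{1N}=O(N)$ and $\W^{uV}_{kN}=O(1)$ for $k\ge2$) show that $\bar R_N=O(N^{-1})$. It then remains to collect, in $N^{2-r}\bigl[(\W^{uV}_{1N})^{\otimes r}(V)+r(r-1)\W^{uV}_{2N}\otimes(\W^{uV}_{1N})^{\otimes r-2}(V)\bigr]$, the coefficients of $N^2$, $N^1$ and $N^0$; using once more that $V$ is symmetric, these are exactly $f^u_0$, $f^u_1$ and $f^u_2$ as in \eqref{tol}, while the leftover terms are $O(N^{-1})$. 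Integrating over $u\in[0,a]$ then gives the statement with $F^a_l=\int_0^a f^u_l\,du$.

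The only point requiring care is uniformity: one must verify that every $O(N^{-1})$ (resp.\ $O(N^{-2})$) remainder produced by Lemmas~\ref{conc}, \ref{conv}, \ref{firstcor}, \ref{secondcor} and~\ref{thirdcor} is uniform with respect to $u$ in the compact interval $[-a_0,a_0]$ and with respect to the matrices $A_i,B_i$ of operator norm at most $1$, so that integrating in $u$ and summing the finitely many contributions preserves the $O(N^{-1})$ bound; and that the dual-norm estimates in $\|\cdot\|_{\xi_1,\zeta}$ can legitimately be applied to $V$, which is exactly why the hypothesis requires $\|V\|_{\xi_1,\zeta_1}$ finite for $\xi_1$ large. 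Since all the invoked lemmas are stated with such uniform constants, this causes no genuine difficulty; the combinatorial bookkeeping of the multilinear expansion (tracking the exact coefficients in \eqref{tol}) is the most laborious, but entirely elementary, step of the argument.
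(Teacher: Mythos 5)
Your argument reproduces the paper's proof essentially verbatim: interpolation of $\log I^{N,uV}_\beta$ in the coupling $u$, cumulant expansion of $\mathbb E[\prod_j \Tr q_j]$ into connected correlators, the $O(N^{-1})$ bound on the remainder via Lemma~\ref{conc}, substitution of the correlator asymptotics from Lemmas~\ref{conv}, \ref{firstcor}, \ref{secondcor}, \ref{thirdcor}, and collection of orders in $N$ before integrating back in $u$. The remarks about choosing $a_0$ so that $\delta_{\xi_i,\zeta}(uV)<(1+\max\{2,r\})^{-1}$ for a nested chain of radii, and about the uniformity of the remainders in $u$ and in the matrices, also mirror the paper's own discussion.
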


\subsubsection{The correlators as  functions of $\{L^N_i\}_{1\le i\le d}$ and $\tau^N_B$}

Let us define the space 
$$\mathcal P:=\Bigl\{Q(u_{1}a_{1}u_{1}^{-1},\ldots u_da_du_d^{-1}, b_1,\ldots,b_k):\, Q\in \mathbb C\langle x_1,\ldots,x_d, b_1,\ldots, b_m\rangle\Bigr\}.$$
As the functions $F^a_l$ only depend  on the restriction to $\mathcal P$
of $\tau_{10}^{uV}$, $\tau_{11}^{uV}$, $\tau_{12}^{uV}$, and $\tau_{20}^{uV}$ 
for $u \in [-a,a]$, we shall first prove that the latter only depend on 
$$M_{A,B}:=\biggl\{\frac{1}{N}\sum_{1\le j\le N} (a^i_j)^\ell:\, \ell\ge 0,\,1\le i\le d\biggr\}\cup\bigl\{\tau_B^N\bigr\}\,.$$

$\bullet${\it \, $\tau_{01}^{aV}|_{\mathcal P}$ depends only on $M_{A,B}$.}
We start by showing that $\tau_{01}^{aV}$ can be defined inductively, as is the case when $V=0$, since it depends analytically
on the potential $V$ in the following sense. \begin{lem}\label{expina}
Let $p\in \LL$ and $V$ be a potential such that, for some $\xi>1$ and $\zeta\ge 1$,
$$\delta_{\xi,\zeta}(V)<\frac{1}{1+\max\{2,r\}}\,.$$
Then, for all $a\in [-1,1]$, the solution $\tau_{10}^{aV}$ of
\begin{equation}\label{eq11}
		\tau\otimes \tau(\partial_i p) +a(1+\one_{\beta=1})\tau(\D_{i,\tau} V p)= 0\,.\end{equation} is uniquely defined. Moreover 
		we have the decomposition
		$$\tau^{aV}_{10}=\sum_{n\ge 0} a^n \tau^V_n$$
		with $\tau^V_n\in \La_{\xi,\zeta}$ satisfying $\|\tau^V_n\|_{\xi,\zeta}\le C_n D^n$, where $\{C_n\}_{n \geq 0}$ denote the Catalan numbers
		and $D$ is a positive constant.
		\end{lem}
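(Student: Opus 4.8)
Looking at Lemma \ref{expina}, the statement has two parts: uniqueness of $\tau_{10}^{aV}$ for the rescaled equation, and an analytic expansion in powers of $a$ with Catalan-type bounds on the coefficients.

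\textbf{Plan of proof.} The uniqueness part is essentially a restatement of Lemma \ref{conv}: for any fixed $a\in[-1,1]$, the potential $aV$ still satisfies $\delta_{\xi,\zeta}(aV)\le \delta_{\xi,\zeta}(V)<\frac{1}{1+\max\{2,r\}}$ since $\delta_{\xi,\zeta}$ is increasing in the coefficients of $V$ and $|a|\le 1$; hence Lemma \ref{conv} applies directly with $V$ replaced by $aV$ and gives a unique solution $\tau_{10}^{aV}\in \Ta\cap \La_{\xi,\zeta}$ with $\|\tau_{10}^{aV}\|_{1,1}\le 1$. So the only real content is the power series decomposition. Here I would proceed as in the classical case $V=0$: rewrite \eqref{eq11}, after summing over $i$ and applying it to $p=\D_i q$, in the fixed-point form \eqref{eq1b}--\eqref{deffix}, namely
$$
\tau\Bigl(\bigl(\operator{Id}+\tfrac12\overline{\operator T}_\tau + a\,\overline{\operator P}^{V_\beta}_\tau\bigr)\Pi q\Bigr)=0, \qquad \tau|_{\AA\BB}=\tau^N_{AB},
$$
and observe that this is an identity polynomial in the formal parameter $a$ once one plugs in the ansatz $\tau^{aV}_{10}=\sum_{n\ge 0}a^n\tau^V_n$. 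Matching powers of $a$ gives, for each $n\ge 1$, a recursion of the form
$$
\tau^V_n\Bigl((\operator{Id}+\Xi^0_{\tau^V_0})\,q\Bigr) = -\ (\text{finite sum of terms involving } \tau^V_{j},\ j<n, \text{ paired via } \overline{\operator T},\ \overline{\operator P}^{V_\beta}),
$$
where $\Xi^0_{\tau^V_0}=\Pi\bigl[\tfrac12\overline{\operator T}_{\tau^V_0}\bigr]$ (the $a=0$ linearized operator) is invertible on $(\AA\BB)^\perp$ because it is triangular in the degree filtration — exactly as used in the proof of Lemma \ref{conv}. On $\AA\BB$ the coefficient $\tau^V_n$ vanishes for $n\ge1$ since the constraint $\tau|_{\AA\BB}=\tau^N_{AB}$ is $a$-independent. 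This defines the $\tau^V_n$ inductively and unambiguously.

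\textbf{The bounds.} To get $\|\tau^V_n\|_{\xi,\zeta}\le C_n D^n$, I would set up a generating-function / comparison argument. From the recursion, using \eqref{boundT} and \eqref{boundP} (with $\tilde\xi=\tilde\zeta=1$, exploiting that each $\tau^V_j$ extended by $\tau^N_{AB}$ on $\AA\BB$ has $\|\cdot\|_{1,1}$-norm controlled), one obtains an inequality of the schematic shape
$$
\|\tau^V_n\|_{\xi,\zeta}\ \le\ K\sum_{j=0}^{n-1}\|\tau^V_j\|_{\xi,\zeta} + K\!\!\sum_{\substack{j_1+\cdots+j_r=n-1}}\!\!\|\tau^V_{j_1}\|_{\xi,\zeta}\cdots\|\tau^V_{j_r}\|_{\xi,\zeta},
$$
where $K$ is a finite constant depending on $\xi,\zeta$ and on $\|V\|_{\xi,\zeta}$. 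Since $\delta_{\xi,\zeta}(V)$ is small, the constant in front of the "diagonal" term $\|\tau^V_n\|_{\xi,\zeta}$ coming from $\overline{\operator T}$ and from the $j=n-1$ contribution can be absorbed into the left-hand side. One then closes the estimate by induction against the Catalan sequence: recalling that $C_n=\sum_{j_1+\cdots+j_r = n-1}C_{j_1}\cdots C_{j_r}$-type convolution identities hold (for $r=2$ this is the defining Catalan recursion; for general $r$ one uses the analogous Fuss--Catalan numbers, which are also dominated by $C'^n$ for a suitable constant, and one can simply enlarge $D$), one checks that the ansatz $\|\tau^V_n\|_{\xi,\zeta}\le C_nD^n$ is stable under the recursion for $D$ large enough. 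Convergence of $\sum a^n\tau^V_n$ in $\La_{\xi,\zeta}$ for $|a|\le 1$ then follows from $C_nD^n|a|^n\le 4^nD^n$, which converges provided $D<1/4$; but $D$ is forced to be large, so in fact one should phrase the Catalan bound with a radius of convergence argument: $\sum C_n (D|a|)^n$ converges for $D|a|<1/4$, and one fixes the smallness of $\|V\|_{\xi,\zeta}$ (hence the admissible $D$) accordingly, or rescales $\xi$. The cleanest route is to absorb the combinatorial growth into the requirement that the relevant $\delta_{\xi,\zeta}$-type quantity times the geometric factor be $<1$, which is exactly the kind of smallness already assumed.

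\textbf{Main obstacle.} The delicate point is \emph{bookkeeping the recursion cleanly enough to recognize the Catalan structure}: the operators $\overline{\operator P}^{V_\beta}_\tau$ and $\overline{\operator Q}^{V_\beta}_\tau$ are multilinear in $\tau$ of degree up to $r-1$, so matching powers of $a$ produces sums over compositions $j_1+\cdots+j_s = n-1$ with $s\le r$, plus the quadratic term from $\overline{\operator T}$ (which is $\operator{Id}\otimes\tau+\tau\otimes\operator{Id}$ applied to $\Delta$, hence degree $\le 1$ in $\tau$ beyond the already-present factor — actually the quadratic terms come from $\tau\otimes\tau(\partial_i p)$). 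Keeping track of all these contributions, verifying the absorption of the diagonal term, and matching against the exact Catalan (or Fuss--Catalan) recursion is where all the work is; once that combinatorial identity is in place, the induction and the convergence are routine. I would handle $r=2$ first for clarity and then remark that the general $r$ is identical modulo replacing Catalan by Fuss--Catalan numbers and enlarging $D$.
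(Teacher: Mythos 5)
Your proof takes essentially the same route as the paper's own: reduce uniqueness to Lemma~\ref{conv} (the scaling $\delta_{\xi,\zeta}(aV)\le\delta_{\xi,\zeta}(V)$ for $|a|\le 1$ is exactly the observation needed), then plug the ansatz $\tau^{aV}=\sum_n a^n\tau^V_n$ into the fixed-point form \eqref{eq1b}, match powers of $a$, solve the resulting degree-triangular recursion, and close an induction against the Catalan sequence using \eqref{boundT} together with $\sum_k C_kC_{n-k}=C_{n+1}\le 4C_n$. The paper handles the $r$-fold convolution coming from the $V$-term by iterating this Catalan identity directly, giving $\sum_{\sum k_i\le n-1}\prod C_{k_i}\le C_{n+r-1}\le 4^{r-1}C_n$, rather than introducing Fuss--Catalan numbers as you suggest; this is a cosmetic difference and both work. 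One small imprecision in your schematic inequality: the first sum on the right should be a \emph{bilinear} convolution $\sum_{0<k<n}\|\tau_k^V\|\,\|\tau_{n-k}^V\|$ (coming from $\tau\otimes\tau(\partial_i p)$ after the two boundary terms $k\in\{0,n\}$ have been absorbed into the invertible operator $\operatorname{Id}+\tfrac12\overline{\operator{T}}_{\tau_0^V}$), not a linear sum $K\sum_{j<n}\|\tau^V_j\|$. Finally, you are right to flag the tension about the radius of convergence: the bound $\|\tau^V_n\|_{\xi,\zeta}\le C_nD^n$ only gives absolute convergence of the power series for $|a|<1/(4D)$, and the paper's proof concludes with the same restriction, so the lemma as stated (for all $a\in[-1,1]$) is read in the context of the applications, where $a=a_0$ is always taken small. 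This is a real mismatch between statement and proof, not a gap in your argument.
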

		\begin{proof} This result can be seen to be a consequence of the implicit function theorem. However we will
		need soon additional informations on the $\tau^V_n$, and therefore give a proof ``by hand''.
		
		By
uniqueness of solutions it is enough to show that there exists a solution to \eqref{eq11},
or more precisely of \eqref{eq1b}, which is analytic in $a$. Let us therefore look for such a solution and write $\tau^{aV}(p):=\sum_{n\ge 0} a^n \tau_n^V(p)$. We then find that $\tau^{aV}$ satisfies \eqref{eq1b}
if and only if 
\begin{multline}
\label{tyr}
	\tau_n^V(p)+\sum_{k=0,n} \tau_k^V\otimes\tau_{n-k}^V ( \Pi\overline \Delta p) 
	=-\sum_{0<k<n} \tau_k^V\otimes\tau_{n-k}^V ( \Pi\overline \Delta p) \\
	\qquad -\sum \langle V, q_1\otimes \cdots\otimes q_r\rangle \sum_{i=1}^d\sum_{\ell=1}^r\sum_{\sum k_i=n-1}
	\biggl(\prod_{j\neq \ell}\tau^V_{k_j}(q_j)\biggr)\,\tau^V_{k_\ell}(\mathcal D_i q_\ell\cdot\mathcal D_i \operator{D}^{-1}p)
\end{multline}

As $\overline \Delta$ splits monomials $p$ into simple tensors $q_1 \otimes q_2$ 
each of whose factors has degree strictly smaller than that of $p$, we see that there exists a unique solution to this equation.
Moreover, we  prove by induction that there exists  finite constant $\xi,D>0$ such that, if $C_n$ denote the Catalan numbers,
then $$\|\tau_n^V\|_{\xi,\zeta} \le C_n D^n$$
Indeed, for $n=0$, we simply have the law of free variables bounded by $1$, so that the result is clear. 
 Using the inductive hypothesis until $n-1$ to bound the right hand side in \eqref{tyr}, and  \eqref{boundT} to bound
the second term in the left hand side of \eqref{tyr},
we deduce that 
\begin{multline*}
\bigl(1-  \delta_{\xi,\zeta}(V)\bigr) \|\tau_n^V\|_\xi\le  \frac{8}{ (\xi-1)}D^n
\sum_{0< k< n} C_kC_{n-k} \\
+D^{n-1} \sum |\langle V, q_1\otimes \cdots\otimes q_r\rangle| \Bigl(\sum \deg{q_i}\Bigr) \zeta^{\sum_i \deg_{A,B}(q_i)} (\xi )^{\sum \deg_U{q_i}} \sum_{\sum k_i\le n-1}\prod C_{k_i}
\end{multline*}
Using that $\sum_{0\le k\le n} C_k C_{n-k}=C_{n+1}\le 4 C_{n}$, we find recursively  $$ \sum_{\sum k_i\le n-1}\prod_{1\le i\le r} C_{k_i}\le C_{n+r-1}\le 4^{r-1} C_n.$$
Thus
we can bound the last term by $ 4^{r-1} C_n D^{n-1} \||V|\|_{\xi}$,
which implies that $$ \|\tau_n^V\|_\xi\le C_n D^n$$
provided $D$ is chosen sufficiently large.
Since $C_{n} \leq 4^n$,
this implies that $\tau^{aV}=\sum_{n\ge 0} a^n \tau_n^V$ is absolutely converging provided $|a|<1/(4D)$ and it satisfies \eqref{eq11}, so $\tau^{aV}=\tau_{01}^{aV}$ as desired.
\end{proof}

  We finally show that $\tau_n^V|_{\mathcal P}$ only depends on $M_{A,B}$. Again, we can 
argue by induction. As already mentioned, this is clear when $n=0$ 
as $\tau_0^V$ is the law of free variables. 
Also, if $p \in \mathcal P$ and $ {\rm deg}(p)=0$ then $p$ depends only on $b_1,\ldots,b_k$, and therefore $\tau_n^V$ only depends on $\tau_B^N$ for all $n \geq 0$.
Thus, by the inductive hypothesis, we can assume that  the result is 
true for $\tau_k^V(p)$ when $k\le n-1$ and $p\in \mathcal P$, and for $\tau_n^V(p)$ when $p\in\mathcal P$ and ${\rm deg}(p)\le \ell$.

To show that this property propagates we shall use the fact that \eqref{tyr} can be seen as an induction
relation where all monomials belong to $\mathcal P$.  To this end, first note that  $\{\tau_n^V\}_{n\ge 0}$ are tracial, that is
$$\tau_n^V(pq)=\tau_n^V(qp)\qquad\forall \,p,q\in \mathcal P\,.$$
Indeed this property is clear as it is satisfied by $\tau^{aV}$, and $\{\tau_n^V\}_{n \geq 0}$ are derivatives of $\tau^{aV}$
with respect to $a$. 

Next, observe that  $\operator{D}^{-1}$ keeps $\mathcal P$ stable. Moreover,  if $p=Q\left(\{u_{i} a_{i}u_{i}^{-1}\}_{1\le i\le m}\right)$ where $Q$ is a monomial, then
$${\mathcal D}_i p=\sum_{Q=q_1 x_i q_2}\bigl(a_{i}u_i^{-1} q_2q_1 u_i-u_i^{-1} q_2q_1u_ia_i\bigr),$$
so that, up to cyclic symmetry, ${\mathcal D}_i p\cdot {\mathcal D}_i q\in \mathcal P$ for each $i$
and $q\subset \mathcal P$. (Here and in the sequel, cyclic symmetry 
is just the action of exchanging $pq$ into $qp$.)  We also show that  $\overline \Delta$
maps $\mathcal P$ into $\mathcal P\otimes \mathcal P$ up to cyclic symmetry.
Indeed, it follows from \eqref{eqn:ReducedLaplacian} that,
	for $p\in\mathcal P$,

\begin{equation*}
\begin{split}
				\overline{\Delta}_ip &= \sum_{p=p_1u_ia_iu_i^{-1}p_2} \bigg{(} \sum_{a_iu_i^{-1}p_2p_1u_i=a_iu_i^{-1}q_1u_ia_iu_i^{-1}
				q_2u_i} a_iu_i^{-1}q_1u_i \otimes a_iu_i^{-1}q_2u_i
			\\
			&\qquad\qquad\qquad\qquad- \sum_{a_iu_i^{-1} p_2p_1u_i=a_iu_i^{-1}q_1u_ia_iu_i^{-1}q_2u_i} a_iu_i^{-1}q_1u_ia_i \otimes q_2 -a_i\otimes p_2p_1  -p_2p_1\otimes a_i \\
				&\qquad\qquad\qquad \qquad- \sum_{u_i^{-1}p_2p_1u_ia_i=u_i^{-1}q_1u_ia_iu_i^{-1} q_2u_ia_i} q_1  \otimes a_iu_i^{-1} q_2u_ia_i\\
				&
				\qquad\qquad\qquad\qquad+  \sum_{u_i^{-1}p_2p_1u_ia_i=u_i^{-1}q_1u_ia_iu_i^{-1} q_2u_ia_i} u_i^{-1}q_1 u_ia_i \otimes u_i^{-1}q_2u_ia_i \bigg{)},
\end{split}
\end{equation*}
so that, up to cyclic symmetry, $	\overline{\Delta}_ip\in \mathcal P\otimes\mathcal P$ for all $i\in \{1,\ldots,m\}$ and $p\in\mathcal P$.

Hence, by induction we see that $\tau_n^V$ restricted to $\mathcal P$ only depends on the restriction of $\{\tau_k^V\}_{k\le n-1}$ to $\mathcal P$,
therefore to the restriction of $\tau^V_0$ to $\mathcal P$.  Since we have already seen that
$\tau_0^V|_\mathcal P$  only depends on  $M_{A,B}$, the conclusion follows.\\

 $\bullet$ {\it \,$\tau_{11}^{aV}$ depends only on $M_{A,B}$.} 
 A direct inspection shows that $\tilde{\Delta}$ maps $\mathcal P$ into $\mathcal P$ up to cyclic symmetry. 
 Indeed, $\tilde{\Delta}=\sum_i\tilde{\Delta}_i$ with 
 \begin{equation*}
\begin{split}
				\tilde{\Delta}_ip &= \sum_{p=p_1u_ia_iu_i^{-1}p_2} \bigg{(} \sum_{a_iu_i^{-1}p_2p_1u_i=a_iu_i^{-1}q_1u_ia_iu_i^{-1}
				q_2u_i} u_i^{-1}q_2^* u_i a_i^2u_i^{-1}q_1u_i 
			\\
			&\qquad\qquad\qquad- \sum_{a_iu_i^{-1} p_2p_1u_i=a_iu_i^{-1}q_1u_ia_iu_i^{-1}q_2u_i} u_i^{-1} q_2^* u_i a_iu_i^{-1}q_1u_ia_i  -u_i^{-1}p_1^*p_2^* u_i a_i  -a_iu_i^{-1}p_2p_1u_i \\
				&\qquad\qquad\qquad - \sum_{u_i^{-1}p_2p_1u_ia_i=u_i^{-1}q_1u_ia_iu_i^{-1} q_2u_ia_i} a_iu_i^{-1} q_2^*u_i a_iu_i^{-1} q_1 u_i \\
				&\qquad\qquad\qquad+  \sum_{u_i^{-1}p_2p_1u_ia_i=u_i^{-1}q_1u_ia_iu_i^{-1} q_2u_ia_i} a_iu_i^{-1}q_2^*q_1 u_ia_i  \bigg{)}\,.
\end{split}
\end{equation*}
 Moreover, the previous considerations showed that $\Psi^{aV}_{\tau_{10}}$ maps $\mathcal P$ into $\mathcal P$ for $a$ small, therefore
 $$\tau_{11}^{aV}(p)= \one_{\beta=1} \tau_{10}^{aV}\left( \tilde {\Delta} (\Psi^{aV}_{\tau_{10}})^{-1}(p)\right)$$
 only depends on $\tau_{10}^{aV}|_{\mathcal P}$. 
Since we just checked that the latter only depends on $M_{A,B}$, this proves the result. \\

$\bullet${ \it $\tau_{20}^{aV}$ 
 depends only on $M_{A,B}$.}
 By Lemma \ref{secondcor}
 $$\tau_{20}^{aV}( \Psi_{\tau_{10}}^{aV} p,q)=-(1+\one_{\beta=1})\, \tau_{10}^{aV} (\overline{\operator{P}}^q_{\tau_{10}^{aV}} p),$$
 and recalling that $\tau_{10}^{aV}$ expands in a 
 convergent series in $a$,  we see that so does $\tau_{20}^{aV}$. We only need to check that the operators which appear in the
 equation defining $\tau_{20}^{aV}$ keeps $\mathcal P$ stable. But we have already seen that both operators $\overline{\Delta}$ and $\operator{P}^V$ keeps $\mathcal P$ stable, 
hence $\tau_{20}^{aV}(p,q)$ only depends on $M_{A,B}$ and it is in fact a convergent series
 in such elements.\\

$\bullet${ \it $\tau_{12}^{aV}$ 
 depends only on $M_{A,B}$.}
 By Lemma \ref{thirdcor}
$$\tau_{12}^{aV}(\Psi_{\tau_{10}}^{aV}p)=
\tau_{11}^{aV}(\tilde\Delta p)-[\tau_{20}^{aV}+\tau^{aV}_{11}\otimes\tau^{aV}_{11}]\bigl(  \overline{\Delta}p
 +\bar{\operator{S}}^{aV_\beta} p\bigr)\,,$$
from which we see that $\tau_{12}^{aV}(p)$ is a convergent series in $a$
(recall that we already proved that $\tau_{10}^{aV}(p),\tau_{11}^{aV}(p)$
and $\tau_{20}^{aV}(p)$ are convergent series in $a$).
 So the main point is to prove that,  up to cyclic symmetry,
 $\overline{\Delta}p
 + \overline{\operator{S}}^{aV_\beta} p\in \mathcal P\otimes \mathcal P$ whenever $p\in\mathcal P$.
 
 We already proved that this is the case for $\overline{\Delta}p$, so we focus on $\bar{\operator{S}}^{aV_\beta} p$.
We notice that it is the sum of two parts: one is linear over tensors of two monomials appearing in the decomposition of $aV$, and as $aV\in \mathcal P^{\ot r}$
 this part clearly belongs to $\mathcal P^{\otimes 2}$; the other part is linear over tensors of one monomial appearing in the decomposition of $aV$ (which therefore belongs to $\mathcal P$)
  and $\mathcal D_i p\cdot\mathcal D_i q_j$
with $q_j$ appearing in the decomposition of $aV$ (which we have seen belongs to $\mathcal P$ up to cyclic symmetry).
Hence also this second part satisfies the desired property, which concludes the proof.

\subsubsection{Smoothness of the functions $F_2,F_1,F_0$}
By Lemma \ref{totolem} and the discussion in the previous subsection, we know that 
$$F^{N,aV}_\beta=\sum_{l=0}^2  N^{2-l}F_{l}^a(L^N_1,\ldots,L_d^N ,\tau^N_B)+O\biggl(\frac1N\biggr),$$
where the functionals $F_0^a,F_1^a,F_2^a$ depend on $\{L^N_i\}_{1\le i\le d}$ and on $\tau^N_B$ through the asymptotic correlators $\{\tau_{1g}^{uV}\}_{0\le g\le 2}$ and $\tau_{20}^{uV}$.
We finally prove that they are smooth functions of these measures.

Recall the notation \eqref{eq:norm zeta}.
We show that:
\begin{lem}\label{lem:smooth}There exists $\xi_0>1$  large enough such that the following holds:
let $V$ have finite $\|\cdot\|_{\xi,\zeta}$ norm for some $\xi>\xi_0$ and $\zeta\ge 1$.
 Then there exists $a_0>0$ such that, for all $a\in [-a_0,a_0]$,
$F_l^a$ is Fr\'echet differentiable $\ell$-times for all $\ell \in \mathbb N$, and
if $\nu_j=(\nu_1^j,\ldots,\nu_d^j,\tau_j)\in P([-1,1])^d\times \Ta(\BB)$,
 we have
  $$\left|D^\ell F^a_l(L^N_1,\ldots,L^N_d,\tau^N_B)[\nu_1,\ldots, \nu_\ell]\right|\le C_{\ell}\,|a|\,\|\nu_1\|_\zeta\cdots\|\nu_\ell\|_\zeta.$$
Moreover, the derivative $D_k F^a_0(L^N_1,\ldots,L^N_d,\tau^N_B)=DF^a_0(L^N_1,\ldots,L^N_d,\tau^N_B)[0,\ldots,0,\delta_x,0\ldots,0]$ of $F$ in the direction of the measure
$L_k^N$ is a function on the real line with finite $\|\cdot\|_\zeta$ norm for any $k\in \{1,\ldots,d\}$.  As a consequence, it is of class $C^\infty$ in an open neighborhood of $[-1,1]$.
\end{lem}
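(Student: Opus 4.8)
The strategy is to reduce the statement about $F_l^a$ to the smoothness and quantitative estimates already available for the asymptotic correlators. Recall from Lemma \ref{totolem} that $F_l^a = \int_0^a f_l^u\,du$, where the integrands $f_l^u$ are given by \eqref{tol} as explicit multilinear pairings of $V$ with the correlators $\tau_{10}^{uV}$, $\tau_{11}^{uV}$, $\tau_{20}^{uV}$ (and $\tau_{12}^{uV}$ does not enter $F_0,F_1,F_2$, only the error). Moreover, by the discussion in Section \ref{secFree} together with Lemma \ref{expina}, each of these correlators, restricted to the subspace $\mathcal P$ of ``balanced'' polynomials, depends only on the data $M_{A,B}=\{L^N_i(x^\ell)\}\cup\{\tau_B^N\}$, and this dependence is through absolutely convergent power series in $u$ with Catalan-type coefficient bounds. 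Thus the first step is to make explicit the factorization $F_l^a(\mu_1,\ldots,\mu_d,\tau) = \int_0^a \Phi_l(u;\mu_1,\ldots,\mu_d,\tau)\,du$, where $\Phi_l$ is built by composing: (i) the map $(\mu_1,\ldots,\mu_d,\tau)\mapsto \tau_{AB}$ sending moment data to the non-commutative distribution of $(u_ia_iu_i^{-1},b_j)$ with $(a_i)$ free from free Haar unitaries; (ii) the solution maps $\tau_1\mapsto \tau_{10}^{uV,\tau_1}$, and then $\tau_{10}\mapsto\tau_{11}^{uV}$, $\tau_{10}\mapsto\tau_{20}^{uV}$ from Lemma \ref{conv}, Lemma \ref{secondcor}; (iii) the pairing against $V$.

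\textbf{Key steps.} First I would record that step (i) is a \emph{linear} (in particular $C^\infty$) map which is bounded from $(\mathcal P([-1,1])^d\times\Ta(\BB),\|\cdot\|_\zeta)$ into $(\La(\AA\BB),\|\cdot\|_\zeta)$, simply because a monomial in $\{u_ia_iu_i^{-1},b_j\}$ evaluated under the freeness relations expands as a finite universal combination of products of the moments $\mu_i(x^\ell)$ and $\tau(b_{j_1}\cdots b_{j_k})$ with controlled degrees; this gives $\|D(\text{step (i)})[\nu]\|_\zeta \le C\|\nu\|_\zeta$ and all higher derivatives vanishing beyond the relevant order (it is multilinear of bounded degree in each variable, so smooth with the claimed bounds). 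Second, Lemma \ref{conv} already states that $\tau_1\mapsto \tau_{10}^{uV,\tau_1}$ is Fréchet differentiable at all orders with $\|D^\ell\tau_{10}^{uV,\tau_1}[\nu_1,\ldots,\nu_\ell]\|_{\xi,\zeta}\le C_{\xi,\zeta,\ell}\|\nu_1\|_\zeta\cdots\|\nu_\ell\|_\zeta$, and the analogous assertion for $\tau_{11}^{uV},\tau_{20}^{uV}$ follows from their explicit formulas (Lemmas \ref{firstcor}, \ref{secondcor}) as $C^\infty$ functions of $\tau_{10}^{uV}$ composed with the fixed bounded operators $\tilde\Delta,\overline\Delta,\overline{\operator P}^q,(\Psi_{\tau_{10}}^{V_\beta})^{-1}$ — here one uses that $(\Psi_{\tau_{10}}^{V_\beta})^{-1}$ depends smoothly on $\tau_{10}$ via a Neumann series, with norm $\le (1-\delta_{\xi,\zeta}(V))^{-1}$, and differentiating the relation $\Psi\circ\Psi^{-1}=\operator{Id}$. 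Third, pairing with $V$: since $f_l^u$ is linear in $V$ and each factor is a correlator with a controlled dual norm $\|\cdot\|_{\xi,\zeta}$, and $\|V\|_{\xi,\zeta}<\infty$, the chain rule gives $\ell$-th derivatives of $\Phi_l(u;\cdot)$ bounded by $C_\ell\,\|V\|_{\xi,\zeta}\,\|\nu_1\|_\zeta\cdots\|\nu_\ell\|_\zeta$. Fourth, integrate in $u\in[-a,a]$ and use that the bounds are uniform in $u\in[-a_0,a_0]$ (for $a_0$ small, so that $\delta_{\xi_i,\zeta}(uV)<(1+\max\{2,r\})^{-1}$ for the finitely many auxiliary radii $\xi_1>\xi_2>\xi_3>\xi_4$ required by Lemmas \ref{firstcor}--\ref{thirdcor}): this yields $|D^\ell F_l^a[\nu_1,\ldots,\nu_\ell]|\le C_\ell\,|a|\,\|\nu_1\|_\zeta\cdots\|\nu_\ell\|_\zeta$, since integrating over $[-a,a]$ produces the factor $|a|$ and replaces $\|V\|_{\xi,\zeta}$ by a constant once $a_0$ is fixed. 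Finally, for the statement on $D_kF_0^a$: this is $DF_0^a[0,\ldots,\delta_x,\ldots,0]$, and tracking through the composition, the $x$-dependence enters only through the finitely many powers $x\mapsto x^k$ appearing in the moment expansion of step (i); evaluating the $\ell=1$ derivative bound with $\nu=\delta_x$ gives $|D_kF_0^a(\cdot)[\delta_x] \cdot x^k\text{-coefficient}|\le C\,|a|\,\zeta^k$ for each $k$, i.e. the function $x\mapsto D_kF_0^a(\cdot)[\delta_x]$ has finite $\|\cdot\|_\zeta=\sup_k\zeta^{-k}|\cdot(x^k)|$ norm, hence is a real-analytic function on a neighborhood of $[-1,1]$ (its Taylor series at $0$ has radius of convergence $\ge\zeta\ge 1$), in particular $C^\infty$ there.

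\textbf{Main obstacle.} The genuinely delicate point is not the chain rule bookkeeping but ensuring that differentiating the \emph{implicitly defined} object $\tau_{10}^{uV,\tau_1}$ with respect to $\tau_1$ (rather than with respect to $V$, as one might first attempt) produces estimates in the \emph{dual} norm $\|\cdot\|_\zeta$ on the perturbation directions $\nu_j$ while the outputs live in $\|\cdot\|_{\xi,\zeta}$; this is exactly the content of the inductive identity \eqref{toto} in the proof of Lemma \ref{conv}, and one must check that composing this with step (i) — which is what converts perturbations of the moment data $(\mu_1,\ldots,\mu_d,\tau)$ into perturbations $\nu$ of $\tau_1\in\La(\AA\BB)$ — does not lose the $\|\cdot\|_\zeta$-control. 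Concretely, the subtlety is that a perturbation $\nu=(\nu_i,\tau)$ of moment data with $\|\nu\|_\zeta\le 1$ must map under step (i) to a perturbation of $\tau_{AB}$ bounded in $\|\cdot\|_\zeta$ on $\AA\BB$, uniformly in $N$ and in the matrices; this holds because the expansion of a word in $\{u_ia_iu_i^{-1},b_j\}$ under freeness has universal bounded-degree structure, so a word of total degree $D$ produces moments of the $a_i$'s and of $\tau_B$ of degree at most $D$, giving the clean bound $\zeta^{D}$. Once this uniformity is in hand, all other steps are routine applications of results already proved, and the $|a|$-smallness of the derivatives is automatic from the $\int_0^a du$ structure combined with the linearity of each $f_l^u$ in $V$.
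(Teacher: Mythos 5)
Your overall plan — reduce $F_l^a$ to the asymptotic correlators via $F_l^a=\int_0^a f_l^u\,du$, invoke the Fréchet-differentiability of $\tau_1\mapsto\tau_{10}^{uV,\tau_1}$ from Lemma \ref{conv}, observe that the restriction to $\mathcal P$ factors through the moment data, and integrate to get the factor $|a|$ — is exactly the paper's strategy, and the discussion of the derivative bound for $F_l^a$ itself is fine. Two points, one minor and one more serious.

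Minor: your ``step (i)'' is muddled. You describe a map from moment data $(\mu_1,\ldots,\mu_d,\tau)$ to ``the non-commutative distribution of $(u_ia_iu_i^{-1},b_j)$ with $(a_i)$ free from free Haar unitaries'' and claim this is the $\tau_{AB}$ fed into Lemma \ref{conv}; but that object is a linear form on $\LL$ (or $\mathcal P$), not on $\AA\BB$, and it is not $\tau_{AB}^N$. What the paper actually exploits is that the restriction of $\tau_{10}^{uV,\tau_1}$ to $\mathcal P$ depends only on $\{\tau_1|_{\mathbb C[a_k]}\}_k$ and $\tau_1|_{\BB}$ (this is the $\mathcal P$-stability of the SD operators and the freeness of the $V=0$ law), so no lift is actually needed: the projection $\Pi'$ in \eqref{deri} already lands in $\bigoplus_k\mathbb C[a_k]\oplus\BB$. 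If you insist on phrasing it as a lift you should take the free product lift $\tau_1 := *_{k}\mu_k * \tau_B\in\Ta(\AA\BB)$, which is a well-defined, $\|\cdot\|_\zeta$-Lipschitz map from moment data into $\Ta(\AA\BB)$, and note that $\tau_{10}^{uV,\tau_1}|_{\mathcal P}$ does not depend on the choice of lift.

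More serious: your argument for the last assertion, that $D_kF_0^a[\delta_\cdot]$ has finite $\|\cdot\|_\zeta$ norm as a function, does not work. Writing $D_kF_0^a[\delta_x]=\sum_j c_j x^j$, the abstract derivative bound $|DF_0^a[\nu]|\le C|a|\,\sup_j\zeta^{-j}|\nu(x^j)|$ from Lemma \ref{conv} gives only $|c_j|\le C|a|\,\zeta^{-j}$ (take $\nu$ with $\nu(x^i)=\delta_{ij}$, so $\|\nu\|_\zeta=\zeta^{-j}$), so $\sum_j|c_j|\zeta^j$ is not controlled — this is exactly the dual-versus-primal norm mismatch. Moreover you quote a dual-type norm $\sup_k\zeta^{-k}|\cdot(x^k)|$ for the \emph{function}, but a bound $|c_k|\le C\zeta^{k}$ gives radius of convergence $\ge\zeta^{-1}\le 1$, not $\ge\zeta$, so even your own stated estimate would not yield analyticity near $[-1,1]$. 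To obtain the claimed primal bound $\sum_j|c_j|\zeta^j<\infty$ you really need the explicit formula \eqref{deri}: $D_k\tau_{0,1}^{V,\tau_1}[p]$ is, for $p\in\LL_{\xi,\zeta}\cap\mathcal P$, an \emph{explicit} element of $\mathcal P$ obtained by applying $(\operator{Id}+\Xi^V)^{-1}$, the operators $\overline{\operator T},\overline{\operator P}^{V_\beta},\overline{\operator Q}^{V_\beta}$, and the projection $\Pi_k'$, all of which are bounded on $(\LL_{\xi,\zeta},\|\cdot\|_{\xi,\zeta})$; after pairing with $V$ (itself of finite $\|\cdot\|_{\xi,\zeta}$ norm) and projecting onto the $a_k$-algebra, one reads off directly that the resulting power series in $a_k$ has $\sum_j|c_j|\zeta^j<\infty$. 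This is the step your argument is missing.
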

\begin{proof} 
First, fix $\xi_0$ sufficiently large so that all previous results apply.
By the previous section it is enough to show that $\{\tau_{1g}^{uV}\}_{0\le g\le 2}$ and $\tau_{20}^{uV}$ depend smoothly on  $(\{L^N_i\}_{1\le i\le d},\tau^N_B)$,
uniformly with respect to $u  \in [-a,a]$. 
Indeed, by \eqref{tol}, $F^a_0$ is the integral of $(\tau^{uV}_{10})^{\ot r}(V)$ over $u\in [0,a]$. 
We have seen in Lemma \ref{conv} that $\tau^N_{AB}\mapsto \tau_{01}^{uV,\tau^N_{AB}}$ is $\ell$-times Fr\'echet
differentiable. Moreover, we have also seen that, once restricted to $\mathcal P$,
 it depends only on $\{L^N_i\}_{1\le i\le d}$ and $\tau^N_B$, and not the full distribution $\tau^N_{AB}$.
As a consequence, the  smoothness of  $\tau_{01}^{uV,\tau^N_{AB}}$ as a function of  $\tau^N_{AB}$ becomes a smoothness as a function of the probability measures $\{L^N_i\}_{1\le i\le d}$
 and $\tau^N_B$. The fact that $DF^a_0$ is $C^\infty$ is a direct consequence of formulas \eqref{lkj} and \eqref{toto}.
 For instance, if we denote by  $D_k$  the derivative along $L^N_k$,
    and $\Pi_k'$ is the projection onto the algebra generated by $\{a_k\}$, for any $p\in \LL_{\xi,\zeta}\cap \mathcal P$ we have
    \begin{equation}\label{deri}
 D_k \tau_{0,1}^{V,\tau_1}[p]= -\Pi_k'\Bigl[\overline{\operator{T}}_{\tau \Pi+\tau_1\Pi'} + \overline{\operator{P}}^{V_\beta}_{\tau \Pi+\tau_1\Pi'}+ \overline{\operator{Q}}^{V_\beta}_{\tau \Pi+\tau_1\Pi'}\Bigr]\bigl(\operator{Id}+\Xi^V_{\tau_{01}^{V,\tau_1}} \bigr)^{-1}p\in \mathcal P,\end{equation}
where we use the fact (see Lemma \ref{secFree}) that
$$\Bigl[\overline{\operator{T}}_{\tau \Pi+\tau_1\Pi'} + \overline{\operator{P}}^{V_\beta}_{\tau \Pi+\tau_1\Pi'}+ \overline{\operator{Q}}^{V_\beta}_{\tau \Pi+\tau_1\Pi'}\Bigr]\bigl(\operator{Id}+\Xi^V_{\tau_{01}^{V,\tau_1}} \bigr)^{-1}(\mathcal P)\subset \mathcal P$$
so that once we project it on $\AA\BB$ we get only polynomials either in the $a_i$ or in the $b_i$'s,
and hence differentiating in the direction of $L^N_k$
we only keep those in $a_k$.

 The same argument  holds for $F_1^u$ and $F_0^u$, since also $\tau_{10}^{uV}$, $\tau_{11}^{uV}$, and $\tau_{20}^{uV}$
 are smooth and only depend on $\{L^N_i\}_{1\le i\le d}$ and $\tau^N_B$.
\end{proof}

\section{ Law of polynomials of random matrices}\label{lawofpol}
Let us consider the equation
$$Y_i=X_i+a\,F_i(X_1,\ldots, X_d, B_1,\ldots, B_m)$$
 with $X_1,\ldots, X_d,$ (resp. $B_1,\ldots, B_m$) self-adjoint operators with norm bounded by $\xi$ (resp. $\zeta$)  and $F_i$ 
smooth functions (eventually polynomial functions) on such operators. We assume that $F_i$ are self-adjoint and that
$F_i=\sum \beta_i^q q$,  where the sum is over monomials in $X_i$'s and $B_i$'  with total degree ${\rm deg}_X(q)$ (resp. ${\rm deg}_B(q)$) in $X_1,\ldots,X_d$ (resp. in $B_1,\ldots, B_m$).
We also assume that for $\zeta\ge 1$ and $\xi$ large enough
 $$\|F_i\|_{\xi,\zeta}:=\sum |\beta_i^q|\,\xi^{{\rm deg}_X(q)} \zeta^{{\rm deg}_B(q)} <\infty\,.$$ 
 By the implicit function theorem, see \cite[Corollary 2.4]{GS}, for any fixed $\xi,\zeta$ there exist $A<A'<\xi$ 
 such that for
$a$ small enough (e.g., so that $A+|a| \|F_i\|_{\xi,\zeta}\le A'$) there exist analytic functions $G_i$, with $\|G_i\|_{A,\zeta}=O(|a|)$,
satisfying
$$
X_i=Y_i+G_i(Y_1,\ldots,Y_d, B_1,\ldots, B_m)\,,
$$
for all operators $Y_i$  whose norm is bounded by $A$.

To be precise, notice that \cite{GS} only consider the case where the $B_i$'s are constant, but the proof extends readily to the
case where some additional fixed matrices $B_i$ are present, as it is based on a fixed point argument showing
that the sequence
$$ X_i^0=Y_i,\qquad X^{n+1}_i=Y_i-a\,F_i( X^n_1,\ldots, X^n_d, B_1,\ldots, B_m)$$ is Cauchy for $\|\cdot\|_{A,\zeta}$ provided $a$ is small enough. Since the closure $\mathbb C\langle x_1,\ldots,x_d; b_1,\ldots,b_m\rangle_{A,\zeta}$
 of the  space of polynomials  under $\|\cdot\|_{A,\zeta}$  is complete,
it follows that the sequence $\{X_i^n\}_{n \in \mathbb N}$ converges in this space for all $1\le i\le d$.  This construction also shows that there exist functions $G_i\in \overline{\mathbb C\langle x_1,\ldots,x_d; b_1,\ldots,b_m\rangle}^{\|\cdot\|_{A,\zeta}}$
satisfying the desired properties.

We next consider the law ${\mathbb P}^N_Y$ of the  random matrices
$$Y^{N}_i=X^{N}_i+a\,F_i(X^{N}_1,\ldots, X^{N}_d,B^{N}_1,\ldots,B^{N}_m)$$
for $d$ independent GUE matrices $X^{N}_1,\ldots, X^{N}_d$ and $m$ deterministic matrices $B^{N}_1,\ldots, B^{N}_m$. Our goal in this section
is to show that the law of $Y^{N}_1,\ldots, Y^{N}_d$ satisfies our previous hypotheses.

First, notice that by Lemma \ref{confinement} applied to the current situation where the equilibrium density is the semicircle law, see \eqref{eq:semicircle}, the matrices $X^{N}_i$ have norms bounded by $3$ with probability greater than $1-e^{-cN}$. Hence, if we fix $\xi=4$ 
and $F_i\in \mathbb C\langle x_1,\ldots,x_d; b_1,\ldots,b_m\rangle_{\xi,\zeta}$ we see that, with probability greater than $1-e^{-cN}$, for $a$ small enough we have
$$X^{N}_i=Y^{N}_i+G_i(Y^{N}_1,\ldots,Y^{N}_d, B^{N}_1,\ldots,B^{N}_m)\,,$$
for some $G_i\in \overline{\mathbb C\langle x_1,\ldots,x_d; b_1,\ldots,b_m\rangle}^{\|\cdot\|_{A,\zeta}}$ with $3\le A<A'<\xi$. 

Therefore, up to an error of order $e^{-cN}$ in the total variation norm,  we have
\begin{multline*}
{\mathbb P}^N_Y(dY_1^N,\ldots,dY_d^N)=\frac{1}{Z_N} e^{-N\sum_{i=1}^d \Tr \left(Y_i^N+G_i(Y_1^N,\ldots,Y_d^N,B_1^N,\ldots, B_m^N)\right)^2}\\
\times \mbox{Jac} \,G\bigl(Y^{N}_1,\ldots,Y^{N}_d, B^{N}_1,\ldots, B^{N}_m\bigr)  \prod_{i} dY_i^N
\end{multline*}
where $ \mbox{Jac} \,G(Y^{N}_1,\ldots,Y^{N}_d, B^{N}_1,\ldots, B^{N}_m)$ 
denotes the Jacobian of the change of variable $X_i= Y_i+G_i(Y_1,\ldots,Y_d, B_1,\ldots,B_m)$.
It turns out that in the case $\beta=2$
$$\log  \mbox{Jac} G(Y^{N}_1,\ldots,Y_d^{N}, B^{N}_1,\ldots, B^{N}_m) =\tr_d\bigl( \Tr\otimes \Tr (\log (\operator{Id}+\mathcal J  G))\bigr)$$
where $\Tr$ is the trace over $N\times N$ matrices, $\tr_d$ is the trace over $d\times d$ matrices, and 
$$(\mathcal J G)_{ij,k\ell; t,s}=\partial_{Y^N_t(k\ell)} G_s(ij) =
(\hat\partial_t G_s)_{ik,\ell j}\,, \qquad i,j,k,\ell \in \{1,\ldots,N\},\,
s,t \in \{1,\ldots,d\},$$
where $\hat\partial_t$ denotes the non-commutative derivative over polynomial of self-adjoint variables defined as
$$\hat\partial_tp:=\sum_{p=q_1Y_t q_2} q_1\otimes q_2\,.$$
Indeed, the above formula follows from the fact that $\hat\partial_t p$ lives in the tensor product space (in other words, on
the algebra of left multiplication tensored with the right multiplication) and
$$\partial_{Y_t^N({k\ell})} G_s({ij})=(\hat \partial_t G_s\sharp \Delta_{k\ell})(ij)= (\hat\partial_t G_s)_{ik,\ell j},$$
where $\Delta_{k\ell}$ is the matrix with null entries except at position $\ell k$ where there is a one (here $A\otimes B\sharp C=ACB$).

As $G$ is small for $a$ small enough (at least when restricted to matrices with universally bounded operator norm), the singularity of the logarithm
is away from our support of integration and  we
deduce that  the law of $Y^{N}_1,\ldots, Y^{N}_d$ can be approximated in the total variation distance by 
$$\frac{1}{Z_N}e^{ N\Tr \,F_1(Y^{N}_1,\ldots,Y^{N}_d,B^{N}_1,\ldots,B^{N}_m)+ \Tr \otimes \Tr \,F_2(Y^{N}_1,\ldots,Y^{N}_d,B^{N}_1,\ldots,B^{N}_m)} \prod dY^{N}_i$$
for two smooth functions $F_1$ and $F_2$, belonging respectively to the closure of $\mathbb C\langle x_1,\ldots,x_d,b_1,\ldots, b_m\rangle$ and $\mathbb C\langle x_1,\ldots,x_d, b_1,\ldots, b_m\rangle^{\otimes 2}$ 
with respect to the norms $\|\cdot\|_{\xi,\zeta}$, where 
$$\|F_2\|_{\xi,\zeta}:=\sum_{q_1,q_2} |\langle F,q_1\otimes q_2\rangle|\|q_1\|_{\xi,\zeta}\|q_2\|_{\xi,\zeta}$$
whenever $F=\sum_{q_1,q_2}  \langle F,q_1\otimes q_2\rangle q_1\otimes q_2$ and the sum runs over monomials.
This proves the result when $\beta=2$.

Next, we consider the random matrices
$$Y^{N}_i=X^{N}_i+a\,F_i(X^{N}_1,\ldots, X^{N}_d,B^{N}_1,\ldots,B^{N}_k)$$
for $d$ independent GOE matrices $(X^{N}_1,\ldots, X^{N}_d)$ and $m$ deterministic symmetric matrices $B^{N}_1,\ldots, B^{N}_m$.
The Jacobian is slightly changed and reads
$$(\mathcal J G)_{ij,k\ell; t,s}=(\hat\partial_t G_s)_{ik,\ell j}+(\hat\partial_t G_s)_{i\ell,kj} \qquad i,j,k,\ell \in \{1,\ldots, d\},$$
where the second term comes from the fact that $\partial_{X_{\ell k }} X_{\ell k}$ does not vanish (as in the complex case) but is equal to one.
Notice that can write the second term as $\Sigma(\hat\partial_t G_s)$, where $\Sigma$ acts on basic tensor product by 
$$\Sigma (A\otimes B)_{ik,\ell j}:=A_{i\ell}B_{kj}.$$
Considering the logarithm of the determinant of $(I+ \mathcal J G)$, we see that it expands in moments of $\mathcal J G$ as 
\begin{align*}
\log\det(I+ \mathcal J G)&=\Tr_d\Tr\otimes\Tr \log(I+ \mathcal J G)=\sum_{n\ge 1} \frac{(-1)^{n+1} }{n} \Tr_d\Tr\otimes\Tr ( \mathcal J G)^n\\
&=\sum_{n\ge 1} \frac{(-1)^{n+1} }{n} \Tr_d\Tr\otimes\Tr ( \nabla G+\Sigma(\nabla G) )^n\end{align*}
with $\nabla G_{ij,k\ell; t,s}=(\hat\partial_t G_s)_{ik,\ell j}$. When expanding the above moments, 
it turns out that the moments with an odd number of $\Sigma$ result
 into the trace of a single polynomial, whereas even numbers result
with tensor products of two traces.  For instance, when $n=1$,
$$\Tr_d\Tr\otimes\Tr ( \Sigma(\nabla G) )=\sum_t \sum_{i,j} (\hat\partial_t G_t)_{ij,ji}=\sum_t \Tr(m(\hat\partial_t G_t))$$
whereas $\Tr_d\Tr\otimes\Tr ((\nabla G) )=\sum_t \sum_{i,j} (\hat\partial_t G_t)_{ii,jj}$. 
Hence, also in this case there exist
convergent series $F_1,F_0$ such that 
\begin{align*}
\log  \mbox{Jac} G(Y_1,\ldots,Y_d, B_1,\ldots, B_m)&=\Tr\otimes \Tr\, F_0+\Tr\, F_1\\
&=\Tr\otimes \Tr\,\Bigl(F_0+\frac{1}{2N} (F_1\otimes \operator{Id}+\operator{Id}\otimes F_1)\Bigr),
\end{align*}
and we conclude as before.

\section{Appendix: Concentration Lemma}\label{secconc}
In this section we prove Lemma \ref{conc}. As already mentioned, it follows from standard results on concentration of measure.

Indeed, thanks to Gromov, it is well known that the groups
$$SU(N):=\{U\in U(N):\det(U)=1\},\qquad SO(N):=O(N)\cap SU(N)$$
can be seen as submanifolds of the set of $N\times N$ matrices that have a Ricci curvature bounded below by $\beta(N+2)/4-1$,
see e.g. \cite[Theorem 4.4.27]{AGZ} and \cite[Corollary 4.4.31]{AGZ}. In particular, this implies concentration of measure under the Haar measures on these
groups. To lift this result to $\mathbb Q_{\beta,N}^V$, let us first notice that, by definition, the potential
$V$ is balanced, in the sense that 
it is invariant under the maps $U_j\mapsto U_j e^{i\theta_j}$ for any $\theta_j\in [0,2\pi)$, being a sum of words each one
containing the same number of letters $U_i$ and $U_i^*$.
Recalling that $\mathbb Q_{\beta,N}^V$ is a measure on $O(N)$ (resp. $U(N)$) when $\beta=1$ (resp. $\beta=2$), it follows that, for any balanced polynomial $P$,
$$\mathbb Q_{\beta,N}^V\Bigl(\bigl|\tr(P)-\mathbb Q_{\beta,N}^V\bigl(\tr(P)\bigr)\bigr|\ge\delta\Bigr)=
\tilde{\mathbb Q}_{\beta,N}^V\Bigl(\bigl|\tr(P)-\tilde{\mathbb Q}_{\beta,N}^V\bigl(\tr(P)\bigr)\bigr|\ge\delta\Bigr),$$
where $\tilde{\mathbb Q}_{\beta,N}^V$ is the restriction of $\mathbb Q_{\beta,N}^V$ to $SO(N)$ (resp. $SU(N)$) when $\beta=1$ (resp. $\beta=2$).

On the other hand, if $P$ is a word which is not balanced and we write $U_j$ has $U_j=e^{i\theta_j} \tilde U_j$ with $\tilde U_j$ in $SU(N)$, then $\tr \,P(U)=e^{i\theta} \tr\, P(\tilde U)$
for some $\theta$ which is a linear combination of the $\theta_j$. As $\theta_j$ follows the uniform measure on $[0,2\pi]$, we deduce that $\mathbb Q_{\beta,N}^V\bigl( \tr (P)\bigr)=0$. Hence,
if $P$ is not balanced,
$$\mathbb Q_{\beta,N}^V\Bigl(\bigl|\tr(P)-\mathbb Q_{\beta,N}^V\bigl(\tr(P)\bigr)\bigr|\ge\delta\Bigr)=
\tilde{\mathbb Q}_{\beta,N}^V\Bigl(\bigl|\tr(P)\bigr|\ge\delta\Bigr),$$
Therefore in both cases we can use concentration inequalities on the special groups.

We then notice that $N^{1-r}\Tr^{\ot r} V$ has a bounded Hessian,
going to zero when $\|V\|_{\xi,\zeta}$ goes to zero. Hence, we can use Bakry-Emery criterion to conclude that, for any $\xi>1$, if $\|V\|_{\xi,\zeta}$ is small enough then
\begin{equation}\label{conc2}
\mathbb Q_{\beta,N}^V\Bigl(\bigl|\tr(P)-\mathbb Q_{\beta,N}^V\bigl(\tr(P)\bigr)\bigr|\ge\delta\Bigr)\le 2e^{-\frac{\beta}{8\|P\|_{\La}^2}\delta^2 }\,,\end{equation}
where $\|P\|_{\La} $ is the Lipschitz constant of $\tr P$, which can be bounded as $$\|P\|^2_{\La}\le
\sup_{u_j,u_j^*,a_j} \sum_{i=1}^d \tau\bigl(|\mathcal D_iP|^2(u_j,u_j^*,a_j)\bigr)$$
where the supremum is taken over all unitary operators $u_i$, all operators $a_i$ with norm bounded by $1$, and all tracial states $\tau$.
Note that
if $P$ is a word we simply have $\|P\|_{\La}\le \mbox{deg}_U(p)$, and more in general
$$\|P\|_{\La}\le \sum |\langle P, q\rangle|\,{\rm deg}_U(q)\le C_\xi\|P\|_{\xi,1},$$
where $C_\xi$ is a finite constant so that $s\le C_\xi\, \xi^s$ for all $s\in\mathbb N $. 
Therefore, thanks to \eqref{conc2} we deduce that, for any monomials $q_1,\ldots, q_k$,
\begin{equation}\label{conc3}
\left|\mathbb Q_{\beta,N}^V\biggl(\prod_{\ell=1}^k \left(\tr(q_\ell)-\mathbb Q_{\beta,N}^V(\tr(q_\ell))\right)\biggr)\right|\le C_k \prod_{\ell=1}^k {\rm deg}_U(q_\ell)\,.
\end{equation}
As correlators can be decomposed as the sum of products of such moments, it follows that for any words $q_1,\ldots,q_k$ and any $\xi>1$
$$\left|\W_{kN}^V(q_1,\ldots,q_k)\right|\le C_k \prod_{\ell=1}^k {\rm deg}_U(q_\ell)\le C_k(C_\xi)^k \prod_{\ell=1}^k \|q_\ell\|_{\xi},
$$
which concludes the proof of Lemma \ref{conc}.

\bibliographystyle{amsalpha}
\bibliography{tran2}

\end{document}